\documentclass[11pt]{amsart}
\usepackage{graphicx}
\usepackage{amssymb,bm}
\usepackage[usenames,dvipsnames]{xcolor}
\usepackage{hyperref}

\date{}

\theoremstyle{plain}
\newtheorem{theorem}{Theorem}[section]
\newtheorem{ex}[theorem]{Example}
\newtheorem{lemma}[theorem]{Lemma}
\newtheorem{conjecture}[theorem]{Conjecture}
\newtheorem{proposition}[theorem]{Proposition}
\newtheorem{corollary}[theorem]{Corollary}
\newtheorem{question}[theorem]{Question}

\theoremstyle{definition}
\newtheorem{definition}[theorem]{Definition}

\newtheorem{remark}[theorem]{Remark}

\newcommand{\FA}{\mathfrak{A}}
\newcommand{\s}{\mathbf{s}}
\renewcommand{\S}{\mathbb{S}}

\renewcommand{\L}{\mathbb{L}}

\newcommand{\D}{\mathbb{D}}
\newcommand{\N}{\mathbb{N}}
\newcommand{\Z}{\mathbb{Z}}

\newcommand{\R}{\mathbb{R}}
\newcommand{\C}{\mathbb{C}}
\newcommand{\SA}{\mathcal{A}}

\newcommand{\SO}{\mathcal{O}}
\newcommand{\SR}{\mathcal{R}}
\newcommand{\ST}{\mathcal{T}}

\newcommand{\BA}{\mathbf{A}}
\newcommand{\La}{\Lambda}

\newcommand{\Br}{\operatorname{Br}}

\newcommand{\tb}{\operatorname{tb}}
\newcommand{\rot}{\operatorname{rot}}

\newcommand{\Symp}{\operatorname{Symp}}

\newcommand{\Sh}{\operatorname{Sh}}
\newcommand{\Aug}{\operatorname{Aug}}

\newcommand{\Gr}{\operatorname{Gr}}

\newcommand{\Mat}{\operatorname{Mat}}
\newcommand{\LGr}{\operatorname{LGr}}
\newcommand{\ind}{\operatorname{ind}}
\newcommand{\w}{\mathfrak{w}}

\newcommand{\SL}{\mathrm{SL}}

\newcommand{\alignbox}[1]{\rlap{$#1$}\phantom{t_{2n-1}^{-1} + K_{c+1}^{R,(1)} b_{2n-2} + K_{c}^{R,(2)} Z_{2k+2}^{(2)}}}

\newcommand{\dfn}[1]{{\textbf{#1}}}

\begin{document}

	\title{Cluster structures from Legendrian double twist knots: a comparative approach}
	
	\author{James Hughes}
\address{University of Southern California, Dept. of Mathematics, Los Angeles, CA 90007, USA}
\email{hughesja@usc.edu}
\author{Jiajie Ma}
\address{Duke University, Dept. of Mathematics, Durham, NC 27708, USA}
\email{jason.ma@duke.edu}

\begin{abstract} 
We undertake a systematic comparison of cluster algebras associated to Legendrian knots of the same smooth knot type. We describe a cluster structure on the $m$-graded augmentation variety of Legendrian representatives of a family of double twist knots that includes all orientably Lagrangian fillable  twist knots. For different Legendrian representatives of the same smooth knot type, we show that these cluster structures are equivalent. Along the way, we characterize the representatives that admit Maslov-$m$ exact Lagrangian fillings, construct a Catalan number of such fillings for each representative, and show that any Maslov-0 fillable representative is Legendrian isotopic to the $(-1)$-framed closure of a positive braid. Finally, we give two Legendrian representatives of a cabled twist knot that yield equivalent cluster structures of infinite type.
\end{abstract}

\subjclass[2020]{53D12; 53D10; 13F60} 
\keywords{}

	\maketitle

\section{Introduction}\label{sec: intro}

\subsection*{Scientific Context}
The classification of Legendrian knots and their exact Lagrangian fillings is a central topic in low-dimensional contact and symplectic topology; see e.g.\ \cite{EliashbergPolterovich96, EtnyreHonda01b, EtnyreHonda05, EliashbergFraser09, YuPan, CasalsGao, CasalsGao23}. 
Starting with Chekanov, Floer-theoretic invariants, in the form of the Chekanov-Eliashberg differential graded algebra (DGA), have played a central role in this pursuit, famously distinguishing a pair of Legendrian twist knots smoothly isotopic to $m(5_2)$ in \cite{Chekanov}. Over a decade later, further refinement of the DGA and related invariants led to a complete classification of Legendrian twist knots in \cite{EtnyreNgVertesi13}. 

More recently, the DGA has also played a key role in efforts to construct and distinguish exact Lagrangian fillings of Legendrian links.\footnote{The classification of Lagrangian fillings is listed in \cite{K3ProblemList} as Problem 1.67.} Specifically, the DGA $\SA(\La)$ of a Legendrian link $\La$ helps to distinguish exact Lagrangian fillings of $\La$ through the data of augmentations, i.e. maps from $\SA(\La)$ to a ground field $\Bbbk$; see e.g. \cite{EHK, YuPan, CasalsNg, CSHW2023}. An exact Lagrangian filling of $\La$ induces an augmentation of $\SA(\La)$, and we refer to the space of such maps as the augmentation variety $\Aug(\La, \Bbbk)$ of $\La$; see Section~\ref{ssec:aug} for a precise definition. 

To date, the most effective tool for distinguishing fillings comes from establishing a connection between the theory of cluster algebras and the augmentation variety or its sheaf-theoretic counterpart, the moduli of microlocal-rank one sheaves. Such a connection was originally investigated in \cite{STWZ} using the combinatorics of plabic graphs and conjugate surfaces. For braid positive Legendrians $\La_\beta$, $\beta\in \Br_n^+$, \cite{GSW} establishes an isomorphism between the coordinate ring $\C[\Aug(\La_\beta)]$ of the augmentation variety and the cluster algebras studied in \cite{ShenWeng}, providing one of the first results  distinguishing infinitely many fillings. This approach was further developed in \cite{CasalsWeng}, where the authors considered Legendrians arising from a combinatorial construction they call a grid plabic graph. They showed that the sheaf moduli of these Legendrians admits a cluster structure in a precise geometric sense that we explain in Section~\ref{ssec: braid_vties}. Their work is the first to produce cluster structures from Legendrians that are not braid-positive, and the family of links they study includes Legendrian representatives of the twist knots $K_{n}$ with $n\in 2\Z, n\leq -4$.   
Given the relative success of the program outlined above, one might ask if cluster theory can also be used to distinguish different Legendrian representatives of the same smooth knot type. This question has a negative answer in the one case in which it is known; the first version of the preprint \cite{CasalsWengPreprint} (now published as \cite{CasalsWeng}) contains a computation describing identical cluster structures associated to two Legendrian representatives of the twist knot $m(7_2)$. However, independent of its utility for distinguishing Legendrian knots, the study of Lagrangian fillings for different Legendrian representatives of the same smooth knot type represents a natural frontier for further exploration.

\subsection*{Main results}

In this work, we undertake the first systematic comparison of cluster structures for different Legendrian representatives of the same smooth knot type. One immediate challenge in approaching this problem lies in finding a likely family of knots to explore: candidates must have multiple Legendrian representatives, admit orientable Lagrangian fillings, and be relatively amenable to the computations involved in producing cluster structures. We start by considering the family of double twist knots, $K_{b, c}$ with $b\leq -5$ odd and $c \geq 2$ even;\footnote{When $c$ is odd, our methods for investigating cluster structures are still valid; we restrict to $c$ even, however, to ensure that $K_{b, c}$ is a knot, allowing us to distinguish certain Legendrian representatives using the results of \cite{OzsvathStipsicz}.} see Section~\ref{ssec: motivation} for an explanation of our choice in knot type. A schematic for $K_{b,c}$ is pictured in Figure~\ref{fig: DoubleTwist} along with a collection of Legendrian representatives $\La[w, c]$. Here, $w$ denotes a word of length $-b-1$ in the letters $Z$ and $S$, while $c$ represents the number of positive crossings in the upper right hand region of the front diagram. While there is not yet a complete classification of the Legendrian representatives of double twist knots, we can apply the methods of \cite{EtnyreNgVertesi13} to distinguish Legendrian representatives of $\La[w, c]$, providing a conjecturally complete collection of max-tb representatives of the form $\La[(ZS)^{z^+}(SZ)^{z^-}(SS)^{\frac{m}{2}}, c]$ for non-negative integers $z^+,$ $z^-$, and $m=(-b-1)-2(z^++z^-)$. 

\begin{center}
		\begin{figure}[h!]{ \includegraphics[width=\textwidth]{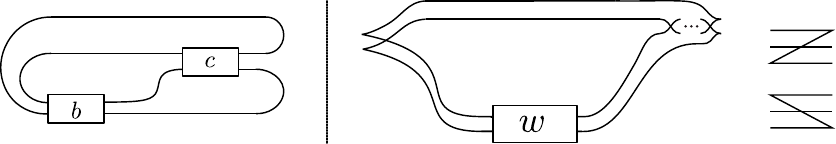}}\caption{A double twist knot $K_{b, c}$ (left) and a front projection for Legendrian representatives $\La[w, c]$  of $K_{b, c}$ (right). The region labeled by $w$ represents a word of length $-b-1$ made up of $S$ tangles (bottom right) and $Z$ tangles (top right), while the region at the top right of the front diagram contains $c$ positive crossings.}
			\label{fig: DoubleTwist}\end{figure}
	\end{center}

One salient feature of double twist knots is that a graded count of augmentations of the Legendrian contact homology DGA distinguishes some, but not all, of the Legendrian representatives of $K_{b, c}$. Nevertheless, the DGAs associated to each Legendrian representative admit the same number of $m$-graded augmentations. This equality suggests a comparison of $m$-graded augmentation varieties and Lagrangian fillings with Maslov number $m$. 

\begin{theorem}\label{thm: main}
Let $w=(ZS)^{z^+}(SZ)^{z^-}(SS)^{\frac{m}{2}}$ and $w'=(ZS)^{{z^+}'}(SZ)^{{z^-}'}(SS)^{\frac{m'}{2}}$. Then there is an isomorphism of augmentation varieties $\Aug_m(\La[w, c]) \cong \Aug_{m'}(\La[w', c])$ with  quasi-equivalent cluster structures, where every cluster chart of $\Aug_m(\La[w,c])$ (resp. $\Aug_{m'}(\La[w', c])$) is induced by an exact Lagrangian filling with Maslov number $m$ (resp. $m'$). 
\end{theorem}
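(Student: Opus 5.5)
We would prove Theorem~\ref{thm: main} by reducing every representative $\La[w,c]$ to a common model, namely the case $w=(SS)^{\frac{-b-1}{2}}$ (or any fixed reference word), and comparing each with that model. The first step is a direct computation of the Chekanov--Eliashberg DGA of $\La[w,c]$ from the front in Figure~\ref{fig: DoubleTwist}, after resolving to a Lagrangian projection. The $c$ positive crossings contribute a braid-like block whose differentials are path matrices. Each $S$ or $Z$ tangle contributes a small block of crossings, and only the gradings of these crossings depend on the letter. Writing $w=(ZS)^{z^+}(SZ)^{z^-}(SS)^{\frac m2}$, we check that in the $m$-graded setting exactly the same generators have degree $0$ and degree $1$. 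The two differences are the orientation of the tangle and the order in which the ``extra'' generators appear in the differential. From this we would solve the augmentation equations explicitly. The degree-$1$ relations let us eliminate the generators coming from $w$ one tangle at a time, expressing them in terms of the $c$-block variables and a few free parameters. The expected outcome is that $\Aug_m(\La[w,c])$ is isomorphic to an open subvariety, of the form (torus)$\times$(an affine braid variety), of the augmentation variety of the $(-1)$-framed closure of a positive braid. That braid depends only on $b$ and $c$, and we would find it by relating the $m$-graded $ZS$ and $SZ$ blocks to the $SS$ block. Composing these identifications gives the isomorphism $\Aug_m(\La[w,c])\cong\Aug_{m'}(\La[w',c])$.

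Next we would import a cluster structure. For positive-braid closures, the work of \cite{GSW} and \cite{CasalsWeng} (Section~\ref{ssec: braid_vties}) provides a cluster structure on the coordinate ring of the braid variety, with seeds coming from weaves or Demazure-weave fillings. We would transport this structure through the isomorphism above. To obtain quasi-equivalence rather than equality, we would track the frozen variables: the tangles $ZS$ and $SZ$ contribute differently to the torus factor, so frozen variables differ by monomial rescalings. The isomorphism should then be a quasi-cluster transformation in the sense of Fraser, meaning that it preserves exchange matrices on the mutable part and multiplies cluster variables by Laurent monomials in the frozen variables. We would verify this on one initial seed and propagate it by mutation.

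Finally we would realize every chart geometrically. For each representative we would build explicit exact Lagrangian fillings of Maslov number $m$. The plan is to perform pinch moves at the crossings of the $w$-region, where the $ZS$ and $SZ$ pairs cost Maslov index but can be decomposable-cobordism pinched consistently in the $m$-graded sense. This reduces to a torus link $T(2,\cdot)$-type piece from the $c$ block. That piece has Catalan-many fillings, and their induced toric charts match the seeds of the finite-type cluster structure, which we expect to be of type $A$. The main obstacle is this identification. We must show that the toric chart induced by each filling is exactly a cluster seed of the transported structure and that all seeds arise this way. We would first try to match pinch sequences with triangulations of a polygon, which correspond to type $A$ seeds, and check that reordering two pinches corresponds to a single mutation.
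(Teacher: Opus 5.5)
Your first two steps are broadly what the paper does: compute the DGA from Ng's resolution and solve $\epsilon\circ\partial=0$. However, you have misplaced where the word $w$ enters.

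\begin{itemize}
\item The chords of the $w$-region have $|e_j|=1$ and $|b_j|=0$ for every letter. What depends on $w$ is $|a_j|=\pm m$ and $|q|=1-m$, i.e.\ the $c$-block.
\item Solving the equations forces every $b_j$ from $\epsilon(b_0)=-1/\epsilon(K_c^L)$, and forces $a_0$ from $K_{c+1}=0$. This gives $\Aug_m(\La[w,c])\cong\{(a_1,\dots,a_c)\in\Bbbk^c \mid K_c(a_1,\dots,a_c)\neq 0\}$ (Proposition~\ref{proposition: aug_variety}).
\item So there is no torus factor and no dependence on $w$ or $b$. Both sides of the claimed isomorphism are the same subvariety of $\Bbbk^c$, with the same type $A_{c-1}$ cluster structure: single frozen variable $K_c$, seeds indexed by triangulations of a $(c+2)$-gon.
\item Consequently there are no frozen-variable rescalings to track. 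The ``quasi'' only reflects the choice of cycle completing the $\L$-compressing cycles to a basis.
\end{itemize}

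The genuine gap is in your third step, and it comes from that misreading. Pinching in the $w$-region cannot be what produces Maslov number $m$. Those chords have degree $0$, so by Theorem~\ref{thm: MaslovNumber} they contribute nothing to the gcd. You also give no argument that they are contractible and simple. The available local model, \cite[Proposition 3.1]{Hughes2021b}, applies to chords trapped between two positive crossings of the front, which here means $a_1,\dots,a_c$. Finally, $b_1(L)=\tb(\La[w,c])+1=c$, so any saddle spent in the $w$-region changes the handle structure, and you never compute its effect on the induced torus.

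The missing idea is to pinch only $a_1,\dots,a_c$, in any order, down to the max-tb unknot $U$.
\begin{itemize}
\item Since $\SA(\La[w,c])$ is $\Z$-graded with $|a_i|=\pm m$, Theorem~\ref{thm: MaslovNumber} gives Maslov number exactly $m$.
\item Each pinch alters only three chords: $a_{i-1}\mapsto a_{i-1}+u^{-2}s_i^{-1}$, $a_i\mapsto s_i$, and $a_{i+1}\mapsto a_{i+1}+v^{-2}s_i^{-1}$. Here $u$ (resp.\ $v$) is the product of the $s_j$ already lying between $a_{i-1}$ and $a_i$ (resp.\ between $a_i$ and $a_{i+1}$), in the paper's notation.
\item This is exactly the map $\phi_i^*$ of Proposition~\ref{proposition: cluster-localization} with $r_{i-1}=us_iv$. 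So every pinch is a cluster localization.
\item After $c$ pinches, $\Aug_m(U)\cong(\Bbbk^\ast)^c_{s_1,\dots,s_c}$ is carried isomorphically onto a cluster chart.
\item Conversely, every triangulation of the $(c+2)$-gon can be read off as a pinching sequence (Theorem~\ref{thm: cluster_charts}).
\end{itemize}
This localization argument is what resolves the ``main obstacle'' you flag, without checking mutations chart by chart.
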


\begin{remark}
    
In the statement of the theorem above, the term `cluster structure' is used in the specific sense of \cite[Section 2.8]{CasalsWeng}. The theorem can be understood as a generalization of the Casals-Weng example mentioned earlier in both the family of Legendrians being considered, as well as the construction of fillings with nonzero Maslov number.
\end{remark} 

In addition to the quasi-cluster equivalences obtained in Theorem~\ref{thm: main}, we also find that the cluster structures are of Dynkin type $A_{n-1}$. These finite type cluster varieties have a Catalan number $C_{n}=\frac{1}{n+1}{2n \choose n}$ of cluster charts that we use to distinguish the fillings we construct. To our knowledge, the following corollary represents the first result distinguishing orientable Lagrangian fillings of nonzero Maslov number.

\begin{corollary}
There are at least $C_{c}$ exact Lagrangian fillings of $\La[(ZS)^{z^+}(SZ)^{z^-}(SS)^{\frac{m}{2}}, c]$ with Maslov number $m$.  
\end{corollary}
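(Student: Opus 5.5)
The plan is to deduce the corollary from Theorem~\ref{thm: main} together with the identification of the cluster type. I will use the standard principle, in the sense of \cite[Section 2.8]{CasalsWeng}: exact Lagrangian fillings whose induced toric charts in $\Aug_m(\La)$ are distinct cannot be Hamiltonian isotopic. The argument has three steps.

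\textbf{Step 1 (fillings give charts).} Fix $\La=\La[(ZS)^{z^+}(SZ)^{z^-}(SS)^{\frac{m}{2}},c]$. An exact Lagrangian filling $L$ with Maslov number $m$ induces, through the DGA map to $\SA(L)$, a family of $m$-graded augmentations. These are parametrized by local systems on $L$, so they give an algebraic torus $(\Bbbk^\times)^{b_1(L)}\hookrightarrow \Aug_m(\La)$. The essential input is that this torus is invariant under Hamiltonian isotopy of $L$: isotopic fillings induce DGA maps that are homotopic, and homotopic maps have the same image of the toric chart. Consequently, if $L$ and $L'$ induce distinct charts, then $L$ and $L'$ are not Hamiltonian isotopic.

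\textbf{Step 2 (every chart comes from a filling).} By Theorem~\ref{thm: main}, every cluster chart of $\Aug_m(\La)$ is induced by an exact Lagrangian filling of Maslov number $m$. So it is enough to count the cluster charts. Distinct clusters have distinct sets of cluster variables, so the corresponding tori are distinct open subsets. Hence distinct clusters give non-isotopic fillings.

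\textbf{Step 3 (counting clusters).} Theorem~\ref{thm: main} gives $\Aug_m(\La)\cong \Aug_{0}(\La[(SS)^{\frac{-b-1}{2}},c])$ with quasi-equivalent cluster structures. Quasi-equivalence preserves the exchange graph, so it suffices to compute the mutation type in one representative. I would take the one treated by \cite{CasalsWeng}, or a braid-positive model, and read off the quiver from an explicit initial filling. The initial filling is built by pinching the $c$ positive crossings in the upper right region in sequence. This should give a linear quiver with $c-1$ mutable vertices, equivalently a type $A_{c-1}$ quiver plus frozen vertices. By the Fomin--Zelevinsky finite type classification, a cluster algebra of type $A_{c-1}$ has exactly $C_{c}=\frac{1}{c+1}\binom{2c}{c}$ clusters. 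Frozen variables do not change this count. Combining Steps 1--3 yields at least $C_c$ pairwise non-Hamiltonian-isotopic fillings of Maslov number $m$.

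\textbf{Main obstacle.} The hardest part is Step 3: identifying the mutable part of the quiver as $A_{c-1}$ with the correct rank. The difficulty is that the $w$-region may contribute extra mutable cycles. I would attempt this by showing that crossings in the $w$-region produce only frozen variables, or no variables at all. Concretely, I would check that the corresponding $1$-cycles in the initial filling bound no further exchangeable disks. For nonzero $m$ this uses the specific gradings: the $Z$ and $S$ tangle crossings have degrees that force the relevant augmentation values to be invertible constants.
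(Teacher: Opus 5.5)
Your overall architecture matches the paper's. Theorem~\ref{thm: main} (concretely, Theorem~\ref{thm: cluster_charts}) realizes every cluster chart of $\Aug_m(\La[w,c])$ as the image of $\phi_{L_P}$ for a Maslov-$m$ filling $L_P$. Fillings inducing distinct charts are not Hamiltonian isotopic, so it remains to count charts.

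The step that fails as written is Step~3. The word $(SS)^{\frac{-b-1}{2}}$ is the representative with $z^+=z^-=0$ and $m=-b-1\neq 0$. By Proposition~\ref{proposition: aug_variety} its $0$-graded augmentation variety is a single point, and by Proposition~\ref{proposition: Maslov-m fillability} it has no Maslov-$0$ filling. So the isomorphism $\Aug_m(\La)\cong\Aug_0(\La[(SS)^{\frac{-b-1}{2}},c])$ you invoke is false, and there is no Maslov-$0$ initial filling from which to read off a quiver. A Maslov-$0$ comparison object would have to be something like $\La[(ZS)^{\frac{-b-1}{2}},c]$. Even then, the decisive claim (``this should give a linear quiver'') is left unproved.

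The detour is also unnecessary, because the paper determines the cluster type directly for every $m$ at once. Proposition~\ref{proposition: twist-cluster} identifies $\Aug_m(\La[w,c])\cong\{K_c(a_1,\dots,a_c)\neq 0\}$ with the subvariety of $\Gr^\circ_{2,c+2}$ obtained by setting all but one frozen Pl\"ucker coordinate equal to $1$. Hence the initial quiver is $K_1\to\cdots\to K_{c-1}\to\boxed{K_c}$, and seeds correspond to triangulations of a $(c+2)$-gon, of which there are $C_c$. The proof of Theorem~\ref{thm: cluster_charts} then converts each triangulation into a pinching order of $a_1,\dots,a_c$.

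Your ``main obstacle'' does not arise. By Proposition~\ref{proposition: aug_variety}, the $w$-region chords $b_j$ have degree $0$. They are determined by $\partial e_0,\dots,\partial e_{2d}$ as nonvanishing functions of the $a_i$, not as constants forced by the grading. Moreover, the fillings $L_P$ pinch only $a_1,\dots,a_c$, with $b_1(L_P)=\tb(\La[w,c])+1=c$, so the $w$-region contributes no homology at all.
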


As in the case of the fillings of the unique max-tb representative of $(2, n)$ torus links studied in \cite{YuPan}, each of the $C_{c}$ fillings of Theorem~\ref{thm: main} can be realized by a sequence of crossing resolutions, with different fillings corresponding to different orders of resolving these crossings. We conjecture, in line with \cite[Conjecture 5.1]{CasalsLagSkel}, that the fillings we construct comprise an exhaustive collection.

As in previous work on cluster algebras and Legendrians, the main geometric ingredient of the construction is the identification of a collection of homology cycles in a filling that bound Lagrangian compressing disks in the complement. These cycles, referred to as $\L$-compressing cycles, provide both the combinatorial data of the cluster algebra as well as the necessary input for defining regular functions in the coordinate ring $\C[\Aug_m(\La[w, c])]$ that correspond to cluster variables. We wish to emphasize that in this work, we identify these $\L$-compressing cycles without constructing the filling as a Legendrian weave, the sole method previously used. While our method of identifying $\L$-compressing cycles is not substantively different from previous methods, requiring only repeated application of a simple local model that previously appeared in \cite{Hughes2021b}, we believe that this departure is nevertheless noteworthy, particularly because the Lagrangian fillings with nonzero Maslov number cannot be realized as Legendrian weaves. 

\begin{remark}\label{rmk: LennyExample}
L.~Ng has recently shared with the authors an example of a Legendrian knot $\La$ of smooth knot type $12n454$ admitting Maslov-$0$ Lagrangian fillings that is not Legendrian isotopic to the boundary of a Legendrian weave; see Figure~\ref{fig: LennyExample}. In particular, this Legendrian knot admits an augmentation $\epsilon$ with respect to which the second linearized contact homology $LCH_2^\epsilon(\La)$ is nonzero, whereas the linearized contact homology of the $(-1)$-closure of a positive braid can only be supported in degree $-1,0,$ and $1$. On the other hand, $\La$ admits a Maslov-$0$ filling via three saddle cobordisms depicted in Figure~\ref{fig: LennyExample}. This further demonstrates a need for techniques that are applicable beyond Legendrian weaves. 
\end{remark}

\begin{center}
    \begin{figure}[h!]{ \includegraphics[width=.6\textwidth]{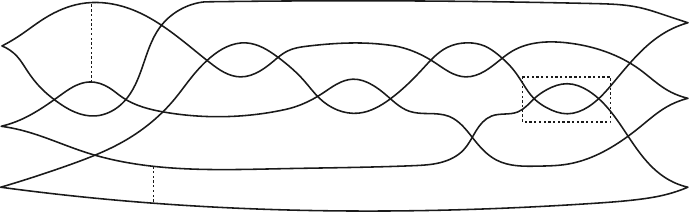}}\caption{Legendrian representative of $12_{n454}$. Dotted lines denote 1-handle attachments and the dotted box represents an application of the local move in Figure~\ref{fig: D4minus} used to construct a Maslov-0 Lagrangian filling.}
			\label{fig: LennyExample}\end{figure}
	\end{center}

\subsubsection*{Braid representatives of $\La[w,c]$} Despite the fact that our method of proof for Theorem~\ref{thm: main} is adapted for fillings with nonzero Maslov number, we also show that the method of Legendrian weaves is sufficient to establish the theorem's conclusions in the Maslov-0 setting.
Legendrian weaves are the primary tool used in many of the constructions linking fillings to cluster theory to date, and they require that the Legendrian link in question be presented as the $(-1)$-closure of a positive braid; see Figure~\ref{fig: -1closure} for a diagram depicting these closures. We produce such Legendrian representatives by an inductive computation.

\begin{theorem}\label{thm: braidification}
    Every Maslov-0 fillable Legendrian of the form $\La[w, c]$ is Legendrian isotopic to the $(-1)$-closure of a positive braid. 
\end{theorem}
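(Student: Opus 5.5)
First I will pin down which $\La[w,c]$ are Maslov-$0$ fillable. The rotation number of $\La[w,c]$ is determined by the numbers of $Z$ and $S$ tangles and their placement. A Maslov-$0$ (orientable) filling forces $\rot=0$ and $\tb = 2g-1$. Combined with the characterization of Maslov-$m$ fillable representatives (the normal form $w=(ZS)^{z^+}(SZ)^{z^-}(SS)^{m/2}$), this reduces Maslov-$0$ fillability to $m=0$, i.e. $w=(ZS)^{z^+}(SZ)^{z^-}$. Up to Legendrian isotopy, I will also use the symmetry that swaps the $ZS$ and $SZ$ blocks (a rotation/reflection of the front) to move freely between the two block types. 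The statement then becomes: for every $z^+,z^-\ge 0$ and even $c\ge 2$, the knot $\La[(ZS)^{z^+}(SZ)^{z^-},c]$ is isotopic to a $(-1)$-closure of a positive braid.

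The core is an induction on $k=z^++z^-$. For the base case $k=1$, the knot $\La[ZS,c]$ is small enough that I will exhibit an explicit sequence of Reidemeister moves. Concretely, I will push the left cusps of the $Z$ and $S$ tangles to the far left and the right cusps to the far right, so that the front becomes $n$ parallel strands with crossings $\sigma_1^{c}$ together with a few extra generators, closed up by the $(-1)$-closure pattern. For the inductive step, I will isolate a local tangle identity: in the front, the insertion of an additional $ZS$ (or $SZ$) block adjacent to an existing one can be realized, after Legendrian Reidemeister II/III moves, as adding one strand to the braid and inserting a fixed positive word. For instance, the insertion could be
\[
\beta \mapsto \beta' = \sigma_{n}\,\iota(\beta)\,\sigma_{n}\sigma_{n-1}\cdots
\]
where $\iota$ includes $\Br_n^+$ into $\Br_{n+1}^+$, followed by a stabilization-free Markov-type adjustment compatible with the $(-1)$-closure. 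I will prove this identity as a lemma about a local front picture. I will then apply it $k-1$ times, tracking the braid word inductively.

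The main obstacle is this inductive tangle lemma. The $(-1)$-closure requires all cusps to sit at the extreme left and right with a specific half-twist pattern. Moving the new block's cusps outward therefore drags strands across the existing braid, and I must check that only positive crossings are created and no zigzags (stabilizations) appear. I would first try to slide the new block's right cusp through the $c$ crossings region using Reidemeister II moves that pair up cusps, mimicking the standard conversion of a plat-type front into a $(-1)$-closure. If that fails, I would instead verify the step via a Lagrangian-projection (Ng resolution) picture, where positivity is easier to read off. As a sanity check, I will compare $\tb$ of the resulting closure, $\mathrm{len}(\beta)-n$, with the known $\tb(\La[w,c])$ at each stage.
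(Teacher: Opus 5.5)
Your skeleton (reduce to $m=0$, then induct on the $ZS$/$SZ$ blocks, adding one strand per block) is the same as the paper's. However, both steps have problems as written.

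\textbf{The reduction to $m=0$.} What you wrote does not justify it. Every $\La[w,c]$ with $w=(ZS)^{z^+}(SZ)^{z^-}(SS)^{m/2}$ has rotation number $0$: its DGA is $\Z$-graded, and each one in fact has an orientable Maslov-$m$ filling. So the constraint $\rot=0$ excludes nothing. The normal form is a Legendrian isotopy normal form, not a fillability criterion, and the Maslov number of a filling is not determined by the rotation number of its boundary. What you need is a genuine obstruction to Maslov-$0$ fillings when $m>0$. The paper gets it from the graded augmentation variety. For $m>0$, the chords $a_0,\dots,a_c$ in the twist region have degree $\pm m\neq 0$ and $q$ has degree $1-m$. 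Hence every $0$-graded augmentation vanishes on them, and the equations $\epsilon\circ\partial=0$ leave a single point, so $\Aug_0(\La[w,c])=\{pt\}$. On the other hand, a Maslov-$0$ filling would induce an injective map from a positive-dimensional torus of pairwise non-homotopic $0$-graded augmentations (Gao--Rutherford). Without this or an equivalent obstruction, your construction, which only handles $m=0$, does not cover the statement.

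\textbf{The inductive tangle lemma.} This lemma is the entire content of the braid construction, and you leave it as a placeholder; the word $\sigma_n\iota(\beta)\sigma_n\sigma_{n-1}\cdots$ is not derived. Two specific points:
\begin{itemize}
\item The global $Z\leftrightarrow S$ reflection acts only by $(z^+,z^-)\mapsto(d-z^+,d-z^-)$. For $m=0$ this just swaps $z^+$ and $z^-$, so it cannot convert individual blocks. Mixed words therefore need separate $ZS$ and $SZ$ steps.
\item A single-braid induction $\beta\mapsto\beta'$ presupposes a location for the new block inside $\La(\beta)$. That location is not canonical after the isotopy.
\end{itemize}

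The paper resolves this by keeping the unprocessed word inside the front:
\begin{enumerate}
\item It first isotopes each $ZS$/$SZ$ pair into a ``loop''.
\item It normalizes so that two positive braid boxes $\beta_1^{(k)},\beta_2^{(k)}\in\Br^+_{k+1}$ flank the remaining word $w'$, with base case $\beta_1^{(1)}=\sigma_1^2$, $\beta_2^{(1)}=\sigma_1^3$.
\item It slides the next loop past its neighbours and then absorbs it. For a $ZS$ loop the update is $\beta_1^{(k+1)}=\sigma_1\cdots\sigma_{k+1}\beta_1^{(k)}\sigma_{k+1}$ and $\beta_2^{(k+1)}=\sigma_{k+1}\beta_2^{(k)}\sigma_{k+1}\cdots\sigma_2$. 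For an $SZ$ loop it is $\beta_1^{(k+1)}=\sigma_1\Sh(\beta_1^{(k)})\sigma_1\cdots\sigma_{k+1}\cdots\sigma_1$ and $\beta_2^{(k+1)}=\Sh(\beta_2^{(k)})\sigma_1$.
\end{enumerate}
Each step is realized by explicit Legendrian front isotopies.
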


\begin{center}
		\begin{figure}[h!]{ \includegraphics[width=.3\textwidth]{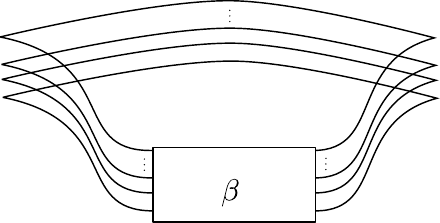}}\caption{Front projection depicting the $(-1)$-closure of a positive braid $\beta$.}
			\label{fig: -1closure}\end{figure}
	\end{center}

In addition to allowing access to Legendrian weaves, our inductive computation sheds light on the question of which Legendrian knots admit Maslov-0 fillings; c.f.\ \cite[Problem 1.68]{K3ProblemList} and \cite[Conjecture 1.3]{HaydenSabloff}. Specifically, Theorem~\ref{thm: braidification} provides additional context for the possible approach proposed in \cite[Question 1.6]{HughesMa25}, which asks whether there exist Maslov-0 fillable Legendrian links not Legendrian isotopic to the $(-1)$-closure of a positive braid. While the question is answered affirmatively by the example of L.~Ng mentioned in Remark~\ref{rmk: LennyExample}, this is in contrast with Theorem~\ref{thm: braidification}, which, to our knowledge, gives the first result resolving this question for multiple Legendrian representatives of a family of smooth knot types.\footnote{The main result of \cite{HughesMa25} does not guarantee an answer to this question for every Legendrian representative of the class of links we study, namely quasipositive 3-braid closures.} In particular, Theorem~\ref{thm: braidification} implies that every Maslov-0 fillable Legendrian twist knot is isotopic to the $(-1)$-closure of a positive braid. The question of whether every fillable smooth knot type admits a fillable Legendrian representative that is the $(-1)$-closure of a positive braid remains open. 

Our work constructing and distinguishing Maslov-$m$ fillings together with the discussion of Maslov-0 fillability above raises the following natural refinement of \cite[Problem 1.68]{K3ProblemList}:

\begin{question}\label{q: Maslov-m fillability}
For a fixed $m\in 2\Z$, determine the smooth knot types that have Legendrian representatives admitting Maslov-$m$ exact Lagrangian fillings.
\end{question}

Note that the Maslov-$m$ fillable max-tb representatives we consider in this work exhibit the property that each is related to a Legendrian representative admitting Maslov-0 fillings by a sequence of $SZ$ moves. A brief search of the Legendrian knot atlas \cite{ChongchitmateNgPre} reveals no likely examples of small crossing number knots with Maslov-$m$ fillable Legendrian representatives for $m\in 2\Z_{>0}$ but without any Maslov-0 fillable Legendrian representatives. Moreover, the obstruction we use in \cite{HughesMa25} to rule out fillability of certain quasipositive 3-braid closures works equally well in the orientable and Maslov-0 cases. While this somewhat sparse evidence suggests that the answer to the $m=0$ version of Question~\ref{q: Maslov-m fillability} is identical to the $m$-agnostic version posed in \cite[Problem 1.68]{K3ProblemList}, we know of no compelling geometric reason for why this might be the case.

\subsubsection*{Comparing Legendrian representatives with infinitely many fillings} In order to extend our comparison of different Legendrian representatives beyond the finite cluster type, we consider two max-tb Legendrian representatives of the $(2, 6)$-cable of the twist knot $m(7_2)$. We use Legendrian weaves to construct Lagrangian fillings for these two Legendrian representatives and exhibit a quasi-cluster equivalence between their augmentation varieties. We also verify that each of these representatives admits infinitely many fillings.

\begin{theorem}\label{thm: infty-equivalent}
    Let $\La$ and $\La'$ be the Legendrian representatives of the $(2, 6)$-cable of $m(7_2)$ isotopic to the $(-1)$-closures of the braids $\beta_{(2, 6)}$ and $\beta'_{(2,6)}$ recorded in Section~\ref{ssec: cable_computation}. Then $\La$ and $\La'$ admit infinitely many fillings, and $\Aug(\La)$ and $\Aug(\La')$ are quasi-equivalent cluster varieties of infinite type.    
\end{theorem}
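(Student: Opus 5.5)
We would prove Theorem~\ref{thm: infty-equivalent} within the Legendrian weave framework. The ingredients are the Casals--Weng notion of cluster structure \cite[Section 2.8]{CasalsWeng} and the Gao--Shen--Weng identification \cite{GSW} of $\C[\Aug(\La_\beta)]$ with the cluster algebra of Shen--Weng \cite{ShenWeng} for $(-1)$-closures of positive braids. Since $\La$ and $\La'$ are by hypothesis isotopic to the $(-1)$-closures of the positive braids $\beta_{(2,6)}$ and $\beta'_{(2,6)}$, \cite{GSW} already gives each of $\Aug(\La)$ and $\Aug(\La')$ a cluster structure. In both, an initial seed comes from a Demazure (or ``left inductive'') weave filling, and the cluster variables are microlocal merodromies along $\L$-compressing cycles. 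The proof then reduces to three steps.

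\textbf{Step 1: quivers.} For each braid we fix an initial weave, read off its $\L$-compressing cycles, and compute the intersection quiver $Q$, respectively $Q'$. Concretely, we pass to a braid word whose weave is assembled from trivalent and hexavalent pieces, and compute the intersections of $\sf Y$-trees via the standard local rules.

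\textbf{Step 2: infinitely many fillings.} We would show that each quiver has infinite mutation type, for instance by exhibiting a mutation sequence reaching a full subquiver with a double arrow (a Kronecker quiver), or a subquiver of affine type $\tilde{A}$ or $\tilde{D}$. Each seed in the mutation class is realized by weave mutation on a filling, because every mutable vertex is represented by an $\L$-compressing cycle and this property is preserved under Lagrangian disk surgery. Infinitely many distinct cluster seeds then yield infinitely many Hamiltonian-isotopy classes of fillings: distinct seeds give distinct toric charts in $\Aug$, and Hamiltonian-isotopic fillings induce the same chart. A more hands-on alternative is to find a Legendrian loop, or a periodic sequence of weave mutations, acting with infinite order on the cluster variables.

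\textbf{Step 3: quasi-cluster equivalence.} This is the main obstacle. We would construct the equivalence as a composition of moves between the two braid presentations. The braids $\beta_{(2,6)}$ and $\beta'_{(2,6)}$ are not expected to be related by positive braid moves and cyclic rotation alone, since their $(-1)$-closures are non-isotopic. So instead we would compare the initial seeds directly. After a finite sequence of mutations of $Q$ we aim to reach a quiver whose principal (mutable) part coincides with that of $Q'$, with frozen parts differing only in ways permitted by a quasi-cluster transformation: the exchange ratios $\hat{y}$ should agree, and the frozen variables should be related by monomial substitution. We would then produce an explicit isomorphism $\Aug(\La)\cong\Aug(\La')$. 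The natural tool is a DGA map, obtained by comparing crossing-by-crossing the augmentation equations written in terms of the weave merodromies. We would check that this isomorphism pulls the chosen seed of $\Aug(\La')$ back to the mutated seed of $\Aug(\La)$, up to multiplication by frozen monomials.

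The bulk of the work is in Step 3: the computer-aided or hand computation of the regular functions, and the verification that frozen variables match up to monomials. We would organize it by first matching the mutable variables, as polynomial expressions in the crossing coordinates, and only then solving for the frozen monomial corrections.
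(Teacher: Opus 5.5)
\textbf{The gap is in Step~2: it does not show that $\La$ and $\La'$ have infinitely many fillings.} You argue that every seed in the mutation class is realized by a filling ``because every mutable vertex is represented by an $\L$-compressing cycle and this property is preserved under Lagrangian disk surgery.'' That is not known in general. After surgery along one $\L$-compressing disk, nothing guarantees that the remaining mutable cycles of the new filling again bound embedded Lagrangian disks in its complement. In weave terms, they generally stop being short {\sf I}- or {\sf Y}-cycles, so you cannot keep mutating.

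That every seed carries a filling is exactly the surjectivity result of \cite{CasalsGao23}. The paper points out that this is established only for braid positive Legendrians. These cables are not braid positive: each component is $m(7_2)$, which is not fibered and hence not a positive braid closure. So infinitely many seeds do not by themselves give infinitely many fillings. Your fallback, a Legendrian loop or a periodic weave-mutation sequence of infinite order, would suffice in principle, but you neither exhibit one nor explain why one exists here.

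The missing idea is to reduce to a Legendrian already known to have infinitely many fillings. The paper writes both cables as Lagrangian pigtail closures, which are admissible because the braids contain $\Delta_8$. It then pinches ten explicit contractible Reeb chords, giving an exact Lagrangian cobordism from $\La(\widetilde{A}_2)$ together with four unlinked max-tb unknots up to each cable. It concludes using three results:
\begin{enumerate}
\item $\La(\widetilde{A}_2)$ is aug-infinite \cite[Proposition 7.4]{CasalsNg};
\item aug-infiniteness propagates up such cobordisms \cite[Proposition 7.5]{CasalsNg};
\item each filling induces only finitely many $\Z$-valued augmentations \cite[Proposition 7.3]{CasalsNg}.
\end{enumerate}

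\textbf{Further corrections.}
\begin{itemize}
\item The cluster structure does not come from \cite{GSW}, which concerns braid positive Legendrians. For $(-1)$-closures of $\beta\Delta$ the relevant input is \cite[Theorem 1.1]{CGGLSS}, which requires $\delta(\beta)=w_0$; the paper notes these braids satisfy this.
\item In Step~3, the paper's argument is only the first half of yours. It finds the mutation sequence $v_5,v_3,v_5,v_1,v_5,v_6$ (with Sage), which makes the mutable subquivers of the two initial weaves identical.
\item Your proposed isomorphism $\Aug(\La)\cong\Aug(\La')$ ``via a DGA map'' has no basis. $\La$ and $\La'$ are not Legendrian isotopic, so no DGA map between them is available. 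Any identification of the varieties would have to come from the cluster-algebra description of their coordinate rings.
\item Your infinite-type criterion (a double arrow or an affine full subquiver after mutation) is a fine alternative to the paper's route. The paper uses the Fomin--Zelevinsky classification, the known type $A$ and $D$ mutation classes, and the $\delta$-invariant to exclude $E_8$. Note that your criterion shows infinite cluster type, not infinite mutation type; the Kronecker quiver is mutation-finite.
\end{itemize}
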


Note that the braids $\beta_{(2,6)}$ and $\beta'_{(2,6)}$ contain a half twist and satisfy the technical condition that their Demazure product is equal to $w_0$. Consequently, \cite[Theorem 1.1]{CGGLSS} implies that their augmentation varieties admit cluster structures.  While smaller cables of twist knots (e.g. $n$-copies) are Lagrangian fillable, they are not isotopic to a braid of the form considered in loc. cit. and it is not immediately clear how to associate a cluster structure to their augmentation varieties.

\subsubsection*{Smooth isotopy invariance of cluster structures}

We conclude this introduction with a brief discussion of reasons we might expect to see smooth isotopy invariance of cluster structures associated to a knot type. Given a Legendrian $\La$ with DGA $(\mathcal{A}(\La), \partial)$, we denote by $I$ the two-sided ideal of $SA$ generated by $\{\partial a\ |\ a \text{ is a generator of } \SA(\La)\}$. The characteristic algebra of $\La$ is defined to be the quotient $\mathcal{A}(\La)/I$. Considered without grading, the abelianized characteristic algebra $\mathcal{A}(\La)/I$ is conjectured to only depend on the smooth knot type of $\La$ \cite[Conjecture 3.14]{Ng03}. Indeed, one can readily demonstrate that the (abelianized ungraded) characteristic algebra is invariant under $SZ$-moves, providing partial evidence for this conjecture. 
Replacing the characteristic algebra by the $\Aug_2(\La)$ leads to a variant of the aforementioned conjecture, and the equivalence of cluster structures obtained in Theorems~\ref{thm: main} and \ref{thm: infty-equivalent} provides similar partial evidence for the following: 

  \begin{conjecture}\label{conj}
  Let $\La$ be a Legendrian knot and suppose $\Aug_2(\La)$ admits a cluster structure induced by orientable fillings of $\La$. If $\La'$ is smoothly isotopic to $\La$, then $\Aug_2(\La')$ admits a cluster structure quasi-equivalent to that of $\Aug_2(\La).$ Moreover, the embedded orientable fillings of $\La$ and $\La'$ are both in bijection with the cluster seeds of $\Aug_2(\La)$. 
  \end{conjecture}

The second part of Conjecture~\ref{conj} can be seen as extending \cite[Conjecture 5.1]{CasalsLagSkel} from the Maslov-0 setting to cover orientable fillings of any Maslov number. When $\Aug_2(\La)$ admits multiple irreducible components, the statement of Conjecture~\ref{conj} should be understood as restricting to a given irreducible component of $\Aug_2(\La)$ or possibly a subset of an irreducible component induced by a collection of embedded fillings.

\begin{remark}
The appearance of the 2-graded augmentation variety above is intended to account for the possibility of a Legendrian knot admitting fillings with different Maslov numbers. In Section~\ref{ssec:maslov}, we show that this phenomenon does occur, describing a family of Legendrians $\La_k$ that admit Maslov-0 fillings \emph{and} Maslov-$k$ fillings for all even $k\geq 0$.
\end{remark}

In addition to the smooth isotopy invariance of the characteristic algebra of $\La$, the 2-graded ruling polynomial of $\La$ is also invariant under smooth isotopy. This polynomial is equal to the minimal $a$-degree term of the HOMFLY polynomial \cite[Theorem 4.1]{Rutherford06} and records the number of 2-graded rulings of $\La$; see loc. cit. for further background and details. If we denote by $\Delta$ the half twist element in the braid group, then for $(-1)$-closures $\La
(\beta\Delta)$ satisfying the hypotheses of \cite[Theorem 1.1]{CGGLSS}, one can use \cite[Theorem 1.1]{ACSHLLW} to show that the number of $\L$-compressing cycles of a filling $L$ of $\La(\beta\Delta)$ is equal to the subleading coefficient of the (0-graded) ruling polynomial. Beyond the setting of such braid closures, experimental evidence confirms the use of the subleading coefficient of the 0-graded ruling polynomial as a surprisingly robust heuristic for the number of $\L$-compressing cycles of Maslov-0 fillings. We expect that the 2-graded version contains similar information, as it does in our case. The remaining terms of the polynomial record point counts of $\Aug_2(\La)$ \cite{HenryRutherford15}, and they likely contain information about the intersection of $\L$-compressing cycles. The smooth isotopy invariance of the 2-graded ruling polynomial therefore provides evidence for the smooth isotopy invariance of cluster structures. 

We wish to note, however, that the use of the ruling polynomial as a heuristic is somewhat limited in full generality, as a recent result of Capovilla-Searle and Pan shows that any polynomial with terms of even degree can be obtained as the ruling polynomial of a Legendrian link \cite{capovillasearle2026rulingpolynomialslegendrianlinks}. In particular, their examples are Lagrangian fillable but the fillings don't appear to bound $\L$-compressing disks, and in many cases can be shown not to. Rather, the terms in the ruling polynomials of the examples they construct seem to encode smooth isotopy classes of (possibly immersed) Lagrangian fillings.

\subsection*{Statement on AI use} The authors acknowledge the use of generative AI to write the Sage code used to produce the mutation sequence in the proof of Theorem \ref{thm: infty-equivalent-body}. Generative AI was not used in formulating, writing, or proving any of the remaining content in this paper.

\subsection*{Acknowledgments}
Many thanks to Lenny Ng for posing the question that sparked this project and for his helpful advice throughout. Thanks also to Agniva Roy and Daping Weng for helpful conversations. JH was partially supported by NSF grant DMS-2446019.

\subsection*{Organization}
In Section~\ref{sec: geometric_background} we give the necessary symplectic topology background, followed by algebraic background on the Chekanov-Eliashberg DGA and cluster varieties in Section~\ref{sec: algebra_background}. Section~\ref{sec: DoubleTwistKnots} distinguishes Legendrian representatives of double twist knots and includes a proof of Theorem~\ref{thm: braidification}. In Section~\ref{sec: DGA computations}, we compute the augmentation variety $\Aug_m(\La[w,c])$. Building on this, we describe its cluster structure and prove Theorem~\ref{thm: main} in Section~\ref{sec: cluster-double-twist}. Finally, we conclude the paper with our comparison of two cables of $m(7_2)$ in Section~\ref{sec: cable_computation}. 

\section{Geometric background}\label{sec: geometric_background}

\subsection{Legendrians links and Lagrangian fillings}
\label{ssec:contact}

Let $(Y,\xi)$ be a $(2n+1)$-dimensional contact manifold. An $n$-dimensional submanifold is \dfn{Legendrian} if it is everywhere tangent to the contact plane field $\xi$. In this paper, we consider primarily the Euclidean space $\R^3$ equipped with the standard contact structure $\xi_{st} = \ker \alpha_{st}$, where $\alpha_{st} = dz-ydx$. In this dimension, the Legendrian submanifolds are knots and links, which we assume to be oriented. We study these links up to Legendrian isotopy, meaning an ambient isotopy through a family of Legendrians. Since a generic Legendrian link avoids a point in $\R^3$, we sometimes also treat our links as residing in the contact $3$-sphere $(\mathbb{S}^3, \xi_{st})$ \cite[Section 3.2]{Geiges08}, where we retain the notation $\xi_{st}$ for the standard contact structure. 

We will often visualize a Legendrian link via its image under the front projection $\Pi_{xz}:\R^3 \to \R^2_{xz}$, or the Lagrangian projection $\Pi_{xy}:\R^3 \to \R^2_{xy}$. We assume throughout that all Legendrian links are chord generic, meaning that $\Pi_{xy}(\La)$ is a generic immersion whose self-intersections are transverse double-points corresponding to Reeb chords. There are two classical invariants associated to a Legendrian link. The \dfn{Thurston-Bennequin number} $\tb(\Lambda)$ is the writhe of $\Pi_{xz}(\Lambda)$ minus the number of left (or right) cusps, and the \dfn{rotation number} $\rot(\Lambda)$ is given by $\frac{1}{2}(c_- - c_+)$ where $c_-,c_+$ are the number of cusps oriented downwards and upwards in $\Pi_{xz}(\La)$, respectively.

Next, we turn to the exact Lagrangian cobordisms of Legendrian links in $\R^3$. Recall that the symplectization $\Symp(M, \ker(\alpha))$ of a contact manifold $(M, \ker(\alpha))$ is the symplectic manifold $(\R_t\times M, d(e^t\alpha))$. Given two Legendrian links $\La_-, \La_+\subseteq (\R^3,\xi_{st})$, 
 an \dfn{exact Lagrangian cobordism} $L \subseteq \Symp(\R^3, \ker(\alpha_{st}))$ from $\La_-$ to $\La_+$ is an embedded exact Lagrangian submanifold such that for some $T>0$,

	\begin{enumerate}
		\item $d(e^t\alpha_{st})|_\Sigma=0$,
		\item $\Sigma\cap ((-\infty, T]\times \R^3)=(-\infty, T]\times \La_-$ 
		\item $\Sigma\cap ([T, \infty)\times \R^3)=[T, \infty) \times \La_+$ 
		\item $e^t\alpha_{st}|_\Sigma=df$ for some function $f: \Sigma\to \R$ that is constant on $(-\infty, T]\times \La_-$ and $[T, \infty)\times \La_+$. 
	\end{enumerate}

We study exact Lagrangian cobordisms up to Hamiltonian isotopy, see e.g. \cite[Section 3.6]{OhSympTopVol1}, and denote a concatenation or a composition of two exact Lagrangian cobordisms $L_1,L_2$ from top to bottom by $L_1 \circ L_2$. An \dfn{exact Lagrangian filling} of a Legendrian link $\La\subseteq (\R^3, \xi_{st})$ is an exact Lagrangian cobordism $L$ from $\emptyset$ to $\La$ in $\Symp(\R^3, \ker(\alpha_{st}))$. Mirroring our perspective on Legendrians in $\mathbb{S}^3$, we may view $L$ as embedded in the symplectic 4-ball with boundary $\partial L\subseteq (\S^3, \xi_{st})$. It is a standard fact from \cite[Theorem 1.3]{Chantraine10} that a Lagrangian fillable Legendrian link $\La$ must attain the maximal Thurston-Bennequin number in its knot type and that the exact Lagrangian fillings of $\La$ are topologically the same. Specifically, we have $\chi(L) = -\tb(\La)$ for a filling $L$ of $\La$.

We will often construct exact Lagrangian cobordisms between Legendrian links in $\R^3$ by composing \dfn{elementary cobordisms} introduced in \cite{EHK}. The three types of elementary cobordisms are:

\begin{itemize}
    \item \textbf{Isotopy cobordisms.} Given a Legendrian isotopy $\varphi_t$ from $\La_+$ to $\La_-$, a small perturbation of the trace of $\varphi_t$ yields an exact Lagrangian cylinder from $\La_-$ to $\La_+$ in $\Symp(\R^3, \ker(\alpha_{st}))$; 
    \item \textbf{Saddle cobordisms.} A Reeb chord of a Legendrian link $\La_+$ is called \emph{contractible} if there is a Legendrian isotopy of $\La_+$ that induces a planar isotopy of $\Pi_{xy}(\La_+)$ and ends in a Legendrian link where the height of the Reeb chord is arbitrarily small. Given a contractible Reeb chord of $\La$ corresponding to a positive crossing in $\Pi_{xy}(\La_+)$, the saddle cobordism is a Lagrangian $1$-handle attachment at this chord. In the Lagrangian projection, this is represented by replacing the crossing of $\La_+$ with its oriented resolution to produce $\La_-$. See Figure~\ref{fig: Saddle} for a diagrammatic illustration in the front and Lagrangian projections. We will also refer to this cobordism as a \emph{pinch move} or as ``resolving" the Reeb chord.
    \item \textbf{Minimum cobordisms.} By \cite{EliashbergPolterovich96}, the max-tb Legendrian unknot $U$ admits a unique exact Lagrangian filling up to isotopy. If a Legendrian link $\La_+$ is a split union of $\La_-$ and $U$, the minimum cobordism from $\La_-$ to $\La_+$ is the disjoint union of the cylinder $\R \times \La_-$ and the unique filling of $U$. Topologically, this constitutes a $0$-handle attachment.
\end{itemize}

An exact Lagrangian cobordism is \dfn{decomposable} if it is Hamiltonian isotopic to a finite concatenation of elementary cobordisms. Note that by \cite{DRG26}, not all exact Lagrangian cobordisms are decomposable.

\subsection{Maslov index of a Lagrangian submanifold}
\label{ssec:maslov}

For our purpose of constructing exact Lagrangian fillings with nonzero Maslov numbers, in this section, we review the definition of this invariant and we develop the necessary machinery to compute the Maslov number for a decomposable cobordism between two Legendrian links. Readers who are more interested in the combinatorial construction of Maslov-$m$ fillings need only consult Theorem~\ref{thm: MaslovNumber} in order to consider themselves well-equipped to explore further sections. Following the general framework of \cite{Seidel08}, our approach relies on the notion of a \emph{grading} or \emph{Maslov potential} on a Lagrangian submanifold. This concept adapts naturally to the Legendrian setting, yielding the Maslov potential used to compute the grading of Reeb chords \cite{EkholmEtnyreSullivan05a}, which is further specialized in \cite{Ng03} into a combinatorial Maslov potential for Legendrian links in $\R^3$. By clarifying the relationship between these perspectives, we show that the Maslov number for a decomposable cobordism from $\La_-$ to $\La_+$ can be computed purely combinatorially from a Maslov potential on $\La_+$.

Let $\omega = \omega_{st}$ or $d(e^t\alpha_{st})$. We begin by reviewing the definition of the Maslov index of a Lagrangian immersion in $(\R^{2n}, \omega)$ following the exposition of \cite[Section 2]{McDuffSalamon95} and \cite[Section 2.2]{EkholmEtnyreSullivan05a}. Let $\LGr_n$ denote the space of Lagrangian subspaces of $(\R^{2n}, \omega_{st})$. Define the \dfn{Maslov cycle} $\Sigma(n)$ to be the singular hypersurface of $\LGr_n$ of codimension $1$ consisting of all Lagrangian subspaces which intersect the vertical $\{0\} \times \R^n$ non-transversely. For a Lagrangian immersion $\phi:L \to (\R^{2n}, \omega)$ and a loop $\gamma$ in $L$, a global trivialization of $\R^{2n}$ identifies the tangent planes along $\phi(\gamma)$ with a loop $\Gamma$ in $\LGr_n$. Here, we use the canonical trivialization of $\R^{2n}$ if $\omega = \omega_{st}$, and the trivialization with respect to the splitting $T(\R \times \R^{2n-1}) = \langle \partial_t, \partial_z \rangle \oplus \xi_{st}$ if $\omega = d(e^t\alpha_{st})$. The \dfn{Maslov index} of $\phi$ is then the homomorphism $H_1(L) \to \Z$ that sends $[\gamma] \in H_1(L)$ to the intersection number of this loop $\Gamma$ with $\Sigma(n)$ in $\LGr_n$. We refer the reader to \cite[Section 2]{McDuffSalamon95} or \cite[Section 2.2]{EkholmEtnyreSullivan05a} for a specific method of calculating this intersection number using the signature of the crossing form. In dimension $n = 1$, we note that the Maslov cycle is a point on $\LGr_1 \cong \S^1$ corresponding to $\{0\} \times \R$. Thus, the Maslov index is simply the degree of the loop $\Gamma:\S^1 \to \S^1$.

By minimizing the Maslov index over all loops with non-negative Maslov indices, we obtain a Lagrangian isotopy invariant.

\begin{definition}
    The \dfn{Maslov number} of a Lagrangian immersion $L \subset (\R^{2n}, \omega)$ is the smallest non-negative integer in the image of the Maslov index of $L$.
\end{definition}

We say that a Lagrangian is Maslov-$m$ if its Maslov number is equal to $m$. We will determine the Maslov number of a decomposable exact Lagrangian filling $L \subset (\R^4,d(e^t\alpha_{st}))$ by constructing a Maslov potential on $L$. To define this concept, we first view the Maslov index of $L$ as a cohomology class in $H^1(L,\Z)$, which in turn corresponds to a homotopy class $[\alpha] \in  [L,\S^1]$ under the canonical isomorphism $H^1(L,\Z) \simeq [L,\S^1]$; here we use $[M, N]$ to denote the set of homotopy classes of maps from $M$ to $N$. Following the exposition of \cite[Section (11j)]{Seidel08}, we define a specific representative map $\alpha:L \to \S^1$ as follows. Let $\eta^2:\La_\C^n(\R^{2n})^{\otimes 2} \to \C$ be the isomorphism of complex lines defined by $\eta^2 = \Omega \otimes \Omega$, where $\Omega$ is the complex volume form on $\R^{2n}$ determined by a compatible complex structure. Since $L$ is Lagrangian, the \dfn{phase map} $\alpha:L \to \S^1$ given by
\begin{equation*}
    \alpha(p) := \frac{\eta^2(v_1,\ldots,v_n,v_1,\ldots,v_n)}{|\eta^2(v_1,\ldots,v_n,v_1,\ldots,v_n)|},
\end{equation*}
where $(v_1,\ldots,v_n)$ is a basis of $T_pL$, is well-defined independent of the choice of basis. For our applications, we will equip $\R^2$ with the standard complex structure and the symplectization $\R^4 = \R \times \R^3$ with the cylindrical complex structure \cite[Section 3.6]{EHK}. Then, the phase map of a Lagrangian immersion $\phi:\S^1 \to \R^2$ is given by the square of its complex unit tangent vector at each point. This map coincides with the loop $\Gamma:\S^1 \to \S^1 \simeq \LGr_1$ used to compute the Maslov index of $\S^1$ mentioned above, where the Maslov cycle is identified with $-1 \in \S^1$ in the codomain. 

\begin{definition}
    An \dfn{$m$-graded Maslov potential} of $L$ to be a map $\alpha^\sharp:L \to \R/m\Z$ that lifts the phase map of $L$.
\end{definition}

By the lifting criterion, the existence of an $m$-graded Maslov potential on $L$ is governed by the Maslov number of $L$.
\begin{lemma}
\label{lemma: MaslovPotentialCriterion}
    There exists an $m$-graded Maslov potential on $L$ if and only if $m$ divides the Maslov number of $L$. An $m$-graded Maslov potential on $L$, if it exists, is well-defined up to a global $\Z/m\Z$ shift on each connected component of $L$.
\end{lemma}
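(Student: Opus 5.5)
We will deduce the lemma from the lifting criterion for covering spaces applied to the covering $\R/m\Z \to \S^1$. We identify $\S^1$ with $\R/\Z$ via $\theta \mapsto e^{2\pi i\theta}$, so that the projection $p_m:\R/m\Z\to\R/\Z$, $x\mapsto x \bmod \Z$, is an $m$-fold connected covering when $m>0$. When $m=0$, $\R/0\Z=\R$ and $p_0$ is the universal covering. An $m$-graded Maslov potential is then exactly a lift $\alpha^\sharp$ of the phase map $\alpha:L\to\S^1$ through $p_m$.

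We may work one connected component of $L$ at a time, since both the existence and uniqueness statements are componentwise. $L$ is a manifold, so it is path connected and locally path connected on each component, and the lifting criterion applies. A lift exists if and only if $\alpha_*(\pi_1(L))\subseteq (p_m)_*(\pi_1(\R/m\Z))=m\Z\subseteq \Z=\pi_1(\S^1)$. Because $\pi_1(\S^1)$ is abelian, $\alpha_*$ factors through $H_1(L)$, and the resulting homomorphism $H_1(L)\to\Z$ is the degree of $\alpha\circ\gamma$ on loops $\gamma$.

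The key step is to identify this homomorphism with the Maslov index. As recalled above, the phase map restricted to a loop $\gamma$ coincides with the loop $\Gamma$ in $\LGr_n$, composed with the determinant-squared map $\LGr_n\to\S^1$. In the one-dimensional model, the text notes that $\Gamma$ is precisely the phase loop, with the Maslov cycle sitting at $-1$. The intersection number of $\Gamma$ with $\Sigma(n)$ equals the degree of $\det^2\circ\Gamma$; this is the standard fact that $\det^2$ induces the isomorphism $\pi_1(\LGr_n)\cong\Z$ realized by Maslov intersection number, as in \cite[Section 2]{McDuffSalamon95}. This identification is the only step with real content. The main care needed is the choice of trivialization and complex structure (canonical versus cylindrical), and consistency of orientation sign conventions. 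A sign does not matter for divisibility.

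Hence the image of $\alpha_*$ is the image $I\subseteq\Z$ of the Maslov index. As a subgroup, $I=\mu\Z$, where $\mu$ is its smallest nonnegative generator, i.e.\ the Maslov number (if $I=0$, then $\mu=0$). The condition $\mu\Z\subseteq m\Z$ holds exactly when $m\mid\mu$, with the convention that $0\mid\mu$ iff $\mu=0$. This proves the existence statement.

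For uniqueness, any two lifts of the same map on a connected space differ by a deck transformation of $p_m$, because the set where they agree is open and closed in the covering sense. The deck group of $\R/m\Z\to\R/\Z$ consists of translations by $\Z/m\Z$. Therefore two potentials differ on each component by a global shift in $\Z/m\Z$, and conversely every such shift of a potential is again a potential.
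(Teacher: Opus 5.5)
Your proposal is correct and follows the same route as the paper, which justifies the lemma in a single line by the lifting criterion for the covering $\R/m\Z\to\S^1$. The identification of $\alpha_*$ on $H_1(L)$ with the Maslov index, which you spell out via $\det^2$, is built into the paper's setup, where the phase map is introduced as a representative of the Maslov class in $H^1(L;\Z)\cong[L,\S^1]$. Your reading of the Maslov number as the non-negative generator of the image of the Maslov index is the right one: the paper's literal ``smallest non-negative integer in the image'' would always be $0$.
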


Noting that the Lagrangian projection of a Legendrian link is a Lagrangian immersion in $\R^2$, we extend the definitions above to Legendrian links by applying them to their Lagrangian projections. Several comments are in order to clarify these definitions and to connect them to other notions in the literature. First, a Legendrian knot has Maslov number equal to twice its rotation number \cite{Chekanov}. Thus, a Legendrian link has Maslov number equal to the greatest common divisor of $2\rot(\Lambda_i)$ over all its components $\Lambda_i$.

Second, if $m$ divides the Maslov number of $\La$, we can define a $\Z/m\Z$-grading of a Reeb chord of $\La$ using a Maslov potential on $\La$. Suppose $c$ is a Reeb chord from $c^-$ to $c^+$. Locally near $c^+$ and $c^-$, $\La$ is the $1$-jet of Morse functions $f_u$ and $f_l$, respectively, defined on a neighborhood of $\pi_x(c)$. Moreover, $\pi_x(c)$ is a critical point of the difference function $f_{ul}:= f_u - f_l$. Let $\ind_{f_{ul}}(\pi_x(c))$ denote the Morse index of $f_{ul}$ at $\pi_x(c)$. Given a $\Z/m\Z$-graded Maslov potential $\alpha^{\sharp}$, we define the $\Z/m\Z$-grading of $c$ by
\begin{equation*}
    |c|_{\alpha^\sharp} := \lfloor \alpha^\sharp(c^+) \rfloor - \lfloor \alpha^\sharp(c^-) \rfloor + \ind_{f_{ul}}(\pi_x(c))-1 \pmod m.
\end{equation*}
By Lemma~\ref{lemma: MaslovPotentialCriterion}, this grading is independent of the choice of Maslov potential when $\La$ is a knot. In fact, it follows from \cite[Lemma 3.4]{EkholmEtnyreSullivan05a} that it equals the Conley-Zehnder index of $c$ minus $1$ modulo $m$, where the Conley-Zehnder index of $c$ is the Maslov index of a path from $c^+$ to $c^-$ in $\La$ followed by the Reeb flow from $c^-$ to $c^+$; we refer the reader to \cite[Section 2.2]{EkholmEtnyreSullivan05a} for the precise definition. When $\La$ is a link, the grading of $c$ depends on the choice of $\alpha^\sharp$, although the difference in the gradings of two Reeb chords does not. We omit the subscript from the notation when the grading is independent of $\alpha^\sharp$ or when the choice is clear from the context.

Finally, phase maps and Maslov potentials of a Legendrian link are conveniently studied via their pullbacks to the front projection. This perspective facilitates a combinatorial definition of a Maslov potential on an oriented Legendrian link $\La$, which streamlines the computation of Reeb chord gradings \cite{Ng03}. Recall that the cusps of $\Pi_{xz}(\La)$ correspond to the vertical tangencies of $\Pi_{xy}(\La)$. Consequently, the pullback of the phase map $\alpha: \Pi_{xz}(\La) \to \S^1$ intersects the Maslov cycle $-1 \in \S^1$ positively at cusps oriented upwards and negatively at cusps oriented downwards. In particular, $\lfloor \alpha^\sharp \rfloor$ induces a locally constant function on the strands of $\Pi_{xz}(\La)$ with values in $\Z/m\Z$, such that near a cusp point, its value on the upper sheet is exactly $1$ more than that on the lower sheet modulo $m$. Motivated by this observation, we define a \dfn{combinatorial Maslov potential} on $\La$ to be a locally constant function
\begin{equation*}
    \mu: \Pi_{xz}(\La)\backslash \{\text{cusps}\} \to \Z/m\Z,
\end{equation*}
that satisfies this exact jump condition at the cusp points. This formulation allows us to combinatorially define the $\Z/m\Z$-grading of Reeb chords by
\begin{equation*}
    |c| = \mu(\Pi_{xz}(c^+)) - \mu(\Pi_{xz}(c^-)) + \ind_{f_{ul}}(\pi_x(c))-1 \pmod m.
\end{equation*}
without requiring an a priori Maslov potential $\alpha^\sharp$.\footnote{When $\La$ is obtained as Ng's resolution of a front \cite{Ng03}, every Reeb chord of $\La$ corresponds to a \emph{maximum} of a difference function. Thus, the $\Z/m\Z$-grading of a Reeb chord $c$ associated to a $\Z/m\Z$-graded Maslov potential is simply $|c| = \mu(c^+) - \mu(c^-)$.}

Let $L \subset (\R^4,\omega_{st})$ be an exact Lagrangian cobordism from the Legendrian link $\La_-$ to the Legendrian link $\La_+$ in $(\R^3,\xi_{st})$. We now connect the Maslov potential of $L$ to the Maslov potentials of $\La_-$ and $\La_+$. Recall that $(\R^4,\omega_{st})$ is equipped with the cylindrical complex structure and the associated complex volume form is $\Omega_{cyl} = (dt+idz)\wedge (dx+idy)$. Denote by $\Omega_{st} = dx+idy$ the standard complex volume form on $\R^2$.

\begin{proposition}
\label{proposition: maslov-ext}
    An $m$-graded Maslov potential on $L$ restricts to $m$-graded Maslov potentials on $\La_\pm$. Conversely, if $\La_+$ (resp. $\La_-$) is a knot and $m$ divides the Maslov number of $L$, then an $m$-graded Maslov potential on $\La_+$ (resp. $\La_-$) extends to $L$.
\end{proposition}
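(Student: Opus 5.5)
The plan is to compare the phase map of $L$ on its cylindrical ends with the phase maps of $\La_\pm$ computed via their Lagrangian projections, and then to use covering space theory for the extension statement. On the ends, $L$ agrees with $(-\infty,T]\times\La_-$ and $[T,\infty)\times\La_+$. At a point $(t,p)$ of such an end, with $p \in \La_\pm$, the tangent space $T_{(t,p)}L$ is spanned by $\partial_t$ and a tangent vector $v \in T_p\La_\pm$. The vector $v$ lies in $\xi_{st}$ and has no $\partial_t,\partial_z$ component in the splitting $\langle\partial_t,\partial_z\rangle\oplus\xi_{st}$. Since $\xi_{st}$ is spanned by $\partial_x+y\partial_z$ and $\partial_y$, the form $dx+idy$ evaluated on $v$ gives exactly the complex tangent vector of $\Pi_{xy}(\La_\pm)$ at $\Pi_{xy}(p)$. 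We also have $(dt+idz)(\partial_t)=1$.

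\textbf{Step 1: the phase map on the ends.} I will compute $\Omega_{cyl}(\partial_t,v)$. Because $dz(v)$ need not vanish, I expect a correction term, and I would check whether the wedge formula gives
\[
(dt+idz)(\partial_t)(dx+idy)(v)-(dt+idz)(v)(dx+idy)(\partial_t)=(dx+idy)(v),
\]
using $(dx+idy)(\partial_t)=0$. This holds, so $\Omega_{cyl}(\partial_t,v)=\Omega_{st}(\Pi_{xy*}v)$. Squaring and normalizing, the phase map of $L$ restricted to $\{t\}\times\La_\pm$ equals the phase map of $\Pi_{xy}(\La_\pm)$ composed with the projection $\{t\}\times\La_\pm\to\La_\pm$. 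Consequently any lift $\alpha^\sharp:L\to\R/m\Z$ of the phase map of $L$ restricts on a slice $\{t\}\times\La_\pm$ to a lift of the phase map of $\La_\pm$. This restriction is the first claim. The choice of slice does not matter, since the ends are products and the restrictions to different slices are homotopic, indeed equal, lifts.

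\textbf{Step 2: extension.} Suppose $\La_+$ is a knot and $m$ divides the Maslov number of $L$. By Lemma~\ref{lemma: MaslovPotentialCriterion}, $L$ has an $m$-graded Maslov potential $\beta^\sharp$, unique up to a $\Z/m\Z$ shift on each component. The component containing the end $\La_+$ is a single component, since $\La_+$ is connected. By Step 1, $\beta^\sharp$ restricts to some $m$-graded potential on $\La_+$. Applying Lemma~\ref{lemma: MaslovPotentialCriterion} to $\La_+$, which is connected, any given potential $\alpha_+^\sharp$ differs from this restriction by a global constant $k \in \Z/m\Z$. Here lifts through $\R/m\Z\to \S^1$ differ by deck transformations, which are shifts by $\Z/m\Z$. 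Replacing $\beta^\sharp$ by $\beta^\sharp+k$ on the relevant component gives an extension of $\alpha_+^\sharp$. The argument for $\La_-$ is identical.

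\textbf{Main obstacle.} The delicate point is Step 1: making sure the trivialization used for $L$, namely the splitting $\langle\partial_t,\partial_z\rangle\oplus\xi_{st}$ with the cylindrical complex structure, matches the one used for the Lagrangian projection. The worry is an orientation or convention mismatch, such as a conjugation, which would make the restriction a lift of the inverse phase map.

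I would handle this with the explicit computation of $\Omega_{cyl}(\partial_t,v)$ above. I would also check that $(dx+idy)$ restricted to $\xi_{st}$ agrees with the identification of $\xi_{st}$ with $\R^2_{xy}$ through $\Pi_{xy}$. If a conjugation did appear, it would be absorbed by negating the potential, and the statement would be unaffected up to that convention.
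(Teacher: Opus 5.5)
Your proposal is correct and follows the same route as the paper. The paper likewise shows by a direct computation with $\Omega_{cyl}$, which you carry out explicitly as $\Omega_{cyl}(\partial_t,v)=(dx+idy)(v)$, that the phase map of $L$ on the cylindrical ends agrees with that of $\La_\pm$. For the converse it also takes a potential on $L$ from Lemma~\ref{lemma: MaslovPotentialCriterion}, restricts it to the connected end $\La_\pm$, and matches the given potential by a global $\Z/m\Z$ shift; your computation shows the conjugation issue you worry about does not arise.
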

\begin{proof}
    A direct computation using the complex volume form $\Omega_{cyl}$ shows that the phase map of $L$ evaluated at a point $(t,p)$ in the cylindrical ends $\R \times \La_{\pm}$ reduces to the phase map of $\La_{\pm}$ evaluated at $p$. Thus, the restriction of a Maslov potential on $L$ to $\{t\} \times \La_{\pm}$ is a Maslov potential on $\La_{\pm}$.

    Next, for $* = +$ or $-$, assume that $\La_*$ is a knot equipped with an $m$-graded Maslov potential $\alpha^\sharp_*$. If $m$ divides the Maslov number of $L$, it follows from Lemma~\ref{lemma: MaslovPotentialCriterion} and above that $L$ admits an $m$-graded Maslov potential $\alpha_L^\sharp$ and it restricts to a Maslov potential on $\La_*$. Applying Lemma~\ref{lemma: MaslovPotentialCriterion} again, we obtain a Maslov potential on $L$ that extends $\alpha^\sharp_*$ after a global $\Z/m\Z$ shift of $\alpha_L^\sharp$.
\end{proof}

In general, an $m$-graded Maslov potential on a \emph{link} $\La_+$ or $\La_-$ need not extend to a Maslov potential of $L$ even if $m$ divides the Maslov number of $L$. A standard algebraic topology argument using the long exact sequence of the pair $(L,\La_{\pm})$ shows that extending arbitrary potentials is obstructed unless the restriction map $i^*:H^0(L;\Z/m\Z) \to H^0(\La_{\pm};\Z/m\Z)$ is surjective. When $L$ is an elementary cobordism, however, we can constructively determine which Maslov potentials of $\La_+$ extend to $L$. The following proposition is key to determining the Maslov number of a decomposable Lagrangian cobordism and provides a combinatorial method for computing it.

\begin{proposition}
\label{proposition: DecomposableMaslovExtension}
    If $L$ is a minimum or isotopy cobordism, then any $m$-graded Maslov potential on $\La_+$ extends to $L$. If $L$ is a saddle cobordism obtained by resolving a contractible Reeb chord $c$, then an $m$-graded Maslov potential $\alpha_+^\sharp$ on $\La_+$ extends to $L$ if and only if $|c|_{\alpha_+^\sharp} = 0$. 
\end{proposition}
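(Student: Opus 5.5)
The plan is to treat the three elementary cobordisms separately, using the lifting criterion behind Lemma~\ref{lemma: MaslovPotentialCriterion} together with the restriction statement of Proposition~\ref{proposition: maslov-ext}. In each case, extending $\alpha_+^\sharp$ amounts to extending a lift of the phase map from the top end $\{T\}\times\La_+$ to the whole cobordism.

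\emph{Isotopy cobordisms.} Here $L$ is a perturbed trace, hence diffeomorphic to $\R\times\La_+$, and $L$ deformation retracts onto the top end. A lift of the phase map defined on $\{T\}\times \La_+$ therefore extends uniquely over $L$ by the homotopy lifting property for the covering $\R/m\Z\to \S^1$.

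\emph{Minimum cobordisms.} Here $L$ is the disjoint union of $\R\times\La_-$ and a disk $D$ filling $U$. On the cylinder we argue as in the isotopy case. The disk is simply connected, so its phase map lifts, and by Lemma~\ref{lemma: MaslovPotentialCriterion} the lift is unique up to a global shift on $D$. Since $\partial D$ is connected, we may choose the shift so that the lift agrees with the given potential on the top end.

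\emph{Saddle cobordisms.} Topologically, $L$ is $\R\times\La_-$ with a $1$-handle attached at the chord $c$. Equivalently, reading downward, $L$ is $\R\times\La_+$ with a $1$-handle whose core is a short arc $\delta$ joining the endpoints $c^+$ and $c^-$ of $c$ on $\La_+$. So $L$ deformation retracts onto $\La_+\cup\delta$. Given $\alpha_+^\sharp$ on $\La_+$, a lift of the phase map along $\delta$ exists starting at its value at $c^+$. The potential extends over $L$ exactly when this lift, transported along $\delta$, ends at the prescribed value $\alpha_+^\sharp(c^-)$ modulo $m$. The hard part is identifying this matching condition with the vanishing of $|c|_{\alpha_+^\sharp}$.

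To compute the condition, we use the local model of the pinch move. Since $c$ is contractible, we may assume its height is arbitrarily small. In the front projection near $\pi_x(c)$, the two sheets $f_u$ and $f_l$ have a difference function $f_{ul}$ with a critical point of some index at $\pi_x(c)$. In the local model of \cite{EHK}, the saddle $L$ is the graph-like Lagrangian that interpolates between the two sheets through the critical point of $f_{ul}$. The change of phase along the core $\delta$ is then governed by the rotation of the tangent line between the two sheets. This contributes exactly the Morse index correction, so the total phase change from $c^+$ to $c^-$ along $\delta$ is $\ind_{f_{ul}}(\pi_x(c))-1$ up to the integer-part conventions.

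Comparing this with the values $\lfloor\alpha_+^\sharp(c^\pm)\rfloor$ shows that the extension exists if and only if
\[
\lfloor \alpha^\sharp_+(c^+) \rfloor - \lfloor \alpha^\sharp_+(c^-)\rfloor+\ind_{f_{ul}}(\pi_x(c))-1\equiv 0 \pmod m.
\]
The left-hand side is $|c|_{\alpha_+^\sharp}$ by definition. A cleaner route to the same conclusion is to invoke \cite[Lemma 3.4]{EkholmEtnyreSullivan05a}, which identifies $|c|+1$ with the Conley--Zehnder index of $c$. The loop formed by a path on $\La_+$ from $c^+$ to $c^-$ followed by $\delta$ closes up in $L$, and its Maslov index equals that Conley--Zehnder index minus one. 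The extension condition is then that the phase increment around this loop, measured relative to $\alpha_+^\sharp$, vanishes modulo $m$.

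The main obstacle is getting the sign and offset conventions right: the $-1$ shift, the use of the floor function, and the cylindrical trivialization. I would verify these on the simplest example first: the standard saddle filling the max-tb unknot, equivalently the pinch from the $(2,2)$-type crossing to two unknots. In that example the resolved chord is known to have degree $0$.
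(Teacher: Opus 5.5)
You handle the isotopy and minimum cases correctly. The deformation-retraction and homotopy-lifting argument is a legitimate topological substitute for the paper's $C^1$-small Hamiltonian estimate, at least for the bare existence claim. Your reduction of the saddle case is also correct: a potential extends exactly when the lift along the handle core $\delta$ returns to $\alpha_+^\sharp(c^-)$.

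The gap is the next step, which is the whole content of the proposition. You never compute how the phase of $TL$ rotates along $\delta$. The sentence ``this contributes exactly the Morse index correction \dots\ up to the integer-part conventions'' is an assertion, not a computation.

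The ``cleaner route'' does not fill the gap. \cite[Lemma 3.4]{EkholmEtnyreSullivan05a} is a statement about the chord $c$ of the Legendrian $\La_+$, where a path of tangent lines in $\R^2$ is closed up. It says nothing about the Lagrangian planes of $L\subset\R^4$ along $\delta$. The equality ``Maslov index of $\gamma\cdot\delta$ equals the Conley--Zehnder index minus one'' is just the missing local computation restated. That route also needs a path $\gamma\subset\La_+$ from $c^+$ to $c^-$, and none exists when $c$ joins two components of $\La_+$. That is precisely the case where $|c|_{\alpha_+^\sharp}$ depends on the relative shift of $\alpha_+^\sharp$, and it is used in the proof of Theorem~\ref{thm: MaslovNumber}. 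Finally, checking a single example only calibrates the answer if you also show that all local saddle models are deformation equivalent with the phases at $c^\pm$ kept off the Maslov cycle, and you do not address this.

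The missing idea is that you never need to look inside the handle; you can read the answer off the bottom end. Let $U$ be a small neighborhood of $c$. Outside $\R\times U$, $L$ is the cylinder over $\La_+\setminus U=\La_-\setminus U$. So $\alpha_+^\sharp$ propagates uniquely downward and prescribes the potential of $\La_-$ at the endpoints of the two new cusp arcs, namely the values it has at the endpoints of the upper and lower strands of $\La_+$.

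In the pinch model $c$ is a minimum of $f_{ul}$, so these floors differ by $|c|_{\alpha_+^\sharp}+1$. Any potential on $\La_-$, however, must jump by exactly $1$ across each cusp, since a cusp is a vertical tangency of $\Pi_{xy}(\La_-)$ and hence one crossing of the Maslov cycle. Restricting via Proposition~\ref{proposition: maslov-ext} gives necessity of $|c|_{\alpha_+^\sharp}=0$.

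For sufficiency, $L\cap(\R\times U)$ is a Lagrangian disk. Its boundary consists of two strand arcs at the top, two cusp arcs at the bottom, and four vertical segments in the cylindrical region, where the phase is constant in $t$. When $|c|_{\alpha_+^\sharp}=0$ this boundary carries a consistent lift, and a disk with connected boundary imposes no further obstruction.

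In your language, the descending arcs of the saddle end at the two cusps of $\La_-$, and the cusp jump rule is exactly the rotation along $\delta$ you needed. This argument also works verbatim when $c$ joins two components.
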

\begin{proof}
    If $L$ is a minimum cobordism of $\La_+ = \La_- \cup U$, then $L$ is a union of a cylinder over $\La_-$ and a disk that bounds the unknot $U$. By the proof of Proposition~\ref{proposition: maslov-ext}, any $m$-graded Maslov potential $\alpha_+^\sharp$ on $\La_+$ extends uniquely to a potential $\alpha_L^\sharp$ on $L$ with $\alpha_L^\sharp(t,p) = \alpha_+^\sharp(p)$ for $p \in \La_-$.

    If $L$ is an isotopy cobordism induced by a Legendrian isotopy $\tau$ of $\La_+$, by \cite[Lemma 6.1]{EHK}, we can parametrize $L$ by $(t, x(t,s), y(t,s),z(t,s)+H(t,s))$ where $t$ is the time coordinate for the isotopy and $H$ is an arbitrarily small contact Hamiltonian $H$ in the $C^1$-metric defined by $H(t,s) = z_t-yx_t$.\footnote{While \cite[Lemma 6.1]{EHK} only uses a $C^0$-small $H$, the $C^1$-small property can be achieved by scaling the time coordinate of the isotopy with a factor $\frac{1}{T}$ for $T >>0$.} Using the tangent vectors $v_t,v_s \in T_{L(t,s)}L$ with respect to this parametrization, we have
    \begin{equation*}
        \Omega_{cyl}(v_t,v_s) = (x_s + iy_s)\left(1+i(H+H_t)-iH_s\frac{x_t+iy_t}{x_s+iy_s} \right).
    \end{equation*}
    Since $H$ is $C^1$-small, it follows that $\Omega_{cyl}(v_t,v_s)$ is $C^0$-close to $\Omega_{st}(x_s,y_s)$ and the phase map of $L$ is $C^0$-close to the phase map of $\La_t := \tau_t(\La_+)$. By a $C^0$-small perturbation of the pushforward potential $\alpha_+^\sharp \circ \tau_t^{-1}$ on $\La_t$ for each $t$, we obtain a potential on $L$ as desired.

    If $L$ is a saddle cobordism obtained by resolving a contractible Reeb chord $c$ of $\La_+$, then $L$ is cylindrical over a region of $\La_+$ outside of a neighborhood $U$ of $c$. The subset of $L$ in $\R \times U$ is part of a Lagrangian disk whose boundary consists of the two strands in $\La_+$ and two cusps in $\La_-$ in the front model, connected by four vertical lines in the $t$-direction at their endpoints; see Figure~\ref{fig: Saddle} (left). Note that $c$ corresponds to a minimum of $f_{ul}$, and the value of $\lfloor \alpha^\sharp_+ \rfloor$ at the endpoints of the upper strand of Figure~\ref{fig: Saddle} (left) is $|c|_{\alpha^\sharp_+}+1$ greater than its value at the endpoints of the lower strand. 
    
    By the proof of Proposition~\ref{proposition: maslov-ext}, the potential $\alpha_+^\sharp$ extends uniquely to the cylindrical part of $L$. In particular, this extension restricts to $\La_-$ in the cylindrical end away from the two cusps, denoted by $\alpha_-^\sharp$. Moreover, the values of $\alpha_-^\sharp$ at the endpoints of the two cusps are the same as the values of $\alpha_+^\sharp$ at the end points of the two strands in $\La_+$ connected by $c$. By the jump condition at a cusp, we know that $\alpha_-^\sharp$ extends over the two cusps if and only if $\lfloor \alpha_-^\sharp \rfloor$ on the upper sheet of the cusp is $1$ more than its value on the lower sheet of the cusp. This condition is thus equivalent to $|c|_{\alpha^\sharp_+} + 1= 1$. By Proposition~\ref{proposition: maslov-ext}, we conclude that $\alpha^\sharp_+$ extends to $L$ only if $|c|_{\alpha^\sharp_+} = 0$.

    If $|c|_{\alpha^\sharp_+} = 0$ so that $\alpha_-^\sharp$ extends to an $m$-graded potential on $\La_-$, we obtain a well-defined $m$-graded potential on the boundary of the Lagrangian disk attached to the cylindrical part of $L$. Since a Lagrangian disk has Maslov number $0$ and one boundary component, it follows from the proof of Proposition~\ref{proposition: maslov-ext} that the Maslov potential extends over the disk and thus, globally.
\end{proof}

We are now in position to prove the main theorem of this section. Suppose $L = L_1 \circ L_2 \circ \cdots \circ L_n$ is a concatenation of elementary cobordisms $L_i$ from $\La_{i+1}$ to $\La_i$, where $\La_+ := \La_1$ and $\La_- := \La_{n+1}$. Suppose moreover $k$ of these are saddle cobordisms $L_{i_1},\ldots,L_{i_k}$, obtained by resolving a Reeb chord $c_j \in \La_{i_j}$ for $j = 1,\ldots,k$. For $i < j$, we denote by $L_{[i,j]}$ the contatenation of cobordisms $L_i \circ L_{i+1} \circ \cdots \circ L_j$.

Let $m_1 := 2r(\La_+)$ denote the Maslov number of $\La_+$, where $r(\La_+)$ is the greatest common divisor of the rotation numbers of the components of $\La_+$. By Proposition~\ref{proposition: DecomposableMaslovExtension}, a $\Z/m_1\Z$-graded Maslov potential on $\La_+$ extends over $L_{[1,i_1-1]}$ to a $\Z/m_1\Z$-graded potential on $\La_{i_1}$. Moreover, if $c_1$ is a Reeb chord between two connected components of $L_{[1,i_1-1]}$, we shift the potential on one component so that the grading of $c_1$ is $0$. Let $|c_1|$ denote the grading of $c_1$ with respect to this potential and let $m_2 := \gcd(m_1,|c_1|)$. 

Composing the $\Z/m_1\Z$-graded Maslov potential on $L_{[1,i_1-1]}$ with the covering map $\R/m_1\Z \to \R/m_2\Z$ yields a $\Z/m_2\Z$-graded Maslov potential with respect to which $c_1$ has grading $0$. By Proposition~\ref{proposition: DecomposableMaslovExtension}, it extends over $L_{[1,i_2-1]}$ to a $\Z/m_2\Z$-graded potential on $\La_{i_2}$. Again, we shift the potential if $c_2$ connects two connected components of $L_{[1,i_2-1]}$. Let $|c_2|$ denote the grading of $c_2$ with respect to this potential. 

Proceeding inductively, we define $|c_j|$ to be the grading of $c_j$ induced by the $\Z/m_j\Z$-graded Maslov potential on $L_{[1,i_j-1]}$, where $m_j = \gcd(2r(\Lambda_+),|c_1|,\ldots,|c_{j-1}|)$.

\begin{theorem}
\label{thm: MaslovNumber}
    The Maslov number of $L$ is $m(L) = \gcd(2r(\Lambda_+),|c_1|,\ldots,|c_k|)$.
\end{theorem}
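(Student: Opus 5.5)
Set $m_{k+1} := \gcd(2r(\La_+),|c_1|,\ldots,|c_k|)$. The plan is to prove $m(L)=m_{k+1}$ by two divisibility statements, using Lemma~\ref{lemma: MaslovPotentialCriterion}: $L$ carries an $m$-graded Maslov potential exactly when $m \mid m(L)$. Combined with Propositions~\ref{proposition: maslov-ext} and~\ref{proposition: DecomposableMaslovExtension}, this turns the problem into deciding which Maslov potentials on $\La_+$ extend across the elementary pieces $L_1,\ldots,L_n$ one at a time.

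\emph{Step 1: $m_{k+1} \mid m(L)$.} By construction, the inductive procedure preceding the theorem yields a $\Z/m_{k+1}\Z$-graded potential on all of $L$. At each saddle $L_{i_j}$, the current $\Z/m_j\Z$-potential is reduced mod $m_{j+1}$; after any component shift, $|c_j|\equiv 0 \pmod{m_{j+1}}$. Proposition~\ref{proposition: DecomposableMaslovExtension} then extends it across the saddle, and across all minimum and isotopy pieces unconditionally, until the next saddle. Reduction commutes with extension, since we compose with the covering $\R/m_j\Z\to\R/m_{j+1}\Z$. Hence $L$ admits an $m_{k+1}$-graded potential, and Lemma~\ref{lemma: MaslovPotentialCriterion} gives $m_{k+1}\mid m(L)$.

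\emph{Step 2: $m(L)\mid m_{k+1}$.} Let $m=m(L)$ and fix an $m$-graded potential $\alpha^\sharp_L$ on $L$. By Proposition~\ref{proposition: maslov-ext} it restricts to an $m$-graded potential on $\La_+$, so $m \mid 2r(\La_+)$. We then show by induction on $j$ that $m\mid |c_j|$. Restrict $\alpha^\sharp_L$ to the slice $\La_{i_j}$, the top of the saddle $L_{i_j}$. Since it extends over $L_{i_j}$, Proposition~\ref{proposition: DecomposableMaslovExtension} gives $|c_j|_{\alpha^\sharp_L}\equiv 0 \pmod m$. It remains to compare this with the grading $|c_j|$ from the inductive construction, which is computed from the $\Z/m_j\Z$-potential on $L_{[1,i_j-1]}$. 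By the induction hypothesis $m\mid m_j$, so that potential reduces mod $m$. On $L_{[1,i_j-1]}$ we now have two $m$-graded potentials: $\alpha^\sharp_L$ and the reduction. By Lemma~\ref{lemma: MaslovPotentialCriterion}, applied to $L_{[1,i_j-1]}$, they differ by a constant shift on each connected component.

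If $c_j$ has both endpoints on one component of $L_{[1,i_j-1]}$, the shift cancels in the grading formula, so $|c_j|\equiv|c_j|_{\alpha^\sharp_L}\equiv 0 \pmod m$. If $c_j$ joins two components, the construction has shifted one of them so that $|c_j|=0$, and the conclusion is immediate. Thus $m \mid \gcd(2r(\La_+),|c_1|,\ldots,|c_k|)=m_{k+1}$, and together with Step 1 this gives $m(L)=m_{k+1}$.

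\emph{Main obstacle.} The delicate point is the bookkeeping when $\La_+$ or an intermediate $\La_i$ is a link, where potentials are defined only up to independent per-component shifts. I expect this to cause two specific worries.
\begin{itemize}
\item The component-matching in Step 2 must use components of the cobordism $L_{[1,i_j-1]}$, not of the slice $\La_{i_j}$. Two components of $\La_{i_j}$ may lie on one component of $L_{[1,i_j-1]}$; then their relative shift is forced by the potential, not free.
\item One must check that $r(\La_+)$ is the right initial quantity, i.e., that every potential on $L$ restricts to one on each component of $\La_+$. The minimum cobordisms also add new components, but an unknot component imposes no constraint.
\end{itemize}
I would handle both by applying Lemma~\ref{lemma: MaslovPotentialCriterion} on $L_{[1,i_j-1]}$ itself, as above. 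I would also note that the result of the inductive construction is independent of the shift choices, because a grading between two distinct components is never constrained beyond being made zero.
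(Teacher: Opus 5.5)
Your proposal is correct and follows essentially the same strategy as the paper. The upper bound comes from the potential built by the inductive construction together with Lemma~\ref{lemma: MaslovPotentialCriterion}. The lower bound comes from uniqueness of potentials up to a shift on each component of $L_{[1,i_j-1]}$, combined with the ``only if'' direction of Proposition~\ref{proposition: DecomposableMaslovExtension}. The only difference is in how you organize that lower bound. You restrict a single $m(L)$-graded potential on all of $L$ and show directly that $m(L)$ divides each $|c_j|$. The paper instead computes the Maslov numbers of the partial cobordisms $L_{[1,i_j]}$ one at a time by contradiction, which also needs its remarks that isotopy and minimum pieces do not enlarge the relevant homology; your version avoids those remarks.
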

\begin{proof}
     By Lemma~\ref{lemma: MaslovPotentialCriterion}, the construction of the potential ensures that the Maslov number $m(L)$ is a multiple of $\gcd(2r(\Lambda_+),|c_1|,\ldots,|c_k|)$. To rule out strict multiples, we proceed by induction. First, since the boundary link $\La_+$ generates $H_1(L_{[1,i_1-1]})$, the Maslov number of $L_{[1,i_1-1]}$ is exactly $m_1 = 2r(\La_+)$. 

     If $c_1$ connects two different components of $L_{[1,i_1-1]}$, then we know by construction that the extended Lagrangian $L_{[1,i_1]}$ retains the same Maslov number $m_2 = m_1$. If $c_1$ has its endpoints on the same component of $L_{[1,i_1-1]}$, assume to the contrary that the Maslov number $m$ of $L_{[1,i_1]}$ is a strict multiple of $\gcd(m_1,|c_1|)$. Since $m$ must also divide $m_1$, composing the prescribed $\Z/m_1\Z$-graded potential on $L_{[1,i_1-1]}$ with the covering map $\R/m_1\Z \to \R/m\Z$ yields a $\Z/m\Z$-graded potential. It follows from our assumption that the grading of $c_1$ with respect to this new potential, $|c_1| \mod m$, is strictly nonzero. Since this $\Z/m\Z$-potential is well-defined up to a global shift on the component $c_1$ belongs to, Proposition~\ref{proposition: DecomposableMaslovExtension} dictates that no $\Z/m\Z$-graded Maslov potential on $L_{[1,i_1-1]}$ can extend to $L_{[1,i_1]}$, which is a contradiction. 
     
     We have thus proven that the Maslov number of $L_{[1,i_1]}$ is $m_2 = \gcd(m_1,|c_1|)$. Since a basis of $H_1(L_{[1,i_1]})$ generates $H_1(L_{[1,i_2-1]})$, the Lagrangian $L_{[1,i_2-1]}$ retains the Maslov number $m_2$. The result then follows by induction.
\end{proof}

\begin{remark}
   Let $L = L_1 \circ L_2$ be a concatenation of a decomposable cobordism $L_1$ with a saddle cobordism $L_2$ obtained by pinching a Reeb chord $c$ of $\La_2$. A local computation shows that if $c$ has nonzero grading with respect to a $\Z/m\Z$-graded Maslov potential on $L_1$, then $L$ acquires either an additional boundary component or a new handle with Maslov index $|c| \pmod m$. Consequently, Theorem~\ref{thm: MaslovNumber} can alternatively be proven by observing that the Maslov number of a decomposable cobordism $L$ is the greatest common divisor of the Maslov indices of a basis for $H_1(L)$, represented by the intermediate Legendrian links and the genera of $L$.
\end{remark}

We briefly describe a combinatorial recipe to compute the Maslov number of a decomposable cobordism $L$ from $\La_-$ to $\La_+$. Following the proof of Proposition~\ref{proposition: DecomposableMaslovExtension}, to compute the gradings of the Reeb chords resolved in $L$, it suffices to keep track of the changes to a combinatorial Maslov potential on $\La_+$ over $L$. Local computations show that such changes under the Reidemeister moves and the saddle cobordism in the front diagram are as depicted in Figure~\ref{fig: Reidemeister} and Figure~\ref{fig: Saddle}. For a saddle cobordism, note that the potential must be adjusted by the grading of the pinched chord on the top. Using this diagrammatic calculus, the extension of a combinatorial Maslov potential on $\La_+$ over $L$ then follows the same procedure as described before Theorem~\ref{thm: MaslovNumber}. 

\begin{center}
		\begin{figure}[h!]{ \includegraphics[width=\textwidth]{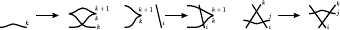}}\caption{Combinatorial rules for computing the change of a Maslov potential under the Reidemeister moves.}\label{fig: Reidemeister}\end{figure}
\end{center}

\begin{center}
		\begin{figure}[h!]{ \includegraphics[width=0.4\textwidth]{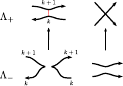}}\caption{Saddle cobordism in front (left) and Lagrangian projection (right). The front diagram is labeled with the change of a Maslov potential.}\label{fig: Saddle}\end{figure}
\end{center}

As a brief application of Theorem~\ref{thm: MaslovNumber}, we conclude this section by showing that fillings of a fixed Legendrian can have different Maslov numbers.

\begin{proposition}\label{}
    There exist Legendrians $\La_k$ that admit an embedded Maslov-0 filling and an embedded Maslov-$k$ filling for all non-negative $k\in 2\Z$.
\end{proposition}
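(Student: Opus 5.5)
The plan is to build $\La_k$ explicitly as a front diagram and to produce both fillings as decomposable cobordisms, that is, sequences of pinch moves followed by minimum cobordisms. Their Maslov numbers are then read off from Theorem~\ref{thm: MaslovNumber}. Take $\La_k$ to be a knot with $\rot(\La_k)=0$, so that $m_1=2r(\La_k)=0$ and Reeb chord gradings are honest integers independent of the potential. Theorem~\ref{thm: MaslovNumber} then says a decomposable filling has Maslov number $\gcd(|c_1|,\dots,|c_k|)$. Here each $|c_j|$ is computed from the potential propagated down the filling via the rules of Figures~\ref{fig: Reidemeister} and~\ref{fig: Saddle}, and chords joining distinct components are normalized to grading $0$. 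So it suffices to exhibit two pinch sequences with $\gcd(|c_1|,\dots,|c_k|)=0$ and $=k$ respectively.

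For the front, I would start from a Maslov-0 fillable double twist front $\La[w,c]$ with $w$ consisting of $S$ tangles, and modify a single twist region. The modification inserts a pair of strands carrying $k/2$ zig-zags of opposite orientation on each, so the total rotation stays $0$. The Maxwell potential on these strands then differs by $k$, and some crossing $a$ between them becomes a contractible positive chord of degree $k$. The same region must also contain a different crossing $b$ of degree $0$ whose resolution, after Legendrian Reidemeister moves, cancels these zig-zags in pairs. The first filling pinches $b$ and then only degree-$0$ chords, terminating in max-tb unknots filled by minimum cobordisms; this gives $m(L)=\gcd(0,\dots,0)=0$. The second filling pinches $a$ first, so $m_2=\gcd(0,k)=k$. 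It then continues with chords whose gradings, mod $k$ relative to the propagated potential, are all $0$ or which join distinct components; this gives $m(L)=k$. The case $k=0$ is covered by taking both fillings to be the Maslov-0 one.

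The bookkeeping for the gradings is routine once the front is fixed. The potential is tracked through each Reidemeister move and pinch, and each pinched chord is checked to be a contractible positive crossing in the Lagrangian projection, for instance by using Ng's resolution so that all chords are maxima of difference functions. Embeddedness is automatic because each filling is a concatenation of elementary cobordisms. As a consistency check, both fillings must have $\chi=-\tb(\La_k)$ by \cite{Chantraine10}, so both pinch sequences must use the same number of saddles relative to unknot caps.

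I expect the main obstacle to be designing the front so that both pinch sequences actually terminate in a disjoint union of max-tb unknots. In particular, after pinching the degree-$k$ chord $a$, the resulting link must still be fillable rather than acquiring a stabilized component, since a stabilized component obstructs fillability by \cite{Chantraine10}. To handle this, I would first try making $a$ separate the link into two components, each with rotation numbers $\pm k/2$ and maximal tb. Later pinches joining these components then have grading $0$ after the shift allowed in the construction preceding Theorem~\ref{thm: MaslovNumber}, which keeps the Maslov number at exactly $k$ while permitting the link to close up into unknots.
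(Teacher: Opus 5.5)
Your framework of decomposable fillings built from pinch moves, with the Maslov number read off from Theorem~\ref{thm: MaslovNumber}, is the one the paper uses. However, the proposal never exhibits $\La_k$, and for an existence statement that is the entire content. The sketched front is not specified precisely enough to check, and as described it has problems. A zig-zag inserted on a strand is a stabilization, since it can be slid along the knot. It therefore lowers $\tb$ and destroys fillability by \cite{Chantraine10}, unless the new cusps are genuinely entangled with the rest of the front (as in the $Z$ and $S$ tangles), which you do not describe. Also, if $w$ consists only of $S$ tangles, the crossings $a_1,\dots,a_c$ of $\La[w,c]$ have degree $2d\neq 0$. That knot is not Maslov-$0$ fillable (Proposition~\ref{proposition: Maslov-m fillability}), so it is not a valid starting point. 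You acknowledge that arranging both pinch sequences to end in max-tb unknots is the main obstacle; that obstacle is left unresolved.

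There is also an error in how you propose to keep the Maslov number at exactly $k$. The shift in the construction preceding Theorem~\ref{thm: MaslovNumber} is permitted only when a chord joins two different connected components of the cobordism $L_{[1,i_j-1]}$, not two components of the intermediate link. Every component of a decomposable cobordism reaches the top end. So when $\La_+=\La_k$ is a knot, the cobordism is always connected and no shift is ever available. After pinching $a$, the two link components bound a connected pair of pants. A later chord joining them has a well-defined grading modulo $k$ that must be checked to vanish; otherwise the Maslov number drops to a proper divisor of $k$.

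The paper sidesteps the construction by using the family $\La_k$ of \cite{LipshitzNg}, whose chord gradings are computed there. After a preparatory Reidemeister II move, resolving the degree-$0$ chords $a_1,a_2$ via the $D_4^-$ cobordism yields a max-tb unknot, hence a Maslov-$0$ filling. Resolving the degree-$k$ chords $a_4,a_7$ also yields a max-tb unknot. Because the second chord again has degree $\equiv 0 \pmod k$, Theorem~\ref{thm: MaslovNumber} gives Maslov number exactly $k$.
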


\begin{proof}
    Let $\La_k$, $k\in 2\Z$, $k\geq 2$ denote the family of Legendrians defined in \cite[Section 2.3]{LipshitzNg}. The gradings of the Reeb chords of $\La_k$ are given in Subsection 2.3.1 of loc.~cit.\ and we adopt the notation therein. We construct fillings of $\La_k$ by resolving certain crossings via a composition of a Reidemeister II move and the cobordism depicted in Figure~\ref{fig: D4minus}; see Figure~\ref{fig: preparatory_isotopy} for the preparatory Reidemeister II move. To construct a Maslov-0 filling of $\La_k$, resolve the crossings labeled $a_1$ and $a_2$. The result is isotopic to a max-tb unknot, and, as $|a_1|=|a_2|=0$, we obtain a Maslov-0 filling. To produce a Maslov-$k$ filling, we resolve the crossings $a_4$ and $a_7$ to again arrive at a max-tb unknot. Since $|a_1|=|a_2|=k$, Theorem~\ref{thm: MaslovNumber} implies that the Maslov number of the resulting filling is $k$. Note that $k\in 2\Z$ implies that the crossing resolutions in the constructions above represent orientable 1-handle attachments. 
\end{proof}

\begin{center}
		\begin{figure}[h!]{ \includegraphics[width=0.4\textwidth]{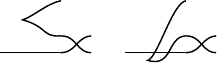}}\caption{Preparatory isotopy we compose with the 1-handle attachment in Figure~\ref{fig: D4minus} to resolve a crossing in the front diagram.}\label{fig: preparatory_isotopy}\end{figure}
\end{center}

\subsection{Legendrian weaves}
\label{ssec:weave}

In this section, we describe Legendrian weaves, a geometric construction of Casals and Zaslow that can be used to combinatorially represent Legendrian surfaces $\w$ in the 1-jet space $J^1\D^2=T^*\D^2\times \R_z$ 
by the singularities of their front projection in $\D^2\times \R_z$. As explained below, the motivation for this approach is that the Lagrangian projection of these surfaces are exact Lagrangian surfaces in $T^*\D^2$, thus providing an alternative way of constructing fillings of certain Legendrian links. In this paper, this material is used primarily in the construction of fillings of the $(2,6)$-cables of our Legendrian representatives of $m(7_2)$; the reader interested in our other results may safely skip this section.

Let $\beta\in Br_N^+$ be a positive braid. We denote by $\La(\beta)$ the $(-1)$-closure of $\beta$; see Figure~\ref{fig: -1closure} for a schematic of the front projection. The contact geometric description of an embedded exact Lagrangian filling of $\La(\beta)$ obtained from a Legendrian weave is as follows. We first construct a Legendrian surface $\w$ in $J^1\D^2=T^*\D^2\times \R_z$ by specifying the (singularities of the) front projection of $\w$. We equip $T^*\D^2$ with the symplectic form $d(e^r\alpha)$ where $\ker(\alpha)=\ker(dy_1-y_2d\theta)$ is the standard contact structure on $J^1(\partial \D^2)$ and $r$ is the radial coordinate. This choice of symplectic form ensures that the flow of $e^r\alpha$ is transverse to $J^1\S^1\cong \R^2\times \partial \D^2$ thought of as the cotangent fibers along the boundary of the 0-section. The Lagrangian projection of $\w$ is then a Lagrangian surface in $(T^*\D^2, d(e^r\alpha))$. Moreover, since $\w\subseteq (J^1\D^2, \ker (dz-e^r\alpha))$ is Legendrian, we immediately obtain the function $z:\Pi_{xy}(\w)\to \R$ satisfying $dz=e^r\alpha|_{\Pi_{xy}(\w)}$, proving that $\Pi_{xy}(\w)$ is exact. 

The boundary of $\Pi_{xy}(\w)$ is taken to be a positive braid $\beta$ in $J^1\S^1$ so that we regard it as a Legendrian link in a contact neighborhood of $\partial \D^2$. As the 0-section of $J^1\S^1$ is Legendrian isotopic to a max-tb standard Legendrian unknot, we can take $\partial\Pi_{xy}(\w)$ to equivalently be the standard satellite of the standard Legendrian unknot. The result of satelliting the positive braid $\beta$ in $J^1\S^1$ about the standard Legendrian unknot in $\R^3$ is the $(-1)$-framed closure of $\beta$ in standard contact $\R^3$; we will use this identification to construct fillings of Legendrian links in $\R^3$.   

The double points of the Lagrangian projection of a weave surface $\w$ correspond to its Reeb chords, which are critical points of the height-difference functions between pairs of sheets of $\w$. Consequently, if $\w$ has no Reeb chords, then $\Pi_{xy}(\w)$ is an embedded exact Lagrangian filling of $\partial(\w)$. We denote such a filling associated to the weave $\w$ by $L_\w$.

\subsubsection{$N$-Graphs and singularities of fronts} To construct a Legendrian weave surface $\w$ in $J^1\D^2,$ we combinatorially encode the singularities of its front projection in a colored graph. Local models for these singularities of fronts are given by Arnol'd \cite[Section 3.2]{ArnoldSing}; the three singularities that appear in our construction are pictured in Figure~\ref{fig: wavefronts} and called the elementary Legendrian cobordisms.

	\begin{center}
		\begin{figure}[h!]{ \includegraphics[width=.8\textwidth]{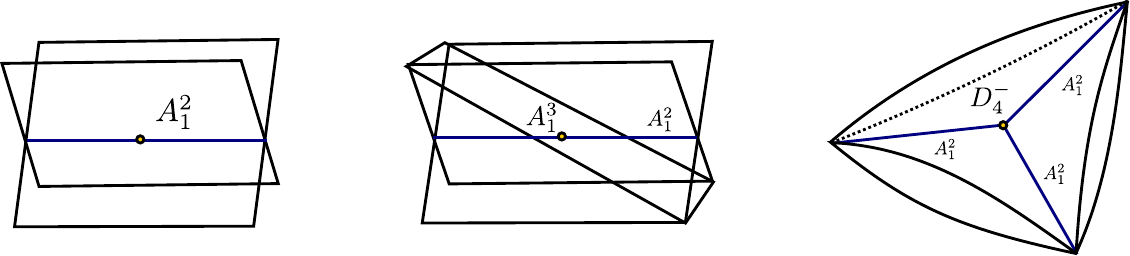}}\caption{Singularities of front projections of Legendrian surfaces. Labels correspond to notation used by Arnol'd in his classification.}
			\label{fig: wavefronts}\end{figure}
	\end{center}

We construct $\w$ so that the boundary of our singular surface $\Pi_{xz}(\w)$ is the front projection of an $N$-stranded positive braid $\beta$. $\Pi(\w)$ can therefore be pictured as a collection of $N$ sheets away from its singularities. We describe the behavior at the singularities as follows:

\begin{enumerate}
    \item The \dfn{$A_1^2$ singularity} occurs when two sheets in the front projection intersect. This singularity can be thought of as the trace of a constant Legendrian isotopy in the neighborhood of a crossing in the front projection. 
    \item The \dfn{$A_1^3$ singularity} occurs when a third sheet passes through an $A_1^2$ singularity. This singularity can be thought of as the trace of a Reidemeister III move in the front projection.
    \item A \dfn{$D_4^-$ singularity} occurs when three $A_1^2$ singularities meet at a single point. This singularity can be thought of as the trace of a 1-handle attachment in the front projection. 
\end{enumerate}

Having identified the singularities of fronts of a Legendrian weave surface, we encode them by a colored graph $\Gamma\subseteq \D^2$. 

\begin{definition}
    An \dfn{$N$-graph} $\Gamma$ is a planar graph with edges labeled by positive Artin generators of the braid group $\Br_N$. We require that any pair of edges labeled $\sigma_i$ and $\sigma_{i+1}$ can only meet at a hexavalent vertex with alternating labels, while any edges labeled $\sigma_i$ can only meet at trivalent vertices. Edges labeled by $\sigma_i$ and $\sigma_j$ for $|i-j|\geq 2$ can only meet at tetravalent vertices with alternating labels.
\end{definition}

To obtain a Legendrian weave $\w(\Gamma)\subseteq (J^1\D^2,\xi_{st})$ from an $N$-graph $\Gamma$, we glue together the local germs of singularities according to the edges of $\Gamma$. First, consider $N$ horizontal sheets $\D^2\times \{1\}\sqcup \D^2\times \{2\}\sqcup \dots \sqcup \D^2\times \{N\}\subseteq \D^2\times \R$ and an $N$-graph $\Gamma\subseteq \D^2\times \{0\}$. We construct the associated Legendrian weave $\w(\Gamma)$ as follows  \cite[Section 2.3]{CasalsZaslow}.

	\begin{itemize}
		\item Above each edge labeled $\sigma_i$, insert an $A_1^2$ crossing between the $\D^2\times\{i\}$ and $\D^2\times \{i+1\}$ sheets so that the projection of the $A_1^2$ singular locus under $\Pi:\D^2\times \R \to \D^2\times \{0\}$ agrees with the edge labeled $\sigma_i$.   
		\item At each trivalent vertex $v$ involving three edges labeled by $\sigma_i$, insert a $D_4^-$ singularity between the sheets $\D^2\times \{i\}$ and $\D^2\times\{i+1\}$ in such a way that the projection of the $D_4^-$ singular locus agrees with $v$ and the projection of the $A_2^1$ crossings agree with the edges incident to $v$.
		\item At each hexavalent vertex $v$ involving edges labeled by $\sigma_i$ and $\sigma_{i+1}$, insert an $A_1^3$ singularity along the three sheets in such a way that the origin of the $A_1^3$ singular locus agrees with $v$ and the $A_1^2$ crossings agree with the edges incident to $v$.
	\end{itemize} 
	\begin{center}
		\begin{figure}[h!]{ \includegraphics[width=\textwidth]{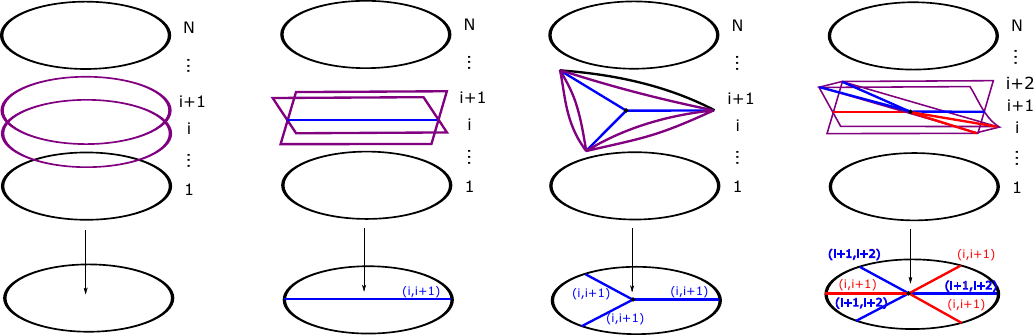}}\caption{The weaving of singularities of fronts along the edges of the $N$-graph (courtesy of Roger Casals and Eric Zaslow, used with permission). Gluing these local models according to the $N$-graph $\Gamma$ yields the weave $\w(\Gamma)$.}	\label{fig:Weaving}\end{figure}
	\end{center}
	
 Note that the tetravalent vertex does not correspond to a distinct singularity of the front projection, as it represents a pair of disjoint $A_1^2$ singularities. If we take an open cover $\{U_i\}_{i=1}^m$ of $\D^2\times \{0\}$ by open disks, refined so that any disk contains at most one of these three features pictured in Figure~\ref{fig:Weaving}, we can glue together the resulting fronts according to the intersection of edges along the boundary of our disks. Specifically, if $U_i\cap U_j$ is nonempty, then we define $\Pi(\w(U_1\cup U_2))$ to be the front resulting from considering the union of fronts $\Pi(\w(U_1))\cup\Pi(\w(U_j))$ in $(U_1\cup U_2)\times \R$.

	\begin{definition}\label{def: weave}
	    The \dfn{Legendrian weave} $\w(\Gamma)\subseteq (J^1\D^2, \xi_{st})$ is the Legendrian lift of the front $\Pi(\w(\cup_{i=1}^m U_i))$ given by gluing the local fronts of singularities together according to the $N$-graph $\Gamma$.
	\end{definition}

Below we will often denote a weave by $\w$ unless we wish to emphasize its relationship with the underlying $N$-graph $\Gamma$, in which case we will retain the notation $\w(\Gamma)$.
\subsubsection{Homology of weaves}	
\label{sssec:weave-homology}

Topologically, a Legendrian weave $\w$ is an $N$-fold branched cover of $D^2$ with simple branch points corresponding to the trivalent vertices of $\Gamma$. The $N$-graph description of $\w$ provides a combinatorial way of identifying particular cycles in $H_1(L_\w;\Z) \simeq H_1(\w;\Z)$ known as $\L$-compressing cycles.

\begin{definition}\label{def: L-compressing cycle}
    An \dfn{$\L$-compressing cycle} $\gamma\in H_1(L)$ of an exact Lagrangian filling $L$ is a homology cycle that bounds an embedded Lagrangian disk in the complement of $L$. An \dfn{$\L$-compressing system} of $L$ is a basis of $\L$-compressing cycles of $H_1(L)$.
\end{definition}

As we will describe in Section~\ref{ssec:cluster}, $\mathbb{L}$-compressing cycles play a crucial role in defining the cluster seed associated to an exact Lagrangian filling of $\La$ in its augmentation variety. For a general filling, determining the existence of an $\L$-compressing cycle -- let alone explicitly identifying one -- remains an open problem. However, when the filling arises from a weave, these cycles can be identified explicitly in the $N$-graph of the weave as follows. First, consider an edge connecting two trivalent vertices in the $N$-graph of a weave. An examination of its local model as in \cite[Figure 11]{CasalsZaslow} shows that each such edge corresponds to a $1$-cycle, and we refer to it as a \dfn{short {\sf I}-cycle}. Similarly, any three edges of the same color that connect a single hexavalent vertex to three trivalent vertices correspond to another 1-cycle, which we refer to as a \dfn{short {\sf Y}-cycle}. See Figure \ref{fig:ShortCycles} for examples of a short {\sf I}-cycle (left) and a short {\sf Y}-cycle (right). We can also consider a sequence of edges starting and ending at trivalent vertices and passing directly through any number of hexavalent vertices, as pictured in Figure \ref{fig:longI-cycle}. Such a cycle is referred to as a \dfn{long {\sf I}-cycle}. Finally, we can combine any number of {\sf I}-cycles and short {\sf Y}-cycles to describe a wide class of 1-cycles as trees with leaves on trivalent vertices and edges passing directly through or branching at hexavalent vertices; we refer to such cycles as \dfn{{\sf Y}-trees}.

 	\begin{center}
		\begin{figure}[h!]{ \includegraphics[width=.45\textwidth]{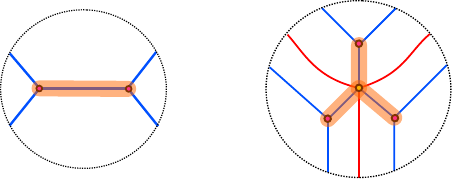}}\caption{Local models of a short {\sf I}-cycle (left) and a short {\sf Y}-cycle (right) depicted by the edges highlighted in orange.
			}
		\label{fig:ShortCycles}\end{figure}
	\end{center}

	\begin{center}
		\begin{figure}[h!]{ \includegraphics[width=.6\textwidth]{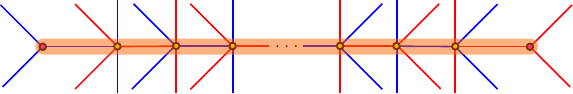}}\caption{A long {\sf I}-cycle depicted by the edges highlighted in orange.}
			\label{fig:longI-cycle}\end{figure}
	\end{center}

For a pair of oriented 1-cycles $\gamma_1, \gamma_2\in H_1(\w(\Gamma))$ intersecting transversely in $\w$, we refer to the signed count of the points of intersection of $\gamma_1$ and $\gamma_2$ as their algebraic intersection and denote it by $\langle \gamma_1, \gamma_2\rangle$. This intersection form governs the cluster structures arising from Lagrangian fillings and provides a powerful invariant for distinguishing them.

In Figure~\ref{fig: CycleIntersections}, we depict cycle intersections near a trivalent (left) and hexavalent (middle and right) vertex. Assuming that the cycles do not intersect at other vertices of the weave, the corresponding intersection quivers are illustrated below. The quiver vertices are color-coded to match the cycles they represent, with an arrow drawn from vertex $1$ to vertex $2$ if $\langle \gamma_1, \gamma_2\rangle = 1$. Note that the {\sf I}-cycle and {\sf Y}-cycle meeting at a hexavalent vertex as in the rightmost local picture do not intersect.  
   
	\begin{center}
		\begin{figure}[h!]{ \includegraphics[width=.5\textwidth]{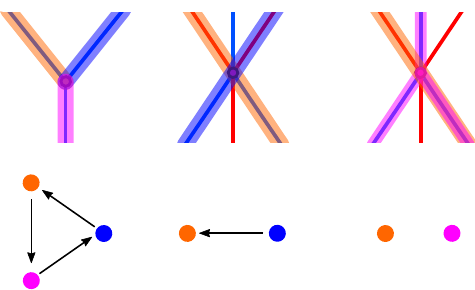}}\caption{Combinatorial rules for locally computing cycle intersections in an inductive weave.}\label{fig: CycleIntersections}\end{figure}
	\end{center}

\section{Legendrian invariants and cluster algebras}\label{sec: algebra_background}

Having established the central geometric objects of this paper, we now turn to the algebraic invariants used to study them, drawing on tools from Legendrian contact homology and cluster theory.

\subsection{The Chekanov-Eliashberg DGA} 
\label{ssec:ce-dga}

 In this section, we review a combinatorial description of the commutative Cheaknov-Eliashberg differential graded algebra (DGA) for Legendrian links in $\R^3$. By the functoriality of Legendrian contact homology, exact Lagrangian cobordisms induce maps of these DGAs, and we describe them explicitly for the elementary cobordisms. These algebraic structures lay the groundwork for Section~\ref{ssec:aug}, where we construct the graded augmentation variety of a Legendrian link, and for Section~\ref{ssec:cluster}, where we introduce the cluster-theoretic framework used to study these varieties. Our exposition serves primarily to set notation and context. For more depth on these materials, we refer the reader to \cite{ENsurvey,ENS,Ng03} for the Chekanov-Eliashberg DGA and its augmentations, and to \cite{EHK, YuPan, CasalsNg} for the DGA maps induced by exact Lagrangian cobordisms.

Let $\La$ be an oriented Legendrian link in $\R^3$, decorated with at least one base point on each component that avoids the Reeb chords. We describe the \emph{commutative} Chekanov-Eliashberg DGA $\SA(\La)$ over a field $\Bbbk$ using the Lagrangian projection $\Pi_{xy}(\La)$. Let $\SR$ denote the set of Reeb chords of $\La$ and $\ST = \{t_1,\ldots,t_l\}$ the set of base points on $\La$. Define the algebra $\SA(\La)$ to be the commutative algebra generated by $\SR \cup \{t_i,t_i^{-1}\ |\ t_i \in \ST\}$ modulo the relations $t_it_i^{-1} = t_i^{-1}t_i = 1$. With the gradings of the Reeb chords $|c|$ defined as in Section~\ref{ssec:maslov} and $|t_i| = 0$ for all $t_i \in \ST$, $\SA(\La)$ has a well-defined $\Z/2r(\La)\Z$ grading up to a global shift on each component of $\La$, where $r(\La)$ is the greatest common divisor of the rotation numbers of the components of $\La$.

The differential $\partial$ of $\SA(\La)$ counts immersed disks in the Lagrangian diagram $\pi_{xy}(\La)$, which constitute rigid pseudo-holomorphic disks in the symplectization $\R \times \R^3$ with boundary on the Lagrangian $\R \times \La$; see \cite{EkholmEtnyreSullivan05b,ENS} for more on the latter perspective. For the combinatorial description, we begin by labeling the four quadrants of every crossing in $\Pi_{xy}(\La)$ with a Reeb sign as in Figure~\ref{fig: DGASigns} (left). Specifically, we label a quadrant with a $+$ if traversing the boundary of the quadrant in the counterclockwise direction one moves from an understand to an overstrand; otherwise, we label it with a $-$. To define the differential over a field of characteristic not equal to $2$, we in addition shade the quadrants of every crossing as in Figure~\ref{fig: DGASigns} (right). 

	\begin{center}
		\begin{figure}[h!]{ \includegraphics[width=.7\textwidth]{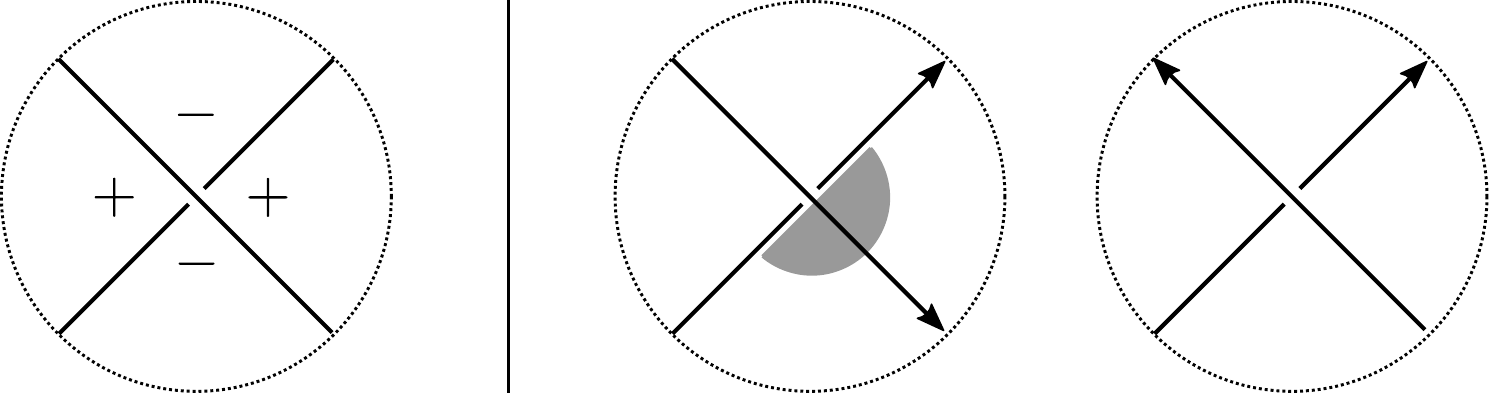}}\caption{Reeb signs (left) and shaded quadrants (right) at a crossing of $\Pi_{xy}(\La)$. 
        }\label{fig: DGASigns}\end{figure}
	\end{center}

To compute the differential of a generator $a \in \SR$, define $\Delta(a,\mathbf{b})$ to be the set of immersions $u: (D_{n+1},\partial D_{n+1}) \to (\R^2,\Pi_{xy}(\La))$ (up to smooth reparametrization) from a disk with $n+1$ number of boundary punctures $p,q_1,\ldots,q_n$ to $\R^2$ that extend continuously to the closed disk and satisfy the following asymptotics conditions:
\begin{itemize}
    \item $u$ sends a neighborhood of $p$ to a positive quadrant of the crossing $a$;
    \item for $i = 1,\ldots,n$, $u$ sends a neighborhood of $p_i$ to a negative quadrant of a crossing $b_i$.
\end{itemize}
Define $w(u)$ to be the word in $\SA$ obtained by traversing the image of $\partial D_{n+1}$ in the counterclockwise direction starting at $a$ and successively reading off the Reeb chords $b_i$ and base points along the way. In particular, we record $t_i$ or $t_i^{-1}$ depending on whether the orientations of $\partial D_{n+1}$ and $\La$ agree or not at the base point. Let $s(u)$ denote the number of shaded quadrants in the image of $u$. We define the differential on generators in $\SR$ by
\begin{equation*}
    \partial a = \sum_{u \in \Delta(a,\mathbf{b})} (-1)^{s(u)} w(u).
\end{equation*}
Finally, define the differential to vanish on the base point generators $t_i^{\pm 1}$ and extend it over all of $\SA$ by linearity and the signed Leibniz rule $\partial (ww') = (\partial w)w'+(-1)^{|w|}w(\partial w')$. It is well-known that $\partial$ has degree $-1$ and squares to zero, and that the DGA is a Legendrian isotopy invariant in the following sense.

\begin{theorem}[\cite{Chekanov}]
    $(\SA(\La),\partial)$ is a well-defined differential graded algebra and the stable tame isomorphism type of  $(\SA(\La),\partial)$ is an invariant of $\La$ under Legendrian isotopy and choice of base points.
\end{theorem}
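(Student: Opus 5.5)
The theorem is Chekanov's, so the plan is to recall the standard combinatorial proof in the commutative setting with base points, as in \cite{Chekanov, ENS}. There are two parts: first that $\partial$ is a well-defined degree $-1$ derivation with $\partial^2=0$, and second that the stable tame isomorphism class is unchanged under Legendrian isotopy and under changes of base points.

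For well-definedness, I will first check that $\partial$ lowers degree by one. This is a comparison of gradings at the corners of an immersed disk. Summing the rotation of the boundary of $u$ shows that $|a|-\sum_i|b_i| = 1$ modulo $2r(\La)$, using the combinatorial Maslov potential description of $|c|$ from Section~\ref{ssec:maslov}; base points have degree $0$ and do not contribute. Finiteness of the sum follows from the action filtration. By Stokes' theorem the height of $a$ exceeds the sum of the heights of the $b_i$, so only finitely many disks contribute.

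For $\partial^2=0$, I will examine the terms of $\partial^2 a$, each of which corresponds to a pair $(u,v)$ of disks glued at a negative corner $b$ of $u$ that is the positive corner of $v$. Each such glued pair is one end of a one-parameter family of immersed polygons with a single obtuse (non-convex) corner. Cutting at that corner in the other direction yields exactly one other broken pair, so the broken configurations come in pairs. The step I expect to be the main obstacle is the sign bookkeeping. I must check that the shaded-quadrant count $s(u)$, together with the signed Leibniz rule, makes paired terms cancel in characteristic $\neq 2$. The base point words must also agree, which holds because both configurations traverse the same boundary arcs. I would handle this by a local case analysis of the two ways a corner can be cut, following Ng's and Ekholm--Etnyre--Sullivan's orientation conventions.

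For invariance, every Legendrian isotopy factors, up to planar isotopy of $\Pi_{xy}(\La)$, through Lagrangian Reidemeister moves of types II and III, together with base points moving past crossings. Planar isotopies preserve the disk counts. A type III move induces an elementary tame automorphism of the form $a\mapsto a+(\text{word in lower generators})$. A type II move creates a pair of chords $e_1, e_2$ with $\partial e_1 = e_2 + \cdots$. Here I will construct a tame isomorphism to a stabilization of the original DGA, defining the map inductively in the height filtration and using Chekanov's map $\phi(e_2) = \partial e_1 - e_2$ corrected by disks passing through the new region. Moving a base point across a crossing conjugates the corresponding chord by $t^{\pm1}$, which is again tame. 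Adding a base point to a component amounts to the substitution $t\mapsto t_1t_2$, which gives an isomorphism after identifying the appropriate localized generators.
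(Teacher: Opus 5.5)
The paper gives no proof of this statement; it is quoted as background with a citation to \cite{Chekanov}. Your outline is the standard argument behind that citation: degree and finiteness from the rotation and action computations, $\partial^2=0$ by pairing the two ways of cutting an immersed polygon at its unique obtuse corner, and invariance via Lagrangian Reidemeister II/III moves plus base point moves. So in approach there is nothing to compare, but two steps are wrong as written.

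Your final sentence is false as stated. Adding a base point enlarges the degree-zero coefficients, and stable tame isomorphism does not survive this. Take the max-tb unknot, whose only Reeb chord $a$ has degree $1$. With one base point you get $H_0=\Bbbk[t^{\pm1}]/(\partial a)\cong\Bbbk$, but with two you get $H_0=\Bbbk[t_1^{\pm1},t_2^{\pm1}]/(\partial a)\cong\Bbbk[t_2^{\pm1}]$. What is true (Ng--Rutherford) is weaker. After sliding the two base points next to each other, the new DGA is the extension of scalars of the old one along $t\mapsto t_1t_2$. So ``choice of base points'' in the statement has to mean the positions of a fixed number of base points on each component, which is the only case the paper uses. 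A smaller wording point: sliding a base point past an endpoint of a chord multiplies that chord by $t^{\pm1}$; it does not conjugate it.

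In the type II step, $e_2\mapsto\partial e_1-e_2$ is not an automorphism: if $\partial e_1=e_2+v$, then $\partial e_1-e_2=v$ does not involve $e_2$ at all. The elementary automorphism you want fixes every other generator and sends $e_2\mapsto\partial e_1=e_2+v$. Conjugating by it gives $\partial e_1=e_2$ exactly. Only then does Chekanov's induction up the height filtration, built from the homotopy operator with $H(e_2)=e_1$, remove $e_1,e_2$ from the differentials of the higher generators.

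Finally, the sign bookkeeping you defer is not in \cite{Chekanov}, which works over $\Z/2$ with no base points. It is the content of \cite{ENS}, and it is needed for the Reidemeister maps as well as for $\partial^2=0$. Rather than redoing it with Koszul signs in the commutative algebra, it is cleaner to start from the noncommutative theorem over $\Z[t^{\pm1}]$. Then divide by the two-sided ideal generated by graded commutators. The signed Leibniz rule shows this ideal is $\partial$-stable, and elementary automorphisms and stabilizations descend to the quotient, so invariance passes to it. This also shows that ``commutative'' must be read as graded-commutative. In characteristic $\neq 2$, a strictly commutative algebra is incompatible with the signed Leibniz rule on a product of two odd generators.
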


Briefly, a stable tame isomorphism is a composition of elementary automorphisms after stabilizing the algebra with finitely many acyclic pairs of generators $e_1,e_2$ with the relation $\partial e_1 = e_2$. In particular, a stable tame isomorphism $\Phi:\SA(\La) \to \SA(\La')$ induces an isomorphism $H_*(\SA(\La)) \simeq H_*(\SA(\La'))$. We refer the reader to \cite[Theorem 3.3]{Chekanov} or \cite[Section 3.2]{ENsurvey} for a precise definition.

By \cite{EHK} and \cite{YuPan}, an exact Lagrangian cobordism $L$ from $\La_-$ to $\La_+$ induces a DGA map $\Phi_L: (\SA(\La_+)[H_1(L)],\partial) \to (\SA(\La_-)[H_1(L)],\partial)$, where $\partial$ vanishes on invertible elements in $H_1(L)$. Specifically, $\phi_L(a)$ counts rigid pseudo-holomorphic disks with boundary on $L$ that have a single boundary puncture asymptotic to the Reeb chord $a$ of $\La_+$, and several boundary punctures asymptotic to Reeb chords of $\La_-$. By the dimension formula of the moduli spaces of these disks \cite[Lemma 3.8]{EHK}, $\phi_L$ has degree $0$ in the grading $\Z/m(L)\Z$. Moreover, the assignment $L \mapsto \Phi_L$ is functorial with respect to the composition of cobordisms: for a composition of cobordisms $L = L_1 \circ L_2$, we have $\Phi_{L} =  \Phi_{L_2} \circ \Phi_{L_1}$ via the inclusion $H_1(L_1) \hookrightarrow H_1(L)$.  For our purpose of computing the DGA maps induced from decomposable cobordisms, we briefly review the DGA maps associated to the elementary cobordisms following \cite{EHK,YuPan,CasalsNg}.

\begin{itemize}
    \item \textbf{Isotopy cobordisms.} As discussed above, an isotopy cobordism $L$ from $\La_-$ to $\La_+$ induces a stable tame isomorphism $\Phi_L:\SA(\La_+) \to \SA(\La_-)$; see \cite[Section 4.1]{CasalsNg} for more details. 
    \item \textbf{Saddle cobordisms.} Let $L_a$ be a saddle cobordism from $\La_-$ to $\La_+$ obtained by resolving the Reeb chord $a$. Moreover, assume that $a$ is \dfn{simple}, meaning for any immersed disk $u$ in $\La_+$ with $k$ positive corners at $a$, a positive convex corner at some Reeb chord besides $a$, and all other corners (including concave corners) being negative, then $u$ must have at least $k$ concave corners. As explained in Section~\ref{ssec: filling}, all of the Reeb chords that we pinch in this paper are simple. 
    
    For the definition of the DGA map $\Phi_{L_a}$, we decorate $\La_-$ with two additional base points $s$ and $-s^{-1}$ of grading $0 \in \Z/m(L)\Z$ on the top and bottom stand of the place of resolving. Geometrically, the concatenation of the curves connecting $s$ and $-s^{-1}$ to the saddle point is a relative cycle in $H_1(L_a,\La_- \cup \La_+)$. As a result, we can use $s$ to represent a cycle in $H_1(L_1)$, viewed as the Poincaré dual of the $1$-cycle; see \cite[Section 3.1]{YuPan} and \cite[Section 3.5]{CasalsNg} for more details. Let $\Delta_a(b)$ denote the set of immersed disks in $\La_+$ with convex corners, comprising exactly two positive corners at $a$ and $b$, and possibly more negative corners. For $u \in \Delta_a(b)$, let $w_1(u)$ denote the word of negative corners and base points encountered as one traverses $\partial u$ from $b$ to $a$, and $w_2(u)$ the analogous word as one traverses $\partial u$ from $a$ to $b$. We define the map $\Phi_{L_a}:\SA(\La_+)[s^{\pm}] \to \SA(\La_-)[s^{\pm}]$ on the generators of $\SA(\La_+)[s^{\pm}]$ by
    \begin{align*}
        \Phi_{L_a}(a) &= s,\quad \Phi_{L_a}(t^{\pm 1}) = t^{\pm 1}, \\
        \Phi_{L_a}(b) &= b + \sum_{u \in \Delta_a(b)} (-1)^{s(u)}w_1(u)s^{-1}w_2(u),\quad \text{for any Reeb chord}\ b \not = a.
    \end{align*}
    \item \textbf{Minimum cobordisms.} Finally, let $L$ be the minimum cobordism from $\La_-$ to $\La_+ = \La_- \cup U$, where $U$ is the max-tb Legendrian unknot decorated with base points $s_1,\ldots,s_k$. Then, $\Phi_L$ is the unique DGA map that sends the Reeb chords on $U$ to $0$, the product of base points $s_1\cdots s_k$ to $-1$, and acts as the identity on all other generators. Note that in the case when $\La_- = \emptyset$, we obtain a DGA map $\Phi_L:(\SA(U),\partial) \to (\Bbbk[s_1^{\pm},\ldots,s_k^{\pm}]/(s_1\cdots s_k = -1),0)$.
\end{itemize}

Hence, for a decomposable filling of an link $\La$ equipped with exactly one base point $t_i \in \ST$ on each of its $l$ components, the cobordism functor induces an explicit DGA map
\begin{equation*}
    \epsilon_L:(\SA(\La),\partial) \to (\Bbbk[t_1^{\pm 1},\ldots,t_l^{\pm 1},s_1^{\pm 1},\ldots,s_r^{\pm 1}]/(w_1 = \cdots = w_k = -1),0),
\end{equation*}
through the composition of the elementary maps, where $r$ is the number of saddle cobordisms applied and $w_i$ is a word of base points lying on the same component of an unknot capped off by a minimum. Note that by \cite[Proposition 3.13]{CasalsNg}, the ring $R$ on the right hand side is isomorphic to $\Bbbk[H_1(L\backslash \ST, \La\backslash \ST;\Z)] \simeq \Bbbk[H_1(L)\oplus \Z^{l-1}] \simeq (\Bbbk^*)^{2g+2l-2}$, where $g$ is the genus of $L$ and $l$ is the number of components of $\La$. This confirms that the map is consistent with the definition of the cobordism functor. With a choice of a $\Bbbk$-algebra homomorphism $\eta: R \to \Bbbk$, we obtain a DGA map $\eta \circ \epsilon_L: (\SA(\La),\partial) \to (\Bbbk,0)$ known as an augmentation of $\SA(\La)$, which we discuss in more detail in the next section.

\subsection{Augmentations} 
\label{ssec:aug}

The Chekanov-Eliashberg DGA and its homology are often infinite-dimensional and thus difficult to use directly for practical computations. A fundamental tool used to circumvent this issue is an augmentation. Let $m$ be a divisor of $2\rot(\La)$. A $(\Bbbk$-valued) \dfn{$m$-graded augmentation} of a decorated Legendrian link $\La$ is a DGA map
\begin{equation*}
    \epsilon: (\SA(\La),\partial) \to (\Bbbk,0)
\end{equation*}
where we think of $(\SA(\La),\partial)$ as inheriting a $\Z/m\Z$ grading from the $\Z/2(r)\Z$ grading described in the previous subsection. In particular, the algebraically defining equation for $\epsilon$ is $\epsilon \circ \partial = 0$. By \cite[Theorem 1.1]{Leverson}, it must also send the product of base points $t_1,\ldots,t_k$ on the same component of $\La$ to $(-1)^k$. 

By \cite{Fuchs03,FuchsIshkhanov,Sabloff05}, note that augmentations do not always exist for an arbitrary Legendrian link. However, for an exact Lagrangian fillable link $\La$, we see from above that the DGA map $\epsilon_L$ induced from a filling $L$ gives rise to a family of augmentations of $\La$ parametrized by the local systems $\eta: \Bbbk[H_1(L\backslash \ST, \La\backslash \ST;\Z)] \simeq R \to \Bbbk$. On the other hand, we note that not every augmentation of $\La$ is induced from an embedded filling and that there exist non-fillable Legendrian links that admit augmentations; see e.g. \cite{GaoRutherford, PanRutherford}.

Augmentations have historically been used to define invariants of Legendrian links and their exact Lagrangian fillings. To introduce the ones relevant to our work, we begin by defining a notion of equivalence of augmentations.
\begin{definition}
\label{dfn:dg-htpy}
    Two $m$-graded augmentations $\epsilon_1,\epsilon_2:(\SA(\La),\partial) \to (\Bbbk,0)$ are \dfn{dg-algebra homotopic} if there exists a $\Bbbk$-linear map $h:(\SA(\La),\partial) \to \Bbbk$ of degree $+1$ such that
    \begin{enumerate}
        \item $h(xy) = h(x)\epsilon_2(y) + (-1)^{|x|}\epsilon_1(x)h(y)$ for all $x,y \in \SA(\La)$; 
        \item $\epsilon_1(a)-\epsilon_2(a) = h(\partial a)$ for any Reeb chord $a$ of $\La$.
    \end{enumerate}
\end{definition}

The main algebraic invariant studied in this paper is the augmentation variety of $\La$, which we define as follows. First, the \dfn{naïve $m$-graded augmentation variety} $\widetilde{\Aug}_m(\La)$ of $\La$ over $\Bbbk$ is the set of $\Bbbk$-valued $m$-graded augmentations of $\La$ with a fixed decoration of base points. Assuming $\Bbbk$ is algebraically closed, $\widetilde{\Aug}_m(\La)$ is an affine subvariety of $\Bbbk^n$ cut out by the system of algebraic equations $\epsilon \circ \partial = 0$, where $n$ is the number degree $0$ generators in $\SA(\La)$. By \cite[Corollary 3.15]{Ng03}, $\widetilde{\Aug}_m(\La)$ is a Legendrian invariant up to multiplying by affine space factors. The augmentation variety is an enhancement of the naïve augmentation variety.

\begin{definition}
    \label{dfn:aug-vareity}
    The \dfn{$m$-graded augmentation variety} $\Aug_m(\La)$ of a Legendrian link $\La$ over $\Bbbk$ is the quotient space $\widetilde{\Aug}_m(\La)/\sim$, where two augmentations are considered to be equivalent if they are dg-algebra homotopic.
\end{definition}

As observed in e.g.\ \cite[Remark 2.9 and Proposition 2.10]{CSHW2023}, the augmentation variety as a set is an invariant up to Legendrian isotopy of $\La$ and change of base points; see also \cite[Corollary 3.15]{Ng03}. In general, it is not certain whether $\Aug_m(\La)$ is still an affine variety. However, it has been established for some specific cases that they are, including positive braid Legendrian links \cite[Corollary 2.14]{GSW}, and certain max-tb representatives of Legendrian $2$-bridge links \cite[Proposition 3.19]{CSHW2023} and Legendrian twist knots \cite{GaoRutherford}, among others. Similarly, we will prove in Proposition \ref{proposition: twist-cluster} that the $m$-graded augmentation variety of certain max-tb representatives of double twist knots $\La$ are affine, where $m$ is defined as in the statement of Theorem~\ref{thm: main}.

We end this section by relating the augmentation variety of $\La$ to its fillings. Recall that a filling $L$ of link $\La$ decorated with base points in $\ST$ induces a $(\Bbbk^\ast)^{2g+2l-2}$-family of augmentations. Moreover, we know by \cite[Proposition 2.6]{GaoRutherford} that these augmentations are pairwise DGA non-homotopic so that a filling induces an injective map of the algebraic torus $\Bbbk^{2g+2l-2}$ into the augmentation variety $\Aug_m(\La)$. It is a standard fact from \cite[Lemma 3.13]{EHK} that if $L,L'$ are two Hamiltonian isotopic exact Lagrangian filling of a decorated Legendrian link $\La$, then there is an invertible map $\phi:\Bbbk[H_1(L'\backslash \ST, \La\backslash \ST;\Z)] \to \Bbbk[H_1(L\backslash \ST, \La\backslash \ST;\Z)]$ such that $\eta \circ \epsilon_L$ and $\eta \circ \phi \circ \epsilon_{L'}$ are dg-algebra homotopic for any $\eta:\Bbbk[H_1(L\backslash \ST, \La\backslash \ST;\Z)] \to \Bbbk$. Therefore, we can use the algebaric embeddings $(\Bbbk^\ast)^{2g+2l-2} \hookrightarrow \Aug_m(\La)$ to distinguish the fillings of $\La$.

\subsection{Rulings of Legendrian links}
In order to distinguish Legendrian representatives of $\La[w,c]$ defined in Section~\ref{ssec: max-tb_reps}, we require the notion of a normal ruling. Normal rulings of Legendrian links are combinatorial invariants associated to their front projections. They were first introduced by Chekanov and Pushkar in  \cite{ChekanovPushkar05}, and separately by Fuchs in \cite{Fuchs03}. We review the basic definitions and results here. 

Given a front projection $\Pi_{xz}(\La)$ of a Legendrian link $\La$, a normal ruling is a decomposition of the front into pairs of paths, where two paired paths begin at the same left cusp and end at the same right cusp. Rulings paths are required to satisfy the following conditions: 
\begin{itemize}
    \item all paths are smooth except at cusps and possibly crossings;
    \item two paired paths do not intersect in the front diagram except at their shared endpoints;
    \item any two paths intersect in at most a set of crossings and cusps
    \item at crossings where two paths intersect and one path lies entirely above the other, the two paths and their pairs must be arranged as in Figure~\ref{fig: RulingDef}. Such a crossing is referred to as a \dfn{switch}.
\end{itemize}

\begin{center}
		\begin{figure}[h!]{ \includegraphics[width=.7\textwidth]{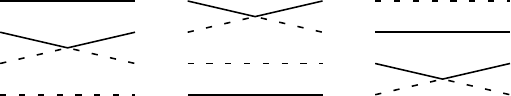}}\caption{Allowed behavior at switched crossings in a normal ruling. Solid lines represent one pair of paths, while dotted lines represent a second pair of paths. 
        }\label{fig: RulingDef}\end{figure}
	\end{center}

An appropriate count of normal rulings yields an invariant of Legendrian links. This count can be refined in the following way:

\begin{definition}
    An $m$-\dfn{graded normal ruling} is a normal ruling such that every switch occurs at a crossing $c$ with grading $|c|$ dividing $m$. 
\end{definition}

Note that in order for there to exist an $m$-graded ruling of a Legendrian knot $\La$, $m$ must divide $2\rot(\La)$. 
Counting rulings provides a Legendrian isotopy invariant:

\begin{theorem}[Theorem 2.5, \cite{ChekanovPushkar05}]\label{thm: ruling_invariant}
    Given a Legendrian knot $\La$ with rotation number $\rot(\La)$, the number of $m$-graded rulings is a Legendrian isotopy invariant of $\La$ for any $m$ dividing $\rot(\La)$.
\end{theorem}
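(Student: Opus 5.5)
The plan is to reduce invariance to a local check. By \'Swi\k{a}tkowski's theorem, two generic fronts of Legendrian isotopic knots are related by planar isotopies that preserve the cusp and crossing structure, together with the three Legendrian Reidemeister moves: I (a cusp passing a strand, creating or cancelling a crossing), II (a pair of cusps passing through each other, creating or cancelling two crossings), and III (a triple point). Planar isotopies do not change the combinatorics of the front, so they preserve both normal rulings and gradings. It therefore suffices to show that, for each Reidemeister move relating fronts $F$ and $F'$, the number of $m$-graded normal rulings of $F$ equals that of $F'$.

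For each move I would fix the front outside a small disk $D$ containing the move. Every ruling restricts on $\partial D$ to boundary data: which boundary points are paired with which, and which pairs lie above or below which. I would then show that for each such boundary datum, the number of admissible completions inside $D$ is the same before and after the move. The key preliminary step is to track how a combinatorial Maslov potential changes across each move, using the rules in Figure~\ref{fig: Reidemeister}. This gives the gradings of the new crossings in terms of the old ones, so one can check that the grading constraint on switches corresponds under the bijection.

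The three moves would be handled as follows.
\begin{enumerate}
\item For move I, the new crossing is between a strand and the cusp's own paired partner region. A switch there would violate normality or the pairing rule, so no new switch is possible and the completions correspond trivially.
\item For move III, the crossing gradings are permuted with the same values, because the potentials on the three strands are unchanged. The proof is a case analysis on the relative ordering and pairing of the three strands. This reproduces Chekanov--Pushkar's bijection: switched/unswitched configurations at the three crossings before the move correspond to configurations after it, with the same multiset of switched crossing gradings.
\item Move II is the delicate case. Two new crossings appear, and for some boundary data one configuration without the new crossings corresponds to \emph{two} configurations after the move, or vice versa. These must be matched by a sign-free cancellation or pairing. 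I would show that, with the strand potentials fixed, the two new crossings have gradings $k$ and $-k$ (or $k$ and $k$). Hence they are either both admissible as switches or both inadmissible, which is what allows the bijection to respect the $m$-graded condition.
\end{enumerate}

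I expect the main obstacle to be the case analysis for moves II and III. One must make sure that the normality condition of Figure~\ref{fig: RulingDef} at each switch is preserved under the correspondence, and not merely the pairing of paths. As a fallback, I would invoke the Henry--Rutherford identification \cite{HenryRutherford15} of ruling counts with normalized point counts of $\Aug_m$ over finite fields. Since the augmentation variety is invariant up to affine factors by \cite[Corollary 3.15]{Ng03}, and the normalization absorbs those factors, this gives invariance directly.
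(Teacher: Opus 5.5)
Your framework is the right one: reduce to the front Reidemeister moves and, for each boundary datum on a small disk, compare the numbers of normal $m$-graded completions. That is the Chekanov--Pushkar argument the paper simply cites. The local analysis you sketch, however, is wrong in ways that matter.

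First, your list of moves is not the standard one. Move I creates a kink with \emph{two} cusps and \emph{one} crossing. Move II is a cusp passing through a strand, creating two crossings. There is no move in which two cusps pass through each other. With the actual moves, your claims fail.

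\textbf{Move I.} The two paths born at the new left cusp must both end at the new right cusp. The middle branch joining the two cusps carries one of them, so the other is forced to switch at the new crossing. That crossing is therefore switched in \emph{every} ruling, not never. This is harmless for two reasons. The two strands meeting there carry the same potential (each differs from the middle branch by the same cusp jump), so the crossing has grading $0$. And the switch is automatically normal, since the companion of the through-path lies above or below the whole kink.

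\textbf{Move II.} The two new crossings pair the strand with the two cusp branches, whose potentials differ by one, and the strand has the more extreme slope at both. So their gradings differ by one, not $k$ and $-k$, and your ``both admissible or both inadmissible'' claim is false. What is actually true is that neither new crossing can be switched. Switching at the second crossing, or at both, forces companion paths to meet. Switching at the first crossing only is compatible with the pairing rule: it makes the strand's path the companion of the path on the far cusp branch. It is excluded only because the companions interlace, so normality is essential here. More basically, a ruling count is unsigned, so a one-to-two correspondence cannot be repaired by any ``cancellation''; you must show the counts agree for each boundary datum.

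\textbf{Move III.} This is where the genuine casework lies, and your description of it is also inaccurate. Label the crossings among the strands $T,M,B$ (top to bottom on the left) as $c_1,c_2,c_3$ before and $d_1,d_2,d_3$ after. Then $c_1,d_3$ involve $T,M$; $c_2,d_2$ involve $T,B$; and $c_3,d_1$ involve $M,B$. Configurations must be matched by the permutation of paths they induce, not crossing by crossing. For example, exchanging the two upper paths is realized by $\{c_1,c_2\}$ or $\{c_2,c_3\}$ before, and only by $\{d_1,d_3\}$ after. These do not have the same multiset of switched gradings. What makes the $m$-graded condition compatible is additivity, $|TB|=|TM|+|MB|$: any two switches force all three gradings to vanish mod $m$. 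What makes the counts agree is a normality check. For each placement of the companions, exactly one of $\{c_1,c_2\}$, $\{c_2,c_3\}$ is normal precisely when $\{d_1,d_3\}$ is. Appealing to ``Chekanov--Pushkar's bijection'' here is citing the statement itself.

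\textbf{Fallback.} It does not close the gap. Henry--Rutherford express the ruling polynomial at $z=q^{1/2}-q^{-1/2}$ through a normalized count of the \emph{set} of augmentations, not the homotopy quotient $\Aug_m$ of this paper. To recover the number of rulings you would also need to interpolate the polynomial from infinitely many $q$. You would further have to check that their argument applies to an arbitrary front. If it instead requires a front placed in a convenient position using the very invariance you want to prove, the fallback is circular.
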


Rulings also encode information about the existence of decomposable Lagrangian fillings. In particular, a decomposable Lagrangian filling $L$ of $\La$ induces a \dfn{canonical ruling} $\rho_L$ on $\La$ by \cite[Lemma 2]{atiponrat2015}. The Euler characteristic of a ruling is defined in \cite{ChekanovPushkar05} to be the quantity $\chi(\rho)=c(\La)-s(\rho)$ where $c(\La)$ is the number of left (or right) cusps of $\La$ and $s(\rho)$ denotes the number of switches of $\rho$. The Euler characteristic of a canonical ruling is determined by $\La$.

\begin{proposition}[Proposition 3.11, \cite{CCPRSY}]\label{proposition: canonical_ruling_obstruction}
    If $L$ is a decomposable Lagrangian filling of $\La$ with canonical ruling $\rho_L$, then $\chi(\rho_L)=\chi(L).$
\end{proposition}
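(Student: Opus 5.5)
The plan is to track the filling $L$ level by level through its decomposition and verify that the canonical ruling records exactly one switch per saddle. I would write $L = L_1\circ\cdots\circ L_n$ as a concatenation of elementary cobordisms, with $s$ saddles and $u$ minima. Since $L$ is a filling, $\chi(L) = u - s$, and by \cite[Theorem 1.3]{Chantraine10} this equals $-\tb(\La)$. The canonical ruling $\rho_L$ of \cite[Lemma 2]{atiponrat2015} is built inductively from the bottom up:

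\begin{enumerate}
\item The empty link carries the empty ruling.
\item Each minimum adds a standard unknot, ruled by its unique ruling.
\item Each isotopy transports rulings via the Chekanov--Pushkar bijection on Reidemeister moves \cite{ChekanovPushkar05}.
\item Each saddle, read from $\La_-$ to $\La_+$, turns a pair of opposite cusps (as in Figure~\ref{fig: Saddle}) into a crossing. The ruling on $\La_+$ is then defined by declaring that crossing a switch.
\end{enumerate}

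The key step is to show that $\chi(\rho) = c(\La)-s(\rho)$ changes under each elementary cobordism exactly as $\chi$ of the surface does. I would check the three cases in turn.

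For a minimum, the added unknot contributes one left cusp and no switches, so $\chi(\rho)$ increases by $1$, matching the new disk.

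For a saddle, passing from $\La_-$ to $\La_+$ removes one left cusp and creates one switch. Hence $\chi(\rho_+) = \chi(\rho_-) - 2$? No: the saddle replaces a left and a right cusp pair, and we count only left cusps, so $c$ drops by $1$ and $s$ rises by $1$. This gives $\chi(\rho_+) = \chi(\rho_-)-2$, but the surface $\chi$ drops by $1$. This mismatch forces a recount. In the local model, resolving a crossing in the front by the move of Figure~\ref{fig: D4minus} after the preparatory isotopy removes one left/right cusp pair. Equivalently, an isotopy first creates a cusp pair, which does not change $\chi(\rho)$ because Reidemeister moves preserve the ruling Euler characteristic. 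Then the saddle trades that pair for a switch. So the net effect of a saddle is $\Delta c = 0$ after accounting for the Reidemeister I stabilization-free move, and $\Delta s = +1$, i.e. $\chi(\rho)$ decreases by $1$. I would pin this down by a careful local picture.

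For isotopies, I would invoke the fact that the bijection of \cite{ChekanovPushkar05} preserves $\chi(\rho)$ on each move. This follows from the invariance of the ruling polynomial graded by $\chi$ \cite{ChekanovPushkar05}, which their proof establishes move by move.

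Summing over the decomposition then gives $\chi(\rho_L) = u - s = \chi(L)$.

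The main obstacle is the saddle step: getting the local cusp and switch bookkeeping right in the front model, and confirming that the new crossing is a genuine switch compatible with the normality condition of Figure~\ref{fig: RulingDef}. Alternatively, I could cite \cite{ChekanovPushkar05}'s identity $\chi(\rho) = -\tb(\La)$ for any ruling of a max-tb front, which does not hold in general. So I would rely on the inductive count instead.
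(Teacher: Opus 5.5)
Your global strategy is the right one: induct over the elementary pieces, with each minimum raising $\chi(\rho)=c(\La)-s(\rho)$ by $1$, isotopies preserving it through the Chekanov--Pushkar correspondences, and each saddle lowering it by $1$. (The paper does not prove this proposition; it cites \cite{CCPRSY}.) The gap is in the saddle step, which is the only real content of the argument, and what you wrote there does not hold together.

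The front model of Figure~\ref{fig: Saddle} does not turn a pair of cusps into a crossing. There, the pinched chord is a short vertical chord between two parallel strands of $\La_+$, and $\La_-$ replaces these strands by a right cusp followed by a left cusp. No front crossing is created or destroyed. Your rule ``a cusp pair disappears \emph{and} the new crossing is declared a switch'' therefore counts the same handle twice. It gives $\Delta\chi(\rho)=-2$, and with that rule the proposition would be false.

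The repair does not help. If a preliminary isotopy creates a cusp pair while preserving $\chi(\rho)$, as you assert Reidemeister moves do, it must also create a switch. The composite then still changes $\chi(\rho)$ by $-2$. Your net count $\Delta c=0$, $\Delta s=+1$ tacitly lets the isotopy raise $\chi(\rho)$ by $1$. Deferring the question to ``a careful local picture'' leaves the crux unproved.

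Here is a local step that works, in the cusp model.
\begin{enumerate}
\item In $\rho_-$, the right cusp terminates a ruling pair $P_1$ and the left cusp starts a pair $P_2$.
\item These are different pairs. Each pair lies over an $x$-interval running from its left cusp to its right cusp, and here the right cusp lies to the left of the left cusp.
\item Define $\rho_+$ by joining the upper path of $P_1$ to the upper path of $P_2$, and the lower to the lower, through the two strands of $\La_+$. Leave every other pair unchanged.
\item No switch is created and one left cusp disappears, so $\chi(\rho_+)=\chi(\rho_-)-1$, matching the $1$-handle.
\end{enumerate}

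Normality is automatic here. You may take the pinch region thin enough that it contains no crossings. Every vertical slice outside it sees exactly the pairing of $\rho_-$, so the conditions of Figure~\ref{fig: RulingDef} at existing switches are untouched. The two merged paths are disjoint away from their endpoints because those of $P_1$ and $P_2$ were.

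The $D_4^-$ model also gives $\Delta c=0$, $\Delta s=+1$; there the cusps are unchanged and $\La_+$ has one more front crossing than $\La_-$. In that model, though, you must check that the added switch can be placed in a non-interlaced, hence normal, position. That is exactly the obstacle you flagged, whereas in the cusp model no check is needed.

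With the saddle step done this way, your induction closes and gives $\chi(\rho_L)=\#\{\text{minima}\}-\#\{\text{saddles}\}=\chi(L)$.
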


Since \cite{Chantraine10} implies that for any filling $L$, we have the equality $\chi(L)=-\tb(\La)$, we can use Proposition~\ref{proposition: canonical_ruling_obstruction} to obstruct the existence of a decomposable filling if we know that no rulings with the correct number of switches exist.

\subsection{Cluster Varieties}
\label{ssec:cluster}

We review the basic terminology in cluster theory as it relates to the augmentation varieties of Legendrian links. For our purpose, it suffices to describe the global geometric structure and the combinatorial data that governs it. We refer the reader to \cite{FominZelevinsky_ClusterI} and \cite{FominZelevinsky_ClusterII} for a comprehensive treatment of these materials. First, the basic building block of a cluster variety is a cluster seed.

\begin{definition}
    A \dfn{cluster seed} $\s = (T_\s,\BA_\s,Q_\s)$ consists of a split algebraic torus $T_s \simeq (\Bbbk^\ast)^n$, a basis of coordinates $\BA_\s = (A_1,\ldots,A_n)$ called \dfn{cluster variables}, and a quiver $Q_\s$ with no $1$-cycles or oriented $2$-cycles whose vertices are in bijection with the cluster variables. The cluster variables and their corresponding vertices in $Q_\s$ are partitioned into two sets: \dfn{mutable} and \dfn{frozen}.
\end{definition}

To construct a global space, cluster seeds are glued together via an involutive operation called \dfn{cluster mutation}. Let $\s$ be a cluster seed and $k \in Q_\s$ a mutable vertex. The cluster mutation of $\s$ in the direction of $k$ produces a new cluster seed $\s' = (T_{\s'},\{A_1',\ldots,A_n'\},Q_{\s'})$ along with a birational map $\mu_k:T_\s \to T_{\s'}$. The cluster seed $\s'$ is determined as follows:
\begin{itemize}
    \item \textbf{Quiver mutation}: The quiver $Q_{\s'}$ is obtained from $Q_{\s}$ by adding a new arrow $i \to j$ for every pair of arrows $i \to k$ and $k \to j$; reversing all arrows incident to $k$; and deleting arrows in opposite directions in pairs until no $2$-cycles remain.
    \item \textbf{Coordinate transformation}: The pullback map $\mu_k^*$ is given by
    \begin{equation*}
        \mu_k^*(A_i') = \begin{cases}
            A_k^{-1}\left(\prod_j A_j^{[\epsilon_{kj}]_+} + \prod_j^{[-\epsilon_{kj}]_+} A_j\right) & \text{if}\ i = k, \\
            A_i & \text{if}\ i \not = k,
        \end{cases}
    \end{equation*}
    where $\epsilon_{kj} = \# (i \to j) - \# (j \to i)$ and $[x]_+:= \max\{x,0\}$.
\end{itemize}

Two cluster seeds are said to be \dfn{mutation equivalent} if they are related by a sequence of cluster mutations. A priori, arbitrary compositions of these rational maps could yield increasingly complicated rational functions. However, cluster mutations exhibit an algebraic rigidity that ensures the charts glue together well, as established by the following theorem.

\begin{theorem}[{Laurent phenomenon \cite[Theorem 3.1]{FominZelevinsky_ClusterI}}] 
    If $\s$ and $\s'$ are mutation equivalent, the composition of their birational transition maps expresses any cluster variables in $\s'$ as a Laurent polynomial in the cluster variables of $\s$.
\end{theorem}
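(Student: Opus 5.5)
The plan is to induct on the number of mutations separating $\s$ from $\s'$, since a mutation equivalence is by definition a finite sequence of single mutations. Fix the initial seed $\s$ with cluster variables $A_1,\ldots,A_n$ and let $\mathcal{L}=\Bbbk[A_1^{\pm1},\ldots,A_n^{\pm1}]$. The goal is that every cluster variable reached by any finite mutation sequence lies in $\mathcal{L}$. A single mutation clearly produces a Laurent polynomial, namely $A_k^{-1}$ times a binomial. Naive composition, however, only gives rational functions, because the later exchange relations divide by previously produced cluster variables, which need not be monomials.

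I would follow the Fomin--Zelevinsky ``caterpillar'' strategy. It reduces the problem to sequences of the form $\mu_{k}\mu_{j}\mu_{i}$ applied to $\s$. Concretely, I would prove the following by induction on the length of the mutation path starting at $\s$: for every seed on the path, its cluster variables lie in $\mathcal{L}$. Moreover, for two consecutive seeds, the variables are coprime in $\mathcal{L}$ (equivalently, in the polynomial ring of the seed one step further along). The inductive step compares the path $\s \to \mu_i\s \to \cdots$ with the path starting at the neighbouring seed $\mu_i\s$. There the inductive hypothesis applies to a shorter path, so the variable in question is a Laurent polynomial both in the variables of $\s$ and in those of $\mu_i\s$. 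Mutating at $i$ again from $\mu_i\s$ then shows that the only possible denominators are powers of $A_i$ together with powers of the new variable $A_i'$ (the mutation of $A_i$). Coprimality of the exchange binomial $A_iA_i'=P$ with $A_i$ forces the $A_i'$-denominators to cancel. Hence the element lies in $\mathcal{L}$.

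The key technical input is the three-step computation. Set $\s_1=\mu_i\s$, $\s_2=\mu_j\s_1$, $\s_3=\mu_k\s_2$ (with $j\neq i$, and the case $k=i$ being the important one). One then checks that the doubly mutated variable is divisible, in the appropriate polynomial ring, by the binomial that appears in the denominator, using the explicit quiver mutation rule and the form of $\epsilon_{kj}$. Specifically, one shows that $\gcd$ of the relevant exchange polynomials is $1$, and that the exchange polynomial at $i$ in $\s_2$ becomes, after substituting $A_j'$, the product of a Laurent monomial and the original one modulo $A_i$. This uses that the quiver has no $2$-cycles, so $\epsilon$ is skew-symmetric. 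Skew-symmetry makes the sign pattern of $\epsilon_{ij}$ flip consistently under mutation.

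I expect this coprimality/divisibility verification in the rank-two and rank-three subpatterns to be the main obstacle. It is where the specific form of the exchange binomials (rather than arbitrary rational maps) is used. I would first try to reduce it to the case where only the arrows among $\{i,j\}$ matter, treating all other variables as frozen coefficients. Then I would check directly that the exchange polynomials $P_i$ and $P_j$ are coprime and that $P_i|_{A_j\mapsto P_j/A_j}$ equals a monomial times $P_i$ modulo the ideal generated by $A_i$. Given that, the induction closes, and since every cluster variable of $\s'$ is obtained along a finite path from $\s$, each lies in $\mathcal{L}$, as claimed.
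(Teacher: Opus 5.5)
The paper gives no proof of this statement; it cites Fomin--Zelevinsky. Your outline is meant to be their caterpillar argument, but the inductive step as you state it does not go through.

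\textbf{The inductive step.} The inductive hypothesis at $\mu_i\s$ tells you only that the target variable $z$ is Laurent in the cluster of $\mu_i\s$, i.e.\ $z\in\mathcal{L}[P^{-1}]$ with $P=A_iA_i'$. That $z$ is also Laurent in the variables of $\s$ is the conclusion, not an available input. Coprimality of $P$ with $A_i$ cannot remove the powers of $P$: $1/A_i'=A_i/P$ is Laurent in the cluster of $\mu_i\s$ but lies outside $\mathcal{L}$ as soon as $i$ has a neighbour. So the step as written would prove something false.

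What is missing is a second comparison seed. Let $j$ be the second step of the path and put $\s_3=\mu_i\mu_j\mu_i\s$. The path from $\s_3$ to the end, going back through $\mu_j\mu_i\s$, is also shorter, so by induction $z$ is Laurent in the cluster $(\ldots,A_i'',\ldots,A_j',\ldots)$ of $\s_3$ as well. With $Q$ the exchange polynomial at $j$ in $\mu_i\s$ and $R$ the one at $i$ in $\mu_j\mu_i\s$, you then need three things:
\begin{itemize}
\item[(a)] $A_j',A_i''\in\mathcal{L}$. For $A_i''=A_iR(A_j')/P$ this is the divisibility $P\mid R(A_j')$ your three-step computation aims at; the reduction there is modulo $A_i'$ (equivalently modulo $P$ in $\mathcal{L}$), not modulo $A_i$.
\item[(b)] $\gcd(A_i',A_j')=\gcd(A_i',A_i'')=1$ in $\mathcal{L}$.
\item[(c)] Unique factorization in $\mathcal{L}$, which gives $\mathcal{L}[P^{-1}]\cap\mathcal{L}[(A_j'A_i'')^{-1}]=\mathcal{L}$.
\end{itemize}
You introduce $\s_3$ but never apply the inductive hypothesis there. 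The clause in your inductive statement about coprimality of variables in consecutive seeds is neither what is needed nor something you establish; the coprimality actually required is local to these three seeds.

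\textbf{The coprimality claim.} Your claim that the relevant exchange polynomials are coprime is false in the commuting case, in the paper's setting where frozen variables are cluster variables and there are no further coefficients. When $\epsilon_{ij}\neq 0$, $Q|_{A_i'=0}$ is a monomial, so coprimality of $A_i'$ with $A_j'$ is immediate and with $A_i''$ is a short computation. When $\epsilon_{ij}=0$, however, $\s_3=\mu_j\s$ and (b) says $\gcd(P_i,P_j)=1$. For arrows $a\to i\to c$ and $a\to j\to c$ one has $P_i=P_j=A_a+A_c$. Indeed $A_iA_j/(A_a+A_c)=A_j/A_i'=A_i/A_j'$ is Laurent in the clusters of both $\mu_i\s$ and $\mu_j\s$ without lying in $\mathcal{L}$, so no argument using only these comparison seeds can close.

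The standard remedy is to first adjoin frozen vertices so that the extended exchange matrix has full rank, e.g.\ one new frozen vertex with a single arrow to each mutable vertex. Full rank is preserved by mutation. Hence at every seed the exchange binomials in two distinct directions are, up to units of the Laurent ring, $1+A^{u}$ and $1+A^{v}$ in multi-index notation, with $u,v$ linearly independent columns of the extended exchange matrix; such binomials are coprime. Run the argument in that setting, then specialize the added frozen variables to $1$. This specialization commutes with mutation and sends Laurent polynomials to Laurent polynomials.
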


\begin{definition}
    A cluster variety $\FA$ is an affine variety with an atlas of mutation equivalent cluster seeds $\mathbf{S} = \{\s = (T_\s, \BA_\s, Q_\s)\}$ satisfying
    \begin{itemize}
        \item Each cluster seed $\s$ yields an open embedding $T_\s \hookrightarrow \FA$ such that the union of these \dfn{cluster charts}, $\bigcup_{\s \in \mathbf{S}} T_\s$, covers $\FA$ up to codimension $2$;
        \item The atlas $\mathbf{S}$ is closed under cluster mutation at any mutable vertex;
        \item The transition maps between any charts $T_\s$ and $T_{\s'}$ in $\FA$ are birationally equivalent to the composition of their cluster mutation maps.
    \end{itemize}
    The \dfn{cluster algebra} is the subalgebra of the coordinate ring $\SO(\FA)$ generated by all cluster variables and the inverses of all frozen variables .
\end{definition}

By the Laurent phenomenon, every cluster variable in every cluster seed extends to a globally defined regular function on the entire variety $\FA$. Since frozen variables are invariant under mutations, they define nowhere-vanishing $\Bbbk^\ast$-valued functions on $\FA$, making them units in $\SO(\FA)$. On the other hand, if $A_i$ is a mutable cluster variable on $\FA$, we define the \dfn{cluster localization} of $\FA$ at $A_i$ to be the non-vanishing locus $\{A_i \not = 0\}$ in $\FA$, with coordinate ring equal to the localization of $\SO(\FA)$ at $A_i$. Finally, since cluster mutation acts transitively, the cluster structure of $\FA$ is uniquely determined by specifying a single cluster seed. We call such a seed an \dfn{initial cluster seed} and its cluster variable the \dfn{initial cluster variable}.

\subsection{Braid Varieties}\label{ssec: braid_vties}
Recent developments in contact geometry \cite{GSW,CasalsWeng,CSHW2023} have demonstrated that for a broad class of exact Lagrangian fillable Legendrian links, including positive braid links and links realized by certain $(-1)$-closure of positive braids, their augmentation variety admits a cluster structure whose cluster seeds are induced by exact Lagrangian fillings. Specifically, if we denote by $g$ the genus of a filling $L$, and by $l$ the number of boundary components, then the cluster charts are the algebraic embeddings $(\Bbbk^\ast)^{2g+2l-2} \hookrightarrow \Aug_m(\La)$  induced by the DGA maps associated to the fillings and the quivers are the intersection quivers of the $\L$-compressing systems of the fillings.\footnote{\cite{CasalsWeng} uses the microlocal theory of sheaves to produce embeddings of algebraic tori into a sheaf-theoretic analog of the augmentation variety.}  A cluster mutation of a cluster seed in $\Aug_m(\La)$ then corresponds geometrically to a Lagrangian disk surgery of the filling along an $\L$-compressing disk that, in general, produces a new filling. We refer the reader to \cite{Polterovich_Surgery} for the precise detail of this surgery operation. In particular, \cite[Proposition 2.5.1]{Yau} shows that this operation preserves the Maslov number of a Lagrangian filling in the 4-ball. Often, the fillings corresponding to the cluster seeds are constructed using the Legendrian weaves introduced in Section~\ref{ssec:weave}. In this setting, the Lagrangian disk surgery can be executed combinatorially as a Legendrian mutation of a weave along an $I$-cycle using $N$-graph calculus \cite[Theorem 4.21]{CasalsZaslow}.

The geometric framework described above finds its most general realization to date in the study of braid varieties. Given a positive braid $\beta$, the \dfn{braid variety} $X(\beta)$ is defined as a space of flags with certain transversality conditions determined by $\beta$; see \cite[Section 3.3]{CGGLSS} for a precise definition. If we denote by $\Delta$ the half twist element of the braid group, then the braid variety of $\beta$ corresponds to the augmentation variety of the $(-1)$-closure $\La(\beta\Delta)$ in the following sense. 

\begin{proposition}[Theorem 2.39, \cite{ACSHLLW}]
    For $\beta \in \Br_n^+$, there is an isomorphism of algebraic varieties $X(\beta)\cong \Aug(\La(\beta\Delta))$.
\end{proposition}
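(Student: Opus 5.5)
The plan is to compute both sides explicitly and match them: write the Chekanov--Eliashberg DGA of $\La(\beta\Delta)$ in matrix form, and show that the augmentation equations cut out the braid-matrix condition defining $X(\beta)$. I would first put $\La(\beta\Delta)$ in the form of Ng's resolution of the $(-1)$-closure front in Figure~\ref{fig: -1closure}, with one base point per component placed near the right cusps. I would choose the Maslov potential that is constant on the $n$ braid strands, with value $0$ on the lower copies and $1$ on the returning strands. By the footnote in Section~\ref{ssec:maslov}, every crossing of $\beta\Delta$ then has degree $0$. The $\binom{n}{2}$ crossings in the closing region and the $n$ right cusps have degree $1$, and the remaining crossings of the closure have degree $-1$.

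Next I would write the differential in matrix form, in the style of Ng's and K\'alm\'an's path matrices. To each crossing $z_i$ of $\beta\Delta$, I assign the elementary braid matrix $B_{k}(z_i)$: the identity except for the $2\times 2$ block $\begin{pmatrix} z_i & 1\\ 1 & 0\end{pmatrix}$ in rows and columns $k,k+1$. The count of immersed disks for the degree-$1$ generators then packages into a single matrix identity, $\partial Y = D + B_{\beta\Delta}(z)\,T$ up to strictly triangular corrections, where $T$ is the diagonal matrix of base points. Consequently an augmentation is exactly an assignment of the degree-$0$ chords satisfying an upper-triangularity condition on $B_{\beta\Delta}(z)T$, or equivalently on $B_\beta(z)B_\Delta(z')\,w_0$. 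That condition is the flag-transversality condition defining the braid variety of $\beta\Delta$, with the extra $\Delta$-variables and the torus $T$ still present.

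Finally I would pass to the quotient by dg-algebra homotopy, following Definition~\ref{dfn:dg-htpy}. A homotopy $h$ is determined by its values on the degree $-1$ chords. Condition (2) then says that $h$ changes the degree-$0$ values by the $\partial$ of these chords, and a direct computation should express this as an action of the unipotent group $U$ of upper triangular matrices, together with the base-point torus. I would show that this action is free and admits a global slice. On the slice, the $\Delta$-variables are normalized, and the residual condition is that $B_\beta(z)w_0$ be upper triangular up to the prescribed form, which is exactly the definition of $X(\beta)$ in \cite[Section 3.3]{CGGLSS}. This produces the isomorphism $X(\beta)\cong \Aug(\La(\beta\Delta))$, which also shows that the quotient is affine.

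I expect the main obstacle to be this last step: proving that the homotopy relation coincides with the $U$-action, and that this action is free with an algebraic slice. This requires care with signs from the shaded quadrants and with the Leibniz-type rule in Definition~\ref{dfn:dg-htpy}(1). I would first try to show that $\Delta$-crossings can always be homotoped to a normal form: the Bruhat cell of $w_0$ is a $U$-torsor, which gives the slice, and freeness would then follow.
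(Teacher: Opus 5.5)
The paper does not prove this proposition. It quotes it from \cite{ACSHLLW}, with \cite{CGGS1} for the case $\beta\Delta=\beta'\Delta^2$, so there is no in-paper argument to compare with and your outline has to stand on its own. The degree count and the path-matrix form of $\partial$ are reasonable. There are, however, two genuine problems.

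\textbf{The base-point decoration.} With one base point per component, the two sides are not isomorphic unless the closure has $n$ components.
\begin{itemize}
\item The augmentation variety has dimension $b_1(L)+l-1=\tb(\La(\beta\Delta))+l=\ell(\beta)-\binom{n}{2}-(n-l)$, where $l$ is the number of components.
\item By contrast, $\dim X(\beta)=\ell(\beta)-\binom{n}{2}$.
\item Example: take $n=2$, $\beta=\sigma_1^4$. Then $\La(\sigma_1^5)$ is the max-tb trefoil, whose augmentation variety is a surface. But $X(\sigma_1^4)=\{K_4(z)=0\}\cong\{K_3(z_1,z_2,z_3)\neq 0\}$ is a threefold; it is the trefoil's augmentation surface times $\Bbbk^\ast$.
\end{itemize}
In your last step this mismatch is hidden in ``upper triangular up to the prescribed form.'' In $X(\beta)$ the diagonal of the triangular matrix is an arbitrary point of $(\Bbbk^\ast)^n$. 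For augmentations it is pinned down by $T$, which has only $l$ free entries. Homotopies cannot make up the difference, because they fix base-point values ($\epsilon_1-\epsilon_2=h\circ\partial$ and $\partial t=0$). So the base-point torus is not part of any group you quotient by. You need a base point on each of the $n$ braid strands, so that $T$ is the full diagonal torus. Otherwise the most you can get is $X(\beta)\cong\Aug(\La(\beta\Delta))\times(\Bbbk^\ast)^{n-l}$.

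\textbf{The homotopy step and the slice.} The step you flag as the main obstacle is unproven, and the proposed slice sits at the wrong end of the word.
\begin{itemize}
\item Homotopies are generated at the $\binom{n}{2}$ degree $-1$ chords in the left closing region, and they act on the braid from that side.
\item For $n=2$, the only degree $0$ chord with nonzero differential is the crossing adjacent to the degree $-1$ chord. A homotopy therefore just translates that one variable.
\item With the word read as $\beta\Delta$ from that side, the $\Delta$-variables are never moved. So ``normalize the $\Delta$-variables'' is not a slice for the homotopy relation.
\end{itemize}

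The repair removes the obstacle altogether.
\begin{itemize}
\item Rotate the braid cyclically to $\Delta\beta$; this does not change the closure, and it puts $\Delta$ next to the degree $-1$ chords.
\item Legendrian Reidemeister moves now cancel the $\binom{n}{2}$ crossings of $\Delta$ against those $\binom{n}{2}$ chords: each strand leaving a left cusp slides past the cusps below it, producing nested left cusps.
\item Ng's resolution of the new front has Reeb chords only in degrees $0$ and $1$. Every degree $+1$ map $h$ vanishes, so $\Aug=\widetilde{\Aug}$.
\item The $\binom{n+1}{2}$ equations $\epsilon\circ\partial=0$ on the degree-$1$ chords are then directly the triangularity condition on $B_\beta(z)$ defining $X(\beta)$, with the diagonal recorded by $T$.
\end{itemize}
No freeness or slice argument for a $U$-action is needed.
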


See also \cite[Theorem 2.39]{CGGS1} for the original description of the isomorphism in the case where $\beta\Delta=\beta'\Delta^2$ for $\beta'$ a positive braid. Note that $X(\beta)\cong \Aug(\La(\beta\Delta))=\emptyset$ if and only if the \emph{Demazure product} $\delta(\beta)$ of the positive braid $\beta\in \Br_n^+$ is less than the longest word $w_0\in S_n$ in Bruhat order;  see e.g. \cite[Section 4.4]{CGGS1} for a definition of the Demazure product. 

For braids satisfying the condition that $\delta(\beta)=w_0$, \cite[Section 4.3]{CGGLSS} gives a recipe for constructing a filling for $\La(\beta\Delta)$ as a Legendrian weave $\w_\beta$. The authors also give a combinatorial rule for identifying a set of homology cycles, referred to as \emph{Lusztig cycles}, 
in $H_1(\w_\beta, \La(\beta\Delta))$ whose algebraic intersections form a quiver. The Lusztig cycles that form absolute cycles in $H_1(\w_\beta, \La(\beta\Delta))$ correspond to mutable vertices, while relative cycles correspond to frozen vertices; see Section~\ref{sssec:weave-homology} for details on how to compute this quiver from a given set of cycles. The set of Lusztig cycles is in bijection with the set of trivalent vertices $\mathfrak{W}_3$ of $\w_\beta$, and following the notation of \cite{CGGLSS} we index a Lusztig cycle as $\gamma_v$ for a vertex $v\in \mathfrak{W}_3$. If $\beta=\Delta\beta'$ for some $\beta'\in \Br_n^+$, the mutable Lusztig cycles can be shown to be $\L$-compressing. It is expected that this property holds in full generality. 

From the data of the weave $\w_\beta$ and its Lusztig cycles, the authors extract regular functions $\{A_{\gamma_v}\}_{v\in \mathfrak{W}_3}$. These functions produce a cluster structure on braid varieties:

\begin{theorem}[Theorem 1.1, \cite{CGGLSS}]
  For $\beta\in \Br_n^+$ with $\delta(\beta)=w_0$, the coordinate ring $\C[X(\beta)]$ is a cluster algebra with initial seed given by the data of the intersection quiver of $\w_\beta$ together with the regular functions $\{A_{\gamma_v}\}_{v\in \mathfrak{W}_3}$. 
\end{theorem}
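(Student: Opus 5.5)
The theorem immediately above is the CGGLSS result, which the paper quotes. A full proof is out of reach in a few paragraphs, so I sketch the strategy I would follow, taking the cited constructions as given.

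\textbf{Step 1: the torus and its coordinates.} First, use the weave $\w_\beta$ built from a Demazure-reduced factorization of $\beta$; it exists because $\delta(\beta)=w_0$. Its trivalent vertices $\mathfrak{W}_3$ should index a coordinate torus. I would show that the number of trivalent vertices equals $\dim X(\beta)=\ell(\beta)-\ell(w_0)$. Next, the flags decorating the faces of $\w_\beta$ define an open embedding
\[
(\C^\ast)^{|\mathfrak{W}_3|}\hookrightarrow X(\beta).
\]
On this torus the functions $A_{\gamma_v}$ restrict to monomials in the torus coordinates.

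\textbf{Step 2: the mutation rule.} I would check that each mutable $A_{\gamma_v}$ satisfies the exchange relation dictated by the intersection quiver. The plan is to realize the Legendrian mutation of $\w_\beta$ at a mutable Lusztig cycle using $N$-graph calculus \cite[Theorem 4.21]{CasalsZaslow}. This yields a new weave and a new torus chart. I would then compare the face flags on the two sides of the mutated edge. A local computation in $\SL_2$ (or $\SL_3$ near hexavalent vertices) should produce the cluster mutation formula. The intersection numbers needed for the exponents $\epsilon_{kj}$ come from the local rules in Figure~\ref{fig: CycleIntersections}.

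\textbf{Step 3: the algebra equality.} The Laurent phenomenon gives the inclusion of the cluster algebra into $\C[X(\beta)]$, provided every cluster variable is regular. I would prove regularity by showing that each $A_{\gamma_v}$ is a product of generalized minors of the flag data, hence regular. The reverse inclusion requires the torus charts obtained from the initial seed and its one-step mutations to cover $X(\beta)$ up to codimension $2$. I would then invoke normality of $X(\beta)$, which is smooth, together with Hartogs extension, in the standard "starfish lemma" argument. This needs two facts: distinct cluster variables are coprime irreducible, and the frozen variables are units.

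\textbf{Main obstacle.} I expect the codimension-$2$ covering and irreducibility to be the hardest part. I would attack it by induction on $\ell(\beta)$, deleting the last letter of $\beta$. The braid variety then fibers over the smaller variety with fiber $\C$ or $\C^\ast$, so the seed of $\beta$ is the seed of the shorter braid with one vertex adjoined. The inductive step would reduce the covering statement to a rank-one check on this fiber.
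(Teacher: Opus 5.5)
The paper gives no proof of this statement; it is quoted from \cite{CGGLSS}, so your sketch has to stand on its own. The overall shape is reasonable: a weave torus of dimension $\ell(\beta)-\ell(w_0)$, a quiver from Lusztig-cycle intersections, and a codimension-$2$ covering plus normality. But Step 3 does not reach the stated conclusion.

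\textbf{Step 3.} The Laurent phenomenon says nothing about regularity, and you only check the initial variables $A_{\gamma_v}$. Even there, calling them products of generalized minors is unsupported. They are defined as monomials in the auxiliary functions $s_v$, which are not regular, so their regularity is a genuine statement needing proof. The infinitely many remaining cluster variables are controlled only through the starfish lemma. Regularity of the initial \emph{and} one-step mutated variables, together with coprimality in the normal domain $\C[X(\beta)]$, gives $\mathcal{U}\subseteq\C[X(\beta)]$. The codimension-$2$ covering by the initial and one-step tori gives the reverse inclusion. So even granting everything, you obtain $\C[X(\beta)]=\mathcal{U}$, the \emph{upper} cluster algebra. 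The theorem, with the paper's definition of the cluster algebra as generated by cluster variables and inverse frozens, asserts equality with $\mathcal{A}$. Since $\mathcal{A}\subsetneq\mathcal{U}$ can happen in general (e.g.\ for the Markov quiver), you need a separate argument for $\mathcal{A}=\mathcal{U}$, typically local acyclicity in the sense of Muller. Your sketch never addresses this.

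\textbf{Step 2.} This step assumes every mutable Lusztig cycle can be mutated by Casals--Zaslow Legendrian mutation \cite[Theorem 4.21]{CasalsZaslow}. That move applies to short {\sf I}-cycles, or to cycles that weave equivalences turn into them. Mutable Lusztig cycles of $\w_\beta$ are in general long cycles or {\sf Y}-trees. As the paper itself notes, even their $\L$-compressibility is only established when $\beta=\Delta\beta'$. So the exchange relations must be proved algebraically, for instance by showing the exchange binomial is divisible by $A_{\gamma_v}$ in $\C[X(\beta)]$, via orders of vanishing along the components of the complement of the weave torus. The same goes for the regularity of the $A'_{\gamma_v}$ and the openness of the one-step tori, both of which your covering argument needs.

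\textbf{The induction.} The induction in your last paragraph is set up incorrectly. Deleting the last letter does not give a $\C$- or $\C^\ast$-fibration over $X(\beta')$. When $\delta(\beta')=w_0$, $X(\beta'\sigma_i)$ splits into an open piece isomorphic to $X(\beta')\times\C^\ast$ and a closed piece, which is a braid variety of $\beta'$ with a different target flag. When $\delta(\beta')\neq w_0$, $X(\beta')$ is empty. Any such induction therefore has to be formulated for braid varieties with arbitrary target, not only for $\delta(\beta)=w_0$.
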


Following \cite{CasalsWeng}, these cluster structures are defined up to quasi-cluster equivalence. In particular, when $\w_\beta$ does not admit a basis of $\L$-compressing cycles, arbitrarily completing the collection of $\L$-compressing cycles to a basis represents a choice of quasi-cluster structure, and different choices made in completing this collection to a basis lead to quasi-equivalent quasi-cluster structures; see~\cite[Section 3.7]{CasalsWeng} for further details. In all of the cases considered in this paper, our fillings do not admit an absolute homology basis of $\L$-compressing cycles and the choice of basis follows the conventions of \cite{CGGLSS}. We generally suppress discussion of quasi-cluster equivalence otherwise for ease of exposition.

The functions $\{A_{\gamma_v}\}_{v\in \mathfrak{W}_3}$ are obtained as products of auxiliary functions $\{s_v\}_{v\in \mathfrak{W}_3}$ computed using what the authors call ``framed weaves'' \cite[Section 5.1]{CGGLSS}. These auxiliary functions can be identified with the homology variables of the pinching map defined in Section~\ref{ssec:ce-dga}, and are generally not regular functions on $X(\beta)$. The product defining  $A_{\gamma_v}$ is determined by the intersection of Lusztig cycles in a way that combinatorially represents a change of basis between the homology cycles recorded by the pinch map and the Lusztig cycles. We give a precise illustration of this process in Section~\ref{ssec: cluster_coordinates}.

\section{Legendrian double twist knots}\label{sec: DoubleTwistKnots}

In this section, we distinguish different max-tb Legendrian representatives $\La[w, c]$ of the double twist knots $K_{b, c}$ following the methods of \cite{EtnyreNgVertesi13}. We begin by characterizing when $\La[w, c]$ and $\La[w', c]$ are isotopic for two words $w, w'$ of even length. We then use \cite{OzsvathStipsicz} and ruling invariants of $\La[w, c]$ to distinguish the remaining Legendrian representatives. We conclude the section by giving an inductive procedure for showing that $\La[w, c]$ is Legendrian isotopic to the $(-1)$-closure of a positive braid when $w$ has equal numbers of $S$ and $Z$ tangles.

\subsection{Max-tb representatives of Legendrian double twist knots}\label{ssec: max-tb_reps}

Recall from Section~\ref{sec: intro} that we denote by $K_{b, c}$ the double twist knot pictured in Figure~\ref{fig: DoubleTwist} (left). In \cite[Theorem 1.1]{EtnyreNgVertesi13}, the authors classify all Legendrian representatives of twist knots $K_{b, 2}$.\footnote{The parameter $b$ here is equal to $m-1$ appearing in \cite{EtnyreNgVertesi13} in the case where $m$ is negative.} As part of this classification, they show that when $b\geq -3$, $K_{b, 2}$ is Legendrian simple. Moreover, $K_{b, c}$ has maximal Thurston-Bennequin number at most $-3$ when $b\leq -4$ is even. As discussed in Section~\ref{ssec:contact}, \cite[Theorem 1.3]{Chantraine10} implies that such twist knots do not admit any fillings. Therefore, we restrict to $b\leq -5$ odd. 

We produce a max-tb representative of $K_{b, c}$ by filling in the box in the front diagram pictured in Figure~\ref{fig: DoubleTwist} (right) by $-b-1$ tangles of type $Z$ (top right) or $S$ (bottom right). We record the collection of tangles in the word $w$ and denote the resulting Legendrian by $\La[w, c]$ where $c$ is the number of positive crossings in the upper right region of the front diagram. If we orient our knot, then we can further separate the $Z$ and $S$ tangles into oriented Legendrian tangles $Z^+, Z^-, S^+,$ and $S^-$, as pictured in Figure~\ref{fig: Z+Z-S+S-}. Following  \cite[Section 3]{EtnyreNgVertesi13}, we denote the number of $Z^+, Z^-, S^+$, and $S^-$ tangles in a word $w$ by $z^+, z^-, s^+$, and $s^-$, respectively. Note that the letters of $w$ must alternate in sign. Consequently, for $b = -2d-1$, we must have $z^+ + s^+=d$ and $z^- + s^-=d$.

\begin{center}
    \begin{figure}[h!]{ \includegraphics[width=.5\textwidth]{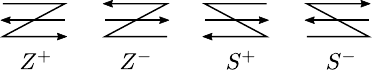}}\caption{From left to right: Legendrian tangles $Z^+$, $Z^-$, $S^+$, and $S^-$. }
    \label{fig: Z+Z-S+S-}\end{figure}
\end{center}

 We start by giving conditions on words $w$ and $w'$ that yield Legendrian isotopic knots following \cite[Lemma 3.4]{EtnyreNgVertesi13}. 

\begin{center}
    \begin{figure}[h!]{ \includegraphics[width=\textwidth]{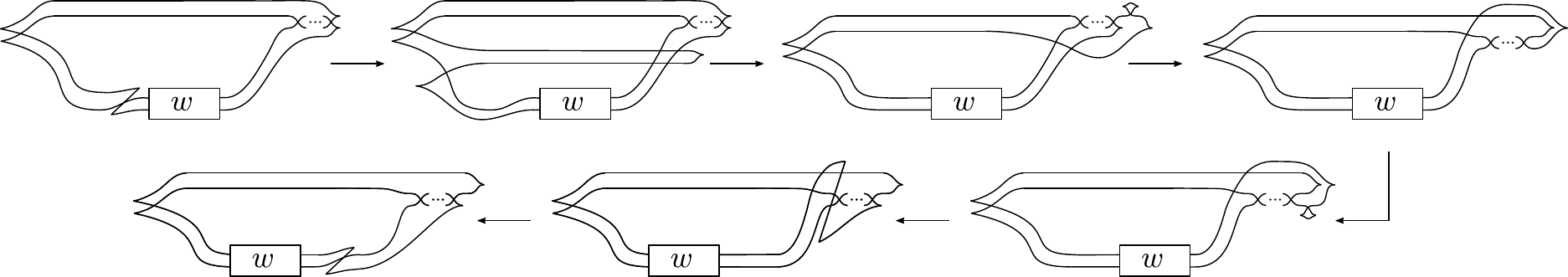}}\caption{Legendrian isotopy of double twist knots $\La[Zw, c]$ and $\La[wZ, c]$.}
    \label{fig: Zw_wZ}\end{figure}
\end{center} 

\begin{lemma}\label{lemma: ZSS=SSZ}
    Let $w$ and $w'$ be two words of length $2d$ with the same $z^+, z^-, s^+, s^-$. Then $\La[w, c]\cong \La[w', c]$. 
\end{lemma}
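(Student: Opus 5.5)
The plan is to follow the strategy of \cite[Lemma 3.4]{EtnyreNgVertesi13}. The idea is to reduce the statement to two kinds of local moves on the word $w$, each realized by a Legendrian isotopy of the front in Figure~\ref{fig: DoubleTwist}.

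\textbf{Move 1 (cyclic rotation).} The first move takes $\La[Zw, c]$ to $\La[wZ, c]$, and likewise with $S$ in place of $Z$. The isotopy is the one pictured in Figure~\ref{fig: Zw_wZ}. The leftmost tangle is slid around the left and top of the diagram and past the $c$ positive crossings in the upper right region, until it reappears at the right end of the box. Because $\La[w,c]$ is the two-bridge-like closure of the box together with the twist region, this is a sequence of Reidemeister II and III moves in the front. The orientation is compatible: a $Z^{\pm}$ letter moved to the end still alternates correctly, since the word has even length $2d$ and the signs alternate cyclically. So rotation preserves $z^\pm, s^\pm$.

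\textbf{Move 2 (local transposition).} The second move uses words of length three, $ZSS \leftrightarrow SSZ$ and $SZZ\leftrightarrow ZZS$, or more generally letters two apart with the same sign. The claim to check is that a $Z$ tangle can be commuted past a pair $SS$, and an $S$ past a pair $ZZ$, within the box by a planar front isotopy. This is a local check: draw $Z^+S^-S^+$ and slide the $Z$ cusp pair through the two $S$ strands using Reidemeister II/III moves, arriving at $S^+S^-Z^+$. Note that the signs are forced by alternation, and the letter keeps its sign because it moves by two positions.

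\textbf{Combining the moves.} Since signs alternate, each letter's sign is determined by its position parity, up to the global choice fixed by the orientation. Hence a word is encoded by two words: $u_+$, the sequence of letters in the $+$ positions, and $u_-$, the sequence in the $-$ positions. Move 2 allows adjacent transpositions within $u_+$ or within $u_-$ whenever the two letters differ and the intervening letter is appropriate. Combined with cyclic rotation, which shifts the whole word by two and so preserves parities, we can sort each of $u_\pm$ into a normal form. For example, one can first bring all $Z^+$ to the front, then all $Z^-$, reaching something like $(ZS)^{z^+}(SZ)^{z^-}(SS)^{\cdot}$ after fixing parities. Two words with the same $z^\pm,s^\pm$ have the same normal form, which gives the lemma.

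\textbf{Expected obstacle.} The main obstacle is making sure Move 2 is available in all configurations. In particular, we must handle the case where the letter we want to commute past is flanked by letters of mixed type, such as $ZSZ$. For this case I would try to use rotation to relocate the configuration, or realize general position-two transpositions directly by a local front isotopy as in \cite{EtnyreNgVertesi13}. Verifying the local front pictures carefully, with orientations, is where the real work lies.
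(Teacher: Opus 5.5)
Your proposal is correct and is essentially the paper's argument: the rotation isotopy $\La[Z^{\pm}w,c]\cong\La[wZ^{\pm},c]$ (and its $S$ analogue), combined with the local isotopies $ZZS\cong SZZ$ and $ZSS\cong SSZ$ from \cite[Lemma 3.4]{EtnyreNgVertesi13}. The obstacle you anticipate does not arise: these two local moves cover both possible middle letters, so two same-sign letters at distance two can always be exchanged (and in $ZSZ$ the outer letters already agree, so nothing needs swapping). Consequently the transpositions alone sort each parity class, and the one-step rotation is needed only to align which parity class carries the $+$ letters when $w$ and $w'$ differ in that respect, rather than the shift by two you describe.
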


\begin{proof}
We first show that for $w$ a word of length $2d-1$, $\La[Z^{\pm}w, c]\cong \La[wZ^{\pm}, c]$ and $\La[S^{\pm}w, c]\cong \La[wS^\pm, c]$; see Figure~\ref{fig: Zw_wZ} for the required Legendrian isotopy in the $Z$ case. Reflecting each figure about the vertical axis yields the isotopy for the $S$ case.

Following the proof of \cite[Lemma 3.4]{EtnyreNgVertesi13} or Figure 12 in loc. cit., the tangles $ZZS$ and $SZZ$ are Legendrian isotopic, as are $ZSS$ and $SSZ$. Since the letters in any word $w$ must alternate in sign, this local computation allows us to swap any two letters with the same sign separated by a single letter. Together with the isotopies from Figure~\ref{fig: Zw_wZ}, this implies that any two words with the same number of $Z^+, Z^-, S^+,$ and $S^-$ yield Legendrian isotopic knots.
\end{proof}

Given a word $w'$ in the letters $S$ and $Z$, we denote by ${z^\pm}'$ the number of $Z^\pm$ in $w'$. As a final equivalence relation, we have that simultaneously swapping all $Z^\pm$ with $S^\pm$ yields Legendrian isotopic knots, as is described in the following lemma.

\begin{lemma}[Lemma 3.5, \cite{EtnyreNgVertesi13}]\label{lemma: reflection}
   Let $w$ and $w'$ be words of length $2d$. If $(z^+, z^-) =(d-{z^+}', d-{z^-}')$, then $\La[w, c]\cong \La[w', c]$.
\end{lemma}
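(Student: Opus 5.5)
Lemma~\ref{lemma: reflection} is quoted from \cite[Lemma 3.5]{EtnyreNgVertesi13}, so the plan is to adapt that argument to our front $\La[w,c]$. The idea is to produce a single global Legendrian isotopy carrying $\La[w,c]$ to $\La[\bar w,c]$. Here $\bar w$ denotes the word obtained from $w$ by interchanging every $Z$ with an $S$ and vice versa, letter by letter.

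\textbf{Step 1: the swap is a Legendrian isotopy.} Rotating the front of Figure~\ref{fig: DoubleTwist} (right) by $\pi$ about an axis perpendicular to the $xz$-plane is not a contactomorphism. However, the composition of the reflections $(x,y,z)\mapsto(-x,y,-z)$ and $(x,y,z)\mapsto (x,-y,-z)$ acts on fronts as the rotation by $\pi$, and it is a contactomorphism of $(\R^3,\xi_{st})$ isotopic to the identity. Equivalently, this rotation is realized on fronts by a sequence of Legendrian Reidemeister moves. The front rotated by $\pi$ is again of the form $\La[\cdot,c]$, after the following checks:
\begin{enumerate}
\item the $c$ positive crossings in the upper right region are carried, by a planar isotopy together with Reidemeister moves sliding the twist region across the top, back to the upper right region;
\item each $Z$ tangle becomes an $S$ tangle and vice versa;
\item the order of the word is reversed.
\end{enumerate}
By Lemma~\ref{lemma: ZSS=SSZ} the order does not matter. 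So Step~1 gives $\La[w,c]\cong\La[\bar w,c]$, possibly with the orientation reversed.

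\textbf{Step 2: bookkeeping of oriented letters.} Rotation by $\pi$ preserves crossing signs and interchanges up and down cusps. Under a fixed orientation it therefore sends $Z^\pm$ to $S^\mp$, or to $S^{\pm}$ once we reverse the orientation to match the convention of Figure~\ref{fig: Z+Z-S+S-}. Rotation numbers and the condition that signs alternate force the count to come out as follows. Since $z^\pm+s^\pm=d$, the number of $Z^+$ letters in $\bar w$ equals $s^+ = d-z^+$, and similarly $z^-$ is replaced by $d-z^-$. This is exactly the hypothesis $(z^+,z^-)=(d-{z^+}',d-{z^-}')$. If the identification instead yields a swap of $+$ and $-$, I would compose with the orientation reversal. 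One could alternatively check that the cyclic symmetry of Figure~\ref{fig: Zw_wZ} already allows this relabeling, since an unoriented knot type is all that is claimed.

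\textbf{Step 3: conclude.} Given $w'$ with the stated counts, $\bar w$ and $w'$ have the same values of $z^\pm$ and $s^\pm$. Lemma~\ref{lemma: ZSS=SSZ} then gives $\La[w,c]\cong\La[\bar w,c]\cong\La[w',c]$.

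\textbf{Main obstacle.} I expect the main obstacle to be Step~1(1): confirming that after the rotation the block of $c$ crossings can be returned to its original position by a Legendrian isotopy. I would do this with explicit Reidemeister~II/III moves sliding the twist box around the cusp, as in the proof of \cite[Lemma 3.5]{EtnyreNgVertesi13}.
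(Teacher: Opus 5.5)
Your overall plan is the paper's. You use the contactomorphism $(x,y,z)\mapsto(-x,-y,z)$, which preserves $dz-y\,dx$ and is contact isotopic to the identity, to trade every $Z$ for an $S$, and then reorder the letters with Lemma~\ref{lemma: ZSS=SSZ}. However, Steps 1 and 2 rest on a wrong description of this map.

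The composite of $(x,y,z)\mapsto(-x,y,-z)$ and $(x,y,z)\mapsto(x,-y,-z)$ is $(x,y,z)\mapsto(-x,-y,z)$. This acts on fronts by the reflection $x\mapsto -x$ across the $z$-axis, not by rotation by $\pi$. Rotation of the front by $\pi$ is the lift $(x,y,z)\mapsto(-x,y,-z)$ alone. That map \emph{is} a contactomorphism, contrary to what you say, but it pulls $dz-y\,dx$ back to its negative, so it is not contact isotopic to the identity. Up to Legendrian isotopy it is Ng's Legendrian mirror, which in general is not realized by Legendrian Reidemeister moves. So your claim that the $\pi$-rotation is "realized on fronts by a sequence of Legendrian Reidemeister moves" is false. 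A genuine $\pi$-rotation also does not do what you list in Step 1: it carries a $Z$-zigzag to a $Z$-zigzag and an $S$ to an $S$, it exchanges up and down cusps, and it moves the block of $c$ crossings to the lower left.

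The error carries into Step 2. Your rotation-based count gives $Z^\pm\mapsto S^\mp$. Then the new word would have $s^-=d-z^-$ letters $Z^+$ and $s^+=d-z^+$ letters $Z^-$, i.e.\ $(z^+,z^-)\sim(d-z^-,d-z^+)$. Combined with the lemma itself, this would identify $(z^+,z^-)$ with $(z^-,z^+)$, which Theorem~\ref{thm: z+z-words} says is false in general (take $d=3$ and $(1,0)$ versus $(0,1)$). Your proposed repair, reversing the orientation or passing to unoriented knots, is not available. Reversal exchanges $Z^+\leftrightarrow Z^-$ and $S^+\leftrightarrow S^-$, so it changes the oriented Legendrian that the $\pm$ counts refer to.

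The fix is to use the front action of the map you actually wrote down. Since $z$ is unchanged, when the orientation is carried along, up cusps stay up and the reflection sends $Z^\pm\mapsto S^\pm$ directly. The reflected word then has $d-z^+$ letters $Z^+$ and $d-z^-$ letters $Z^-$, with no reversal needed. What remains is that the $c$-crossing block lands in the upper left. The paper handles this by first applying the isotopy of Figure~\ref{fig: reflection_front}, which makes the front reflection-symmetric outside the $w$ region. After that, the reflection simply replaces each $Z^\pm$ by $S^\pm$ in reversed order, and Lemma~\ref{lemma: ZSS=SSZ} finishes. This is equivalent to your plan of sliding the block back afterwards.
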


\begin{proof}
   The reflection $x\mapsto -x$ in the $xz$-plane lifts to a contactomorphism $(x, y, z)\mapsto (-x, -y, z)$ of $(\R^3, \xi_{st})$. This contactomorphism induces a Legendrian isotopy, as can be seen by its action of rotation by $180^\circ$ in the Lagrangian projection. If we perform the isotopy pictured in Figure~\ref{fig: reflection_front} prior to this reflection, then we see the reflection simply swaps $Z^\pm$ with $S^\pm$, producing the desired isotopy between $\La[w, c]$ and $\La[w', c]$. 
\end{proof}

\begin{center}
    \begin{figure}[h!]{ \includegraphics[width=\textwidth]{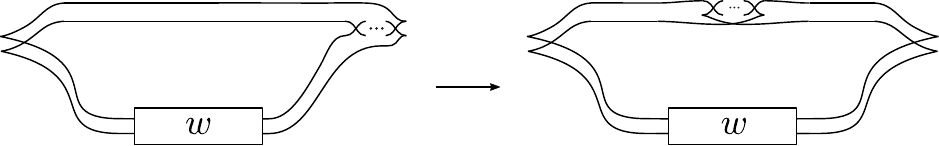}}\caption{Legendrian isotopy transforming $\La[w, c]$ into a front that is fixed by reflection across the $z$-axis except for possibly in the $w$ region.}
    \label{fig: reflection_front}\end{figure}
\end{center} 

We are now prepared to describe the Legendrian isotopy classes of $\La[w, c]$.

\begin{theorem}\label{thm: z+z-words}
Let $w$ and $w'$ be two words of length $2d$. The Legendrian double twist knots $\La[w, c]$ and $\La[w', c]$ are Legendrian isotopic if and only if $(z^+, z^-)=({z^+}', {z^-}')$ or $(z^+, z^-)=(d-{z^+}', d-{z^-}')$
\end{theorem}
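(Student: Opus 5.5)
The "if" direction is already in hand. If $(z^+,z^-)=({z^+}',{z^-}')$, then $s^\pm = d-z^\pm$ is determined as well, so Lemma~\ref{lemma: ZSS=SSZ} gives $\La[w,c]\cong\La[w',c]$. If instead $(z^+,z^-)=(d-{z^+}',d-{z^-}')$, we apply Lemma~\ref{lemma: reflection} directly. All the content is therefore in the "only if" direction. There we must show that the unordered pair $\{(z^+,z^-),(d-z^+,d-z^-)\}$ is a Legendrian isotopy invariant of $\La[w,c]$. Following \cite{EtnyreNgVertesi13}, I would extract this pair from two independent invariants.

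\textbf{Step 1: the rotation number.} Each oriented tangle $Z^\pm$, $S^\pm$ contributes a fixed number of up and down cusps, and the rest of the front in Figure~\ref{fig: DoubleTwist} is independent of $w$. So $\rot(\La[w,c])$ is, up to an additive constant, $\pm(z^+-z^-)$: a $Z$ and an $S$ of the same sign differ by reversing the cusp orientation. Equivalently, $\rot$ is a linear function of $z^+-z^-$, and the symmetry of Lemma~\ref{lemma: reflection} sends $z^+-z^-\mapsto -(z^+-z^-)$, consistent with $\rot\mapsto-\rot$ up to orientation reversal. Since we work with knots that we may also reorient, $\rot$ recovers $|z^+-z^-|$, and with orientation it recovers $z^+-z^-$ up to the reflection ambiguity.

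\textbf{Step 2: a second invariant detecting $z^++z^-$.} Knowing $z^+-z^-$ still leaves a one-parameter family, for example $w=(ZS)^{z^+}(SZ)^{z^-}(SS)^{m/2}$ with $2(z^++z^-)+m=2d$. I would use graded ruling counts (Theorem~\ref{thm: ruling_invariant}) or graded augmentation counts. The Reeb chords internal to a $Z$ tangle and to an $S$ tangle receive different gradings. In particular, the crossings inside an $SS$ block or a $ZZ$ block have grading $0$, while mixed blocks produce crossings whose gradings depend on the rotation carried by the tangle.

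I would compute the $0$-graded, or more generally $m$-graded, ruling polynomial of $\La[w,c]$ as a function of the pair $(\min,\max)$ of $\{z^++s^-,\,z^-+s^+\}$. A convenient normalized form is the one above, with the $(SS)^{m/2}$ block. The existence of a $k$-graded ruling with the correct number of switches should then detect the smallest $k$ for which all switch crossings can have grading dividing $k$. This quantity reads off $m=2d-2(z^++z^-)$, or its reflected counterpart.

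As a backup, I would invoke \cite{OzsvathStipsicz}, as the introduction suggests. The Legendrian invariant $\widehat{\mathfrak L}$ in knot Floer homology distinguishes these representatives after stabilization, exactly as in the twist-knot classification of \cite{EtnyreNgVertesi13}. That approach requires identifying each $\La[w,c]$ with a surgery or contact-structure description, and is the fallback if rulings fail to separate some pair.

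\textbf{Main obstacle.} The hard step is Step 2: carrying out the ruling or grading computation uniformly in $d$ and $c$, and checking that it separates all pairs not related by the reflection. I would first verify it on $d=2$ and the $m(7_2)$ case of \cite{CasalsWengPreprint}, then argue inductively on the word length using the local structure of the tangles.
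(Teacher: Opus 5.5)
Your ``if'' direction is correct and is exactly what the paper uses (Lemmas~\ref{lemma: ZSS=SSZ} and~\ref{lemma: reflection}). The ``only if'' direction fails at Step~1, because $\rot$ is constant (in fact $0$) on this family.

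Your own remark already shows this. Lemma~\ref{lemma: reflection} is a Legendrian isotopy of oriented knots, so it preserves $\rot$. But it sends $z^+-z^-$ to $-(z^+-z^-)$, so $\rot$ cannot be a nonconstant affine function of $z^+-z^-$. More directly, the paper's DGA is $\Z$-graded, and every normal form $(ZS)^{z^+}(SZ)^{z^-}(SS)^{m/2}$ bounds an orientable filling. Since swapping a $Z^+$ for an $S^+$ stays in the family, a $Z$ and an $S$ of the same sign contribute the same rotation, not opposite ones.

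That leaves graded rulings, and they see less than you hope. Every crossing outside the $c$-region is $0$-graded, so there are no nonzero gradings coming from ``mixed blocks.'' The $c$ crossings have grading $\pm 2(d-(z^++z^-))$. Your pair $\{z^++s^-,z^-+s^+\}$ actually encodes $z^+-z^-$, not $z^++z^-$. So rulings determine only $|d-(z^++z^-)|$, and nothing in your plan separates representatives with the same value of this quantity. An example is $\La[ZSZS,c]$ versus $\La[ZSSZ,c]$, the very $m(7_2)$ pair you propose as a test case. These have the same $\tb$ and $\rot$, the same graded ruling counts (all their crossings are $0$-graded), and, by the paper, isomorphic augmentation varieties.

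The ingredient you list as a vague ``backup'' is the essential one, and the paper supplies the mechanism:
\begin{itemize}
\item The transverse push-off is unchanged by negative stabilization.
\item A single negative stabilization, slid along the knot as needed, makes $Z^-$ and $S^-$ Legendrian isotopic.
\item Hence the push-off of $\La[w,c]$ depends only on $z^+$, and \cite[Theorem 5.8]{OzsvathStipsicz} distinguishes push-offs with different $z^+$.
\end{itemize}
Combined with the ruling invariant, this gives the ``only if'' direction.

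When you write this out, note that the reflection of Lemma~\ref{lemma: reflection} is induced by a contactomorphism isotopic to the identity. So the push-off of $\La$ can only see $z^+$ up to $z^+\leftrightarrow d-z^+$. For instance, with $d=3$ the classes $(z^+,z^-)=(1,1)$ and $(2,0)\cong(1,3)$ have the same ruling invariant and the same push-off. Running the same argument on the reversed knot closes this: one positive stabilization identifies $Z^+$ with $S^+$, so that push-off detects $z^-$ up to $z^-\leftrightarrow d-z^-$. These three pieces of data together determine $(z^+,z^-)$ up to the reflection.
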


\begin{proof}

To distinguish $\La[w, c]$ from $\La[w', c]$ we have two complementary tools: the count of $m$-graded rulings, and the $\mathcal{L}$-invariant of the transverse push-off computed in \cite{OzsvathStipsicz}. We use ruling invariants to distinguish Legendrian representatives with different $z^+$ and the $\mathcal{L}$-invariant to distinguish representatives with the same $z^+$ but different $z^-$.

By inspection, we see that the $c$ positive crossings in the front diagram of $\La[w, c]$ have grading $2(d-(z^++z^-))$ and all other crossings are 0-graded. Following the construction of rulings of $\La[w, 2]$ from \cite[Proposition 3.6]{EtnyreNgVertesi13}, we see that switching at all of the 0-graded crossings yields a 0-graded ruling of $\La[w, c]$. If, in addition, we switch at each of the $c$ remaining crossings, then we obtain a $2(d-(z^++z^-))$-graded ruling.\footnote{We can obtain additional $2(d-(z^++z^-))$-graded rulings from switching at a combination of the remaining crossings, but we don't require a precise count for our proof.} Therefore, if $z^++z^-$ is not equal to either $ {z^+}'+{z^-}'$ or $d-({z^+}'+{z^-}')$, the number of $2(d-(z^++z^-))$-graded rulings of $\La[w, c]$ and $\La[w', c]$ differ, and $\La[w, c]\not\cong\La[w',c]$ by Theorem~\ref{thm: ruling_invariant}.

Now consider words $w$ and $w'$ with $z^++z^-={z^+}'+{z^-}'$. \cite[Theorem 5.8]{OzsvathStipsicz} implies that the transverse push-offs of $\La[(ZZ)^{z^+}(SS)^{d-{z^+}}, c]$ and $\La[(ZZ)^{{z^+}'}(SS)^{2d-{z^+}'}, c]$ are not transversely isotopic if $z^+\neq {z^+}'$ for $1\leq z^+\leq d$. Since transverse push-offs are invariant under negative stabilization of the underlying Legendrian, this implies that no negative stabilizations of $\La[(ZZ)^{z^+}(SS)^{2d-{z^+}}, c]$ and $\La[(ZZ)^{{z^+}'}(SS)^{2d-{z^+}'}, c]$ are Legendrian isotopic. A local computation shows that the tangles $Z^-$ and $S^-$ become Legendrian isotopic after a single negative stabilization; see e.g. \cite[Figure 14]{EtnyreNgVertesi13}.
\end{proof}

Since the Legendrian isotopy class of $\La[w, c]$ depends only on $z^+$, $z^-$, and $c$, we may now assume without loss of generality that $w$ is of the form $(ZS)^{z^+}(SZ)^{z^-}(SS)^{\frac{m}{2}}$ for non-negative integers $z^+, z^-,$ and $m=2(d-(z^++z^-))$.

\subsection{Braid representatives of double twist knots}\label{ssec: braidification}

In this subsection, we give an inductive procedure for modifying the front projection of our twist knots in order to be able to make use of existing tools for constructing and distinguishing fillings of Legendrian knots. Given a word $w=(ZS)^{z^+}(SZ)^{z^-}$, we will produce a positive braid $\beta$ so that the $(-1)$-closure of $\beta$ is Legendrian isotopic to $\La[w, c]$. To set notation, for $\beta \in \Br_n$, we denote by $\Sh(\beta)$ the image of the shift homomorphism $\Sh:\Br_n \to \Br_{n+1}$ defined by $\Sh(\sigma_i) = \sigma_{i+1}$. The following is a restatement of Theorem~\ref{thm: braidification} from the introduction:

\begin{theorem}\label{thm: twist_braids}
    Let $w=(ZS)^{z^+}(SZ)^{z^-}$ for non-negative integers ${z^+}, {z^-}$ with ${z^+}+{z^-}\geq 1$. Then there exists a positive braid $\beta\in \Br_{2+z^++z^-}^+$ such that $\La[w, c]\cong \La(\beta)$. 
\end{theorem}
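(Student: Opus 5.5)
We will argue by induction on $n = z^+ + z^-$, producing the braid explicitly from a sequence of front-diagram Legendrian isotopies (Reidemeister moves in the front). By Lemma~\ref{lemma: ZSS=SSZ} we may reorder the letters of $w$ freely, provided we preserve the counts of $Z^\pm$ and $S^\pm$. We may therefore peel off one block $ZS$ or $SZ$ at a time. We may also place it adjacent to the region containing the $c$ positive crossings, using the cyclic moves of Figure~\ref{fig: Zw_wZ}. By Lemma~\ref{lemma: reflection} we may further assume $z^+ \geq z^-$ where convenient, which halves the number of cases.

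For the base case $n=1$ we take $w = ZS$ (or $SZ$). We draw the front of $\La[ZS, c]$ and perform an explicit isotopy: a Reidemeister II move pulls the left cusp of the $Z$ tangle across, and the right cusp of the $S$ tangle is pushed outward. The aim is to reach a front with exactly three left cusps nested as in Figure~\ref{fig: -1closure}, with the interior consisting only of positive crossings among $3$ strands. We expect the $c$ crossings to become $\sigma_1^{c}$, or a conjugate of it, and the $ZS$ block to contribute a short word such as $\sigma_2\sigma_1\sigma_2$. This gives $\beta_1 \in \Br_3^+$. As a sanity check, the count of crossings should equal $\tb + (\text{number of strands})$, since for a $(-1)$-closure $\tb(\La(\beta)) = \ell(\beta) - N$.

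For the inductive step, suppose $\La[w,c] \cong \La(\beta)$ with $\beta \in \Br_{N}^+$ and $N = 2+n$. We insert one more block $ZS$ into $w$. We then exhibit a local isotopy in which this block, together with one outermost pair of cusps, is converted into an extra strand. In the resulting front the old braid appears as $\Sh(\beta)$, or as $\beta$ with the new strand placed at the top, and it is multiplied by a fixed positive word in $\sigma_1,\sigma_2$ (for example $\sigma_1\sigma_2\cdots$) coming from the new strand threading through the twist region. Concretely, the new braid should take a form like $\beta' = \sigma_1 \Sh(\beta)\sigma_1$ or $\Sh(\beta)\,\sigma_1\sigma_2\sigma_1$.

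The key point of the induction is to keep the inductive hypothesis in a normalized form, namely that the front is a $(-1)$-closure in which the $w$-region sits as a specified sub-tangle adjacent to the outer cusps. With that normalization, the same local move applies repeatedly, and we can check it with a $\tb$ count: each block adds two to the crossing number $-\tb$ and one strand.

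The main obstacle is the inductive local isotopy itself. Converting a $ZS$ tangle that sits between existing braid strands into a new closed strand requires moving cusps past positive crossings. This is done using Reidemeister II and III moves in the front, which are only valid in specific configurations, and we must ensure that no negative crossings (left cusps pointing the wrong way) are created.

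We will first try to realize the move by dragging the right cusp of the $S$ tangle around the closure arc, which is the trick used for $(-1)$-closures. This turns a cusp pair into a full strand of the closure and leaves behind only positive crossings. If direct dragging fails, we will instead use the $SZ$ block, which has the opposite cusp configuration. Lemma~\ref{lemma: ZSS=SSZ} lets us order the blocks so that every $ZS$ block precedes every $SZ$ block, and we can treat the two types with mirror-image moves.
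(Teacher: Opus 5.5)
There is a genuine gap. The step that carries all of the content, namely the Legendrian isotopy that absorbs one $ZS$ or $SZ$ block into a new strand, is never produced. You only say what the braid ``should'' look like and what you would try if dragging a cusp fails. You correctly sense that the inductive hypothesis must be a normal form for a partially simplified front, rather than $\La[w',c]\cong\La(\beta)$ for a shorter word $w'$. However, you never specify that normal form, and that is the missing idea.

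In the paper, each $ZS$ (resp.\ $SZ$) pair is first isotoped into a ``loop'' tangle. A preparatory isotopy then produces a front consisting of two positive braid boxes $\beta_1^{(k)},\beta_2^{(k)}\in\Br_{k+1}^+$ together with the loops not yet processed. To absorb the next loop, one first slides its strands across the adjacent loops. An explicit sequence of front moves then gives $\beta_1^{(k+1)}=\sigma_1\cdots\sigma_{k+1}\beta_1^{(k)}\sigma_{k+1}$ and $\beta_2^{(k+1)}=\sigma_{k+1}\beta_2^{(k)}\sigma_{k+1}\cdots\sigma_2$ for a $ZS$ loop. For an $SZ$ loop it gives the mirrored formulas $\beta_1^{(k+1)}=\sigma_1\Sh(\beta_1^{(k)})\sigma_1\cdots\sigma_{k+1}\cdots\sigma_1$ and $\beta_2^{(k+1)}=\Sh(\beta_2^{(k)})\sigma_1$.

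More seriously, your model of the target is wrong, so the heuristics steering your search point the wrong way. The $(-1)$-closure $\La(\beta)$ of $\beta\in\Br_N^+$ is the satellite of the standard unknot. Its standard front contains, besides $\beta$, $N(N-1)$ negative crossings near the cusps. The nested-cusp front with only positive crossings that you describe is the rainbow closure of $\beta$, which is Legendrian isotopic to $\La(\beta\Delta^2)$. Consequently $\tb(\La(\beta))=\ell(\beta)-N^2$, not $\ell(\beta)-N$. For instance, the paper's braid for $m(7_2)$ has $17$ crossings on $4$ strands and $\tb=1$.

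Since $\tb(\La[w,c])=c-1$ is independent of $w$, going from $N$ to $N+1$ strands must add exactly $2N+1$ crossings. The new strand has to cross all of the old ones, which is what the factors $\sigma_1\cdots\sigma_{k+1}$ above accomplish: they contribute $2k+3$ new crossings when $N=k+1$. Your candidates $\sigma_1\Sh(\beta)\sigma_1$ and $\Sh(\beta)\sigma_1\sigma_2\sigma_1$, and your three-strand base case $\sigma_1^c\sigma_2\sigma_1\sigma_2$, therefore cannot be correct. Your $\tb$ check is also calibrated to the wrong formula.

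Worse, the front you aim for generally does not exist. A nested-cusp front with only positive crossings exhibits the knot as a positive braid closure, which is fibered. But $\La[ZSZS,2]$, for example, represents $m(7_2)$, whose Alexander polynomial $3t^2-5t+3$ is not monic, so it is not fibered.
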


\begin{center}
    \begin{figure}[h!]{ \includegraphics[width=.7\textwidth]{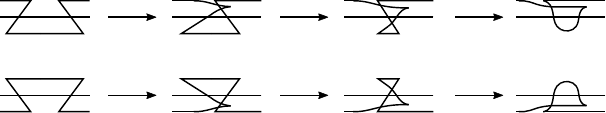}}\caption{Isotopies performed on $ZS$ (top) or $SZ$ (bottom) tangles to produce the $ZS$ or $SZ$ loops we use in the proof of Theorem~\ref{thm: twist_braids}.}
    \label{fig: SZ_blocks}\end{figure}
\end{center} 

\begin{center}
    \begin{figure}[h!]{ \includegraphics[width=\textwidth]{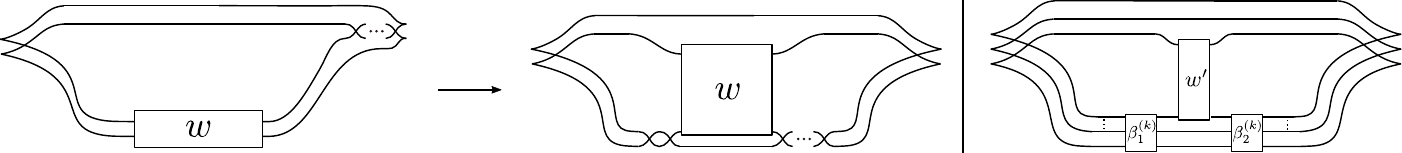}}\caption{On the left, a preparatory isotopy performed prior to the induction argument used in the proof of Theorem~\ref{thm: twist_braids}. The boxed region represents a word $w=(ZS)^{z^+}(SZ)^{z^-}$ of length $-b-1$. On the right, the front obtained after the $k$th step of the induction argument.} 
    \label{fig: base_case}\end{figure}
\end{center} 

\begin{proof}

We prove the statement by first performing an isotopy to produce a front diagram that is easier to manipulate systematically. We then inductively simplify paired $ZS$ and $SZ$ tangles to produce $\La_\beta$. 

We begin by performing the isotopies pictured in Figure~\ref{fig: SZ_blocks} to replace each of the paired $ZS$ or $SZ$ tangles by the corresponding tangle depicted on the right of the figure. We refer to these as $ZS$ or $SZ$ loops corresponding to which of the paired tangles they originated from. We then modify our front following the sequence of isotopy pictured in Figure~\ref{fig: base_case} (left), fixing the $ZS$ and $SZ$ loops. The resulting front provides the base case of our induction argument, and we set $\beta_1^{(1)}=\sigma_1^2$ and $\beta_2^{(1)}=\sigma_1^3$.

\begin{center}
    \begin{figure}[h!]{ \includegraphics[width=.8\textwidth]{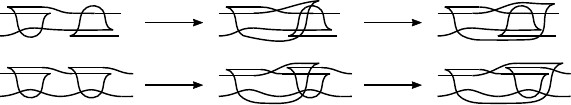}}\caption{Isotopies of $ZS$ loops and $SZ$ loops. Vertically reflecting the sequence of fronts in the top (respectively, bottom) row results in an isotopy describing how an $ZS$ loop interacts with an $SZ$ loop (resp. a $ZS$ loop).}
    \label{fig: SZZSCommuting}\end{figure}
\end{center}

We assume inductively that the tangles represented by the boxes labeled $\beta_{1}^{(k)}$ and $\beta_{2}^{(k)}$ are $(k+1)$-stranded positive braids obtained after simplifying $k-1$ of the $ZS$ or $SZ$ loops. We wish to conclude that we can simplify an additional $ZS$ or $SZ$ loop and produce positive braids $\beta_1^{(k+1)}$ and $\beta_2^{(k+1)}$ so that the front diagram is as pictured in Figure~\ref{fig: base_case} (right). Figure~\ref{fig: SZZSCommuting} depicts the process of sliding strands from the $k$th loop across its adjacent $SZ$ or $ZS$ loops. If the $k$th loop in the word $w$ is an $ZS$ loop, then we obtain the first front of Figure~\ref{fig: ZSinduction} in the end. Performing the sequence of isotopies pictured in the rest of Figure~\ref{fig: ZSinduction} produces a front with one fewer $ZS$ loop and $(k+2)$-stranded braids
\begin{equation*}
\beta_1^{(k+1)}=\sigma_1\dots\sigma_{k+1}\beta_1^{(k)}\sigma_{k+1}  \text{ and } \beta_2^{(k+1)}=\sigma_{k+1}\beta_2^{(k)}\sigma_{k+1}\dots \sigma_2.
\end{equation*}    

\begin{center}
    \begin{figure}[h!]{ \includegraphics[width=\textwidth]{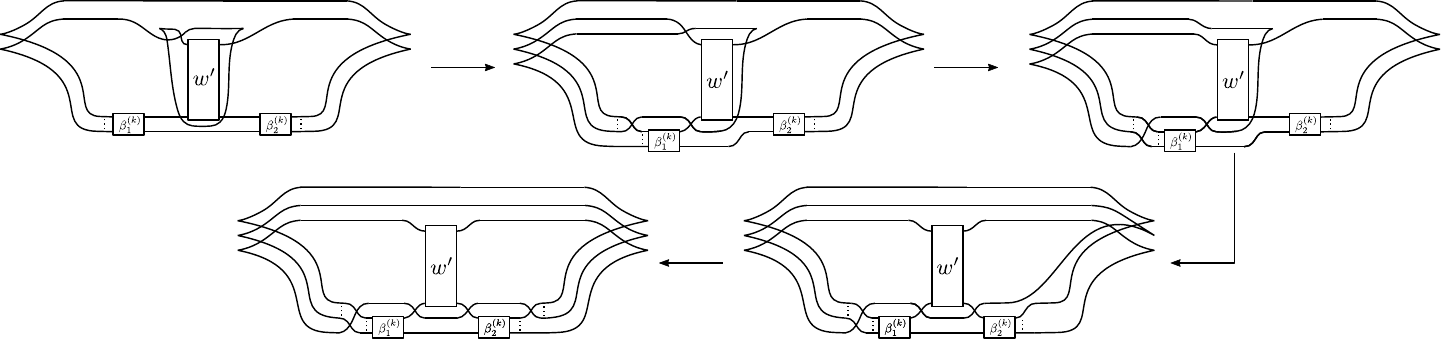}}\caption{Sequence of Legendrian isotopies depicting the induction step in the proof of Theorem~\ref{thm: twist_braids} with a $ZS$ block. The box labeled $w'$ represents the remaining $SZ$ and $ZS$ blocks.}
    \label{fig: ZSinduction}\end{figure}
\end{center}

Similarly, if the $k$th loop in the word $w$ is an $SZ$ loop, then we obtain the first front in Figure~\ref{fig: SZ_induction} by repeatedly applying the isotopies depicted by vertically reflecting those of Figure~\ref{fig: SZZSCommuting}. Performing the remaining sequence of isotopies from Figure~\ref{fig: SZ_induction} then produces a front with one fewer $SZ$ loop and $(k+2)$-stranded braids
\begin{equation*}
\beta_1^{(k+1)}=\sigma_1\Sh(\beta_1^{(k)})\sigma_{1}\dots\sigma_{k+1}\dots\sigma_1  \text{ and } \beta_2^{(k+1)}=\Sh(\beta_2^{(k)})\sigma_1.
\end{equation*}
\begin{center}
    \begin{figure}[h!]{ \includegraphics[width=\textwidth]{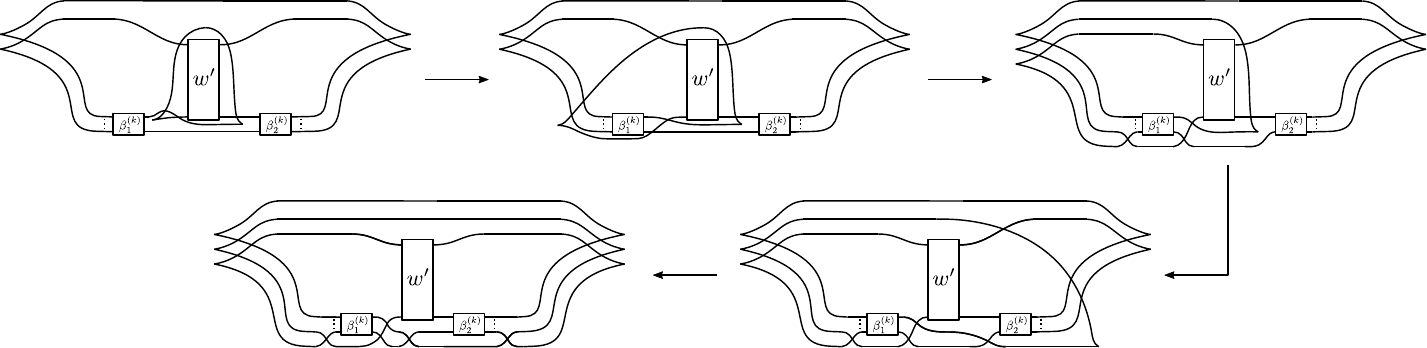}}\caption{Sequence of Legendrian isotopies depicting the induction step in the proof of Theorem~\ref{thm: twist_braids} with an $SZ$ block. The box labeled $w'$ represents the remaining $SZ$ and $ZS$ blocks.}
    \label{fig: SZ_induction}\end{figure}
\end{center} 
\end{proof}

We observe that our inductive argument in the proof above does not depend on the choice of braid in our base case. Given $\beta_1, \beta_2\in \Br_n^+$, let $\La[w, \beta_1, \beta_2]$ be the Legendrian link of the form pictured in Figure~\ref{fig: base_case} (right) replacing $w'$ with $w$ and $\beta_1^{(k)}, \beta_2^{(k)}$ by $\beta_1$ and $\beta_2$. From the proof of Theorem~\ref{thm: braidification}, we obtain the following corollary:

\begin{corollary}\label{cor: General_braidification}
    Let $\beta_1, \beta_2\in \Br_n^+$, $w=(ZS)^{z^+}(SZ)^{z^-}$ for non-negative integers ${z^+}, {z^-}$. Then there exists a positive braid $\beta\in \Br_{n+{z^+}+{z^-}}^+$ such that $\La[w, \beta_1, \beta_2]\cong \La(\beta)$. 
\end{corollary}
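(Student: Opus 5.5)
The plan is to rerun the induction from the proof of Theorem~\ref{thm: twist_braids} almost verbatim, starting from a different base case. By definition, $\La[w,\beta_1,\beta_2]$ is the front of Figure~\ref{fig: base_case} (right) with $w'$ replaced by $w$ and the two boxes filled by the $n$-stranded positive braids $\beta_1,\beta_2$. This is exactly the configuration the inductive hypothesis of that proof assumes, with $n$ playing the role of $k+1$. I would induct on the number $z^++z^-$ of $ZS$ and $SZ$ loops remaining in $w$, proving the stronger claim that $\La[w,\beta_1,\beta_2]$ is Legendrian isotopic to $\La[\emptyset,\beta_1',\beta_2']$ for some $\beta_1',\beta_2'\in\Br^+_{n+z^++z^-}$.

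For the inductive step, write $w=Lw''$ with $L$ the first loop. If $L$ is a $ZS$ loop, apply the isotopies of Figure~\ref{fig: SZZSCommuting} to slide its strands past the adjacent loops, and then apply the sequence in Figure~\ref{fig: ZSinduction}. This gives $\La[w'',\beta_1',\beta_2']$ with
\begin{equation*}
\beta_1'=\sigma_1\cdots\sigma_n\beta_1\sigma_n, \qquad \beta_2'=\sigma_n\beta_2\sigma_n\cdots\sigma_2 .
\end{equation*}
If $L$ is an $SZ$ loop, use the reflected moves and Figure~\ref{fig: SZ_induction} instead. This gives
\begin{equation*}
\beta_1'=\sigma_1\Sh(\beta_1)\sigma_1\cdots\sigma_n\cdots\sigma_1, \qquad \beta_2'=\Sh(\beta_2)\sigma_1 .
\end{equation*}
In both cases the new braids are positive and lie in $\Br^+_{n+1}$. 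The key check is that none of these figures uses any property of the braids in the boxes beyond positivity. The braids only move as boxes: strands enter and exit through their ends, and the local moves happen away from them. So the isotopies apply unchanged.

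Once $w$ is empty, I need to identify $\La[\emptyset,\beta_1,\beta_2]$ with a $(-1)$-closure. I would do this by planar isotopy and Reidemeister moves that slide one box around the closure strands until it is adjacent to the other. This should give the $(-1)$-closure of a positive braid such as $\beta_1\beta_2$, possibly conjugated by, or multiplied with, a fixed positive word coming from the crossings in the closing region. Positivity is preserved throughout, and the strand count is $n+z^++z^-$, as claimed.

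The main obstacle is the verification that Figures~\ref{fig: SZZSCommuting}--\ref{fig: SZ_induction} are genuinely local and independent of the box contents. This is the point the corollary's statement relies on. The final step also needs care: the proof of Theorem~\ref{thm: twist_braids} does not record explicitly the form of the closing front with no loops left, so I would check it directly from Figure~\ref{fig: base_case}.
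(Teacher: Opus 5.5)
Your proposal is correct and matches the paper's argument. The paper's one-line proof says that the induction in Theorem~\ref{thm: twist_braids} never uses the contents of the braid boxes, so it can be rerun from the front of Figure~\ref{fig: base_case} (right); your recursive formulas for $\beta_1',\beta_2'$ are exactly the paper's with $n=k+1$, and your final step, identifying the loop-free front with the $(-1)$-closure of $\beta_1\beta_2$, is what the paper uses implicitly (cf.\ the empty-word case $\beta_1^{(1)}\beta_2^{(1)}=\sigma_1^5$ in the subsequent lemma).
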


\begin{proof}
    The proof is identical to that of Theorem~\ref{thm: braidification}, replacing the initial base case front with the front pictured in Figure~\ref{fig: base_case} (right).
\end{proof}

We now verify that the braids we produce from $\La[(ZS)^{z^+}(SZ)^{z^-}, c]$ satisfy the hypotheses of \cite[Theorem 1.1]{CGGLSS}. Specifically, if $\beta$ is a braid obtained from the word $w=(ZS)^{z^+}(SZ)^{z^-}$, then we wish to show that $\La(\beta)\cong \La(\beta'\Delta_n)$ for $\beta'\in \Br_n^+$ satisfying the additional condition that its Demazure product $\delta(\beta)$ is equal to $w_0$. 

\begin{lemma}
    The braid $\beta\in \Br_{2+z^++z^-}$ obtained from $w=(ZS)^{z^+}(SZ)^{z^-}$ by the process described in Theorem~\ref{thm: twist_braids} is braid equivalent to $\beta'\Delta_{2+z^++z^-}$ for $\beta'\in \Br_{2+z^++z^-}$.  
\end{lemma}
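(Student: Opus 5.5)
The plan is to show that the half twist already divides the braid $\beta_1^{(k)}$ at every stage of the induction in the proof of Theorem~\ref{thm: twist_braids}. Since $\beta$ is assembled from $\beta_1^{(k)}$ and $\beta_2^{(k)}$ by concatenation, it would then be braid equivalent, up to cyclic conjugation, to $\beta'\Delta_{2+z^++z^-}$ with $\beta'$ positive. I will use three standard Garside-theoretic facts about $\Delta_n$ in $\Br_n$.

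The first fact is the twisting relation $\Delta_n x=\tau(x)\Delta_n$, where $\tau(\sigma_i)=\sigma_{n-i}$. Since $\tau$ sends positive words to positive words, a positive braid is left-divisible by $\Delta_n$ (in the positive monoid) if and only if it is right-divisible by $\Delta_n$. I will simply say such a braid is \emph{divisible by $\Delta_n$}. The second and third facts are the recursive decompositions
\[
\Delta_{n+1}=\sigma_1\sigma_2\cdots\sigma_n\,\Delta_n=\Delta_n\,\sigma_n\cdots\sigma_1
\qquad\text{and}\qquad
\Delta_{n+1}=\operatorname{Sh}(\Delta_n)\,\sigma_1\sigma_2\cdots\sigma_n .
\]
Here the first identity in the first display follows from the second by word reversal, because $\Delta$ is reversal-invariant.

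The claim, proved by induction on $k$, is that $\beta_1^{(k)}$ is divisible by $\Delta_{k+1}$.
\begin{itemize}
\item[] \textbf{Base case.} We have $\beta_1^{(1)}=\sigma_1^2=\Delta_2\sigma_1$.
\item[] \textbf{$ZS$ step.} Write $\beta_1^{(k)}=\Delta_{k+1}\alpha$ with $\alpha$ positive. Then
\[
\beta_1^{(k+1)}=\sigma_1\cdots\sigma_{k+1}\,\Delta_{k+1}\,\alpha\,\sigma_{k+1}=\Delta_{k+2}\,\alpha\,\sigma_{k+1}
\]
by the first decomposition.
\item[] \textbf{$SZ$ step.} Use right divisibility instead, writing $\beta_1^{(k)}=\alpha\Delta_{k+1}$. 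Then
\[
\beta_1^{(k+1)}=\sigma_1\operatorname{Sh}(\alpha)\,\operatorname{Sh}(\Delta_{k+1})\,\sigma_1\cdots\sigma_{k+1}\,\sigma_k\cdots\sigma_1=\sigma_1\operatorname{Sh}(\alpha)\,\Delta_{k+2}\,\sigma_k\cdots\sigma_1
\]
by the second decomposition.
\end{itemize}
In both cases the result is divisible by $\Delta_{k+2}$, using the equivalence of left and right divisibility to pass between the two forms as needed.

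Finally, I will read off from the final front of the induction (Figure~\ref{fig: base_case}, right, with $w'$ empty) how $\beta$ is built. I expect $\beta$ to be a concatenation of $\beta_1^{(n-1)}$ and $\beta_2^{(n-1)}$, possibly together with some additional positive letters, where $n=2+z^++z^-$. The $(-1)$-closure is invariant under cyclic rotation of the braid word, which is a Legendrian isotopy, and so is braid equivalence. Hence we may rotate $\beta$ so that the factor $\Delta_n$ coming from $\beta_1^{(n-1)}$ sits at the end, giving $\beta\sim\beta'\Delta_n$ with $\beta'$ positive.

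The main obstacle is this last bookkeeping step. I need to pin down exactly how $\beta_1$, $\beta_2$ and any residual crossings combine into $\beta$ in the final front. I also need to make sure the rotation is legitimate, meaning it is a genuine conjugation in $\Br_n$ and not merely a cyclic permutation of a word that includes non-braid pieces of the front. If the residual crossings obstruct the rotation, I would instead move them across $\Delta_n$ using $\Delta_n x=\tau(x)\Delta_n$, which keeps everything positive.
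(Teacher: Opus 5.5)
Your proposal is correct and follows essentially the same argument as the paper. You induct on $k$ to show that $\beta_1^{(k)}$ contains $\Delta_{k+1}$, using $\sigma_1\cdots\sigma_{k+1}\Delta_{k+1}=\Delta_{k+2}$ in the $ZS$ step and $\Sh(\Delta_{k+1})\sigma_1\cdots\sigma_{k+1}=\Delta_{k+2}$ in the $SZ$ step, with $\Delta x=\tau(x)\Delta$ used to move the half twist to whichever end is needed.

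For the final bookkeeping, the paper takes $\beta=\beta_1\beta_2$ (cf.\ its base case $\beta_1^{(1)}\beta_2^{(1)}=\sigma_1^5$). Your fallback of sliding the remaining positive letters across $\Delta_n$ via $\Delta_n x=\tau(x)\Delta_n$ then already gives $\beta=\beta'\Delta_n$ with $\beta'$ positive, so no cyclic rotation is needed.
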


\begin{proof}
We prove the statement by a induction argument similar to the one we use in the proof of Theorem~\ref{thm: twist_braids}. The base case where $w$ is empty is true: the braid we obtain from the empty word is $\beta_1^{(1)}\beta_2^{(1)}=\sigma_1^5$. We assume that $\beta_1^{(k)}$ is braid equivalent to a braid containing a half twist $\Delta_{k+1}$ and consider $\beta_1^{(k+1)}$. 

We first examine the case where the $k$th loop in $w$ is a $ZS$ loop. Since $\Delta_{k+1}\sigma_i=\sigma_{k+1-i}\Delta_{k+1}$ in $\Br_{k+1}^+$, we can assume that $\beta_1^{(k)}$ begins with $\Delta_{k+1}$. Then $\sigma_1\dots\sigma_{k+1}\Delta_{k+1}=\Delta_{k+2}$, so $\beta_1^{(k+1)}$ contains a half twist. In the case where the $k$th loop in $w$ is an $SZ$ loop, we apply a braid equivalence to ensure that $\beta_1^{(k)}$ ends with $\delta_{k+1}$. The braid $\beta_1^{(k+1)}$ then contains $\Sh(\Delta_{k+1})\sigma_1\dots \sigma_{k+1}=\Delta_{k+2}$, as desired.
\end{proof}

In order to ensure that the hypotheses of \cite[Theorem 1.1]{CGGLSS} are satisfied by the braid we produce, it remains to show that the Demazure product is equal to $w_0$. Rather than checking this directly, we observe that when $\delta(\beta)$ is less than $w_0$ in Bruhat order, $X(\beta)=\emptyset$. Since $X(\beta)\cong \Aug(\beta\Delta)$ and Proposition~\ref{proposition: aug_variety} below implies that the augmentation variety is nonempty, it follows that any braid produced from Theorem~\ref{thm: twist_braids} is equivalent to one of the form $\beta\Delta$ with $\delta(\beta)=w_0$. As an immediate corollary, we have that $\Aug(\La[(ZS)^{z^+}(SZ)^{z^-}, c])$ admits a cluster structure. We note that the method we use in Section~\ref{sec: cluster-double-twist} to produce a cluster structure on $\Aug_m(\La[w,c])$ does not rely on the results of \cite{CGGLSS}, though it is possible show that the cluster structure we produce is equivalent to the structure obtained from \cite{CGGLSS}.

\subsection{Motivation for double twist knots}\label{ssec: motivation}

We conclude this section with a brief explanation for our choice of double twist knots as the main object of our study. Any knot type that is well-suited to our comparative study must satisfy two properties: multiple orientably fillable Legendrian representatives and computable Legendrian invariants.
Braid positive knots, the class of Legendrians for which cluster-theoretic methods are most established, are expected to be Legendrian simple; see e.g.\ \cite{MR2629769, CasalsLagSkel} for results in this direction. Moreover, many Legendrian non-simple knot types for which we have a complete classification -- such as negative torus links, classified in \cite{DaltonEtnyreTraynor} -- do not admit orientable exact Lagrangian fillings. Even when we do have Legendrian non-simple knots that are Lagrangian fillable, our methods of computation can become impractical rather quickly. For example, while positive cables of uniformly thick knot types are classified in \cite{CEMR25}, there does not yet exist a straightforward criteria for determining when such Legendrians correspond to cluster structures. We give a sufficient condition in Proposition~\ref{proposition: cluster_cables}, but even in cases satisfying this condition, the combinatorial methods of \cite{CGGLSS} used to identify  cluster structures quickly become unwieldy, making results difficult to obtain in any level of generality. We demonstrate this difficulty with an example in Section~\ref{sec: cable_computation}.  

This leaves us with twist knots as a natural candidate for our exploration. Legendrian twist knots were classified in \cite{EtnyreNgVertesi13}, and include an infinite family that admits orientable fillings. However, twist knots have slice genus one, constraining the cluster structures that one could associate to them and significantly limiting the scope of our exploration. We therefore expand the class of knots we consider to include double twist knots. As the double twist knots we consider have slice genus $g_4(K_{b, c})=\frac{c}{2}$, we obtain more complicated cluster structures than those associated to twist knots, albeit still of finite type, as we show in the following section.

\section{Augmentation varieties of Legendrian double twist knots}\label{sec: DGA computations}

In this section, we compute for even $c \ge 2$, the commutative Chekanov-Eliashberg DGA of the double twist knot $\La[w, c]$ and its augmentation variety. Similar computations for certain twist knots have also been carried out in \cite[Proposition 4.10]{GaoRutherford}. To set the notation, let $w_i$ denote the $i$-th tangle in the region $w$, counted from left to right. By Theorem~\ref{thm: z+z-words}, we continue to assume that $w$ is of the form $(ZS)^{z^+}(SZ)^{z^-}(SS)^{\frac{m}{2}}$ for even $m$; throughout this section, any mention of $w$ and $m$ refers to this fixed choice of parameters.

\begin{center}
    \begin{figure}[h!]{ \includegraphics[width=.6\textwidth]{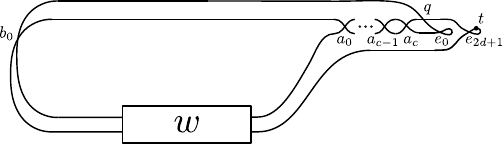}}\caption{Lagrangian projection of a knot Legendrian isotopic to $\La[w,c]$ used for DGA computations.} 
    \label{fig: 2bridgeDGALagrangian}\end{figure}
\end{center} 

For our purpose of computing augmentations induced from exact Lagrangian fillings, we introduce a positive and a negative crossing to the front in Figure~\ref{fig: base_case} (right) by performing a Reidemeister II move at the right cusp and compute the DGA using Ng's resolution \cite{Ng03} of the front, shown in Figure~\ref{fig: 2bridgeDGALagrangian}. We label the generators of the DGA as follows. On the right of Figure~\ref{fig: 2bridgeDGALagrangian}, let $e_0,e_{2d+1}$ denote the two Reeb chords corresponding to the cusps, $q$ the negative crossing, and $a_0, a_1,\ldots,a_c$ the positive crossings ordered from left to right. Let $b_0$ denote the positive crossing on the left of Figure~\ref{fig: 2bridgeDGALagrangian}. For each tangle $w_j$, let $e_j$ denote the Reeb chord corresponding to the right cusp of the tangle and $b_j$ the Reeb chord corresponding to the twist. Finally, place a base point $t$ next to $e_{2d+1}$. The $\Z$-valued gradings of the generators are determined by:

\begin{align*}
    |e_j| &= 1, && 0 \leq j \leq 2d+1, \\
    |a_j| &= \begin{cases}
        -m & \quad \ \ \text{if } j \text{ is odd} \\
        m & \quad \ \ \text{if } j \text{ is even}
    \end{cases}, && 0 \leq j \leq c+1,\\
    |q| &= 1 - m, \\
    |b_j| &= |t| = 0, && 0 \leq j \leq 2d.
\end{align*}

To compute the differential of $\SA(\La[w, c])$, we introduce a family of recursively defined polynomials that compactly package the resulting terms, known as the \dfn{continuant polynomials}. We use them to parametrize all asymptotic configurations of an immersed disk near a sequence of positive crossings in a Legendrian link. 

Continuants previously appeared explicitly in this context in \cite{Hughes2021b} where they were used to describe the Chekanov-Eliashberg DGA of Legendrian $(2,n)$-torus links and parametrize their augmentations. Similar computations also appear in \cite{Kalman-braid,GSW,CGGS1} for braid-positive Legendrian links, as well as \cite{CSHW2023} for Legendrian 2-bridge links.
These polynomials also naturally reflect the $A_n$-type cluster structure of the augmentation variety; see \cite[Section 4.2]{CSHW2023} for more details. Continuant polynomials are defined recursively by
\begin{equation*}
    K_n(x_1,\ldots,x_n) = x_1K_{n-1}(x_2,\ldots,x_n) + K_{n-2}(x_3,\ldots,x_n)\quad \text{with}\quad K_0() = 1,\ \text{and}\ K_1(x_1) = x_1.
\end{equation*}
For instance, $K_2(x_1,x_2) = x_1x_2 + 1$, $K_3(x_1,x_2,x_3) = x_1x_2x_3 + x_1 + x_3$, and $K_4(x_1,x_2,x_3,x_4) = x_1x_2x_3x_4 + x_1x_2 + x_1x_4 + x_3x_4 + 1$. 
We shall associate these polynomials to the region of positive crossings $a_0,\ldots,a_c$ and use the shorthand
\begin{equation*}
    K_{c+1} = K_{c+1}(a_0,\ldots,a_c), \qquad K_{c-1}^M = K_{c-1}(a_1,\ldots,a_{c-1}),\qquad
    K_c^L = K_c(a_0,\ldots,a_{c-1}).
\end{equation*}
where the superscript $L$ can be understood as denoting the leftmost $c$ crossings in the region and $M$ denotes the middle $c-1$ crossings.

Finally, let $W^{ZS}_{z^+} := \prod_{i = 1}^{z^+}(1+b_{2i-2}b_{2i-1})$ and $W^{SZ}_{z^-} := \prod_{i = 2}^{z^-}(1+b_{2i-1}b_{2i})$; we use $W^{ZS}_{z^+}$ and $W^{SZ}_{z^-}$ below to parametrize the asymptotic configurations of an immersed disk near a sequence of $SZ$'s and $ZS$'s, respectively. These polynomials do not appear in our description of the augmentation variety of $\La[w, c]$ below; we include them for the sake of completeness.

We are now in position to state the differential of $\SA(\La[w, c])$ over $\Z[t^{\pm 1}]$. The non-vanishing components of the differential of $\SA(\La[w, c])$ are given by
    \begin{align*}
    \partial e_0 &= \begin{cases}
        \alignbox{1 + b_0K_{c}^L} & \text{if } z^+ \not = 0\ \text{or}\ m \not = 0 \\
        \alignbox{1 + b_0K_{c}^L + W^{SZ}_{z^-}K_{c-1}^M} & \text{if } z^+,m  = 0
    \end{cases}, \\
    \partial e_j &= \hphantom{\left\{\vphantom{\begin{cases}0\\0\end{cases}}\right.} \alignbox{1 + b_{j-1}b_j} \quad 1 \leq j \leq 2d, \\
    \partial e_{2d+1} &= \begin{cases}
        \alignbox{t^{-1} + K_{c}^L b_{2d}} & \text{if } z^- \not = 0\ \text{or}\ m \not = 0 \\
        \alignbox{t^{-1} + K_{c}^L b_{2d} - K_{c-1}^MW^{ZS}_{z^+}} & \text{if } z^-,m  = 0
    \end{cases}, \\
    \partial q &= \begin{cases}
        \alignbox{b_0K_{c+1}} & \text{if } z^+ \not = 0\ \text{or}\ m \not = 0 \\
        \alignbox{b_0K_{c+1} + W^{SZ}_{z^-}K_{c}^R} & \text{if } z^+,m  = 0
    \end{cases}, \\
    \partial a_0 &= \begin{cases}
        \alignbox{0} & \text{if } z^- \not = 0\ \text{or}\ m \not = 0 \\
        \alignbox{W^{ZS}_{z^+}} & \text{if } z^-,m  = 0
    \end{cases},
    \end{align*}

Having established the precise DGA structure, we now proceed to characterize the graded augmentation varieties of $\La[w,c]$.

\begin{proposition}\label{proposition: aug_variety}
    For $m' \not = 1$, the $m'$-graded augmentation variety of $\La[w, c]$ over a field $\Bbbk$ is
    \begin{equation*}
        \Aug_{m'}(\La[w,c])\cong \begin{cases}
         \left\{(a_1,\ldots,a_c) \in \Bbbk^c \ |\ K_c(a_1,\ldots,a_c) \not = 0 \right\}, & m'|m \\
         \{pt\} & m' \nmid m
        \end{cases}.
    \end{equation*}
\end{proposition}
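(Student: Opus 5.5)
We compute $\Aug_{m'}(\La[w,c])$ directly from the DGA above: first the naive variety of $m'$-graded augmentations, then the quotient by dg-algebra homotopy.

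\textbf{Degree-zero generators.} Reducing the $\Z$-gradings modulo $m'$, the generators that can take nonzero values are:
\begin{itemize}
\item $b_0,\ldots,b_{2d}$ and $t$, which always have degree $0$;
\item $a_0,\ldots,a_c$, whose degrees are $\pm m$, hence $0$ modulo $m'$ exactly when $m'\mid m$;
\item $e_j$, of degree $1$, and $q$, of degree $1-m$.
\end{itemize}
Since $m'\neq 1$, neither $e_j$ nor $q$ has degree $0$ when $m'\mid m$, so both are sent to $0$. When $m'\nmid m$, $q$ also has nonzero degree, since $1-m\equiv 0$ would force $m\equiv 1 \pmod{m'}$, impossible for even $m$ with $m'$ even. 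The relations $\epsilon\circ\partial=0$ come from $\partial e_j$ and $\partial q$, and from $\partial a_0$ in the case $z^-=m=0$.

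\textbf{Case $m'\mid m$.}
\begin{itemize}
\item The equations $1+\epsilon(b_{j-1})\epsilon(b_j)=0$ for $1\le j\le 2d$ force every $b_j$ to be a unit, with $\epsilon(b_j)=(-1)^j\epsilon(b_0)^{\pm}$ alternating between $b_0$ and $-b_0^{-1}$. Consequently the factors $1+b_{2i-2}b_{2i-1}$ and $1+b_{2i-1}b_{2i}$ vanish, so the $W$-terms vanish whenever they occur with a nonzero number of factors.
\item In the generic case, $\partial e_0$ gives $b_0K_c^L=-1$, and $\partial q$ gives $b_0K_{c+1}=0$, i.e. $K_{c+1}=0$.
\item Using the continuant recursion from the right, $K_{c+1}=K_c^L a_c+K_{c-1}(a_0,\ldots,a_{c-2})$. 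The equation $K_{c+1}=0$ is linear in $a_0$: expanding from the left, $K_{c+1}=a_0K_c(a_1,\ldots,a_c)+K_{c-1}(a_2,\ldots,a_c)$. So $a_0$ is determined uniquely exactly when $K_c(a_1,\ldots,a_c)\neq 0$.
\item When $K_c(a_1,\ldots,a_c)=0$, the identity $K_{c+1}K^M_{c-1}-K^L_cK_c(a_1,\ldots,a_c)=\pm1$ (the continuant determinant identity) forces $K_{c-1}(a_2,\ldots,a_c)\neq 0$ and hence a contradiction. Equivalently, that identity gives $K^L_c=\mp K_c(a_1,\ldots,a_c)^{-1}$ on the locus, so $b_0=-1/K_c^L$ is then determined.
\item The equation from $\partial e_{2d+1}$ fixes $t$ in terms of $b_{2d}$ and $K_c^L$.
\item In the degenerate cases ($z^+=m=0$ or $z^-=m=0$), the extra $W$-terms are either zero or, when a $W$ is an empty product, contribute $K^M_{c-1}$-type corrections. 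I would check these using the same determinant identity, and also check that $\partial a_0=W^{ZS}_{z^+}$ is consistent. This bookkeeping is where I expect the main obstacle: making sure that the empty products ($W^{SZ}_{z^-}$ with $z^-\le1$, and $d=0$ situations) still give a variety isomorphic to $\{K_c\neq0\}$.
\item Altogether, the naive variety is the graph of a regular map over $\{(a_1,\ldots,a_c): K_c\neq 0\}$.
\end{itemize}

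\textbf{Homotopies in the case $m'\mid m$.} A homotopy $h$ has degree $+1$, so it can be nonzero only on generators of degree $-1 \pmod{m'}$. These are the elements whose degree is congruent to $-1$, and none exist: the $e_j$ have degree $1\not\equiv-1$ when $m'\neq 2$, and $q$ has degree $1-m\equiv 1$. For $m'=2$ there are such elements, but $h$ is determined by its values on generators, and $h(\partial a)$ involves $h$ only on degree-$(-1)$ generators appearing in $\partial a$ for degree-$0$ $a$. Since the differentials of degree-$0$ generators ($\partial a_0$, $\partial b_j$, $\partial t$) vanish or are units, no nontrivial identifications arise. So $\Aug_{m'}$ equals the naive variety, giving the first case of the statement.

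\textbf{Case $m'\nmid m$.} Here the $a_j$ are forced to $0$, so $K_c^L$ and $K_{c+1}$ reduce to their constant values: $K_{c+1}(0,\ldots,0)=0$ and $K_c(0,\ldots,0)=\pm1$ for $c$ even and odd respectively. The equations then determine $b_0$, hence all the $b_j$, and $t$ uniquely, so the variety is a single point. Again no homotopies act, for the same degree reasons as above.
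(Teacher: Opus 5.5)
\textbf{Gap: the case $m'\mid m-1$.} Your treatment of the case $m'\mid m$ is sound and is essentially the paper's argument. You use the continuant determinant identity where the paper cites \cite[Lemma 3.6]{CSHW2023}. The degenerate $W$-terms you worried about are harmless: $d\ge 2$, so they are nonempty products of factors $\epsilon(\partial e_j)=0$.

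The gap is in the case $m'\nmid m$. You rule out $|q|\equiv 0$ by saying $m\equiv 1 \pmod{m'}$ is ``impossible for even $m$ with $m'$ even.'' Nothing makes $m'$ even. The gradings of $\La[w,c]$ are $\Z$-valued, so every $m'$ is admissible, and the hypothesis $m'\neq 1$ would be pointless if only even $m'$ were allowed.

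So consider $m'\mid m-1$ with $m'\neq 1$; for example $m'=m-1$ with $m\ge 4$, which is exactly the case the paper singles out. Then $q$ lies in degree $0$. It appears in no differential, and $\epsilon(\partial q)=\epsilon(b_0)K_{c+1}(0,\ldots,0)=0$ holds automatically. Hence the naive variety is an affine line $\{q\in\Bbbk\}$, not a point.

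Your closing claim that ``no homotopies act, for the same degree reasons'' also fails here. The monomial $b_0a_0$ occurs in $\partial q=b_0K_{c+1}$, because $K_c(a_1,\ldots,a_c)$ has constant term $1$ for $c$ even. Therefore $|a_0|=|q|-1\equiv -1 \pmod{m'}$, and $a_0$ can carry a nonzero homotopy value.

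\textbf{The missing step.} What you need is the homotopy that collapses this line. Given two such augmentations $\epsilon_1,\epsilon_2$, set $h(a_0)=\epsilon_2(q)-\epsilon_1(q)$ and $h=0$ on all other generators. The derivation rule then simplifies, since:
\begin{itemize}
\item $\epsilon_i(a_j)=0$ for all $j$, and $h(b_0)=0$;
\item $K_c(0,\ldots,0)=1$;
\item every monomial of $K_{c-1}(a_2,\ldots,a_c)$ is a nonempty product of chords on which both $h$ and $\epsilon_i$ vanish.
\end{itemize}
With $\epsilon_1(b_0)=-1$, this gives
\[
h(\partial q)=\epsilon_1(b_0)\,h(a_0)\,\epsilon_2\bigl(K_c(a_1,\ldots,a_c)\bigr)=-h(a_0)=\epsilon_1(q)-\epsilon_2(q).
\]
The remaining conditions $h(\partial a)=\epsilon_1(a)-\epsilon_2(a)$ hold because both sides vanish. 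This is the homotopy constructed in the paper's proof. Without it, your argument only proves the second case of the statement for those $m'$ with $m'\nmid m-1$.
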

\begin{proof}
    An $m'$-graded augmentation $\epsilon$ of $\La[w, c]$ is defined by $\epsilon \circ \partial a_0 = 0$, $\epsilon \circ \partial q = 0$, and $\epsilon \circ \partial e_j$ for $j = 0,\ldots, 2d+1$; moreover, $\epsilon(a) \not = 0$ only if $|a| = 0 \mod m'$. In particular, note that $\epsilon(e_j) = 0$, $\epsilon(a_i) = 0$ if $m' \nmid m$, and $\epsilon(q) = 0$ if $m' \not = m-1$. Since $\La[w, c]$ is a knot, \cite[Theorem 1.1]{Leverson} also ensures that $\epsilon(t) = -1$.
    
    We simplify the system of equations that define an augmentation of $\La[w,c]$ as follows. First, in the case when $z^+,m = 0$, observe that $\epsilon(W_{z^-}^{SZ}) = \epsilon(\prod_{i = 1}^{z^-} \partial e_{2i}) = \prod_{i = 1}^{z^-} \epsilon \circ \partial e_{2i} = 0$. Thus, the equations $\epsilon \circ \partial e_0 = 0$ and $\epsilon \circ \partial q = 0$ reduce to
    \begin{equation*}
        \epsilon(b_0K_c^L) = -1\quad \text{and}\quad \epsilon(b_0K_{c+1}) = 0,
    \end{equation*}
    respectively in all cases. The latter condition is equivalent to $\epsilon(K_{c+1}) = 0$. 
    
    Similarly, in the case when $z^-,m = 0$, $\epsilon(W_{z^-}^{SZ}) = 0$ so that the equation $\epsilon \circ \partial a_0 = 0$ is vacuously true and $\epsilon \circ \partial e_{2d+1} = 0$ reduces to
    \begin{equation*}
        \epsilon(K_c^Lb_{2d}) = -1,
    \end{equation*}
    in all cases. Now, the system of equations $\epsilon(1+b_{j-1}b_j) = \epsilon \circ \partial e_j = 0$ for $j = 1,\ldots,2d$ implies that the values of $\epsilon(b_j)$ are determined inductively from $\epsilon(b_0)$. In particular, $\epsilon(b_j) = -1/\epsilon(b_0)$ if $j$ is odd and $\epsilon(b_j) = \epsilon(b_0)$ if $j$ is even. Thus, the conditions $\epsilon(b_0K_c^L) = -1$ and $\epsilon(K_c^Lb_{2d}) = -1$ are equivalent, and are both equivalent to $\epsilon(K_c^L) \not = 0$. By \cite[Lemma 3.6]{CSHW2023}, we know that $\epsilon(K_{c+1}) = 0$ implies $\epsilon(K_c^L) \not = 0$. Moreover, note from the recurrence relation of the continuants that $\epsilon(K_{c+1}) = 0$ if and only if $\epsilon(K_c^R) \not = 0$, in which case $a_0$ is determined. Therefore, the naïve augmentation variety of $\La[w, c]$ is an affine variety given by
    \begin{equation*}
        \widetilde{\Aug}_{m'}(\La[w,c])\cong \begin{cases}
         \left\{(a_1,\ldots,a_c) \in \Bbbk^c \ |\ K_c(a_1,\ldots,a_c) \not = 0 \right\} & m'|m \\
         \left\{q \in \Bbbk \right\} & m' = m-1 \\
         \{pt\} & m' \nmid m, m' \not = m-1
        \end{cases}.
    \end{equation*}
    Since $c$ is even, the unique $m'$-graded augmentation $\epsilon$ of $\La[w,c]$ when $m' \nmid m, m' \not = m-1$ is the one that sends $b_0$ to $-1$. Then, $\epsilon(b_j)$ is determined as above, $\epsilon(t) = -1$, and $\epsilon(a) = 0$ on all other generators $a$.

    If $m' \mid m-1$, then an augmentation is non-vanishing only on generators with a zero differential, meaning no DGA homotopy can exist. Assume that $m' = m-1$. It suffices to show that in this case, any two $m'$-graded augmentations $\epsilon_1,\epsilon_2$ of $\La[w,c]$ are DGA homotopic. We define a map $h:\SA(\La[w,c]) \to \Bbbk$ on the generators of $\SA(\La[w,c])$ as follows. First, define $h(t) = -1$, $h(q) = 0$, 
    $h(e_j) = 0$ for $j = 1, \ldots, 2d+1$, and $h(a_j)= 0$ for $j = 1,\ldots,c$. Define $h(b_0) = -1$. Then, $h(b_j) = -h(b_0)$ if $j$ is odd and $h(b_j) = h(b_0)$ if $j$ is even. Finally, define 
    \begin{equation*}
        h(a_0) = \epsilon_2(q)-\epsilon_1(q).
    \end{equation*}
    We see that for $a \not = q$, $h(\partial a) = 0 = \epsilon_1(a)-\epsilon_2(a)$, and $h(\partial q) = h(b_0K_{c+1}) = \epsilon_1(q)-\epsilon_2(q)$ 
    due to the recurrence relation defining $K_{c+1}$ and the fact that $c$ is even. Thus, $h$ is a DGA homotopy between $\epsilon_1$ and $\epsilon_2$.
\end{proof}

We end this section with several immediate applications of Proposition~\ref{proposition: aug_variety}. First, by Theorem~\ref{thm: z+z-words}, we obtain a family of distinct max-tb Legendrian double twist knots whose graded augmentation varieties are isomorphic.

\begin{corollary}
For $m_1,m_2 \not = 0$, $\Aug_{m_1}(\La[w, c])\cong \Aug_{m_2}(\La[w', c])$ if $m_1,m_2 \mid m$ or $m_1,m_2 \nmid m$. 
\end{corollary}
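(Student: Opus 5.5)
This follows directly from Proposition~\ref{proposition: aug_variety}, applied once to $\La[w,c]$ and once to $\La[w',c]$. The key observation is that the right-hand side of that proposition depends only on $c$ and on whether $m'$ divides the parameter $m$ of the word. It does not depend on $z^+$ or $z^-$. Write $m=2(d-(z^++z^-))$ for $w$ and $m'' = 2(d-({z^+}'+{z^-}'))$ for $w'$. I read the hypothesis "$m_1,m_2\mid m$" as $m_1\mid m$ and $m_2\mid m''$, each divisibility taken with respect to the word whose augmentation variety is being computed; likewise for the nondivisibility case.

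Assume first that $m_1\mid m$ and $m_2\mid m''$. Proposition~\ref{proposition: aug_variety} requires the grading to be different from $1$. The gradings $m_1,m_2$ are divisors of twice the rotation number, which is even, so they are never $1$ and the proposition applies. It gives
\[
\Aug_{m_1}(\La[w,c])\cong \{(a_1,\ldots,a_c)\in\Bbbk^c \mid K_c(a_1,\ldots,a_c)\neq 0\}\cong \Aug_{m_2}(\La[w',c]).
\]
The variety in the middle is the same for both knots, because $c$ is shared. This gives the isomorphism.

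Assume instead that $m_1\nmid m$ and $m_2\nmid m''$. Then Proposition~\ref{proposition: aug_variety} makes both augmentation varieties a single point, so they are isomorphic.

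The only real obstacle is making sure the proposition applies in every case. I would check two things. First, the excluded value $m'=1$ never arises here, by the parity argument above. Second, the degenerate situation $m''=0$ or $m=0$ (when $m_1 \mid 0$ holds trivially) is already covered by the divisibility case of the proposition, since its differential computation treated $z^\pm,m=0$ separately and reached the same variety. Finally, I would note that the isomorphisms are canonical on the coordinates $(a_1,\ldots,a_c)$. This is the form in which Theorem~\ref{thm: main} will later use them.
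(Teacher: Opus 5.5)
Your route is exactly how the paper obtains this corollary: apply Proposition~\ref{proposition: aug_variety} separately to $\La[w,c]$ and $\La[w',c]$, and note that the answer depends only on $c$ and on divisibility. The paper records it as an immediate application of that proposition, and reading the divisibility condition word by word is the sensible interpretation.

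The step that fails is your dismissal of the excluded value $m'=1$. The claim that $m_1,m_2$ divide $2\rot$, which is even, so they are never $1$, is false, because $1$ divides every integer. In fact $\rot(\La[w,c])=0$ here (the gradings in Section~\ref{sec: DGA computations} are $\Z$-valued), so every $m'$ is an admissible grading. The paper itself works with odd gradings, e.g.\ $m'=m-1$ in the proof of the proposition. The corollary's hypothesis as stated is only $m_1,m_2\neq 0$, so $m_i=1$ is allowed. It lands in your first case ($1\mid m$), and there the proposition says nothing.

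This is not cosmetic. For $m'=1$ every generator has degree $0$, so an augmentation may be nonzero on $q$ and on all the $e_j$. That breaks exactly the vanishing the proposition's proof relies on, and homotopies are no longer constrained by degree. Identifying $\Aug_1(\La[w,c])$ with $\{K_c(a_1,\ldots,a_c)\neq 0\}$ would therefore need a separate homotopy computation, and it is not clear a priori that it gives the same answer.

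The clean repair is to state the hypothesis as $m_1,m_2\neq 1$, matching the proposition. Excluding $0$ is actually unnecessary: $0\mid m$ iff $m=0$, and $0\nmid m-1$ because $m$ is even. With that hypothesis your argument is complete and coincides with the paper's.
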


Second, we can fully characterize the Maslov-0 fillable representatives of $\La[w, c]$ and give a partial characterization of Maslov-$m$ fillability of $\La[w, c]$.

\begin{proposition}\label{proposition: Maslov-m fillability}
$\La[w, c]$ admits a Maslov-0 filling if and only if $w=(ZS)^{z^+}(SZ)^{z^-}$. For $w=(ZS)^{z^+}(SZ)^{z^-}(SS)^{\frac{m}{2}}$, $\La[w,c]$ does not admit a Maslov-$m'$ filling if $m'$ does not divide $m$. 
\end{proposition}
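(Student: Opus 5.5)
The obstruction half of the statement comes from augmentations; the existence half comes from Theorem~\ref{thm: twist_braids} together with an explicit decomposable filling.

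\emph{Obstruction.} Suppose $L$ is a filling of $\La[w,c]$ with Maslov number $m'$. By the discussion in Section~\ref{ssec:ce-dga}, the induced map $\Phi_L$ has degree $0$ in the $\Z/m'\Z$ grading. Composing with any local system $\eta$ gives a whole $(\Bbbk^\ast)^{2g}$-family of $m'$-graded augmentations; here $g=c/2$, since $\chi(L)=-\tb(\La[w,c])=1-c$ by \cite{Chantraine10}, and $l=1$. By \cite[Proposition 2.6]{GaoRutherford} these augmentations are pairwise non-homotopic, so they embed $(\Bbbk^\ast)^{c}$ into $\Aug_{m'}(\La[w,c])$. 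Proposition~\ref{proposition: aug_variety} says this variety is a point whenever $m'\nmid m$ (and $m'\neq 1$, which is automatic for an orientable filling, whose Maslov number is even). Since $c\ge 2$, a torus of positive dimension cannot embed in a point, so $m'\mid m$. Taking $m'=0$ gives $0\mid m$, that is $m=0$, so $w=(ZS)^{z^+}(SZ)^{z^-}$. This proves the second claim and the forward direction of the first.

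\emph{Existence.} Let $w=(ZS)^{z^+}(SZ)^{z^-}$. If $z^++z^-\ge1$, Theorem~\ref{thm: twist_braids} and the subsequent lemma identify $\La[w,c]$ with $\La(\beta'\Delta)$ with $\delta(\beta')=w_0$. The weave construction of \cite[Section 4.3]{CGGLSS} then gives an embedded filling $L_{\w_\beta}$. A weave filling is Maslov-$0$, because the Legendrian surface $\w$ lifts to $J^1\D^2$ and the front sheets give a global $\Z$-valued Maslov potential.

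To avoid relying on that, I would alternatively resolve crossings directly. By inspection of the front, every crossing of $\La[w,c]$ other than the $c$ crossings in the upper right region has grading $0$, and when $m=0$ those $c$ crossings have grading $2(d-(z^++z^-))=0$ as well. So every crossing of $\La[w,c]$ is $0$-graded. I would pinch the $c$ crossings in order, each via the Reidemeister~II move of Figure~\ref{fig: preparatory_isotopy} composed with the local move of Figure~\ref{fig: D4minus}. This should leave a max-tb unknot, which I cap with a minimum cobordism. Since $\rot=0$ and each resolved chord has grading $0$, Theorem~\ref{thm: MaslovNumber} gives Maslov number $0$.

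The main obstacle is checking that this pinching sequence really terminates at a max-tb unknot, using only contractible, simple chords. I would verify this inductively on $c$: pinching two adjacent crossings reduces $\La[w,c]$ to $\La[w,c-2]$. The base case $c=0$ should isotope to the standard unknot via the isotopies of Lemma~\ref{lemma: ZSS=SSZ} and Figure~\ref{fig: Zw_wZ}, which cancel the $ZS$/$SZ$ pairs.
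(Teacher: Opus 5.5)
Your proposal is correct. The obstruction is the paper's own argument: a Maslov-$m'$ filling gives, via \cite[Proposition 2.6]{GaoRutherford}, an injection of a positive-dimensional torus into $\Aug_{m'}(\La[w,c])$, and Proposition~\ref{proposition: aug_variety} shows this variety is a point when $m'\nmid m$. Your dimension $(\Bbbk^\ast)^{c}$, from $g=c/2$ and $l=1$, is the right one; the paper's $(\Bbbk^\ast)^{c/2}$ is a harmless slip. The case $m'=1$ is excluded simply because $1\mid m$, so orientability is not needed there.

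For existence, the paper's proof just points to the pinching construction of Section~\ref{ssec: filling}, which is your second route. After the Reidemeister~II move at the right cusp, one pinches $a_1,\dots,a_c$, leaving only $a_0$, i.e.\ the max-tb unknot. Since every pinched chord has grading $m=0$, Theorem~\ref{thm: MaslovNumber} gives Maslov number $0$.

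Your primary route, through Theorem~\ref{thm: braidification} and the Demazure weaves of \cite{CGGLSS}, is also valid. It imports more, however, including the argument that $\delta(\beta')=w_0$, which itself rests on Proposition~\ref{proposition: aug_variety}. What it buys is a filling inside the Casals--Zaslow weave framework. The pinching route is self-contained and works verbatim for $m\neq 0$, which is how the paper also gets its Maslov-$m$ fillings.

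One step of your pinching route needs a different justification. The isotopies of Lemma~\ref{lemma: ZSS=SSZ} and Figure~\ref{fig: Zw_wZ} only reorder the letters of $w$. They never cancel $ZS$ or $SZ$ pairs, so they cannot identify $\La[w,0]$ with the standard unknot. Argue instead as follows:
\begin{itemize}
\item The $c$ positive crossings are the only $c$-dependent contribution to $\tb$, so $\tb(\La[w,0])=\tb(\La[w,c])-c=-1$.
\item $K_{b,0}$ is smoothly the unknot, since its two-bridge invariant $p=|bc\pm 1|$ equals $1$ when $c=0$.
\item By the Eliashberg--Fraser classification \cite{EliashbergFraser09}, $\La[w,0]$ is therefore the standard max-tb unknot.
\end{itemize}
Also, simplicity of the pinched chords is needed only for the DGA-map formula. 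Contractibility alone suffices for the existence of the filling and for applying Theorem~\ref{thm: MaslovNumber}.
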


\begin{proof}
In Section~\ref{ssec: filling}, we construct Maslov-0 fillings of $\La((ZS)^{z^+}(SZ)^{z^-})$, proving one direction of the statement. We now obstruct the existence of Maslov-0 fillings of $\La[(ZS)^{z^+}(SZ)^{z^-}(SS)^{\frac{m}{2}}, c]$ when $m>0$. In this case, we know by Proposition~\ref{proposition: aug_variety} that $\Aug(\La[w,c])\cong \{pt\}$. By \cite[Proposition 1.2]{GaoRutherford}, however, any Maslov-$0$ exact Lagrangian filling of $\La[w,c]$ would induce an injective map $(\Bbbk^\ast)^{c/2}\hookrightarrow\Aug(\La[w,c])$. Therefore, there is no Maslov-$0$ filling of $\La[w,c]$ when $m>0$. The obstruction of Maslov-$m$ fillability of $\La[w,c]$ follows the same argument.
\end{proof}

\begin{remark} 
We expect that $\La[w, c]$ admits a Maslov-$m$ filling if and only if $w$ is equivalent (in the sense of Theorem~\ref{thm: z+z-words}) to a word of the form $(ZS)^{z^+}(SZ)^{z^-}(SS)^{\frac{m}{2}}$. Given that we construct Maslov-$m$ fillings in Section~\ref{ssec: filling}, the difficulty in proving such a statement lies in obstructing Maslov-$m'$ fillings for $m'$ dividing $m$. In particular, the obstruction used to prove Proposition~\ref{proposition: Maslov-m fillability} fails, as e.g. $\Aug_2(\La[w,c])\cong \Aug_2(\La[w', c])$ for all words $w, w'$ of the same length. 
\end{remark}

 \begin{remark}
Given a word $w=(ZS)^{z^+}(SZ)^{z^-}$, Proposition~\ref{proposition: Maslov-m fillability} tells us that we can perform an operation swapping $SZ$ for $ZS$ or vice-versa to obtain a new Maslov-0 fillable Legendrian representative of the same knot. Corollary~\ref{cor: General_braidification} gives a method for producing $(-1)$-closures of positive braids for possibly distinct Legendrian representatives of a broader class of knot type related by pairs of $SZ$ or $ZS$ swaps than our double twist knots. While there are examples of distinct max-tb Legendrian representatives that are not related by swapping $S$ with $Z$ \cite[Section 7]{EtnyreNg03}, those known to the authors do not admit orientable Lagrangian fillings. 
It is not known whether there exist Maslov-0 Lagrangian fillable Legendrian representatives of the same knot type that are not related by swapping $SZ$ with $ZS$ (or vice versa). As a result, we can think of the method of proof for Theorem~\ref{thm: twist_braids} as possibly giving a method for obtaining different Maslov-0 Lagrangian fillable representatives from an initial one.
\end{remark}

As corollaries of Propositions~\ref{proposition: Maslov-m fillability} and Theorem~\ref{thm: z+z-words}, together with the construction of the fillings in Section~\ref{ssec: filling}, we can enumerate the Maslov-$m$ fillable Legendrian representatives of double twist knots.

\begin{corollary}
There are exactly $\lceil \frac{d+1}{2}\rceil$ distinct Maslov-$0$ fillable max-tb Legendrians representatives of $K_{-2d-1, 2}$ and at least $\lceil \frac{d+1}{2}\rceil$ distinct Maslov-0 fillable max-tb Legendrian representatives of $K_{-2d-1, c}$ for $c > 2$ even.
\end{corollary}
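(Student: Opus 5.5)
The count follows from combining Proposition~\ref{proposition: Maslov-m fillability} with Theorem~\ref{thm: z+z-words}. By Theorem~\ref{thm: z+z-words} and the normalization that follows it, every max-tb representative $\La[w,c]$ of $K_{-2d-1,c}$ is isotopic to one with $w=(ZS)^{z^+}(SZ)^{z^-}(SS)^{\frac{m}{2}}$, where $z^++z^-+\frac{m}{2}=d$. By Proposition~\ref{proposition: Maslov-m fillability}, this representative is Maslov-0 fillable exactly when $m=0$, i.e.\ $z^++z^-=d$. The Maslov-0 fillable representatives of this form are therefore indexed by $z^+\in\{0,1,\dots,d\}$, with $z^-=d-z^+$, and the task reduces to counting these up to Legendrian isotopy.

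Theorem~\ref{thm: z+z-words} says that $\La[w,c]\cong\La[w',c]$ if and only if $(z^+,z^-)=({z^+}',{z^-}')$ or $(z^+,z^-)=(d-{z^+}',d-{z^-}')$. With $z^-=d-z^+$, the second condition becomes ${z^+}'=d-z^+$. So the isotopy classes correspond to orbits of the involution $z^+\mapsto d-z^+$ on $\{0,\dots,d\}$. There are $d+1$ points. When $d$ is even there is one fixed point, $z^+=d/2$, so the number of orbits is $d/2+1=\lceil\frac{d+1}{2}\rceil$. When $d$ is odd there are no fixed points, so the number of orbits is $(d+1)/2$. This gives exactly $\lceil\frac{d+1}{2}\rceil$ distinct Maslov-0 fillable representatives of the form $\La[w,c]$, for every even $c\ge 2$.

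For $c=2$, the claim is that these are all the Maslov-0 fillable max-tb representatives of $K_{-2d-1,2}$. This is the step that needs an outside input: the classification of Legendrian twist knots in \cite[Theorem 1.1]{EtnyreNgVertesi13}, which shows that every max-tb representative of the twist knot $K_{-2d-1,2}$ is isotopic to some $\La[w,2]$. Once that is granted, every Maslov-0 fillable representative appears in the count above, so the number is exact.

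For $c>2$ no classification is available. The family $\La[w,c]$ might not exhaust the max-tb representatives, which is why only the lower bound is claimed. The only point to check is that the distinguishing half of Theorem~\ref{thm: z+z-words} works for every even $c$: the ruling count uses the grading $2(d-(z^++z^-))$ of the $c$ crossings, and the $\mathcal{L}$-invariant argument of \cite{OzsvathStipsicz} requires $c$ even so that $K_{b,c}$ is a knot. Both hold as stated. The existence of the Maslov-0 fillings for all $z^+$ comes from the constructions referenced in Section~\ref{ssec: filling}.
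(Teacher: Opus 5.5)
Your proposal is correct and matches the argument the paper intends. The paper states the corollary as an immediate consequence of Proposition~\ref{proposition: Maslov-m fillability} and Theorem~\ref{thm: z+z-words}, which reduce the count to the orbits of $z^+\mapsto d-z^+$ on $\{0,\dots,d\}$, exactly as you do. Your explicit use of the Etnyre--Ng--V\'ertesi classification \cite{EtnyreNgVertesi13} to get exactness for $c=2$, and the absence of such a classification for $c>2$ to explain why only a lower bound is claimed there, is the input the paper leaves implicit.
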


\begin{corollary}
For $c, m\geq 2$ even, 
there are at least $d+1-\frac{m}{2}$ Maslov-$m$ fillable Legendrian representatives of $K_{-2d-1, c}$.
\end{corollary}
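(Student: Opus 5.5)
The corollary is a counting statement, so I would get it from an injective map from parameter data to Legendrian isotopy classes. Fix $c,m\geq 2$ even and $b=-2d-1$. By Theorem~\ref{thm: z+z-words} and Lemma~\ref{lemma: ZSS=SSZ}, every max-tb representative $\La[w,c]$ is determined up to isotopy by the pair $(z^+,z^-)$, modulo the involution $(z^+,z^-)\mapsto(d-z^+,d-z^-)$. For a fixed $m$, I would consider the words
\[
w=(ZS)^{z^+}(SZ)^{z^-}(SS)^{\frac{m}{2}}, \qquad z^++z^-=d-\tfrac{m}{2}, \qquad z^+,z^-\geq 0.
\]
There are exactly $d-\frac{m}{2}+1=d+1-\frac{m}{2}$ such words, indexed by $z^+\in\{0,\dots,d-\frac m2\}$.

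\emph{Fillability.} I would invoke the construction of Maslov-$m$ fillings of $\La[(ZS)^{z^+}(SZ)^{z^-}(SS)^{\frac m2},c]$ in Section~\ref{ssec: filling}, the same construction that underlies Theorem~\ref{thm: main}. The Maslov number is checked with Theorem~\ref{thm: MaslovNumber}. The rotation number is set up so that $2r(\La)$ is a multiple of $m$. The $c$ crossings $a_i$ have grading $\pm m$, and every other pinched chord has grading $0$. So the Maslov number is a positive multiple of $m$, and after reconciling with the gcd formula (the chords of grading exactly $m$) it equals $m$.

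\emph{Distinctness.} Suppose two of these words, with $z^+\neq z'^+$, gave isotopic Legendrians. By Theorem~\ref{thm: z+z-words} we would need $(z^+,z^-)=(d-z'^+,d-z'^-)$. Adding the two coordinates gives $d-\frac m2=z^++z^-=2d-(z'^++z'^-)=d+\frac m2$, which forces $m=0$, a contradiction. Hence all $d+1-\frac m2$ representatives are pairwise non-isotopic, which gives the "at least" bound.

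The main obstacle is the fillability step, since it relies on the later construction producing a filling whose Maslov number is exactly $m$ and not a proper multiple. Distinctness is immediate from Theorem~\ref{thm: z+z-words}.
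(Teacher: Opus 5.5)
Your proposal is correct and follows the paper's argument. The paper takes the $d+1-\frac m2$ words $(ZS)^{z^+}(SZ)^{z^-}(SS)^{\frac m2}$ with $z^++z^-=d-\frac m2$, shows they are pairwise non-isotopic by Theorem~\ref{thm: z+z-words} (the reflection would change $z^++z^-$ to $d+\frac m2$), and notes that each admits the pinching fillings of Section~\ref{ssec: filling}.

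On the Maslov computation, two small clarifications. First, $\rot(\La[w,c])=0$, since the gradings in Section~\ref{sec: DGA computations} are $\Z$-valued. Second, the only pinched chords are $a_1,\dots,a_c$, so there are no "other" pinched chords. Theorem~\ref{thm: MaslovNumber} then gives the Maslov number directly as $\gcd(0,\pm m,\dots,\pm m)=m$.
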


\section{Cluster structures on augmentation varieties $\Aug_m(\La[w,c])$}
\label{sec: cluster-double-twist}

In this section, we prove that the $m$-graded augmentation variety $\Aug_m(\La[w, c])$ of $\La$ is a cluster variety with every cluster seed corresponding to some Maslov-$m$ exact Lagrangian filling of $\La[w, c]$. Drawing upon the strategies developed in in \cite[Section 5]{CSHW2023}, we first define a cluster structure on $\Aug_m(\La[w, c])$ by identifying it with a subvariety of the cluster variety $\Gr_{2,c+3}^\circ$, a subset of the affine cone of the Grassmannian $\Gr(2,c+3)$. For each cluster seed, we then construct a decomposable Maslov-$m$ exact Lagrangian filling such that the pullback of the induced DGA map is an open embedding of an algebraic torus onto the cluster torus chart. Finally, we discuss how the geometry of the exact Lagrangian fillings we constructed recovers the quivers and cluster mutations of the cluster seeds in $\Aug_m(\La[w, c])$.

\subsection{$\Aug_m(\La[w,c])$ and Grassmannian cluster varieties} 
For the first step, we recollect some necessary background on the Grassmannian cluster variety $\Gr_{2,n}^\circ$. We refer readers to \cite[Section 4.2]{CSHW2023} for more details. First, we define $\Gr_{2,n}^\circ$ as the quotient space
\begin{equation*}
    \SL_2\backslash \{M \in \Mat_{2\times n}\ |\ P_{i,i+1}(M) \not = 0\ \text{for all}\ 1 \leq i < n\ \text{and}\ P_{1,n}(M) \not = 0\},
\end{equation*}
where $P_{i,j}$ is the \dfn{$(i,j)$-th Plücker coordinate}, defined as the determinant of the submatrix formed by the $i$th and $j$th columns of the matrix $M$. By a result of Fomin and Zelevinsky \cite{FominZelevinsky_ClusterII}, the cluster seeds of $\Gr_{2,n}^\circ$ correspond to the triangulations of an $n$-gon. In more details, given a triangulation $\tau$ of an $n$-gon whose vertices are labeled clockwise by $1,2,\ldots,n$, the cluster torus chart $U_\tau \subset \Gr_{2,n}^\circ$ is an open subset cut out by the conditions $P_e := P_{i,j} \not = 0$ for all diagonals or sides $e$ connecting two distinct vertices $i<j$ in $\tau$. In particular, the Plücker coordinates $P_e$ are the cluster variables. The quiver $Q_\tau$ is given by placing a mutable vertex at every diagonal of $\tau$ and a frozen vertex at every side of $\tau$, drawing a counterclockwise $3$-cycle for each triangle in $\tau$, and then deleting any arrows between frozen vertices. Finally, a seed mutation in the direction of $P_{i,j}$ corresponds to a flip of the diagonal connecting $i,j$, which transforms the triangulation $\tau$ to another triangulation $\tau'$.

To connect $\Gr_{2,n}^\circ$ with the augmentation variety, we need to describe $\Gr_{2,n}^\circ$ as an affine variety. Define two special matrices 
\begin{equation*}
    B(x):= \begin{pmatrix}
    x & -1 \\
    1 & 0 
    \end{pmatrix}\quad \text{and}\quad D(x):= \begin{pmatrix}
    x & 0 \\
    0 & x^{-1} 
    \end{pmatrix}.
\end{equation*}
\cite[Section 4.2]{CSHW2023} gives two characterizations of $\Gr_{2,n}^\circ$ as an affine subvariety in $\Bbbk_{a_1,\ldots,a_{n-1}}^{n-1} \times (\Bbbk^\ast)^{n-1}_{p_1,\ldots,p_{n-1}}$ determined by a polynomial in terms of $B(a_j)$ and $D(p_j)$. The characterization relevant to our work is presented below, where $(M)_{ij}$ denotes the $(i,j)$th entry of a matrix $M$.

\begin{proposition}[Proposition 4.15 and 4.16, \cite{CSHW2023}]\label{proposition: gr_eqns} $\Gr_{2,n}^\circ$ is an open affine subvariety in $\Bbbk_{a_1,\ldots,a_{n-2}}^{n-2} \times (\Bbbk^\ast)^{n-1}_{p_1,\ldots,p_{n-2},p_n}$ cut out by the inequality
\begin{equation*}
    (B(a_1)D(p_1)B(a_2)D(p_2)\cdots B(a_{n-2})D(p_{n-2}))_{11} \not = 0.
\end{equation*}
Under this identification, $P_{j,j+1} = p_j$ for $1 \leq j < n$ and $P_{1,n} = p_n$. Moreover, if $p_i= 1$ for $1 \leq i \leq j-1$ and $i = n$, then $P_{j,n} = K_{j-1}(a_{1},\ldots,a_{j-1})$.
\end{proposition}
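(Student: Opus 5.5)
The plan is to prove the statement directly by choosing an explicit normal form for a $2\times n$ matrix $M$ modulo the left $\SL_2$-action. The columns of $M$ are then produced one at a time by a transfer-matrix recursion built from $B(a_j)$ and $D(p_j)$.

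Write $M=(v_1|\cdots|v_n)$. Since $P_{1,2}(M)\neq 0$, there is a unique $g\in\SL_2$ with $gv_1=e_1$ and $gv_2=p_1e_2$, where $p_1:=P_{1,2}$. This fixes the gauge completely, so the stabilizer is trivial and the quotient is just the set of normalized representatives. Next consider the frame $F_j:=(v_j \mid v_{j+1})$, which has $\det F_j=p_j$. Any column $v_{j+2}$ can be written as a linear combination of $v_j$ and $v_{j+1}$. The condition $\det(v_{j+1},v_{j+2})=p_{j+1}$ pins down the coefficient of $v_j$, and the coefficient of $v_{j+1}$ is a free parameter, which will be $a_{j}$ up to a normalization by the $p$'s. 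Matrix-wise this should read $F_{j+1}=F_j\,B(a_j)D(p_j)$ after a suitable rescaling of the frames. I would first fix the exact rescaling by checking the cases $n=3,4$ by hand. Iterating the recursion gives $F_{n-1}=F_1\,B(a_1)D(p_1)\cdots B(a_{n-2})D(p_{n-2})$, up to the same normalizing diagonal factors.

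This produces $n-2$ free parameters $a_j$ together with the nonzero values of consecutive minors, matching $\dim \Gr_{2,n}^\circ=2n-3$. Now compute the remaining constraint. The minor $P_{1,n}=\det(v_1,v_n)=\det(e_1,v_n)$ is the second coordinate of $v_n$, which is an entry of the product above; I expect this entry to be the $(1,1)$ entry after transposing conventions. So requiring $P_{1,n}\neq0$, or equivalently trading the determined minor $P_{n-1,n}$ for the free unit $p_n=P_{1,n}$, is exactly the stated inequality. This gives an isomorphism onto the open subvariety of $\Bbbk^{n-2}\times(\Bbbk^\ast)^{n-1}$, and it identifies $P_{j,j+1}=p_j$ and $P_{1,n}=p_n$.

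For the final claim, set $p_i=1$ for $i\le j-1$ and $i=n$. By induction on $k$, using the defining recursion of continuants, one shows
\[
B(a_1)\cdots B(a_k)=\begin{pmatrix}K_k(a_1,\ldots,a_k) & -K_{k-1}(a_1,\ldots,a_{k-1})\\ K_{k-1}(a_2,\ldots,a_k) & -K_{k-2}(a_2,\ldots,a_{k-1})\end{pmatrix}.
\]
Reading off $P_{j,n}$ from the frames, for instance via the Plücker relation or by applying $\det$ to the appropriate columns of the products, then yields $K_{j-1}(a_1,\ldots,a_{j-1})$. I expect the main obstacle to be bookkeeping rather than an idea: one has to arrange the normalizations by the $D(p_j)$ so that the relevant entry is exactly the $(1,1)$ entry and the continuant carries the stated indices and signs. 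I would settle this first in small cases and then run the induction.
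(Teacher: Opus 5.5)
There is a genuine gap, and it comes from your choice of gauge. You normalize $(v_1,v_2)$ using $P_{1,2}\neq0$, so your recursion produces $v_3,\dots,v_n$. Your free data are then the units $p_1,\dots,p_{n-1}$ together with $n-2$ coefficients attached to $v_3,\dots,v_n$, and the single closing condition is $P_{1,n}\neq0$. That is a perfectly good chart, but it is not the one in the statement. Carried out carefully, with frames $(v_{k+1}\mid -p_k^{-1}v_k)$ and coefficients normalized by $v_{k+1}=a_kp_kv_k-(p_k/p_{k-1})v_{k-1}$, it gives
\[
P_{1,n}=p_1\,\bigl(B(a_2)D(p_2)\cdots B(a_{n-1})D(p_{n-1})\bigr)_{11}.
\]
This is the statement with the columns rotated by one, written in the coordinates $a_2,\dots,a_{n-1},p_1,\dots,p_{n-1}$. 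In the statement, by contrast, $P_{1,n}=p_n$ is a unit coordinate that never has to be imposed, and the inequality encodes $P_{n-1,n}\neq0$.

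So your claim that requiring $P_{1,n}\neq0$ ``is exactly the stated inequality'' is false. The proposed trade is also not a relabeling. In your gauge $P_{n-1,n}=p_{n-1}$ is free and $P_{1,n}$ is determined. Passing to the stated chart means eliminating $p_{n-1}$, and replacing your last coefficient $a_{n-1}$ (the one attached to $v_n$) by the statement's $a_1=P_{2,n}/(P_{1,2}P_{1,n})$, which in your coordinates involves division by the closing polynomial. The continuant claim fails for the same reason: in your gauge $v_n$ is the last output of the recursion, so $P_{j,n}$ involves all of your parameters rather than only $a_1,\dots,a_{j-1}$.

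The repair is the paper's argument: spend the $\SL_2$-freedom on $P_{1,n}\neq0$, i.e.\ take $gv_1=e_1$ and $gv_n=p_ne_2$. Set $M_k:=B(a_1)D(p_1)\cdots B(a_k)D(p_k)$. Then $\det M_k=1$ and $B(a)D(p)e_1=(ap,p)^T$. So if $v_j=M_{j-1}e_1$, the condition $\det(v_j,v_{j+1})=p_j$ says exactly that the second coordinate of $M_{j-1}^{-1}v_{j+1}$ is $p_j$. Writing its first coordinate as $a_jp_j$ gives $v_{j+1}=M_je_1$ for $1\le j\le n-2$. This argument yields three things at once:
\begin{itemize}
\item it uses precisely the coordinates $a_1,\dots,a_{n-2},p_1,\dots,p_{n-2},p_n$;
\item the only remaining condition is $P_{n-1,n}=\det(M_{n-2}e_1,p_ne_2)=p_n(M_{n-2})_{11}\neq0$;
\item in general $P_{j,n}=p_n(M_{j-1})_{11}$, which gives the continuant formula immediately.
\end{itemize}
Your recursion $F_{j+1}=F_jB(a_j)D(p_j)$ then holds on the nose for $F_j:=M_{j-1}=(v_j\mid -p_{j-1}^{-1}v_{j-1})$. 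This gauge is exactly the one that makes $F_1=(v_1\mid p_n^{-1}v_n)$ the identity, so there is no rescaling to guess.

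One caveat for the final induction. With $B(x)$ as defined, $(B(a_1)B(a_2))_{11}=a_1a_2-1$, so the product formula you quote holds for the recursion $K_n=x_1K_{n-1}-K_{n-2}$, not for the $+$ recursion. This sign-convention issue is already present in the paper's \eqref{eq:continuant-B}, so it is not specific to your plan. Still, your induction only closes once you fix a consistent convention.
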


The proof follows from the observation that by an $\SL_2$-action, every point in $\Gr_{2,n}^\circ$ can be represented by a matrix whose column vectors $v_1,\ldots,v_n$ are $v_1 = \begin{pmatrix}
    1  \\
    0 
\end{pmatrix}$, $v_n = \begin{pmatrix}
    0 \\
    p_n
\end{pmatrix}$, and
\begin{equation*}
    v_j = B(a_1)D(p_1)B(a_2)D(p_2)\cdots B(a_{j-1})D(p_{j-1})v_1,\quad \text{for all}\ 2 \leq j < n,
\end{equation*}
with the condition $P_{n-1,n} = \det(v_{n-1},v_n) \not = 0$. The second part of Proposition~\ref{proposition: gr_eqns} then follows from a direct computation. In particular, the appearance of the continuants can be anticipated by the equality
\begin{equation}
    \label{eq:continuant-B}
    \begin{pmatrix}
    K_n(x_1,\ldots,x_n) & -K_{n-1}(x_1,\ldots,x_{n-1}) \\
    K_{n-1}(x_2,\ldots,x_n) & -K_{n-2}(x_2,\ldots,x_{n-1})
    \end{pmatrix} = B(x_1)\ldots B(x_n),
\end{equation}
as observed in \cite[Lemma 3.3]{CSHW2023}. We now define a cluster structure on the $m$-graded augmentation variety $\Aug_m(\La[w, c])$.

\begin{proposition}
\label{proposition: twist-cluster}
    The $m$-graded augmentation variety $\Aug_m(\La[w, c])$ is a cluster variety with an initial cluster seed given by the following quiver and initial cluster variables (the boxed cluster variable is frozen):

    \begin{equation*}
        K_1(a_1)\rightarrow K_2(a_1,a_2)\rightarrow K_3(a_1,a_2,a_3)\rightarrow \cdots \rightarrow K_{c-1}(a_1,\dots, a_{c-1})\rightarrow \boxed{K_c(a_1,\dots, a_c)}.
    \end{equation*}
\end{proposition}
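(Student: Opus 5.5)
Since Proposition~\ref{proposition: aug_variety} identifies $\Aug_m(\La[w,c])$ with the principal open set $\{(a_1,\ldots,a_c)\in\Bbbk^c \mid K_c(a_1,\ldots,a_c)\neq 0\}$, I will transport the known cluster structure of $\Gr_{2,c+3}^\circ$ to this set. The idea is to realize it as the slice of $\Gr_{2,n}^\circ$, with $n=c+2$ (and adjust to $c+3$ if the frozen count requires it), on which all frozen Plücker coordinates except one are set to $1$.

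Concretely, apply Proposition~\ref{proposition: gr_eqns} with $n=c+2$. The variety $\Gr_{2,c+2}^\circ$ sits in $\Bbbk^{c}_{a_1,\ldots,a_c}\times(\Bbbk^\ast)^{c+1}$ and is cut out by $(B(a_1)D(p_1)\cdots B(a_c)D(p_c))_{11}\neq 0$. Impose $p_1=\cdots=p_c=p_n=1$, which means setting every frozen variable except $P_{c+1,c+2}$ (and the redundant $P_{c,c+1}$ relation) to $1$. By \eqref{eq:continuant-B}, the defining inequality becomes exactly $K_c(a_1,\ldots,a_c)\neq 0$. So the slice is isomorphic to $\Aug_m(\La[w,c])$.

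Next, choose the fan triangulation of the $(c+2)$-gon at vertex $n$, with diagonals $\{j,n\}$ for $2\le j\le c$, together with the boundary sides. By the last clause of Proposition~\ref{proposition: gr_eqns}, $P_{j,n}=K_{j-1}(a_1,\ldots,a_{j-1})$. The mutable variables are therefore $K_1,\ldots,K_{c-1}$, and the remaining non-trivialized frozen variable is $P_{c+1,n}=K_c$. The fan triangulation's quiver, after deleting the frozen vertices set to $1$, is the linear chain $K_1\to K_2\to\cdots\to K_{c-1}\to \boxed{K_c}$. This is type $A_{c-1}$ with one frozen vertex at the end. I need to check the orientation convention (counterclockwise $3$-cycles) against the displayed arrows; if it is reversed, I will relabel the polygon in the opposite direction.

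The main obstacle is justifying that specializing frozen variables to $1$ yields a cluster variety in the sense of the definition. This requires three things.
\begin{itemize}
\item The charts $U_\tau$ must restrict to open tori in the slice. Each restricted chart is $\{P_e\neq 0\}$ for the diagonals of $\tau$, intersected with the slice.
\item Mutations must remain valid. Exchange relations of the form $P_{ik}P_{jl}=P_{ij}P_{kl}+P_{il}P_{jk}$ survive specialization, because the frozen variables that were set to $1$ are units and their arrows only enter the binomials as monomial coefficients.
\item The union of the charts must cover the slice up to codimension $2$.
\end{itemize}
For the last point, I would argue directly on the set $\{K_c\neq 0\}$. The complement of the fan chart is contained in $\bigcup_j\{K_{j}=0\}$. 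The flipped charts cover generic points of each divisor $\{K_j=0\}$, using the continuant recurrence $K_{j+1}=a_{j+1}K_j+K_{j-1}$: on $K_j=0$ one has $K_{j-1}\neq 0$ and $K_{j+1}\neq 0$, so the mutated variable $(K_{j-1}\cdot\ast+K_{j+1}\cdot\ast)/K_j$ is nonvanishing there. What remains uncovered then has codimension at least $2$, mirroring the argument in \cite[Section 5]{CSHW2023}.
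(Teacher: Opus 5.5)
Your construction is the same as the paper's. Both take the slice of $\Gr_{2,c+2}^\circ$ cut out by $p_1=\cdots=p_c=p_{c+2}=1$; $n=c+2$ is the right value, and no adjustment to $c+3$ is needed. Both use the fan triangulation at vertex $c+2$ with $P_{j,c+2}=K_{j-1}(a_1,\ldots,a_{j-1})$, keep the single frozen variable $P_{c+1,c+2}=K_c$, and reduce the defining inequality to $K_c\neq 0$ via \eqref{eq:continuant-B}. Your orientation worry is harmless, since a quiver and its opposite give identical exchange relations.

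You diverge at the step you call the main obstacle. The paper does not check the cluster-variety axioms on the slice by hand; it cites \cite[Proposition 4.8]{CSHW2023}, which says that specializing frozen variables to $1$ yields a cluster variety whose seeds are the specialized seeds. Your first two bullets are fine: frozen variables are coordinates on every $U_\tau$, so $U_\tau$ meets the slice in a subtorus, and exchange relations specialize. The codimension-$2$ bullet, however, rests on an invalid inference.

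You argue that because $K_{j-1}$ and $K_{j+1}$ are nonzero on $\{K_j=0\}$, the mutated variable ``$(K_{j-1}\cdot\ast+K_{j+1}\cdot\ast)/K_j$'' is nonvanishing there. But the mutated variable $K_j'$ is a regular function, so the numerator equals $K_jK_j'$ and vanishes identically on $\{K_j=0\}$. The two nonzero summands cancel there, and nothing about $K_j'$ follows. The pointwise claim is in fact false. For $c=2$, $K_1'$ is the Plücker coordinate $P_{1,3}=a_2$, which vanishes at the point $(a_1,a_2)=(0,0)$ of $\{K_1=0\}\cap V$. That point lies in neither chart; this is harmless only because it has codimension $2$.

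Here is how to repair the argument.
\begin{enumerate}
\item Identify $K_j'$ as the Plücker coordinate of the flipped diagonal $\{j,j+2\}$. Using $v_j=B(a_1)\cdots B(a_{j-1})v_1$, this equals $a_{j+1}$ up to sign.
\item Each $K_i(a_1,\ldots,a_i)$ is irreducible: it is linear in $a_i$, with coefficients $K_{i-1}$ and $K_{i-2}$ that have no common zero (up to sign they form the first row of a determinant $\pm1$ matrix).
\item Hence $\{K_j=0\}\cap V$ is an irreducible hypersurface. None of $a_{j+1}$ or $K_i$ with $i\neq j$ vanishes identically on it, since each is irreducible and not associate to $K_j$.
\item The flipped chart needs all of these nonzero, not just $K_j'$, so you must also control the non-adjacent $K_i$. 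With the above, the flipped chart contains a dense open subset of the divisor, and the uncovered locus has codimension at least $2$.
\end{enumerate}

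Alternatively, push each $D(p_i)$ rightward using $D(q)B(x)=B(q^2x)D(q^{-1})$. This shows every column $v_j$ is a scalar multiple of the corresponding column of a point of $V$ with rescaled parameters. You get $\Gr_{2,c+2}^\circ\cong V\times(\Bbbk^\ast)^{c+1}$ with $U_\tau\cong(U_\tau\cap V)\times(\Bbbk^\ast)^{c+1}$, so codimension-$2$ covering transfers directly from the Grassmannian.
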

\begin{proof}
    We employ the same argument used in \cite[Theorem 5.1]{CSHW2023}, detailing the steps as follows. Since $P_{j,j+1} = p_j$ is frozen for $1 \leq j \leq n$, by \cite[Proposition  4.8]{CSHW2023}, the subvariety $V \subset \Gr_{2,c+3}^\circ$ cut out by $p_1 = \cdots = p_c = p_{c+2} = 1$ is also a cluster variety with the cluster seeds obtained from cluster seeds of $\Gr_{2,c+2}^\circ$ by specializing $p_j = 1$. Consider, in particular, the triangulation of the $(c+2)$-gon consisting of diagonals connecting the vertex $c+2$ with vertices $2,\ldots,c$. Following the construction of the cluster structure on $\Gr_{2,c+2}^\circ$ and $V$ above, we see that its corresponding mutable cluster variables are
    \begin{equation*}
        P_{j,c+2} = K_{j-1}(a_{1},\ldots,a_{j-1}),
    \end{equation*}
    for $j = 2,\ldots,c$, where we used Proposition~\ref{proposition: gr_eqns}. Similarly, the only frozen cluster variable remaining is
    \begin{equation*}
        p_{c+1} = P_{c+1,c+2} = K_c(a_{1},\ldots,a_c).
    \end{equation*}
    The triangulation gives the quiver prescribed by the proposition.

    It suffices to show that $V$ is the non-vanishing locus of $K_c(a_1,\ldots,a_c) \not = 0$. Indeed, by Proposition~\ref{proposition: gr_eqns} and Equation~\ref{eq:continuant-B}, setting $p_j = 1$ for $j \not = c+1$ yields
    \begin{equation*}
        (B(a_1)B(a_2)\cdots B(a_c))_{11} = K_c(a_1,\ldots,a_c) \not = 0,
    \end{equation*}
    as desired.
\end{proof}

\subsection{Exact Lagrangian fillings and cluster charts}
\label{ssec: filling}

Our next step is to construct, for every cluster torus chart on $\Aug_m(\La[w, c])$, an exact Lagrangian filling of $\La[w, c]$ whose induced family of augmentations coincides with the given chart. As in \cite{CSHW2023} and \cite{GSW}, our strategy is to apply a sequence of pinching cobordisms described in Section~\ref{ssec:contact} to some permutation of the Reeb chords $a_1,\ldots,a_c$. These moves reduce the knot to the max-tb unknot $U$, which we finally cap with its unique disk filling. We obtain all of the desired torus charts by varying the order in which the Reeb chords are pinched.

To begin with, we observe that the specified Reeb chords are contractible and simple in Figure~\ref{fig: 2bridgeDGALagrangian}, as required by a pinch cobordism. Indeed, for $i \not = 0$, every Reeb chord $a_i$ is trapped in between two positive crossings in the front diagram so that a local move given in \cite[Section 3.1]{Hughes2021b} shows that they are contractible. For $1 \leq i \leq c$, the disks with multiple positive punctures including one at $a_i$ are the bigons with the other positive puncture at $a_{i-1}$ or $a_{i+1}$. Those with a positive puncture at $a_c$ consists of a bigon with the other positive puncture at $a_{c-1}$ and a disk with a positive puncture at $e_0$ and a negative puncture at $q$. In particular, there is no disk with multiple positive punctures at the same Reeb chord $a_i$ and these Reeb chords are simple. 

Proceeding inductively, note that after pinching a number of Reeb chords in $a_1,\ldots,a_c$, the rest of them remain contractible and simple by a same argument. We use a permutation $P = (i_1,\ldots,i_c)$ of the indices in $\{1,\ldots,c\}$ to denote a \dfn{pinching sequence} where the Reeb chords $a_{i_1},\ldots,a_{i_c}$ of $\La[w, c]$ are pinched in the order listed. Let $\La_j[w, c]$ denote the Legendrian link obtained after pinching the Reeb chords $a_{i_1},\ldots,a_{i_j}$. Recall from Section~\ref{ssec:ce-dga} that compared to $\La_{j-1}[w,c]$, $\La_j[w,c]$ has two additional base points $s_j$ and $-s_j^{-1}$ placed on the top and bottom of the $0$-resolution of $a_{i_j}$. In the end, we obtain the max-tb unknot $U:= \La_{c}[w,c]$ with $2c+1$ base points $s_1,\ldots,s_c,-s_1^{-1},\ldots,-s_c^{-1},$ and $t$. Denote by $L_P$ the cobordism from $U$ to $\La[w, c]$ via the pinching sequence $P$ and $\Phi_{L_P}$ its induced DGA map. 

As in \cite{CSHW2023}, we prove that each pinch move gives a cluster localization of the cluster variety $\Aug_m(\La[w, c])$, so that the full sequence restricts to a cluster torus chart. To this end, we recall the following proposition proven in \cite[Proposition 4.22]{CSHW2023} about a specific form of cluster localization of $\Gr_{2,n}^\circ$ that we apply below. 

\begin{proposition}[Proposition 4.21 and 4.22, \cite{CSHW2023}] 
\label{proposition: cluster-localization}
Consider $\Gr_{2,n-1}^\circ$ as the non-vanishing locus of $(B(b_1)D(r_1)B(b_2)D(r_2)\cdots B(b_{n-2})D(r_{n-2}))_{11}$ and set $r_0 := r_{n-1}$. For each $1 \leq i < n$, define a map $\phi_i: \Gr_{2,n-1}^\circ \times (\Bbbk^\ast)^2_{u,v} \to \Gr_{2,n}^\circ$ by
\begin{equation*}
    \phi_i^*(a_j) = \begin{cases}
        b_j & \text{if}\ j < i-1, \\
        b_{i-1}+\frac{v}{ur_{i-1}} & \text{if}\ i > 1\ \text{and}\ j = i-1, \\
        \frac{r_{i-1}}{uv} & \text{if}\ j = i, \\
        b_i+\frac{u}{vr_{i-1}} & \text{if}\ j < n-1\ \text{and}\ j = i+1, \\
        b_{j-1} & \text{if}\ j > i+1. 
    \end{cases}\quad \phi_i^*(p_j) = \begin{cases}
        r_j & \text{if}\ j < i-1, \\
        u & \text{if}\ j = i-1, \\
        v & \text{if}\ j = i, \\
        r_{j-1} & \text{if}\ j > i.
    \end{cases}\
\end{equation*}
After adding a $3$-cycle $u\rightarrow r_{i-1} \rightarrow v\rightarrow u$ of frozen vertices to each cluster seed of $\Gr_{2,n-1}^\circ$, $\phi_i$ becomes an isomorphism of cluster varieties onto its image, which is the cluster localization at $P_{2,n}$ if $i = 1$ and at $P_{i-1,i+1}$ otherwise.
\end{proposition}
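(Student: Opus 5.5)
The plan is to work with the explicit model of $\Gr_{2,n}^\circ$ from Proposition~\ref{proposition: gr_eqns}. In that model a point is represented by the columns
\[
v_1=(1,0)^T,\qquad v_j=B(a_1)D(p_1)\cdots B(a_{j-1})D(p_{j-1})v_1,\qquad v_n=(0,p_n)^T .
\]
The key observation is a local matrix identity in $\SL_2$. Consider a product
\[
B(a_{i-1})D(p_{i-1})\,B(a_i)D(p_i)\,B(a_{i+1})D(p_{i+1}),
\]
and substitute $a_i=r_{i-1}/(uv)$, $p_{i-1}=u$, $p_i=v$, $a_{i\pm1}$ as prescribed. I will check by direct multiplication that the result equals $B(b_{i-1})D(r_{i-1})B(b_i)D(r_i)$ (suitably reindexed). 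In words, the $n$-column configuration reduces to an $(n-1)$-column configuration with $v_i$ deleted. This is the matrix form of the continuant recurrence combined with the commutation rule $D(x)B(y)=B(x^2y)D(x^{-1})$ up to sign adjustments.

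Using this identity, I show that $\phi_i$ is a well-defined morphism into $\Gr_{2,n}^\circ$. The columns $v_j$ with $j\ne i$ of the image agree with those of the source point. So the Plücker coordinates are $P_{j,k}(\phi_i(x))=P_{j,k}(x)$ for $j,k\ne i$, up to the relabeling of indices above $i$. In addition, $P_{i-1,i}=u$ and $P_{i,i+1}=v$ by construction. The nonvanishing of $P_{i-1,i+1}$ then follows from the Plücker relation
\[
P_{i-1,i+1}P_{i,k}=P_{i-1,i}P_{i+1,k}+P_{i,i+1}P_{i-1,k}.
\]
Combined with the identity, this gives $P_{i-1,i+1}=r_{i-1}$, the frozen side of the smaller polygon, which is nonzero. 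For the endpoint case $i=1$ the same argument applies cyclically with $P_{2,n}$, and for $i=n-1$ with $P_{n-2,n}$. I will handle these two boundary cases separately, since the formulas for $a_{i\pm1}$ degenerate there.

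Next I show that $\phi_i$ is an isomorphism onto the locus $\{P_{i-1,i+1}\neq0\}$. Given a point with $P_{i-1,i+1}\ne0$, the inverse is obtained by deleting column $i$ and setting $u=P_{i-1,i}$, $v=P_{i,i+1}$. The original coordinates are then recovered from $r_{i-1}=P_{i-1,i+1}$ by the displayed formulas, which are rational and regular exactly where $P_{i-1,i+1}\ne0$. Renormalizing the deleted configuration into the standard form of Proposition~\ref{proposition: gr_eqns} uses only the $\SL_2$ action, so it is regular. Hence the image is the claimed localization.

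For the cluster statement, I use the standard description: the cluster seeds of $\{P_{i-1,i+1}\ne0\}$ are exactly the triangulations of the $n$-gon containing the diagonal $(i-1,i+1)$, with that diagonal frozen. Cutting off the triangle $(i-1,i,i+1)$ gives a bijection between these triangulations and the triangulations of the $(n-1)$-gon. Under $\phi_i$ the Plücker cluster variables correspond as computed above, and the extra triangle contributes precisely the counterclockwise $3$-cycle $u\to r_{i-1}\to v\to u$ on frozen vertices. Mutations away from the diagonal $(i-1,i+1)$ commute with this correspondence, because the exchange relations are Plücker relations preserved by $\phi_i$.

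I expect the main obstacle to be the explicit $3\times$ product identity with its sign and orientation conventions, especially checking that the arrows in the $3$-cycle point the right way relative to the counterclockwise convention. I would verify the identity first for $n=4$ by hand and then argue that the general case is local.
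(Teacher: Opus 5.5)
Your proposal is correct. Note that the paper does not prove this statement: it quotes it from \cite{CSHW2023}. The only ingredient the paper records is the column normal form $v_j=B(a_1)D(p_1)\cdots B(a_{j-1})D(p_{j-1})v_1$, which is what you build on. Your route (insert or delete the column $v_i$, then cut off the ear $(i-1,i,i+1)$ to match triangulations) is the natural one, and it works. The identity you plan to check does hold. With $r=r_{i-1}$,
\[
B\bigl(b_{i-1}+\tfrac{v}{ur}\bigr)D(u)\,B\bigl(\tfrac{r}{uv}\bigr)D(v)\,B\bigl(b_i+\tfrac{u}{vr}\bigr)=B(b_{i-1})D(r)B(b_i),
\]
which is quickest to see from $D(u)B(\tfrac{r}{uv})D(v)=\begin{pmatrix} r & -u/v\\ v/u & 0\end{pmatrix}$.

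A few points to tighten.

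(1) Matching the column $v_{i+1}$, and not only $v_{i+2},\dots$, needs the truncated identity $B(a_{i-1})D(u)B(a_i)D(v)v_1=B(b_{i-1})D(r)v_1$. It follows from the full identity because $B(b)B(a)^{-1}=\begin{pmatrix}1&b-a\\0&1\end{pmatrix}$ fixes $v_1$. It is also the only identity available when $i=n-2$, where there is no $a_{i+1}$.

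(2) The Plücker relation is a detour. Once the columns $j\neq i$ agree, $P_{i-1,i+1}$ is just the source's $P_{i-1,i}=r_{i-1}$. Concretely, for $1<i<n-1$ one has $P_{i-1,i+1}=a_ip_{i-1}p_i$, so the localization is simply $\{a_i\neq0\}$ and your inverse formulas are visibly regular there. This is also the form in which the paper uses the result, with $a_i=s_i$.

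(3) The orientation of the $3$-cycle cannot matter. The vertices $u$ and $v$ lie only in the ear, whose third side is frozen, so they are adjacent to no mutable vertex. The mutable arrows at $r_{i-1}$ come from the other triangle on $(i-1,i+1)$, which is the triangle of the $(n-1)$-gon on the side labelled $r_{i-1}$. Arrows among frozen vertices never enter exchange relations, and the paper's $Q_\tau$ deletes them anyway.

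(4) Your boundary cases are partly an indexing problem in the statement itself.
For $i=1$, the clause ``$p_{i-1}\mapsto u$'' must mean $\phi_1^*(p_n)=u$, overriding the clause $p_n\mapsto r_{n-1}$. Indeed $P_{2,n}=a_1p_1p_n$, which equals $r_0$ only if $p_n=u$. The $\SL_2$ renormalization you mention then works, for example with $g=B(b_1)B(a_2)^{-1}D(v)^{-1}B(a_1)^{-1}$: this sends $v_2\mapsto v_1$, $v_{j+1}$ to the $j$-th normal-form column of the source, and $v_n\mapsto(0,r_0)^T$.
For $i=n-1$, the symbol $b_{n-2}$ is not a coordinate of the model of $\Gr_{2,n-1}^\circ$ in Proposition~\ref{proposition: gr_eqns}. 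It has to be read as $-M_{12}/M_{11}$ with $M=B(b_1)D(r_1)\cdots B(b_{n-3})D(r_{n-3})$. That is exactly what is forced by requiring the inserted column to be $(v\cdot v_{n-2}+u\cdot v_n)/r_{n-2}$, and with this reading your column-insertion argument goes through unchanged.
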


The following statement proves the second part of Theorem~\ref{thm: main}:

\begin{theorem}\label{thm: cluster_charts}
    The induced morphism $\phi_{L_P}:\Aug_m(U) \to \Aug_m(\Lambda[w,c])$ is an open embedding of an algebraic torus onto a cluster torus chart on $\Aug_m(\Lambda[w,c])$. Conversely, every cluster torus chart in $\Aug_m(\Lambda[w,c])$ is the image of $\phi_{L_P}$ for some pinching sequence $P$.
\end{theorem}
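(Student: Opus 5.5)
The plan is to induct on the length of the pinching sequence, following the strategy of \cite[Theorem 5.1]{CSHW2023}. The goal at each stage is that a single pinch realizes one of the cluster localizations $\phi_i$ of Proposition~\ref{proposition: cluster-localization}.

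Concretely, I would describe an intermediate link $\La_j[w,c]$ by the same front as $\La[w,c]$, with the crossings $a_{i_1},\ldots,a_{i_j}$ replaced by their $0$-resolutions and carrying the base points $s_k,-s_k^{-1}$. Its DGA has the same form as the one computed in Section~\ref{sec: DGA computations}. The change is that the continuant region is now a product of the matrices $B(a_\ell)$ interspersed with diagonal matrices $D(\cdot)$ recording the base points. I would then repeat the argument of Proposition~\ref{proposition: aug_variety}: the $b_j$ are determined by $b_0$, the equation $\partial e_0$ forces $(\prod B D)_{11}\neq 0$, and $\partial q$ determines $a_0$. The result identifies $\Aug_m(\La_j[w,c])$ with a cluster localization of $V\subset \Gr_{2,c+3}^\circ$, now with $j$ extra frozen $3$-cycles, in the coordinates of Proposition~\ref{proposition: gr_eqns}.

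Next I would compute $\Phi_{L_{a_i}}$ for one pinch. The only disks in $\Delta_{a_i}(b)$ are the bigons to $a_{i\pm1}$, together with the disk through $e_0$ and $q$ when $i=c$. The formula in Section~\ref{ssec:ce-dga} then gives
\[
a_i\mapsto s,\qquad a_{i\pm1}\mapsto a_{i\pm1}+(\text{base point word})\,s^{-1}.
\]
After the substitution $u,v\leftrightarrow$ base point monomials, this matches $\phi_i^*$ of Proposition~\ref{proposition: cluster-localization}, with $r_{i-1}$ absorbing neighboring base points. Two further checks are needed here. First, the grading: every $a_i$ has grading $\pm m\equiv 0 \pmod m$, so Theorem~\ref{thm: MaslovNumber} shows each pinch is allowed and $L_P$ is Maslov-$m$. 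Second, the homotopy quotient is harmless on $\Aug_m$, since there $\epsilon(q)=0$ and no degree-$1$ homotopy survives, as in the proof of Proposition~\ref{proposition: aug_variety}. Composing the $c$ pinches with the minimum cobordism, which sets the product of base points to $-1$, yields $\phi_{L_P}$ as a composition of cluster localizations. This composition is an open embedding of $(\Bbbk^*)^{c}$ onto the nonvanishing locus of the cluster variables of a seed, that is, onto a cluster torus chart.

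For the converse, I would track the combinatorics. Pinching $a_i$ corresponds to localizing at $P_{i-1,i+1}$, or at $P_{2,n}$ when $i=1$. In the polygon this cuts off the triangle at vertex $i$ and leaves a smaller polygon. Any triangulation of the $(c+2)$-gon that uses only diagonals not meeting vertex $c+2$ in a forbidden way is obtained by successively cutting off ears, and the ears correspond to chords still present. Since the frozen specialization $p_j=1$ fixes the relevant sides, every triangulation, and hence every seed of $V$, arises from some order of ear removals. That order is a permutation $P$. This should also reproduce the count $C_c$.

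I expect the main obstacle to be the bookkeeping in the inductive step. One must show that after earlier pinches, the base points $s_k^{\pm1}$ land in exactly the positions that make the local formula literally equal to $\phi_i$, with the correct $r_{i-1}$. This is delicate especially at the boundary indices $i=1$ and $i=c$, where the extra disk through $e_0$ and $q$ enters. I would first handle these boundary cases by comparing directly with the $i=1$ case of Proposition~\ref{proposition: cluster-localization}, using the cyclic symmetry of the polygon. If a base point lands on the wrong side, I would fix it by a base point move, which changes coordinates only by a monomial torus automorphism.
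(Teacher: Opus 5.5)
Your proposal is correct and follows essentially the same route as the paper. The paper likewise matches each pinch map $\Phi_{L_i}$ (identity except $a_{i-1},a_i,a_{i+1}$) with the localization $\phi_i^*$ of Proposition~\ref{proposition: cluster-localization} via the substitution $r_{i-1}=us_iv$, composes the $c$ localizations to land on a cluster torus identified with $\Aug_m(U)\cong(\Bbbk^*)^c$, and proves the converse by ear removal on triangulations of the $(c+2)$-gon.

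One point to tidy in your converse: no triangulation needs to be excluded, so drop the qualifier about ``forbidden'' diagonals. Any triangulation of a polygon with at least four vertices has two non-adjacent ears. Since the two endpoints of the frozen side are adjacent, at least one ear is a vertex labeled by some $a_i$, and the induction goes through for every seed.
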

\begin{proof}
    We begin by verifying that the augmentation variety $\Aug_m(U)$ of the unknot is isomorphic to $(\Bbbk^\ast)^c$, parametrized by the variables $s_1,\ldots,s_c$. An inspection of the unknot $U$ obtained by resolving $a_1,\ldots,a_c$ in Figure~\ref{fig: 2bridgeDGALagrangian} shows that the differential of the DGA of $U$ is given by
    \begin{align*}
    \partial e_0 &= \hphantom{\left\{\vphantom{\begin{cases}0\\0\end{cases}}\right.} \alignbox{1 + b_0s_1^{-1}\cdots s_c^{-1}} \\
    \partial e_{2d+1} &= \hphantom{\left\{\vphantom{\begin{cases}0\\0\end{cases}}\right.} \alignbox{t^{-1} + s_1\cdots s_cb_{2d}} \\
    \partial e_j &= \hphantom{\left\{\vphantom{\begin{cases}0\\0\end{cases}}\right.} \alignbox{1 + b_{j-1}b_j} \quad 1 \leq j \leq 2d, \\
    \partial q &= \begin{cases}
        \alignbox{b_0a_0s_1\cdots s_c} & \text{if } z^+ \not = 0\ \text{or}\ m \not = 0 \\
        \alignbox{(b_0a_0 + W^{SZ}_{z^-})s_1\cdots s_c} & \text{if } z^+,m  = 0
    \end{cases}, \\
    \partial a_0 &= \begin{cases}
        \alignbox{0} & \text{if } z^- \not = 0\ \text{or}\ m \not = 0 \\
        \alignbox{W^{ZS}_{z^+}} & \text{if } z^-,m  = 0
    \end{cases}.
\end{align*}
Arguing as in the proof of Proposition~\ref{proposition: twist-cluster}, we deduce that $\Aug_m(U)$ is cut out by $s_1\cdots s_c \not = 0$ in $\Bbbk^c_{s_1,\ldots,s_c}$ . The other augmentation values are determined by $\epsilon(b_j) = \epsilon(s_1\cdots s_{c+1})$ for even $j$, $\epsilon(b_j) = 1/\epsilon(s_1\cdots s_{c+1})$ for odd $j$, $\epsilon(t) = 1$, and $\epsilon(e_j) = \epsilon(a_0) = \epsilon(q) = 0$. This non-vanishing locus is precisely $(\Bbbk^\ast)^c_{s_1,\ldots,s_c}$ as claimed.

Next, we prove that the pullback $\phi_{L_P}$ of the cobordism DGA map $\Phi_{L_P}$ embeds this torus onto a cluster chart. It suffices to consider only the images of the Reeb chords $a_1,\ldots,a_c$ under $\Phi_{L_P}$, which define the augmentation variety $\Aug_m(\La[w, c])$. Consider an intermediate pinch cobordism from $\La_{i_{j-1}}(w,c)$ to $\La_{i_j}(w,c)$. To match with the equation in Proposition~\ref{proposition: cluster-localization} in the end, we relabel the relevant Reeb chords in $\La_{i_j}(w,c)$ from left to right by $a_1,\ldots,a_{c+1-j}$ so that the Reeb chord resolved is $a_i$. Denote by $\Phi_{L_i}$ the DGA map induced by this pinch cobordism. According to the formula for a pinch cobordism map described in Section~\ref{ssec:ce-dga}, $\Phi_{L_i}$ is the identity for all those Reeb chords except for $a_i$ and its adjacent chords, $a_{i-1}$ (if $i > 1$) and $a_{i+1}$ (if $i < c+1-j$), in which case they are
\begin{equation*}
    \Phi_{L_i}(a_{i-1}) = a_{i-1} + u^{-2}s^{-1}_i,\quad \Phi_{L_i}(a_i) = s_i,\quad \Phi_{L_i}(a_{i+1}) = a_{i+1} + v^{-2}s^{-1}_i,
\end{equation*}
where $u$ (resp. $v)$ is the product of $s_j$ that are already sandwiched between $a_{i-1}$ and $a_i$ (resp. $a_i$ and $a_{i+1}$) before the pinching. This map coincides with the map $\phi^*_i$ in Proposition~\ref{proposition: cluster-localization} by substituting $r_{i-1}$ with $us_iv$. Thus, each pinch cobordism induces a cluster localization of the cluster variety $\Aug_m(\La[w, c])$ and after $c$ localizations, we obtain a cluster torus chart with local coordinates $s_j$'s that can be identified with $\Aug_m(U)$.

It remains to specify a corresponding pinching sequence $P$ for every cluster chart on $\Aug_m(\La[w, c])$. Let $\tau$ be a triangulation of the $(c+2)$-gon associated to the cluster seed of $\Aug_m(\La[w, c])$. The construction of $P$ follows the same recipe as in \cite[Section 2.3.1]{Hughes2021b} and \cite[Proposition 5.11]{CSHW2023}: a cluster localization corresponds to removing and recording a vertex with no incident diagonals; since this operation preserves a triangulation of the remaining $n$-gon, we can proceed by induction. In more details, we homotope $\tau$ so that all its vertices line up in a straight line and its unique ``frozen" side is represented by a largest semicircular arc at the bottom, connecting the leftmost and rightmost vertices. Similarly, the remaining sides of the $(c+2)$-gon are represented by consecutive small semicircular arcs on the top and the diagonals in $\tau$ become semicircular arcs sandwiched in the middle. We label from the second leftmost vertex to the second rightmost vertex by $a_1,\ldots,a_c$. Then, a pinching sequence corresponding to $\tau$ is obtained by going over the diagonals from top to bottom while reading off and deleting the unique vertex opposite to each of the diagonals. Note that if there are multiple diagonals on the same level, then any choice of ordering of these diagonals will work. See Figure~\ref{fig: TriangulationEx} for an example.
\end{proof}

Since every cluster chart in $\Aug_m(\La[w, c])$ corresponds to a triangulation of the $(c+2)$-gon, it follows from Theorem~\ref{thm: cluster_charts} that pinching $a_1,\ldots,a_{c}$ in some order induces at least $C_{c}$ distinct Hamiltonian isotopy classes of Maslov $m$ exact Lagrangian fillings of $\La[w, c]$, where $C_{c}$ is the $c$-th Catalan number. Moreover, observe from the construction at the end of the proof of Theorem~\ref{thm: cluster_charts} that two pinching sequences induce the same triangulation only if they differ by swapping the orders of pinching at pairs of Reeb chords $a_i$ and $a_j$ with $|j-i| \ge 2$. By \cite[Lemma 5.14]{CSHW2023} or more specifically the local model provided in \cite[Proposition 3.1]{Hughes2021b}, $a_i$ and $a_j$ are simultaneously contractible in this case so that the fillings induced by the two pinching sequences are both Hamiltonian isotopic to the filling obtained by pinching $a_i$ and $a_j$ at once. Hence, pinching sequences that induce the same cluster chart yield Hamiltonian isotopic fillings. Finally, all the fillings we constructed are obtained by successive $1$-handle attachments and a $0$-handle attachment at the end; thus, they are all smoothly isotopic. In summary, we obtain the following corollary.

\begin{corollary}
    There are $C_{c}$ smoothly isotopic but Hamiltonian non-isotopic Maslov-$m$ exact Lagrangian fillings of $\La[w, c]$ constructed from pinching sequences of Reeb chords $a_1,\ldots,a_c$.
\end{corollary}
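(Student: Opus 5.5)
The corollary has three parts: there are $C_c$ fillings, they are Hamiltonian non-isotopic, and they are smoothly isotopic. I would prove each in turn, organizing the argument around the bijection between pinching sequences modulo commutation and triangulations of the $(c+2)$-gon.

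\emph{Step 1: Maslov number.} By Proposition~\ref{proposition: DecomposableMaslovExtension} and Theorem~\ref{thm: MaslovNumber}, the Maslov number of $L_P$ equals $\gcd(2r(\La[w,c]),|c_1|,\ldots,|c_c|)$, where the $c_j$ are the successively pinched chords with their inherited gradings. Every $a_i$ with $1\le i\le c$ has grading $\pm m$. Pinching a chord of grading $\pm m$ in a $\Z/m\Z$-graded potential gives grading $0 \bmod m$, so the potential keeps extending. I would check with the saddle rule of Figure~\ref{fig: Saddle} that the remaining chords keep grading $\pm m$ after earlier pinches. The max-tb unknot cap is a minimum cobordism and imposes no constraint. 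Since $2r(\La[w,c])=m$, the gcd is $m$.

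\emph{Step 2: Hamiltonian non-isotopy.} Theorem~\ref{thm: cluster_charts} shows that $\phi_{L_P}$ embeds $(\Bbbk^*)^c$ onto the cluster chart $U_\tau$ of the triangulation $\tau$ built from $P$. It also shows every triangulation arises this way. By the Fomin--Zelevinsky description, distinct triangulations give distinct cluster tori, because their sets of cluster variables (Plücker coordinates) differ, and hence their non-vanishing loci differ as subsets. By \cite[Lemma 3.13]{EHK}, Hamiltonian isotopic fillings induce the same image torus in $\Aug_m(\La[w,c])$ up to reparametrization. Therefore fillings attached to different triangulations are not Hamiltonian isotopic, which gives at least $C_c$ classes.

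\emph{Step 3: exactly $C_c$ classes.} I would show that two pinching sequences with the same triangulation give Hamiltonian isotopic fillings. From the recipe at the end of Theorem~\ref{thm: cluster_charts}, such sequences differ by swaps of non-adjacent chords $a_i,a_j$ with $|i-j|\ge 2$. These chords are simultaneously contractible by the local model of \cite[Proposition 3.1]{Hughes2021b}, so the two orders are Hamiltonian isotopic to a simultaneous pinch. This is the main obstacle: one must verify that simultaneous contractibility persists after arbitrary earlier pinches, presumably by induction using that each $a_i$ stays trapped between positive crossings.

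\emph{Step 4: smooth isotopy.} Every $L_P$ consists of $c$ orientable 1-handles attached to a disk, so it is a genus-$c/2$ surface with one boundary component. Fillings built by handle attachments at the same boundary in different orders are smoothly isotopic: handle attachments at disjoint locations commute smoothly, and the resolutions can be slid past each other in the 4-ball. Any two such fillings are therefore smoothly isotopic rel boundary.
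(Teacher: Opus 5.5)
Your proposal follows the paper's own argument: Theorem~\ref{thm: cluster_charts} with \cite[Lemma 3.13]{EHK} separates fillings with different triangulations, pinches of simultaneously contractible chords commute for sequences with the same triangulation, and reordering of $1$-handles gives smooth isotopy. There is one caveat in Step~3: the swapped chords must be non-adjacent among the chords not yet pinched, not merely satisfy $|i-j|\ge 2$ in the original labels. For instance, $(2,1,3)$ and $(2,3,1)$ give different triangulations, so the commutation argument cannot apply to $a_1,a_3$ once $a_2$ has been pinched.

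In your added Maslov check, $2r(\La[w,c])$ is $0$, not $m$: the gradings are $\Z$-valued, and an orientable filling forces $r=0$. The conclusion still holds, because the pinched chords have $\Z$-grading $\pm m$, so Theorem~\ref{thm: MaslovNumber} gives $\gcd(0,\pm m,\ldots)=m$.
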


As discussed in Section~\ref{ssec: braid_vties}, the quivers associated to the cluster seeds in $\Aug_m(\La[w, c])$ are expected to be the intersection quivers of the $\L$-compressing systems and the cluster mutations should correspond to Lagrangian disk surgeries along the $\L$-compressing disks. While the exact Lagrangian fillings $L_P$ we constructed do not arise from Legendrian weaves where one can identify $\L$-compressing disks directly via \cite[Section 2.4]{CasalsZaslow} and \cite[Section 3]{CasalsWeng}, note that the interesting topology of these fillings is concentrated in the pinching region. Since the existence of $\L$-compressing disks is a local property, it suffices to restrict our focus to this specific part of the fillings. In particular, by repeated applications of \cite[Proposition 3.1]{Hughes2021b} that the saddle cobordism is Hamiltonian isotopic to the $D_4^-$ cobordism relative to the boundary, we can realize the Legendrian lift of this region as an inductive weave to which the prior known results can be applied. Indeed, using the local model of the $D_4^-$ cobordism given in Figure~\ref{fig: D4minus}, we see that the front diagram of the weave begins with $c+1$ strands on the top corresponding to the crossings $a_0,\ldots,a_c$ in the front diagram of $\La[w,c]$, followed by a sequence of $c$ trivalent vertices $v_i$ with the right incoming edge coming from strand $i$ corresponding to the pinching of $a_i$ from top to bottom. The outgoing strand of a trivalent vertex is labeled by the same index as its left incoming strand so that we can proceed inductively. In the end, there is a single outgoing strand at the bottom corresponding to the crossing $a_0$.\footnote{As this is a local model, this differs slightly from the convention in, e.g. \cite{CGGLSS}, where the bottom edge represents an additional crossing.} See, for example, Figure~\ref{fig: InitialWeave} (left) for the weave associated to the initial cluster seed of $\Aug_4(\La[w,4])$, and Figure~\ref{fig: TriangulationEx} (right) for the weave corresponding to the pinching sequence $P = (1,4,6,3,5,2)$ applied to $\La[w,6]$. 

\begin{center}
    \begin{figure}[h!]{ \includegraphics[width=\textwidth]{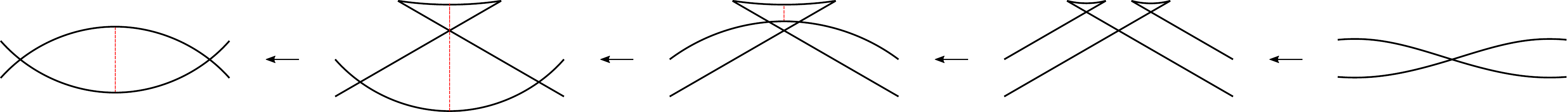}}\caption{$D_4^-$ cobordism depicted in the front projection. The dashed red line depicts a contractible Reeb chord.
    }
    \label{fig: D4minus}\end{figure}
\end{center} 

Recall from Section~\ref{sssec:weave-homology} each short $I$-cycle between two adjacent trivalent vertices represents a cycle in $H_1(L_P)$ that bounds a Lagrangian disk, all of which are linearly independent. The outgoing edge incident to the bottom-most trivalent vertex represents a relative cycle in the weave, which becomes another cycle in $H_1(L_P)$ once the unknot is capped off in $L_P$. Thus, there is a one-to-one correspondence between these cycles and the trivalent vertices in the weaves, assigning each cycle $\gamma_i$ to the top vertex $v_i$ of its defining edge. In particular, this collection consists of exactly $c$ cycles that span $H_1(L_P)$ since $b_1(L_P) = \tb(\La[w, c])+1 = c$. Following the combinatorial recipe given in Section~\ref{sssec:weave-homology}, we can compute the algebraic intersection numbers among these cycles by examining how the corresponding edges intersect at the trivalent vertices in the weave. The intersection quiver of the filling then consists of $c$ vertices $v_1,\dots  v_c$ corresponding to the trivalent vertices in the weave, the first $c-1$ of which are mutable. The $c$-th vertex is frozen and corresponds to a choice of absolute cycle starting at the final trivalent vertex of the weave; to avoid later confusion, we will choose this cycle so that it only intersects the $(c-1)$-st cycle. An arrow is directed from $v_i$ to $v_j$ if the geometric intersection number $\langle \gamma_i, \gamma_j \rangle$ is $+1$. See Figure~\ref{fig: InitialWeave} (right) for the intersection quiver associated to the initial weave in the prior example. In particular, observe that the intersection quiver for the initial weave coincides with the quiver given in Proposition~\ref{proposition: twist-cluster} and the dual graph of the weave coincides with the $(c+2)$-gon triangulation constructed therein. As explained around \cite[Definition 4.18]{CasalsZaslow}, since a Legendrian mutation of a weave corresponds to a flip of a diagonal in a triangulation, we see that the mutations in the cluster variety $\Aug_m(\La[w, c])$ are generated by the Lagrangian disk surgeries of the fillings constructed.

\begin{center}
\begin{figure}[h!]{ \includegraphics[width=.6\textwidth]{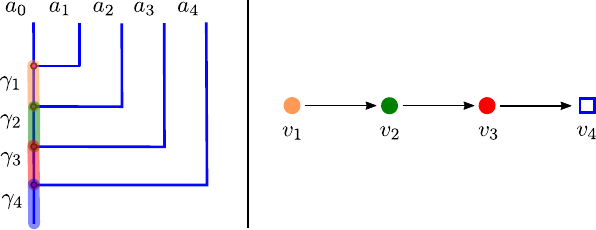}}\caption{Initial weave for $\Aug_4(\La[w,4])$ (left) and the associated quiver (right).}
    \label{fig: InitialWeave}\end{figure}
\end{center}

\subsection{Cluster coordinates from $\L$-compressing cycles}\label{ssec: cluster_coordinates}

We conclude this section with a discussion of a method for obtaining cluster coordinates from the $\L$-compressing cycles of a Lagrangian filling of $\La[w, c]$ as a geometric supplement to the previous subsection. This method can be viewed as  a Floer-theoretic version of the microlocal sheaf-theoretic approach of \cite{CasalsWeng}, where the authors extract cluster coordinates by computing the microlocal parallel transport along relative cycles dual to $\L$-compressing cycles. We expect that these two methods agree with each other and with \cite[Section 5]{CGGLSS}, though a proof in full generality lies beyond the scope of this work.\footnote{By \cite{casals2025comparingclusteralgebrasbraid}, the cluster structures in \cite{CasalsWeng} and \cite{CGGLSS} are already known to agree with one another. See \cite[Section 1.2 (C)]{casals2026categoricallusztigcyclesweave} for discussion on Lusztig cycles and their duals.}

Recall that by Theorem~\ref{thm: cluster_charts} the toric charts $\mathcal{C}_P$ given by the image of the morphism $\phi_{L_P}$ are cluster charts of $\Aug_m(\La[w, c])$. The functions $\{s_i\}_{i=1}^c$ and their inverses parametrize $\mathcal{C}_P$, but in general do \emph{not} extend to global regular functions on $\Aug_m(\La[w, c])$. Cluster coordinates are necessarily regular functions and they also parametrize the same algebraic torus. As a result, there is a monomial change of coordinates taking the functions $\{s_i\}_{i=1}^{c}$ to the restriction of the cluster variables to $\mathcal{C}_P$. A priori, it is not clear how to obtain this monomial change of coordinates from the information of the filling. As we explain below, the key input of the $\L$-compressing cycles is the ability to determine the monomial change of coordinates that produces the cluster variables.  

Given a filling $L_P$, we let $\gamma_i\in H_1(L_P)$ be the homology cycle passing through the $i$-th and the $(c+1)$-st saddle points of $L_P$. The variable $s_i$ then corresponds to the Poincar\'e dual of  $\gamma_i$. Denote by $\mathfrak{B}_P$ the basis $\{\gamma_i\}_{i=1}^{c}\subseteq H_1(L_P)$ and by $\mathfrak{B}_P^\vee$ the dual basis $\{\gamma_i^\vee\}_{i=1}^{c}$. After identifying a basis $\mathfrak{B}_{\L}$ of $\L$-compressing cycles of $L_P$, one can express the dual basis $\mathfrak{B}_{\L}^\vee$ in terms of $\mathfrak{B}_P^\vee$. Denote by $M_{P\L}$ the matrix corresponding to this change of dual bases. 

Under the change of basis specified by $M_{P\L}$, we associate a product of $s_i$ variables to the dual of an $\L$-compressing cycle. The cluster variables of $\Aug_m(\La[w, c])$ are then obtained as the pullback of this product. In order to state this relationship precisely, we identify the diagonals of the triangulation associated to $L_P$ in the proof of Theorem~\ref{thm: cluster_charts} with the dual basis $\mathfrak{B}_\L^\vee$ and index them accordingly.

The following statement, together with Proposition~\ref{proposition: twist-cluster}, completes the proof of Theorem~\ref{thm: main}.

\begin{theorem}
   Let $L_P$ be a Lagrangian filling of $\La[w, c]$  constructed via the pinching sequence $P$ with associated triangulation $T_P$. The restriction of the cluster variable $P_{k,l}$ associated to the $j$-th diagonal of $T_P$ to the toric chart $\mathcal{C}_P$ is given by 
   $$P_{k, l}|_{\mathcal{C}_P} = \prod_{i=1}^{c}s_i^{(M_{P\L})_{ij}}.$$
\end{theorem}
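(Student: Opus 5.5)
We argue by induction on the length $c$ of the pinching sequence, following the cluster localization structure used in the proof of Theorem~\ref{thm: cluster_charts}. The key observation is that both sides of the claimed identity are computed one pinch at a time. Fix $P=(i_1,\ldots,i_c)$. By Theorem~\ref{thm: cluster_charts}, $\phi_{L_P}$ factors as the composition of the maps $\phi_{i}$ of Proposition~\ref{proposition: cluster-localization}, with the substitution $r_{i-1}=us_iv$. Under this factorization, the cluster variables of $\Gr_{2,n}^\circ$ pull back either to cluster variables of $\Gr_{2,n-1}^\circ$ or to the new frozen variables $u,v$. In particular, the diagonal $P_{i-1,i+1}$ that is localized at the first pinch pulls back to $r_{i-1}=us_iv$. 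Iterating, each Plücker coordinate $P_{k,l}$ attached to a diagonal of $T_P$ restricts on $\mathcal{C}_P$ to a Laurent monomial in the $s_i$. The content of the theorem is therefore the identification of its exponent vector with the $j$-th column of $M_{P\L}$.

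\textbf{Step 1: exponent vectors from the algebra.} Track how the products $u,v$ (the $s$'s sandwiched between adjacent chords) evolve along the sequence. In the line-and-semicircle picture of $T_P$ from the proof of Theorem~\ref{thm: cluster_charts}, the diagonal whose opposite vertex $a_{i_j}$ is deleted at step $j$ restricts to the product of $s_{i_j}$ and all $s_i$ with $a_i$ lying strictly under that arc. This gives
\[
P_{k,l}|_{\mathcal{C}_P}=\prod_{a_i \text{ under the arc } (k,l)} s_i .
\]
The frozen side $P_{c+1,c+2}=K_c$ is the special case in which all $a_i$ lie under the arc, giving $\prod_i s_i$. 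This case can be checked directly against $\Aug_m(U)$, where $K_c$ becomes $s_1\cdots s_c$ up to sign.

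\textbf{Step 2: exponent vectors from the geometry.} Using the inductive weave model of $L_P$ from Figure~\ref{fig: D4minus}, write each short ${\sf I}$-cycle in $\mathfrak{B}_\L$ in terms of the cycles $\gamma_i$ through the $i$-th and $(c+1)$-st saddles. A short ${\sf I}$-cycle between trivalent vertices $v_p$ and $v_q$ is homologous to $\gamma_p-\gamma_q$, with orientations fixed by the weave. The frozen cycle is $\gamma_{c}$ or its analogue. Inverting and transposing gives $M_{P\L}$, expressing each dual element of $\mathfrak{B}_\L^\vee$ in terms of $\mathfrak{B}_P^\vee$. One should find that the dual of the cycle at the vertex corresponding to diagonal $j$ is $\sum \gamma_i^\vee$, summed over the vertices $v_i$ in the subtree of the weave below that vertex. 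Under the dual-graph identification of the weave with $T_P$, this subtree is exactly the set of $a_i$ under the arc.

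\textbf{Main obstacle.} The hardest step is matching the two index sets and signs precisely. This means checking that the subtree of trivalent vertices in the weave coincides with the chords under the corresponding arc. It also means checking that the basepoint conventions $s_j$ and $-s_j^{-1}$, together with $u^{-2}$ and $v^{-2}$ versus $u,v$ in the pinch map, produce exponent $+1$ rather than a different monomial. I would first verify the initial seed, where $P_{j,c+2}=K_{j-1}(a_1,\ldots,a_{j-1})$ should equal $s_1\cdots s_{j-1}$, and then show that both sides transform identically under a diagonal flip. A flip corresponds to interchanging two adjacent pinches, as in Legendrian mutation of the weave. Since flips connect all triangulations, this would complete the induction.
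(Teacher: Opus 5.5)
Your proposal is correct and follows essentially the paper's route. The paper obtains the algebraic side $P_{k,l}|_{\mathcal{C}_P}=s_{k+1}\cdots s_{l-1}$ by citing \cite[Lemma 4.1]{Hughes2021b} rather than re-deriving it from the localization maps as in your Step 1; then, as in your Step 2, it writes the short {\sf I}-cycles as $\gamma_{v_{top}}-\gamma_{v_{bottom}}$ (completed by the cycle of the final pinch) and reads off $\eta^\vee=\sum_{\rho\in\mathcal{R}(v_{top})}\gamma_\rho^\vee$ over the binary subtree rooted at $v_{top}$, which in the paper's convention opens \emph{upward} toward earlier pinches rather than ``below.'' The flip-by-flip induction you sketch as a fallback is not needed: checking that $\sum_{u\in\mathrm{sub}(v)}\gamma_u^\vee$ pairs to $\delta$ with the cycles $\gamma_w-\gamma_{\mathrm{parent}(w)}$ and $\gamma_{root}$ is a one-line computation, and it already settles the index matching and the exponents.
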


\begin{proof}
   
Following our discussion at the end of  Section~\ref{ssec: filling}, we can produce a local model of the $\L$-compressing cycles of a decomposable filling of $\La[w,c]$ determined by a pinching sequence $P$ by constructing a 2-weave $\w_P$ dual to a triangulation $T_P$. By Theorem~\ref{thm: cluster_charts}, the toric chart $\mathcal{C}_P$ is completely determined by the pinching sequence, and therefore, we can restrict to this local 2-weave picture to perform the necessary computations to prove the statement of the theorem.

By \cite[Lemma 4.1]{Hughes2021b}, the statement is equivalent to the equation $\prod_{i=1}^{c}s_i^{(M_{P\L}^\vee)_{ij}}=s_{k+1}\dots s_{l-1}.$ After restricting to the local 2-weave model, we largely follow the ideas appearing in Section 6.1 of Version 1 of the (now published) preprint \cite{CGGS1Preprint}, adjusting as necessary to make the role of the $\L$-compressing cycles more apparent. As above, we denote by $\gamma_i$ the cycles of $\mathfrak{B}_P$. Since we identify a trivalent vertex $v$ of the weave $\w_P$ with a cycle $\gamma_v\in \mathfrak{B}_P$ by connecting the saddle point at $v$ to the saddle point created by the final pinch in the sequence $P$, we can see that the short {\sf I}-cycles of $\mathfrak{B}_\L$ can be written as $\eta_i=\gamma_{v_{top}}-\gamma_{v_{bottom}}$ for $v_{top}$ the upper trivalent vertex of the short {\sf I}-cycle $\eta_i$ and $v_{bottom}$ the lower trivalent vertex. Given a trivalent vertex $v$, denote by $\mathcal{R}(v)$ the set of Reeb chords lying between the leftmost and rightmost branches of the binary subtree rooted at $v$ opening upwards. From the equation for $\eta_i$ above, we see that $\mathfrak{B}_\L^\vee$ is obtained from $\mathfrak{B}_P^\vee$ by $$\eta_j^\vee=\sum_{\rho\in \mathcal{R}(v_{top})} \gamma^\vee_\rho.$$ As a result, we have that the expression associated to the dual relative cycle to the $\L$-compressing cycle $\eta_j$ is exactly $\prod_{\rho\in \mathcal{R}(v_{top})}s_\rho=s_{v_{k+1}}\dots s_{v_{l-1}}$, where $k+1$ and $l-1$ are the labels of the vertices appearing at the top left and top right of the binary subtree rooted at $v_{top}$. Note that for the final trivalent vertex $v_{final}$, representing the frozen cycle of the quiver, we have that $\mathcal{R}(v_{final})=\{v_1, \dots v_c\}$, ensuring that the frozen variable is equal to $s_1\dots s_{c}$.
\end{proof}

\begin{center}
\begin{figure}[h!]{ \includegraphics[width=.8\textwidth]{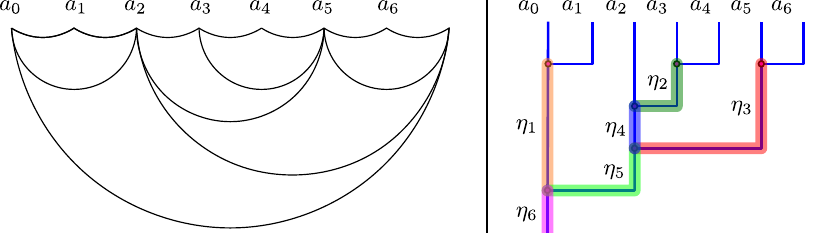}}\caption{The triangulation $T_P$ (left) and the dual weave (right) corresponding to the pinching sequence $P$ given in Example~\ref{ex: cluster_coordinates}. The labels $a_0, \dots, a_6$ denote Reeb chords of $\La[w,c]$ while $\eta_1, \dots, \eta_5 \in H_1(L_P)$ denote $\L$-compressing cycles of $L_P$ and $\eta_6$ completes this collection to a homology basis.}
    \label{fig: TriangulationEx}\end{figure}
\end{center} 

\begin{ex}\label{ex: cluster_coordinates}
Consider the triangulation given in Figure~\ref{fig: TriangulationEx} and its dual weave representing a local model for a decomposable filling $L_P$ of $\La[w, 6]$ specified by the pinching sequence $P= (1,4,6,3,5,2)$.  We denote the cycles of $\mathfrak{B}_P$ by $\gamma_1, \dots, \gamma_6$, where $\gamma_i$ is the cycle formed by the saddle points given by pinching the Reeb chord $a_i$. Then we have that $\mathfrak{B}_\L=\{\gamma_1-\gamma_2, \gamma_4-\gamma_3, \gamma_6-\gamma_5, \gamma_3-\gamma_5, \gamma_5-\gamma_2, \gamma_2\}$. If we further denote by $\gamma_i^\vee$ the dual relative cycles in $\mathfrak{B}_P^\vee$, then the dual basis for $\mathfrak{B}_\L$ is $$\mathfrak{B}_{\L}^\vee=\{\gamma_1^\vee, \gamma_4^\vee, \gamma_6^\vee, \gamma_3^\vee + \gamma_4^\vee, \gamma_3^\vee+\gamma_4^\vee+\gamma_5^\vee+\gamma_6^\vee, \gamma_1^\vee+\dots +\gamma_6^\vee\}.$$ 
We can express the set of cluster variables corresponding to the triangulation $\mathcal{T}_P$ restricted to the cluster chart $\mathcal{C}_P$ as $$\{s_1, s_4, s_6, s_3s_4, s_3s_4s_5s_6, s_1s_2s_3s_4s_5s_6\}.$$ The pullback of these monomials to $\Aug(\La[w, 6])$ is then given by the continuants $$\{a_1, a_4, a_6, 1+a_3a_4, 1+a_3a_4+a_3a_6+a_5a_6+a_3a_4a_5a_6, K_6(a_1, \dots, a_6)\},$$ which are global regular functions on $\Aug_m(\La[w,c])$.       
\end{ex}

\begin{remark}
    Note that in Example~\ref{ex: cluster_coordinates}, we choose to include $\gamma_2$ in the basis $\mathfrak{B}_\L$ despite the fact that $\gamma_2$ is not an $\L$-compressing cycle. This is because the first five cycles represent a presumed maximal linearly independent collection of $\L$-compressing cycles of the filling $L_P$, and we must complete this set of $\L$-compressing cycles to a basis in order to find the dual basis $\mathfrak{B}_\L^\vee$. The choice of a cycle other than $\gamma_2$ is precisely the quasi-equivalence that our cluster structures are defined up to; see \cite[Section 3.7]{CasalsWeng} for more details.  
\end{remark}

\section{Legendrian cables with infinitely many fillings}
\label{sec: cable_computation}

In this section, we give an example of two smoothly isotopic Legendrians $\La$ and $\La'$ that both admit \emph{infinitely many} fillings and show that their augmentation varieties have identical cluster structures. We begin by defining Legendrian cables, as this will allow us to construct $\La$ and $\La'$ as positively sloped cables of twist knots.

\subsection{Braid varieties from Legendrian cables}\label{ssec: Legendrian cables}
We follow the conventions of \cite{DaltonEtnyreTraynor} for constructing Legendrian cables; see also Section 1.2 of loc. cit for more details. In addition, we fix the convention that if $\La$ is equipped with a Maslov potential, then each pushoff of $\La$ also carries the same Maslov potential.

Given integers $p, q,$ and $n$ with $p$ and $n\geq 1$ and $\gcd(p,q)=1$, and a max-tb Legendrian $\La$, the standard Legendrian $(np, nq)$ cable $\La_{(np, nq)}$ of $\La$ is formed by the following process:
\begin{enumerate}
    \item If $\frac{q}{p}=\tb(\La)$, then $\La_{(np, nq)}=\La_{(n, nq)}$ is the $n$-copy of $\La$. A front for $\La_{(np, nq)}$ can be obtained by taking pushoffs in the positive Reeb direction to form $n$ copies of a front projection of $\La$. 
    \item If $\frac{q}{p}> \tb(\La)$, we let $s\in \N$ be defined by the equation $\frac{q}{p}=\tb(\La)+\frac{s}{p}$. To construct the $\La_{(np, nq)}$ cable, we first form the $np$-copy of $\La$. The cable is then obtained by replacing a trivial $np$-stranded tangle of the $np$-copy by the positive braid $(\sigma_{np-1}\dots\sigma_1)^{ns}$.
    \item If $\frac{q}{p}> \tb(\La)$, we let $s \in \N$ be defined by the equation $\frac{q}{p}=\lceil \frac{q}{p}\rceil + \frac{s}{p}$. To construct one of two possible lesser-sloped cables, $\La^\pm_{(np, nq)}$, we first form the $p$-copy of $\La$. We  replace a trivial $p$-stranded tangle with either $s$ $p$-stranded $Z$ tangles or $s$ $p$-stranded $S$ tangles; see Figure~\ref{fig: NegativeCables}. Finally, we take the $n$-copy of the result to obtain $\La^{\pm}_{(np, nq)}$.
\end{enumerate}

\begin{center}
    \begin{figure}[h!]{ \includegraphics[width=.4\textwidth]{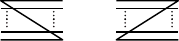}}\caption{An $n$-stranded $Z$ tangle (left) and $S$ tangle (right).}
    \label{fig: NegativeCables}\end{figure}
\end{center} 

Equipped with the construction above, we give a sufficient condition for when the augmentation variety of a cable admits a cluster structure in the sense of \cite{CGGLSS}. 

\begin{proposition}\label{proposition: cluster_cables}
Let $\La=\La(\beta\Delta_m)$ be a knot with $\beta\in \Br_m^+$. Suppose that $\frac{q}{p}\geq \frac{m}{2}+\tb(\La)$ with $p \geq 1$ and $\gcd(p, q)=1$. Then  $\Aug(\La_{np, nq})$ admits a cluster structure for all $n \ge 1$. 
\end{proposition}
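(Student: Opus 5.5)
We will show that $\La_{(np,nq)}$ is Legendrian isotopic to the $(-1)$-closure of a positive braid of the form $\gamma\Delta_{nmp}$ with Demazure product $\delta(\gamma)=w_0$. Then the isomorphism $X(\gamma)\cong\Aug(\La(\gamma\Delta))$ of \cite[Theorem 2.39]{ACSHLLW} and \cite[Theorem 1.1]{CGGLSS} give the cluster structure. The slope hypothesis $\frac{q}{p}\geq \frac{m}{2}+\tb(\La)$ is exactly what supplies the extra half twist on $nmp$ strands.

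\textbf{Step 1: braid form of the satellite.} The front of $\La(\beta\Delta_m)$ consists of $m$ parallel strands carrying $\beta\Delta_m$, closed by $m$ nested left and right cusps. Taking the $np$-copy replaces every strand by $np$ parallel strands. Each crossing $\sigma_i$ becomes the $np\times np$ block crossing, which is a positive braid on $nmp$ strands. Each nested cusp pair becomes $np$ nested cusp pairs, together with the standard crossings of a pushoff through a cusp. We isotope those cusp crossings into the braid region, so the $np$-copy becomes the $(-1)$-closure of a positive braid on $nmp$ strands. This braid contains the cabled braid $\widetilde{\beta}\widetilde{\Delta}_m$ and, in addition, a full twist $\Delta_{np}^2$ on each of the $m$ blocks; these twists come from pushing each block around the cusps. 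Because $\frac{q}{p}>\tb(\La)$, we are in case (2) of the cable construction: we insert $(\sigma_{np-1}\cdots\sigma_1)^{ns}$, with $s=q-p\,\tb(\La)\geq pm/2$, inside one block. We slide this inserted braid into the braid region as well.

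\textbf{Step 2: extracting $\Delta_{nmp}$.} We use the identity
\[
\Delta_{nmp}=\widetilde{\Delta}_m\cdot\prod_{j=1}^{m}\Delta_{np}^{(j)},
\]
where $\widetilde{\Delta}_m$ is the cabled half twist of blocks and $\Delta_{np}^{(j)}$ is a half twist inside block $j$. The block half twist $\widetilde{\Delta}_m$ is already present. It therefore suffices to produce a half twist $\Delta_{np}$ inside each of the $m$ blocks, and they must be brought adjacent to $\widetilde{\Delta}_m$. Our tools are conjugation in $\Br^+$ (Markov-type moves are available via the $(-1)$-closure), the commutation $\Delta\sigma_i=\sigma_{N-i}\Delta$, and the closure itself, which lets braid words slide from one block to another around the link. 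The inserted braid $(\sigma_{np-1}\cdots\sigma_1)^{ns}$ contains $(\sigma_{np-1}\cdots\sigma_1)^{n\cdot pm/2}=\Delta_{np}^{nm}$ on the block where it sits. Since $\La$ is a knot, the closure permutes the $m$ blocks transitively, so sliding distributes these factors among all $m$ blocks.

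\textbf{Step 3: the Demazure condition.} We must check that the complementary braid $\gamma$ has Demazure product $w_0$. Following the argument used for Theorem~\ref{thm: twist_braids}, it is enough to show that $\Aug(\La_{(np,nq)})\neq\emptyset$. Nonemptiness would follow from a filling. A filling of $\La$ exists because $\La(\beta\Delta_m)$ bounds a weave filling by \cite{CGGLSS}, and positive cables of fillable knots with positive inserted twist are decomposably fillable by pinching crossings of the twist region. Alternatively, $\gamma$ itself contains $\widetilde{\beta}$ together with block full twists, which already contain reduced words for $w_0$ blockwise.

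The main obstacle is Step 2: making the bookkeeping precise so that the $m$ block half twists land adjacent to $\widetilde{\Delta}_m$, rather than only some of them. This is where the bound $s\geq pm/2$ must be used sharply. I would first attempt the argument for $n=1$ and then take the $n$-copy, noting that the $n$-copy of a $(-1)$-closure of $\gamma\Delta$ is again of the same form.
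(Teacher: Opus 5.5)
Your overall strategy is the paper's. The $np$-copy of $\La(\beta\Delta_m)$ is the $(-1)$-closure of the cabled braid $\widetilde{\beta}\widetilde{\Delta}_m$. One factors $\Delta_{npm}=\widetilde{\Delta}_m\,\Delta_{np}\Sh^{np}(\Delta_{np})\cdots\Sh^{np(m-1)}(\Delta_{np})$, and the inserted twist $(\sigma_{np-1}\cdots\sigma_1)^{ns}$ supplies the block half twists.

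\textbf{Step 1 is wrong.} There are no extra block full twists. The $(-1)$-closure is the satellite of the standard unknot by a pattern in $J^1\S^1$, and taking the $np$-copy there creates no cusps. So the $np$-copy of $\La(\gamma)$ is exactly $\La(\widetilde{\gamma})$. A $\tb$ count confirms this: for $\gamma\in\Br_k^+$ one has $\tb(\La(\gamma))=|\gamma|-k^2$, so $\La(\widetilde\gamma)$ and the $N$-copy both have $\tb=N^2\tb(\La)$. That leaves no room for $m$ additional full twists.

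The crossings produced by pushing copies through a cusp join a left-moving and a right-moving branch, so they are negative. They are precisely what the $(-1)$-framing absorbs and cannot be slid into the braid as positive twists. The front with $m$ nested cusps that you describe is the rainbow closure, i.e.\ $\La(\beta\Delta_m^3)$. If those block twists were present, every block would already contain $\Delta_{np}$ and the slope hypothesis would be superfluous, contradicting your own opening remark.

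\textbf{The count in Step 2 is off.} Since $(\sigma_{N-1}\cdots\sigma_1)^N=\Delta_N^2$, at the threshold $s=pm/2$ the inserted braid $(\sigma_{np-1}\cdots\sigma_1)^{np\cdot m/2}$ gives $\Delta_{np}^{m}$ (the paper's count), not $\Delta_{np}^{nm}$. That is exactly one half twist per block, which is why the bound is sharp.

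\textbf{The bookkeeping you call the main obstacle is short.} The paper does it as follows.
\begin{enumerate}
\item Place $\Delta_{np}^m$ on one block adjacent to $\widetilde{\Delta}_m$.
\item Rotate $\Delta_{np}^{m-1}$ once around the braid axis. This is a slide through the cabled crossings plus a cyclic rotation of the braid word, hence a Legendrian isotopy of the $(-1)$-closure. It comes back on the block determined by the permutation of $\beta\Delta_m$.
\item Leave one copy of $\Delta_{np}$ there and repeat.
\end{enumerate}
Since $\La$ is a knot, that permutation is an $m$-cycle. After $m-1$ revolutions every block carries one $\Delta_{np}$ at the same location. These commute and combine with $\widetilde{\Delta}_m$ into $\Delta_{npm}$, and for larger slopes the excess twisting stays in the positive complementary braid.

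\textbf{Drop Step 3.} The paper applies \cite[Theorem 1.1]{CGGLSS} without checking the Demazure product and explicitly allows $\Aug(\La_{(np,nq)})=\emptyset$. Your arguments there also do not hold:
\begin{enumerate}
\item A weave filling of $\La(\beta\Delta_m)$ from \cite{CGGLSS} requires $\delta(\beta)=w_0$, which is not assumed.
\item Fillability of the cable by pinching twist crossings is asserted without proof.
\item The alternative argument rests on the nonexistent block twists, and blockwise longest elements alone would not give $w_0\in S_{npm}$.
\end{enumerate}

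\textbf{Do not reduce to $n=1$.} In case (2) the $(np,nq)$ cable is not the $n$-copy of the $(p,q)$ cable. The braid $(\sigma_{np-1}\cdots\sigma_1)^{ns}$ differs from the $n$-cable of $(\sigma_{p-1}\cdots\sigma_1)^{s}$ by $s$ full twists on $n$-strand blocks. Moreover, the $n$-copy of $\La(\gamma\Delta_{pm})$ only yields the cabled $\widetilde{\Delta}_{pm}$. It is missing exactly the half twists inside each $n$-block that $\Delta_{npm}$ needs.
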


\begin{proof}
We prove the statement by  showing that for $\La$ satisfying the hypotheses above, $\La_{(np, nq)}$ is isotopic to $\La(\beta'\Delta_{npm})$ for a braid $\beta'\in \Br_{npm}^+$ and then applying \cite[Theorem 1.1]{CGGLSS}. To that end, consider $\La=\La(\beta\Delta_m)$ as in the statement of the proposition and assume that $\frac{q}{p}=\frac{m}{2}+\tb(\La)$; the case where $\frac{q}{p}>\frac{m}{2}+\tb(\La)$ will follow from the case where we have equality.

First, we note that the $np$-copy of $\La$ is again a $(-1)$-framed closure of a positive braid. We denote the $np$-copy of $\beta$ by $\beta'$.

As the half-twist element $\Delta$ of the braid group can be characterized uniquely by the property that every pair of strands crosses exactly once, we can factor $\Delta_{npm}$ as the product of the $np$-copy of $\Delta_m$ with the braid $\Delta_{np}\Sh^{np}(\Delta_{np})\dots \Sh^{np(m-1)}(\Delta_{np}) $. Note in particular that the shifted copies of $\Delta_{np}$ lie on disjoint collections of strands and that together they account for the crossing of each of the $np$ strands created when replacing a single strand of $\La$ by $np$ strands to form the cable. When $\frac{q}{p}=\frac{m}{2}+\tb(\La)$, we have that $\frac{s}{p}=\frac{m}{2}$. Therefore, the cable $\La_{(np, nq)}$ is formed from the $np$ copy by replacing a trivial $np$ tangle by $(\sigma_{np-1}\dots\sigma_1)^{ns}=\Delta_{np}^{\frac{2m}{2}}=\Delta_{np}^m.$ 

Finally, we claim that since $\La(\beta\Delta_m)$ is a knot, we can obtain $\La(\beta'\Delta_{npm})$ from $\La_{(np, nq)}$ by isotoping the cable so that we replace $\Delta_{np}^m$ by $\Delta_{np}\Sh^{np}(\Delta_{np})\dots \Sh^{np(m-1)}(\Delta_{np}) $. The condition that $\La(\beta\Delta_m)$ is a knot implies that if we consider the Coxeter projection $\pi:\Br_m\to S_m$ of $\beta\Delta_m$, then the permutation $\pi(\beta\Delta_m)$ is a single cycle; equivalently, $(\pi(\beta\Delta_m))^k$ has no fixed points for $1\leq k<m$.

If we assume without loss of generality that $\Delta_{np}^m$ was added to the $np$-copy of $\La$ by replacing the lowest $np$ strands immediately after $\beta'$, then this isotopy can be realized by passing $\Delta_{np}^{m-1}$ counterclockwise around the braid axis $m-1$ times, leaving a copy of $\Sh^{knp}(\Delta_{np})$ each time we pass through all of the crossings of $\beta'$. Since $\pi(\beta\Delta_m)$ is a single cycle, we obtain $\Delta_{np}\Sh^{np}(\Delta_{np})\dots \Sh^{np(m-1)}(\Delta_{np})$ after these $m-1$ revolutions. Therefore, $\La_{(np, nq)}\cong \La(\beta'\Delta_{npm})$, as desired.  
\end{proof}

    As in the statement of \cite[Theorem 1.1]{CGGLSS}, we allow for the possibility that $\Aug(\La_{(np, nq)})$ is empty. In order to ensure that $\Aug(\La_{(np, nq)})$ is nonempty, we require that the braid $\beta'$ that satisfies $\La(\beta'\Delta_{npm})\cong \La_{(np,nq)}$ has Demazure product $\delta(\beta')=w_0$; see Section~\ref{ssec: braid_vties} for more details.  See also \cite[Theorem 1.6]{Sabloff2022satellite} for an alternative sufficient condition for ensuring that the cable $\La_{(np, nq)}$ is fillable, and hence that $\Aug(\La_{(np, nq)})$ is nonempty. Note that the authors of \cite{Sabloff2022satellite} use $\Delta$ to denote the full twist in the braid group.

\subsection{Equivalence of infinite-type cluster structures}\label{ssec: cable_computation}

Let $\beta, \beta'\in \Br_4$ be given by the following braid words: 
\begin{align*}    \beta&=\sigma_1\sigma_1\sigma_3\sigma_3\sigma_3\sigma_2\sigma_2\sigma_3\sigma_1\sigma_2\sigma_2\Delta_4\\ 
    \beta'&=\sigma_1\sigma_1\sigma_1\sigma_2\sigma_3\sigma_3\sigma_2\sigma_2\sigma_1\sigma_2\sigma_2\Delta_4
\end{align*}

As explained in \cite[Section 4.10]{CasalsWengPreprint}, the $(-1)$-closures $\La(\beta)$ and $\La(\beta')$ are two distinct Legendrian representatives of $m(7_2)$.\footnote{Alternatively, it is possible to produce equivalent braid words for $\beta$ and $\beta'$ from the algorithm given in Section~\ref{ssec: braidification} starting with the words $w=ZSZS$ and $w'=ZSSZ$.} Moreover, Proposition~\ref{proposition: Maslov-m fillability} implies that $\La(\beta)$ and $\La(\beta')$ are the only two Maslov-0 fillable Legendrian representatives of $m(7_2)$.

 Consider the $(2, 6)$-cables of $\La=\La(\beta)$ and $\La'=\La(\beta')$. 
 Following the proof of Proposition~\ref{proposition: cluster_cables}, these cables are isotopic to the $(-1)$-closures of the following positive braids:
\begin{align*}\label{eq: cablebraids}
    \beta_{(2, 6)}&=(\sigma_2\sigma_1\sigma_3\sigma_2)^2(\sigma_6\sigma_5\sigma_7\sigma_6)^3(\sigma_4\sigma_3\sigma_5\sigma_4)^2(\sigma_6\sigma_5\sigma_7\sigma_6)(\sigma_2\sigma_1\sigma_3\sigma_2)(\sigma_4\sigma_3\sigma_5\sigma_4)^2\Delta_8\\
    \beta'_{(2,6)}&=(\sigma_2\sigma_1\sigma_3\sigma_2)^3(\sigma_4\sigma_3\sigma_5\sigma_4)(\sigma_6\sigma_5\sigma_7\sigma_6)^2(\sigma_4\sigma_3\sigma_5\sigma_4)^2(\sigma_2\sigma_1\sigma_3\sigma_2)(\sigma_4\sigma_3\sigma_5\sigma_4)^2\Delta_8
\end{align*}

We claim that these two braids correspond to the only two decomposably fillable Legendrian representatives of the $(2, 6)$-cable of $m(7_2)$.

\begin{proposition}\label{proposition: cable_reps}
    $\La(\beta_{(2,6)})$ and $\La(\beta'_{(2,6)})$ are the only two decomposably Maslov-0 fillable Legendrian representatives of the $(2,6)$-cable of $m(7_2)$.
\end{proposition}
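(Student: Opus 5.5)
The proof has two halves: show that both $\La(\beta_{(2,6)})$ and $\La(\beta'_{(2,6)})$ are decomposably Maslov-0 fillable, and show that no other Legendrian representative of the $(2,6)$-cable of $m(7_2)$ is. The fillable half is constructive. Both braids have the form $\beta''\Delta_8$ with $\Aug\neq\emptyset$, and \cite[Section 4.3]{CGGLSS} produces a weave filling $\w_\beta$ for each. Alternatively, we pinch the crossings of the two $4$-stranded $(\sigma_2\sigma_1\sigma_3\sigma_2)$-type blocks and reduce to the $2$-copy of a filled $\La(\beta)$, capping off with Maslov-0 pieces. Since both braids are $(-1)$-closures of positive braids, every Reeb chord has degree $0$, so Theorem~\ref{thm: MaslovNumber} gives Maslov number $0$. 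We also check that the two cables are not Legendrian isotopic, using the $(-1)$-closure/ruling-count invariants or the fact that the underlying companions are distinct as in \cite[Section 4.10]{CasalsWengPreprint}; cabling data should recover the companion.

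For the exclusion half, the plan is to reduce to the companion. Because $m(7_2)$ is a twist knot, its Legendrian representatives are classified in \cite{EtnyreNgVertesi13}. In the same spirit, Legendrian representatives of cables of Legendrian-classified, uniformly thick knot types are classified in \cite{CEMR25}. Here $m(7_2)$ is uniformly thick because its max-tb is $1$ and it is negative-twist-like, so its Legendrian cables with slope $q/p = 3 > \overline{\tb}(m(7_2))=1$ are exactly the standard cables $\La_{(2,6)}$ of max-tb representatives $\La$ together with their stabilizations. Any decomposably fillable representative must be max-tb by \cite{Chantraine10}, hence equal to $\La_{(2,6)}$ for a max-tb representative $\La$ of $m(7_2)$. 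By Theorem~\ref{thm: z+z-words}, these representatives are the $\La[w,2]$ with $w$ of length $4$, namely the words $ZSZS$, $ZSSZ$ and $SSZS\sim ZSSS$-type (with $m=2$) or $(SS)^2$ (with $m=4$).

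It remains to rule out the cables of the representatives with $m\neq 0$. For such $\La=\La[w,2]$, every pushoff carries the same Maslov potential, so a Reeb chord of the cable lying over a chord $a$ of $\La$ of degree $\pm m$ has degree $\pm m$. We then argue as in Proposition~\ref{proposition: Maslov-m fillability}: compute that $\Aug_0$ of the cable is too small, for instance that it forces the augmentation to vanish on the lifts of $a_1,a_2$ and so cannot contain a torus $(\Bbbk^*)^{b_1(L)+\cdots}$, contradicting \cite[Proposition 1.2]{GaoRutherford}. A second option is an obstruction on the $0$-graded rulings of the cable of the right Euler characteristic via Proposition~\ref{proposition: canonical_ruling_obstruction}: the switches needed to reach $\chi=-\tb$ must use the degree-$m$ crossings. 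I expect the ruling route to be cleaner for decomposable fillings.

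The main obstacle is the classification step. We need the cable classification of \cite{CEMR25} to apply, including verifying uniform thickness of $m(7_2)$ and that the slope $3$ is sufficiently positive. We also need the obstruction argument to control every crossing of the cable front, not just those coming from $a_i$.
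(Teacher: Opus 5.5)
Your plan is the paper's proof: weave fillings for the two braid closures, then the cable classification of \cite{CEMR25}, using uniform thickness of $m(7_2)$ from their Theorem 1.3 and \cite[Theorem 3.3]{EtnyreNgVertesi13} to identify non-destabilizable with max-tb, to reduce to standard cables of the max-tb representatives of $m(7_2)$, and finally the canonical-ruling obstruction of Proposition~\ref{proposition: canonical_ruling_obstruction} for the cables of $\La[ZSSS,2]$ and $\La[SSSS,2]$. The paper takes the ruling route you prefer and does not need the augmentation alternative; the computation you leave open is that these cables (eight left cusps, $\tb=8$) have $0$-graded rulings with at most ten switches, whereas a decomposable filling would require a canonical ruling with sixteen.
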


\begin{proof}
We first describe the max-tb representatives of the $(2, 6)$-cable of $m(7_2)$ following the classification given by \cite[Theorem 1.7]{CEMR25}.
By \cite[Theorem 1.3]{CEMR25}, the knot type $m(7_2)$ (also denoted by $K_{-6}$ in the notation of loc. cit.) is uniformly thick, meaning that $w(m(7_2))=\overline\tb(m(7_2))$. Here $w(K)$ denotes the width of the knot $K$, i.e. the supremum of the slopes of the dividing curves on the boundary of solid tori with convex boundary and core isotopic to $K$. Since $\overline{tb}(m(7_2))=1$, we have that the $(2,6)$-cable of $m(7_2)$ has slope $\frac{3}{1}$, which is greater than $w(m(7_2))$ and therefore our cable satisfies the hypotheses of \cite[Theorem 1.7]{CEMR25}. By \cite[Theorem 3.3]{EtnyreNgVertesi13}, the Legendrian representatives of $m(7_2)$ are non-destabilizable if and only if they maximize Thurston-Bennequin number. Therefore, \cite[Theorem 1.7]{CEMR25} implies that the max-tb representatives of the $(2,6)$-cable of $m(7_2)$ are precisely the representatives obtained by applying the cabling procedure described at the beginning of Section~\ref{ssec: Legendrian cables} to each of the max-tb representatives of $m(7_2)$.

We claim that a Legendrian representative of the $(2,6)$-cable of $m(7_2)$ is decomposably Maslov-0 fillable if and only if the corresponding max-tb representative of $m(7_2)$ is. We construct decomposable weave fillings of $\La(\beta_{(2,6)})$ and $\La(\beta'_{(2,6)})$ below. Therefore, to verify our claim, we argue that the remaining two Legendrian representatives, which we denote $\La_1$ and $\La_2$ do not admit 0-graded normal rulings with enough switches to be a canonical ruling induced by a filling of these Legendrian representatives. We construct $\La_1$ and $\La_2$ by applying the cabling procedure to the fronts $\La[ZSSS, 2]$ and $\La[SSSS, 2]$ and see that they both have eight left cusps and satisfy $\tb(\La_1)=\tb(\La_2)=8$. Direct computation then shows that the $0$-graded rulings of $\La_1$ and $\La_2$ have at most ten switches, implying the minimum Euler characteristic of a 0-graded ruling $\rho$ of $\La_1$ or $\La_2$ is $\chi(\rho)=8-10=-2$. Proposition~\ref{proposition: canonical_ruling_obstruction} therefore obstructs the existence of decomposable fillings of $\La_1$ and $\La_2$. 
\end{proof}

\begin{center}
    \begin{figure}[h!]{ \includegraphics[width=\textwidth]{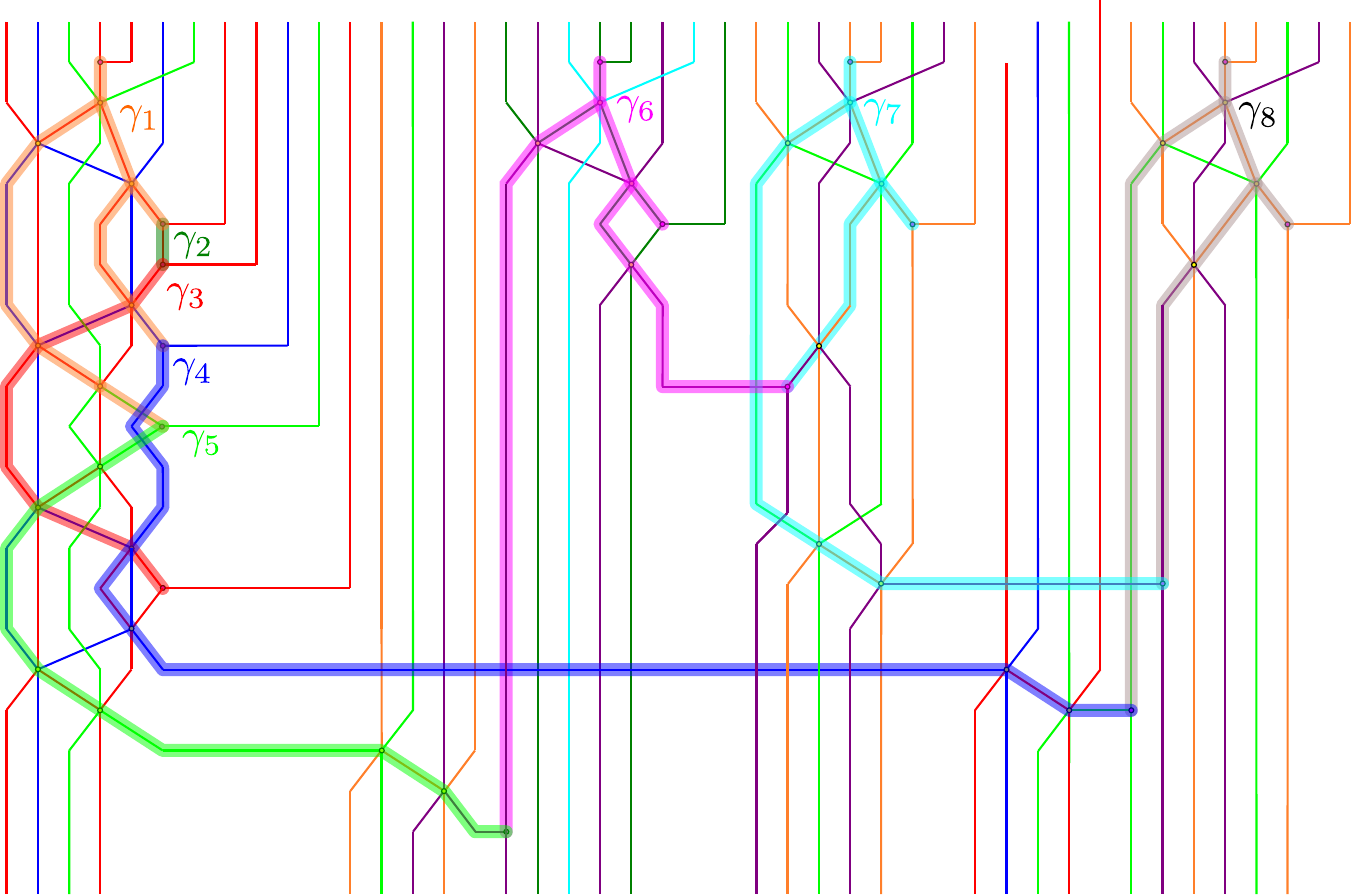}}\caption{Legendrian weave filling of the $m(7_2)$ cable $\La(\beta_1)$.} 
    \label{fig: CableWeave1}\end{figure}
\end{center} 

\begin{center}
    \begin{figure}[h!]{ \includegraphics[width=\textwidth]{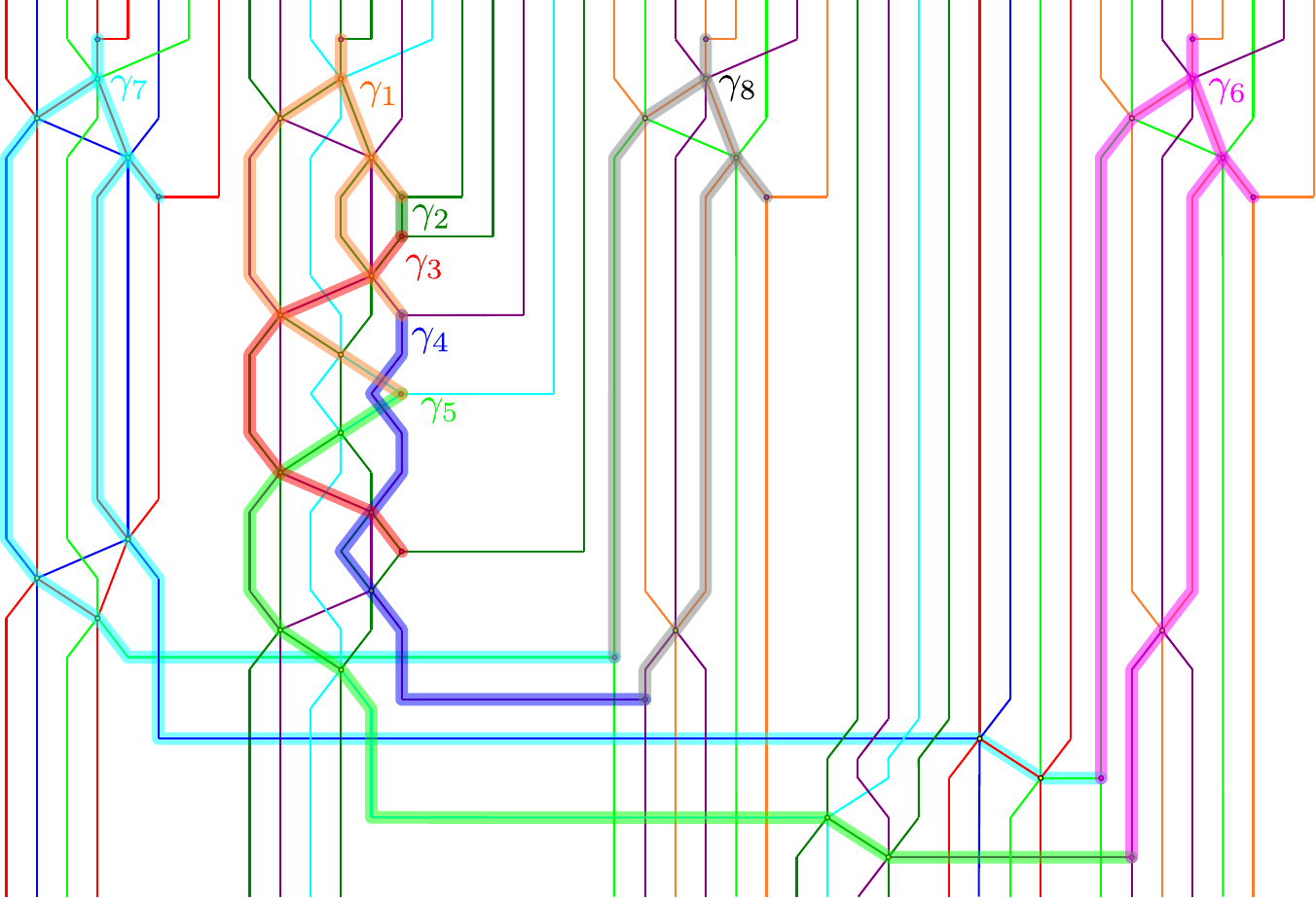}}\caption{Legendrian weave filling of the $m(7_2)$ cable $\La(\beta_2)$.}
    \label{fig: CableWeave2}\end{figure}
\end{center} 

We expect that the statement of Proposition~\ref{proposition: cable_reps} remains true after removing the adjective `decomposable,' though any proof we are aware of would require somewhat involved computations of the DGAs of the two representatives.

We now prove the equivalence of the cluster structures associated to $\La_{(2,6)}$ and $\La_{(2,6)}'$.

\begin{theorem}\label{thm: infty-equivalent-body}
    The cluster varieties $\Aug(\La_{(2,6)})$ and $\Aug(\La_{(2,6)}')$ are quasi-equivalent and have infinitely many cluster seeds.
\end{theorem}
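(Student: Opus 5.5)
The plan is to work entirely within the braid-variety framework of Section~\ref{ssec: braid_vties}. By Proposition~\ref{proposition: cluster_cables} and the explicit words above, both cables are $(-1)$-closures of braids of the form $\gamma\Delta_8$. Here $\gamma$ is the product of $4$-crossing blocks preceding $\Delta_8$. We would first check that $\delta(\gamma)=w_0$, most simply by exhibiting the weave fillings of Figures~\ref{fig: CableWeave1} and~\ref{fig: CableWeave2}: nonemptiness of $\Aug$ then forces this, exactly as argued after Corollary~\ref{cor: General_braidification}. Then \cite[Theorem 1.1]{CGGLSS} supplies cluster structures on $\Aug(\La_{(2,6)})\cong X(\gamma)$ and $\Aug(\La'_{(2,6)})\cong X(\gamma')$. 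The initial seeds are read off from the weaves $\w_\gamma,\w_{\gamma'}$: we take the Lusztig cycles, compute their intersection quiver via the local rules of Figure~\ref{fig: CycleIntersections}, and separate mutable (absolute) from frozen (relative) vertices. Since $\gamma$ contains $\Delta_8$ (each block $\sigma_{i+1}\sigma_i\sigma_{i+2}\sigma_{i+1}$ assembles with the others into a half twist, as in the proof of Proposition~\ref{proposition: cluster_cables}), the mutable Lusztig cycles are $\L$-compressing. Hence each seed, and each mutation by Legendrian weave mutation \cite[Theorem 4.21]{CasalsZaslow}, is realized by an embedded filling.

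For quasi-equivalence, we would compare the two initial quivers $Q$ and $Q'$. Both have the same number of mutable vertices, namely $b_1$ minus the frozen count. This count agrees because $\tb$ and the component number are smooth-type determined for these cables. We then search, by computer (the Sage computation mentioned in the introduction), for a mutation sequence $\mu$ with $\mu(Q)$ isomorphic to $Q'$ on mutable parts, with frozen arrows matching up to the allowed quasi-equivalence. We would try mutation paths guided by the relation between the underlying words: $\beta$ and $\beta'$ differ by the $ZSZS\leftrightarrow ZSSZ$ swap, i.e.\ by a local rearrangement of three $\sigma_1$/$\sigma_3$ letters and a $\sigma_2$. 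So we expect the relevant mutations to be localized on the Lusztig cycles of the corresponding blocks of the cable. Once the mutable quivers match, a quasi-cluster equivalence in the sense of \cite[Section 3.7]{CasalsWeng} follows. It only requires the exchange matrices to agree on mutable rows and the frozen variables to be related by monomial changes preserving the exchange ratios. We verify this by comparing the full extended exchange matrices after $\mu$, adjusting the choice of completion of the $\L$-compressing basis as in the remark following Example~\ref{ex: cluster_coordinates}.

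For infinitely many seeds, it suffices to find in the mutable part of $Q$ (equivalently $Q'$) a full subquiver that is mutation-infinite. Examples are a subquiver with a double arrow (a Kronecker/affine $\tilde A_1$ pattern) or an acyclic subquiver that is not of Dynkin type, e.g.\ a tree of type $\tilde D$ or $\tilde E$. We would first mutate to an acyclic or nearly acyclic representative and inspect it. By Fomin--Zelevinsky's finite type classification, a cluster algebra with a non-Dynkin mutation-type subquiver has infinitely many seeds. By \cite{GaoRutherford} and the injective toric charts, the resulting seeds give infinitely many Hamiltonian non-isotopic fillings.

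The main obstacle is the middle step: the quivers are large (on the order of twenty vertices for $8$-strand braids of this length), and finding an explicit mutation sequence relating $Q$ and $Q'$ is a combinatorial search. We would attack it computationally with breadth-first search over mutations restricted to vertices near the differing blocks, testing quiver isomorphism at each stage.
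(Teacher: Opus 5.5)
Your route to quasi-equivalence is the paper's. Take the (right inductive) weave fillings with their Lusztig cycles, read off the intersection quivers, and let SageMath find a mutation sequence after which the mutable parts coincide; the paper's sequence is $v_5,v_3,v_5,v_1,v_5,v_6$. The search is smaller than you expect: there are $\ell(\gamma)-\ell(w_0)=44-28=16$ Lusztig cycles, and only $8$ of them are mutable. The paper stops once the mutable subquivers agree, so your additional comparison of extended exchange matrices is more careful than what is done there.

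For infinitely many seeds you take a different and shorter route. The paper uses the Fomin--Zelevinsky classification to reduce to mutation types $A_8$, $D_8$, $E_8$. It excludes $A_8$ and $D_8$ because their mutation classes (Buan--Vatne, Vatne) contain no multiple arrows, and excludes $E_8$ by the vanishing of Casals' $\delta$-invariant. Your approach of exhibiting a full subquiver of infinite type avoids all of this. The paper's exclusion of $A_8,D_8$ presupposes that its quivers carry an arrow of multiplicity $\ge 2$. By the Fomin--Zelevinsky $2$-finiteness criterion ($|b_{ij}b_{ji}|\le 3$ throughout the mutation class), such an arrow already rules out every finite type, $E_8$ included. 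One terminological correction: Kronecker and affine $\tilde D$, $\tilde E$ quivers are mutation-\emph{finite}. What you need is a full subquiver of infinite \emph{cluster} type, which forces infinite type for the whole quiver because finite type passes to full subquivers (freeze the remaining vertices).

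There is, however, a false auxiliary claim that you should delete. The $\Delta_8$ assembled in the proof of Proposition~\ref{proposition: cluster_cables} (the $2$-copy of $\Delta_4$ together with the shifted $\Delta_2$'s) is the trailing factor of $\beta_{(2,6)}=\gamma\Delta_8$. The word $\gamma$ itself is the $2$-copy of the $11$-letter word $\sigma_1^2\sigma_3^3\sigma_2^2\sigma_3\sigma_1\sigma_2^2$. The $\L$-compressing criterion recalled in Section~\ref{ssec: braid_vties} needs $\gamma=\Delta_8\gamma''$. Cyclic rotation would then give $\La_{(2,6)}\cong\La(\gamma''\Delta_8^2)$, which is smoothly the closure of the positive braid $\gamma''$ and hence fibered. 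That is impossible: $7_2$ is not fibered (its Alexander polynomial $3t^2-5t+3$ is not monic), and neither are its cables.

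So you have not shown that the mutable Lusztig cycles are $\L$-compressing, nor that every seed comes from a filling. Your closing assertion of infinitely many Hamiltonian non-isotopic fillings therefore does not follow. The paper stresses that seed-to-filling surjectivity is only known for braid-positive links, and it proves infinitely many fillings separately, via an exact cobordism to $\La(\widetilde{A}_2)$ and aug-infiniteness. None of this is needed for the theorem as stated, so dropping it leaves your argument intact.
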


\begin{proof}
Figures~\ref{fig: CableWeave1} and \ref{fig: CableWeave2} depict (right inductive) Legendrian weave fillings of $\La(\beta_{(2,6)})$ and $\La(\beta'_{(2,6)})$ together with their labeled Lusztig cycles. The mutable part of each of their intersection quivers is given in Figure~\ref{fig: CableQuivers} where the color and subscript on the vertex label matches the color and subscript of the corresponding Lusztig cycle in the weave. Starting from the quiver for $\La(\beta_1)$ depicted on the left of the figure, we obtain identical mutable subquivers after the following mutation sequence obtained from the SageMath cluster algebra package \cite{sagemath}: $v_5, v_3, v_5, v_1, v_5, v_6.$ 

 \begin{center}
    \begin{figure}[h!]{ \includegraphics[width=.4\textwidth]{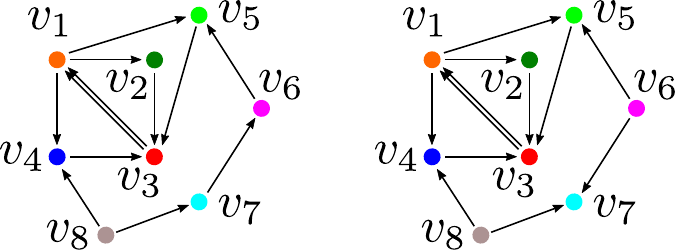}}\caption{Mutable subquivers of the weave fillings of $\La(\beta_1)$ (left) and $\La(\beta_2)$ (right).}
    \label{fig: CableQuivers}\end{figure}
\end{center}
To see that the cluster varieties we obtain are of infinite type, we appeal to the classification of finite type cluster algebras \cite[Theorem 1.4]{FominZelevinsky_ClusterII}. If either of our cluster varieties were of finite type, then their quivers would be mutation equivalent to an arbitrary orientation of a Dynkin diagram of type $A_8, D_8,$ or $E_8$. In types $A_n$ and $D_n$, any quiver mutation equivalent to an arbitrarily oriented Dynkin diagram is classified in \cite{VatneBuan, Vatne} and none of them have arrows with multiplicity greater than one. To distinguish our quivers from a quiver mutation equivalent to an $E_8$ quiver, we use the $\delta$ invariant of \cite{casals2023binaryinvariantmatrixmutation}: this invariant vanishes by direct computation for the quivers depicted in Figure~\ref{fig: CableQuivers}, whereas the $\delta$ invariant is non-vanishing for any quiver mutation equivalent to the $E_8$ Dynkin quiver.
\end{proof}
  
Note that the braid words $\beta_{(2,6)}$ and $\beta_{(2,6)}'$ are chosen carefully to simplify the construction of the weaves pictured in Figures~\ref{fig: CableWeave1} and $\ref{fig: CableWeave2}$. As such, it's not particularly clear how to meaningfully generalize the example we discuss in Theorem~\ref{thm: infty-equivalent-body}, especially since the mutation sequence we use in the proof does not appear to be readily obtainable from the geometry of the weaves or the braid words themselves. While one could likely produce an infinite family of Legendrians with quasi-equivalent cluster structures of infinite type by increasing the slope of the cable, any computation involving e.g. other twist knots seems likely to become increasingly involved. 

\begin{remark}
We wish to remark here on the number of $\L$-compressing cycles bounded by our weave fillings. From Section~\ref{ssec: filling}, we see that the number of $\L$-compressing cycles we identify for any filling $L$ we construct is one fewer than the rank of $H_1(L)$. In any example that we have checked, this remains true of fillings of any cable of $\La[w,c]$ so long as the slope is at least $\tb(\La[w, c])$. This phenomenon seems to be fairly robust: for the Maslov-0 fillable representative of $7_4$ given in \cite{ChongchitmateNgPre}, one can show that no genus-one filling bounds an $\L$-compressing disk, while we identify $|H_1(L)|-2$ $\L$-compressing disks in any filling $L$ of the non-negatively sloped cables of $7_4$ that we investigated. In general, we expect that cabling preserves the difference between the rank of $H_1(L)$ and the number of $\L$-compressing cycles that the filling bounds.
\end{remark}

\subsection{Constructing infinitely many fillings.}
We now verify that $\La(\beta_{(2,6)})$ and $\La(\beta'_{(2, 6)})$ both admit infinitely many fillings. Note that this does not follow immediately from the fact that the corresponding augmentation varieties admit infinitely many cluster seeds because surjectivity of the map from fillings to cluster seeds is only established for braid positive Legendrians in \cite[Theorem 1.1]{CasalsGao23}. Their result is done using the technology of quivers with potentials, and there are significant challenges associated with generalizing the approach to the case of $\La(\beta\Delta)$. We therefore require an explicit construction of Lagrangian cobordisms between our Legendrian links and Legendrian links that are already known to admit infinitely many Lagrangian fillings. We begin by defining a technical condition allowing us to distinguish such fillings.

\begin{definition}
    A Legendrian link $\La$ is aug-infinite if the collection of all $\Z$-valued augmentations of $\SA(\La, \partial)$ induced by its orientable, Maslov-0 exact Lagrangian fillings is infinite.
\end{definition}

Note that any aug-infinite Legendrian link necessarily admits infinitely many fillings, as each filling can only induce finitely many $\Z$-valued augmentations \cite[Proposition 7.3]{CasalsNg}.

The following statement, together with Theorem~\ref{thm: infty-equivalent-body} prove Theorem~\ref{thm: infty-equivalent}: 

\begin{proposition}
    $\La_{(2, 6)}$ and $\La'_{(2, 6)}$ both admit infinitely many fillings.
\end{proposition}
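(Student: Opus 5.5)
The plan is to show that $\La_{(2,6)}$ and $\La'_{(2,6)}$ are aug-infinite. By \cite[Proposition 7.3]{CasalsNg} this gives infinitely many fillings. Aug-infiniteness will be pulled back along an exact Lagrangian cobordism from a Legendrian that is already known to be aug-infinite. The natural source is a braid positive Legendrian whose augmentation cluster variety is of infinite type. For such links, \cite{GSW,CasalsGao23} give infinitely many Maslov-0 fillings that induce pairwise distinct $\Z$-valued augmentations, and for the fillings produced by iterating a Donaldson--Thomas type mutation sequence this is already recorded in \cite{CasalsNg}. Concretely, I would look for a positive braid $\gamma \in \Br_8^+$ with $\La(\gamma)$ aug-infinite; a candidate is the Legendrian $(3,6)$ torus link or some $\La(\sigma_1^{a}\sigma_2^{b}\cdots)$ of affine or wild type. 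Then $\gamma$ should be obtained as a subword of $\beta_{(2,6)}$, and likewise of $\beta'_{(2,6)}$, by deleting crossings.

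\textbf{Step 1: the cobordism.} Deleting a crossing of a positive braid word inside the $(-1)$-closure is a pinch move at a contractible Reeb chord, that is, a saddle cobordism. I would use the weaves of Figures~\ref{fig: CableWeave1} and \ref{fig: CableWeave2} to pick, for each braid, a set of crossings of the $\Delta_8$ and cabling blocks whose resolution, together with braid equivalences and Markov destabilizations, yields $\La(\gamma)$. Each resolved crossing must be $0$-graded; in $(-1)$-closures of positive braids all crossings are $0$-graded, so Theorem~\ref{thm: MaslovNumber} makes the resulting cobordism $C$ orientable and Maslov-0. Gluing $C$ onto the infinitely many fillings $L_k$ of $\La(\gamma)$ gives Maslov-0 fillings $C\circ L_k$ of $\La_{(2,6)}$.

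\textbf{Step 2: distinguishing the fillings.} By functoriality, $\epsilon_{C\circ L_k}=\epsilon_{L_k}\circ\Phi_C$. The issue is to show that infinitely many distinct $\Z$-valued augmentations of $\La(\gamma)$ remain distinct after precomposing with $\Phi_C$. I would first check that, on the Reeb chords of $\La(\gamma)$, $\Phi_C$ is the identity up to lower-order correction terms involving the new homology variables $s_i$. This follows from the pinch formula $\Phi_{L_a}(b)=b+\sum w_1 s^{-1} w_2$ of Section~\ref{ssec:ce-dga}, because every surviving chord of $\La(\gamma)$ is also a chord of $\La_{(2,6)}$. Specializing the $s_i$ to $\pm1$ (local systems chosen in $\Z$) then gives an injection on the level of augmentation values. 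The alternative is to argue geometrically: the filling map $(\Bbbk^*)^{b_1}\hookrightarrow \Aug(\La_{(2,6)})$ is injective by \cite[Proposition 2.6]{GaoRutherford}, and the cobordism induces a map $\Aug(\La(\gamma))\times(\Bbbk^*)^r\to\Aug(\La_{(2,6)})$ that is a cluster localization, hence injective on charts. Distinct charts of $\Aug(\La(\gamma))$ would then give distinct charts downstairs.

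\textbf{Main obstacle.} The hard step is Step 1: finding an explicit $\gamma$ that is simultaneously a subword of both $\beta_{(2,6)}$ and $\beta'_{(2,6)}$, up to equivalence, and is known to be aug-infinite. Equivalently, I need a frozen localization of the infinite-type mutable quivers of Figure~\ref{fig: CableQuivers} that is still of infinite type. I would try first to delete all crossings outside two consecutive cabling blocks, which should leave a $(2,n)$-cable-like positive braid containing $(\sigma_2\sigma_1\sigma_3\sigma_2)^3$. Up to conjugation and destabilization this is the $(3,6)$ torus link, a known aug-infinite example (\cite{CasalsNg}, type $\tilde E_6$ / $D_4^{(1)}$). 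If this fails, I would search by computer for a subword whose quiver contains an arrow of multiplicity $\ge 2$ between mutable vertices, that is, a Kronecker subquiver.
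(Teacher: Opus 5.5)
There is a genuine gap. The step you flag as the main obstacle, an explicit cobordism from a known aug-infinite Legendrian, is the entire content of the proof, and the concrete candidate you propose for it cannot work.

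\begin{itemize}
\item For $\beta\in\Br_N^+$ one has $\tb(\La(\beta))=\ell(\beta)-N^2$. Since $\beta_{(2,6)}$ has $72$ crossings in $\Br_8$, $\tb(\La_{(2,6)})=8$, consistent with the proof of Proposition~\ref{proposition: cable_reps}.
\item Let $C'$ be a cobordism from $\La(\gamma)$ to $\La_{(2,6)}$ built from pinches and minima. Every component of $C'$ meets the two-component top end. So if $\La(\gamma)$ has three components, $C'$ has at most two components and $\chi(C')\le 2\cdot 2-2-3=-1$.
\item Hence $\tb(\La(\gamma))=\tb(\La_{(2,6)})+\chi(C')\le 7$. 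The max-tb $(3,6)$ torus link has $\tb=9$, so no pinching of $\beta_{(2,6)}$ or $\beta'_{(2,6)}$ lands on it.
\end{itemize}

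Your fallback, searching for a subword whose quiver contains a double arrow, is also insufficient on its own. As the paper points out, infinite cluster type does not by itself produce infinitely many fillings outside the braid-positive setting. You must land on a specific Legendrian whose aug-infiniteness has already been proven.

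The paper supplies exactly this.
\begin{itemize}
\item It uses the Lagrangian pigtail closure, which is admissible because the braids contain $\Delta_8$, and first rotates the first twenty crossings of $\beta'_{(2,6)}$.
\item It then pinches ten explicit proper contractible chords: $a_5,a_8,a_{25},\dots,a_{28},a_{41},\dots,a_{44}$ for $\La_{(2,6)}$, and $a_5,\dots,a_8,a_{13},\dots,a_{16},a_{41},a_{44}$ for $\La'_{(2,6)}$.
\item The resulting braids still contain the block $\Sh^4\big((\sigma_2\sigma_1\sigma_3\sigma_2)^3\big)$. However, their closures are the Casals--Ng link $\La(\widetilde{A}_2)$ split with four max-tb unknots, not a torus link.
\item $\La(\widetilde{A}_2)$ is aug-infinite by \cite[Proposition 7.4]{CasalsNg}, and aug-infiniteness transfers along such a cobordism by \cite[Proposition 7.5]{CasalsNg}.
\end{itemize}

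You should cite that transfer result in place of your Step 2, which does not hold up as written.
\begin{itemize}
\item The correction terms $w_1(u)s^{-1}w_2(u)$ in a pinch map involve other Reeb chords, so injectivity after specializing the $s_i$ is not automatic.
\item The braid-move and destabilization pieces of $C$ are not of the form ``identity plus lower-order corrections''.
\item The asserted cluster-localization map $\Aug(\La(\gamma))\times(\Bbbk^*)^r\to\Aug(\La_{(2,6)})$ is not established for these links.
\end{itemize}
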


\begin{proof}
    We show that $\La_{(2,6)}$ and $\La_{(2,6)}'$ are both aug-infinite by exhibiting an exact Lagrangian cobordism to the Legendrian $\La(\beta_{11})=\La(\widetilde{A}_2)$ studied in \cite{CasalsNg}. This link is isotopic to the $(-1)$-closure of $\beta=(\sigma_2\sigma_1\sigma_3\sigma_2)\Delta_4$. By \cite[Proposition 7.4]{CasalsNg}, $\La(\widetilde{A}_2)$ is aug-infinite, and we we wish to conclude that $\La_{(2,6)}$ and $\La_{(2,6)}'$ are  aug-infinite as well. To that end, we consider the Lagrangian pigtail closure of the braids $\beta_{(2,6)}$ and $\beta'_{(2,6)}$; see Figure~\ref{fig: PigtailClosure}. To verify that these pigtail closures are isotopic to the $-1$-closures we used to define $\La_{(2,6)}$ and $\La'_{(2,6)}$, see \cite[Section 2.2]{CasalsNg}. Our braids are admissible in the sense of \cite[Definition 2.5]{CasalsNg} because they contain $\Delta_8$. In the case of $\La_{(2,6)}'$, we will also perform an isotopy that rotates the first twenty crossings of our braid to obtain the braid below, which we will also denote by $\beta'_{(2,6)}$:
     $$\beta'_{(2,6)}=(\sigma_4\sigma_3\sigma_5\sigma_4)^2(\sigma_2\sigma_1\sigma_3\sigma_2)(\sigma_4\sigma_3\sigma_5\sigma_4)^2(\sigma_6\sigma_7\sigma_5\sigma_6)^3(\sigma_4\sigma_5\sigma_3\sigma_4)(\sigma_2\sigma_3\sigma_1\sigma_2)^2\Delta_8$$

\begin{center}
    \begin{figure}[h!]{ \includegraphics[width=.4\textwidth]{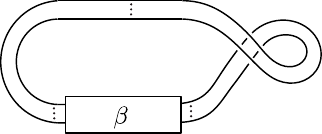}}\caption{Lagrangian pigtail closure of an admissible positive braid $\beta$.}
    \label{fig: PigtailClosure}\end{figure}
\end{center} 
We index the crossings of $\beta_{(2,6)}$ and $\beta'_{(2,6)}$ from left to right. Below, we give two pinching sequences of proper contractible Reeb chords that result in a disjoint union of $\La(\widetilde{A}_2)$ with four unlinked max-tb Legendrian unknots. For each of the two Legendrians indicated, resolve the listed crossings in order:

\begin{align*}
    \La_{(2,6)}&: a_5, a_8, a_{25}, a_{26}, a_{27}, a_{28}, a_{41}, a_{42}, a_{43}, a_{44}\\
    \La_{(2,6)}'&: a_5, a_6, a_7, a_8, a_{13}, a_{14}, a_{15}, a_{16}, a_{41}, a_{44} 
\end{align*}

If we label the strands from bottom to top, then the pinching sequences produce pigtail closures of the following two braids: \begin{align*}
    \Delta_4\Sh^4(\sigma_2\sigma_1\sigma_3\sigma_2)^3(\sigma_4\sigma_3\sigma_5\sigma_4)(\sigma_6\sigma_5\sigma_7\sigma_6)(\sigma_2\sigma_1\sigma_3\sigma_2)(\sigma_4\sigma_3\sigma_5\sigma_4)\Delta_8,\\
    (\sigma_4\sigma_3\sigma_5\sigma_4)(\sigma_2\sigma_1\sigma_3\sigma_2)(\sigma_4\sigma_3\sigma_5\sigma_4)\Sh^4(\sigma_2\sigma_1\sigma_3\sigma_2)^3(\sigma_4\sigma_3\sigma_5\sigma_4)\Delta_4\Delta_8,
\end{align*} both of which are isotopic to the disjoint union of four unlinked unknots with $\La(\widetilde{A}_2)$. Thus, since $\La(\widetilde{A}_2)$ is aug-infinite, both $\La_{(2,6)}$ and $\La_{(2,6)}'$ are aug-infinite as well by \cite[Proposition 7.5]{CasalsNg}.
\end{proof}

\bibliographystyle{alpha}
\bibliography{main.bib}

\end{document}